\numberwithin{equation}{section}
\newtheorem{theorem}{Theorem}[section]
\newtheorem{proposition}{Proposition}[section]
\newtheorem{lemma}{Lemma}[section]
\newtheorem{corollary}{Corollary}[section]
\newtheorem{remark}{Remark}[section]
\newtheorem{remarks}{Remark}[section]
\newtheorem{definition}{Definition}[section]
\newcommand{\be}{\begin{equation}}
\newcommand{\ee}{\end{equation}}
\newcommand{\e}{\varepsilon}
\newcommand{\R}{\mathbb R}
\newcommand{\C}{\mathbb C}
\newcommand{\Z}{\mathbb Z}
\newcommand{\T}{\mathbb T}
\renewcommand{\a }{\alpha }
\renewcommand{\b }{\beta }
\newcommand{\ii }{{\rm i} }
\newcommand{\io}{\iota}
\begin{document}

\title{{\bf Canonical coordinates with tame estimates for the defocusing NLS equation on the circle}}

\date{}


\author{
Thomas Kappeler\footnote{Supported in part by the Swiss National Science Foundation.}  ,
Riccardo Montalto\footnote{Supported in part by the Swiss National Science Foundation.} 
}

\maketitle

\noindent
{\bf Abstract.}
In a case study for integrable PDEs, we construct real analytic, canonical coordinates for the defocusing NLS equation
on the circle, specifically taylored towards the needs in perturbation theory. They are defined in neighbourhoods of families of finite dimensional invariant tori and are shown to satisfy together with their derivatives tame estimates. 
When expressed in these coordinates, the dNLS Hamiltonian is in normal form up to order three.

\medskip

\noindent
{\em Keywords:} defocusing NLS equation, integrable PDEs, normal form.

\medskip

\noindent
{\em MSC 2010:} 37K10, 35Q55


\tableofcontents

\section{Introduction}\label{introduzione paper}
\label{1. Introduction}
In form of a case study for integrable PDEs, the goal of this paper is to construct canonical coordinates for the defocusing NLS (dNLS) equation, specifically taylored to the needs in perturbation theory. 
We consider the  dNLS equation in one
space dimension
   \begin{equation}
   \label{1.1} \ii \partial _t u = - \partial ^2_x u + 2|u|^2u\,, \quad x \in \T := \R / \Z
   \end{equation}
on the Sobolev space $H^s_\C \equiv H^s({\mathbb T}, {\mathbb C})$ of complex
valued functions on ${\mathbb T}$, whose distributional derivatives up to order
$s \in {\mathbb Z}_{ \geq 0}$ are in $L^2(\T, \C)$. Equation \eqref{1.1} can be viewed
as a Hamiltonian PDE, obtained by restricting the Hamiltonian system on the
phase space $H^s_c := H^s_\C \times H^s_\C$ with Poisson bracket
and Hamiltonian given by
\begin{equation}\label{Poisson brackets}
\{ {\mathcal F}, {\mathcal G}\} (u, v) 
= - \ii \int ^1_0 (\partial _u {\mathcal F} \partial _{v} {\mathcal G} - 
    \partial _{v} {\mathcal F} \partial _u {\mathcal G})dx, \qquad
     {\mathcal H}^{nls}(u, v) = \int ^1_0 (\partial _x  u \partial _x v + u^2 v^2)dx
\end{equation}
to the real subspace $H^s_r$ of $H^s_c$ consisting of elements $(u, v)$ with $v = \overline u$. Here $\mathcal F, \mathcal G$ are ${\cal C}^1$-smooth complex valued functionals on $H^s_c$ with sufficiently regular $L^2$-gradients. 
Equation \eqref{1.1} can then be
rewritten as $\partial _tu = - \ii \partial _{v}{\mathcal H}^{nls} \mid_{v = \overline u}$. 
The dNLS equation  is an integrable PDE and according to \cite{GK},  admits global Birkhoff
coordinates on $H^s_\C$ with $s \in {\mathbb Z}_{\geq 0}$. To state the main results of this paper we first need to describe these coordinates in more detail: for any $s \in \Z_{\geq 0}$, let  
$$
h^s_\C \equiv h^{s}(\Z, \C) := \big\{ x = (x_n)_{n \in \Z} \subseteq \C : \| x \|_s < + \infty \big\}\,, \quad \| x \|_s := \sum_{n \in \Z}\langle n \rangle^{2 s} |x_n|^2, \quad \langle n \rangle := {\rm max}\{1, |n| \}\,,
$$
$$
h^s \equiv h^s(\Z, \R) := \big\{ (x_n)_{n \in \Z} \in h^s_\C : \,\, x_n \in \R \,\,\, \forall n \in \Z\big\}
$$
and 
$$
h^s_c := h^s_\C \times h^s_\C\,, \qquad h^s_r := h^s \times h^s\,. 
$$
Note that the Sobolev space $H^s_\C$ can then be described by 
$$
H^s_\C = \big\{ u= \sum_{n \in \Z} u_n e^{2 \pi i n x} : (u_n)_{n \in \Z} \in h^s_\C  \big\}\,, \qquad  \| u \|_s : = \| (u_n)_{n \in \Z}\|_s\,.
$$
Furthermore let 
$$
\ell^{1, 2} \equiv \ell^{1, 2}(\Z, \R) := \big\{ x = (x_n)_{n \in \Z} \subset \R : \,\, \| x\|_{1, 2} := \sum_{n \in \Z} \langle n \rangle^2 |x_n| < + \infty \big\}\,,
$$
$$
\ell^{1, 2}_+ := \big\{ (x_n)_{n \in \Z} \in \ell^{1, 2} : \quad  x_n \geq 0\,, \quad \forall n \in \Z \big\}
$$
and define the following version $F_{nls}$ of the Fourier transform, introduced in \cite{GK}, 
\begin{equation}\label{F nls}
F_{nls} : H^0_c \to h^0_c\,, \qquad (u, v) \mapsto 
\big(- \frac{1}{\sqrt{2}} (u_{- n} + v_n), \, - \frac{\ii}{\sqrt{2}}(u_{- n} - v_n) \big)\,,
\end{equation}
where $u_n$ denotes the $n$th Fourier coefficient of $u$, $u_n := \int_0^1 u(x) e^{- 2 \pi \ii n x}\, d x$. 
Note that for $v = \overline u$, one has $v_n = \overline u_{- n}$ for any $n \in \Z$, implying that 
$$
F_{nls}(u, \overline u) = \big( - \sqrt{2}{\rm Re}(u_{- n})\,,\, \sqrt{2} {\rm Im}(u_{- n}) \big)\,.
$$
The inverse of $F_{nls}$ is then given by 
$$
F_{nls}^{- 1} : h^0_c \to H^0_c \,, \quad \big( (x_n)_{n \in \Z}, (y_n)_{n \in \Z} \big) \mapsto 
\big( - \frac{1}{\sqrt{2}} \sum_{n \in \Z} (x_{- n} - \ii y_{- n}) e^{2 \pi \ii n x}\,, 
\, - \frac{1}{\sqrt{2}} \sum_{n \in \Z} (x_n + \ii y_n) e^{2 \pi \ii n x} \big)\,.
$$
Finally we recall that a possibly nonlinear map $F : U \to Y$ of a subset $U$ of a Banach space $X$ into another Banach space $Y$ is said to be bounded if $F(V)$ is bounded for any bounded subset $V$ in $U$. 
\begin{theorem}[\cite{GK}, \cite{KST}] \label{Theorem Birkhoff coordinates}
 There exists a neighbhorhood ${\cal W}$ of $H^0_r$ in $H^0_c$ and an analytic map 
$$
\Phi^{nls} : {\cal W} \to h^0_c\,, \qquad (u, v) \mapsto \big( (x_n)_{n \in \Z}, (y_n)_{n \in \Z}\big)
$$ 
with $\Phi^{nls}(0) = 0$ such that the following holds: 
\begin{description}
\item[{\rm (B1)}] For any $s \in \Z_{\geq 0}$, $\Phi^{nls}(H^s_r) \subseteq h^s_r$ and $\Phi^{nls}: H^s_r \to h^s_r$ is a real analytic diffeomorphism. 
\item[{\rm (B2)}] The map $\Phi ^{nls}$ is canonical, meaning that on ${\cal W}$, $\{ x_n, y_n \} = - 1$ and all the other brackets between coordinate functions vanish.  
\item[{\rm (B3)}] The Hamiltonian $H^{nls} := {\cal H}^{nls} \circ (\Phi^{nls})^{- 1}$, defined on $h_r^1$, is a function of the actions $I_n := (x_n^2 + y_n^2)/2$, $n \in \Z$, only and $H^{nls} : \ell^{1, 2}_+ \to \R, I \mapsto H^{nls}(I)$ 
is real analytic.  
\item[{\rm (B4)}] The differential $d_0\Phi^{nls}$ of $\Phi^{nls}$ at $0$ is the
Fourier transform $F_{nls}$ defined in \eqref{F nls}.
\item[{\rm (B5)}] The nonlinear parts $A^{nls} := \Phi^{nls} - F_{nls}$ of $\Phi^{nls}$ and $B^{nls} := \Psi^{nls} - F_{nls}^{- 1}$ of $\Psi^{nls}:= (\Phi^{nls})^{- 1}$ are one smoothing, meaning that for any $s \in \Z_{\geq 1}$,
$$
A^{nls} : H^s_r \to h^{s + 1}_r \quad \text{and} \quad B^{nls} : h^s_r \to H^{s + 1}_r
$$
are real analytic and bounded.
\end{description}

\noindent
The maps $\Phi^{nls}, \Psi^{nls}$ are referred to as Birkhoff maps and the coordinates 
$((x_n, y_n))_{n \in \Z}$ as Birkhoff coordinates for the dNLS equation.
\end{theorem}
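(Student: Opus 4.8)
\noindent\emph{Proof plan.}\quad The plan is to construct $\Phi^{nls}$ out of the spectral data of the Zakharov--Shabat (AKNS) operator associated with the dNLS equation, adapting the scheme developed for KdV. To a pair $(u,v)$ in a complex neighbourhood of $H^0_r$ one associates the first order $2\times 2$ system $L(u,v)y = \lambda y$ whose off-diagonal potential is built from $u$ and $v$; the dNLS equation is the isospectral deformation of $L(u,v)$ generated by a second Lax operator, so that the periodic spectrum of $L(u,v)$ on $[0,1]$ --- a sequence of eigenvalues $\lambda_n^\pm$, $n \in \Z$, which pair up with $\lambda_n^+ - \lambda_n^- \to 0$ as $|n| \to \infty$ --- together with an auxiliary Dirichlet-type divisor $(\mu_n,\sigma_n)_{n \in \Z}$, provides a complete set of conserved quantities and the raw material for action--angle coordinates. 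The first step is the spectral analysis of $L(u,v)$: holomorphy of the fundamental matrix solution $M(x,\lambda;u,v)$ jointly in $\lambda$ and in $(u,v)$, definition of the discriminant $\Delta(\lambda) := \tfrac12\,\mathrm{tr}\,M(1,\lambda)$, a Counting Lemma localising the periodic and Dirichlet eigenvalues in disjoint discs around $n\pi$, and the a priori asymptotics $\lambda_n^\pm = n\pi + \ell^2_n$, $\mu_n = n\pi + \ell^2_n$, uniformly on bounded subsets of potentials.

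The second step is to define the action variables by the period integrals
$$
I_n := \frac{1}{\pi}\oint_{\Gamma_n} \lambda\,\frac{\dot\Delta(\lambda)}{\sqrt{\Delta(\lambda)^2-4}}\,d\lambda\,, \qquad n \in \Z\,,
$$
($\Gamma_n$ a contour encircling the $n$-th gap, up to normalisation), and to show, using the localisation and asymptotics above, that $I_n \geq 0$, that $I_n$ is real analytic in $(u,v)$, and that $I_n = 0$ precisely when the $n$-th gap is collapsed. The conjugate angles $\theta_n$ are defined as suitably normalised Abelian integrals of the second kind evaluated on the Dirichlet divisor from a fixed base point, so that $\theta_n$ is well defined modulo $2\pi$ and $(x_n,y_n) := (\sqrt{2I_n}\cos\theta_n,\ \sqrt{2I_n}\sin\theta_n)$ depends real analytically on $(u,v)$ even across $\{I_n = 0\}$. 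Setting $\Phi^{nls} := ((x_n,y_n))_{n\in\Z}$, property (B1) follows by combining the $\ell^2$-type asymptotics of $\lambda_n^\pm$ and $\mu_n$ with the decay of the potential to get $\Phi^{nls}(H^s_r) \subseteq h^s_r$ for all $s$, together with the inverse spectral (Gelfand--Levitan--Marchenko) reconstruction of $(u,v)$ from $((x_n,y_n))$, which maps $h^s_r$ back to $H^s_r$ analytically.

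The canonical property (B2) is the core of the argument: one must establish $\{I_n,I_m\}=0$, $\{\theta_n,\theta_m\}=0$, $\{\theta_n,I_m\}=\delta_{nm}$ on ${\cal W}$. This is derived from the ``fundamental'' Poisson brackets of the entries of the transition matrix $M(1,\lambda)$ --- which are prescribed by the AKNS $r$-matrix structure --- propagated to brackets among $\Delta(\lambda)$, the periodic eigenvalues and the $\mu_n$, and then pushed through the contour integrals defining $I_n$ and $\theta_n$, a limiting argument being justified by the uniform asymptotics. Property (B3) then comes essentially for free once ${\cal H}^{nls}$ is recognised as a spectral invariant: writing it via a trace formula / the high-energy expansion of $\Delta(\lambda)$ exhibits it as a function of the $I_n$ only, real analytic on $\ell^{1,2}_+$.

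Finally (B4) and (B5) require quantitative refinements. Linearising the whole construction at $(u,v)=0$, where $L(0,0)$ is constant coefficient with explicit spectrum $\lambda_n^\pm = \mu_n = n\pi$, identifies $d_0\Phi^{nls}$ with $F_{nls}$; this is essentially a direct computation. The one-smoothing statement (B5) needs the genuinely hard analytic input: one must expand $\lambda_n^\pm$, $\mu_n$, hence $I_n$ and $x_n \pm \mathrm{i}\,y_n$, one order beyond the leading Fourier term, with remainders summable against an extra power of $\langle n\rangle$ and uniformly bounded on bounded sets of potentials, and likewise for the inverse map. I expect this sharp, uniform second-order asymptotic analysis of the Zakharov--Shabat spectral quantities --- the input supplied by \cite{KST} --- to be the main obstacle; the remaining steps, while substantial, are a structured adaptation of the KdV construction.
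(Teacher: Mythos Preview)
The paper does not prove this theorem at all: it is quoted from the literature (references \cite{GK} and \cite{KST}) as a known foundational result and is used throughout the paper as a black box. So there is no ``paper's own proof'' to compare against.

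That said, your outline is a faithful sketch of how the result is actually established in those references. The construction of $\Phi^{nls}$ from the periodic/Dirichlet spectral data of the Zakharov--Shabat operator, the definition of actions via contour integrals of $\dot\Delta/\sqrt{\Delta^2-4}$, the angles via Abelian integrals on the divisor, the canonical relations from the $r$-matrix structure, and the identification of $d_0\Phi^{nls}$ with $F_{nls}$ --- this is precisely the program carried out in \cite{GK}. Your attribution of (B5) to the sharper second-order asymptotics of \cite{KST} is also correct: the one-smoothing property is not proved in \cite{GK} and requires exactly the refined uniform expansions you describe. One small caution: the step ``$(x_n,y_n)$ depends real analytically on $(u,v)$ even across $\{I_n=0\}$'' is genuinely delicate --- the angle $\theta_n$ is not defined when $I_n=0$, and the analyticity of the Cartesian coordinates there requires a separate argument (essentially showing that $\sqrt{2I_n}\,e^{\pm \ii\theta_n}$ extends analytically), which in \cite{GK} occupies a nontrivial portion of the construction. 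Your plan acknowledges this but perhaps underestimates the work involved.
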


Birkhoff coordinates are a tool to study perturbations of the dNLS equation far away from the equilibrium. In particular, in \cite{BKM} they were used to show the existence of finite dimensional invariant tori of large size for 
Hamiltonian perturbations of this equation, involving no derivatives of $u$. 
So far, no such results have been obtained for perturbations involving $\partial_x u$ (and possibly $\partial_x^2 u$) --  
see \cite{BKM}, \cite{B3}, \cite{CW}, \cite{GY1}, \cite{GY2}, \cite{GK1}, \cite{KL}, \cite{KP2}
 for results on perturbations of the dNLS equation on the circle obtained so far.
 In view of the recent results in \cite{BBM3} concerning the existence of small
 quasi-periodic solutions of {\em quasi-linear} Hamiltonian perturbations of the KdV equation
 and our results in \cite{BKM} described above,
 we expect that Hamiltonian perturbations of the
 dNLS equation, involving $\partial_x u$ (and possibly $\partial_x^2 u$),
also admit {\em large} quasi-periodic solutions, also referred to as multi-solitons. For this purpose, the scheme developed in \cite{BKM}
 has to be considerably refined. In particular, canonical coordinates are needed which together 
 with their derivatives satisfy tame estimates. 
In \cite{Molnar}, such estimates were derived for $\Phi^{nls}$ and its inverse, but so far are not
available for their derivatives. 
In this paper, we prove how to use the Birkhoff coordinates  to construct 
 near bounded, integrable, finite dimensional subsystems of the dNLS equation, {\em local}  canonical coordinates
 so that they satisfy tame estimates and the dNLS Hamiltonian, when expressed in these coordinates,
 is in normal form up to order three -- see Theorem \ref{modified Birkhoff map} for a precise statement. In future work, we will use these coordinates as a starting point for applying 
 a KAM scheme to reduce certain linear operators with tame estimates , which come up in the Nash Moser iteration, to operators with constant coefficients.  Recently, such schemes have been further developed in significant ways.
 In the context of the dNLS equation, results of this type in  \cite{Berti-Montalto} 
 will be particularly relevant.

To state our main result, we need to introduce some more notation.
For any $S \subseteq \Z$ with $|S| < + \infty$, let $S^\bot := \Z \setminus S$. By a slight abuse of notation, we identify $h^s_c$ with $\C^S \times \C^S \times h^s_{ \bot c}$ and $h^s_r$ with $\R^S \times \R^S \times h^s_{\bot r}$ where 
$$
h^s_{\bot c} := h^s(S^\bot, \C) \times h^s(S^\bot, \C)\,, \qquad h^s_{\bot r} := h^s(S^\bot, \R) \times h^s(S^\bot, \R)\,.
$$
Accordingly, an element $z \in h^0_c$ is written as 
$$
z = (z_S, z_\bot)\,, \qquad z_S = \big( (x_j)_{j \in S}, (y_j)_{j \in S} \big), \quad z_\bot = \big( (x_j)_{j \in S^\bot}, (y_j)_{j \in S^\bot} \big)\,, 
$$
and as norm we choose $
\| z \|_s := \| z_S\| + \| z_\bot\|_s
$ where
$$
\| z_S\| \equiv \| z_S\|_0 := \Big( \sum_{j \in S} |x_j|^2 + |y_j|^2 \Big)^{\frac12}\,, \qquad \| z_\bot\|_s := \Big( \sum_{j \in S^\bot} \langle j \rangle^{2 s}(|x_j|^2 + |y_j|^2) \Big)^{\frac12}\,.
$$
Furthermore, we introduce the bilinear form
\begin{align}\label{bilinear form on bot}
(z_\bot , z'_\bot)_r := \sum_{j \in S^\bot} x_j x_j' + y_jy_j' \,, \quad 
z_\bot=(x_\bot, y_\bot), \,\,  z'_\bot=(x'_\bot, y'_\bot) \in h_{\bot c}\,.
\end{align}
and write the sequence of actions $I = (I_k)_{k \in \mathbb Z}$ as $(I_S, I_\bot)$ where
$$
I_S := (I_k)_{k \in S}, \qquad I_\bot := (I_k)_{k \in S^\bot} \, , \quad I_k \equiv I_k(z) = \frac{|z_k|^2}{2} = \frac{x_k^2 + y_k^2}{2}\,, \quad \forall k \in \Z\,.
$$
Finally, we introduce the dNLS frequencies 
 \begin{equation}\label{definizione frequenze NLS introduzione}
 \omega_k^{nls}(I) := \partial_{I_k} H^{nls}(I) \,, \qquad k \in \Z\,.
 \end{equation}
 They satisfy asymptotics of the form $\omega_k(I) = 4 k^2 \pi^2 + O(1)$
 as $k \to \pm \infty$.
 More precisely, the map
  $$
  \ell^{1, 2}_+ \rightarrow \ell ^\infty , \ (I_k)
      _{k \in {\mathbb Z}} \mapsto (\omega ^{ nls}_n(I) - 4 \pi ^2 n^2)
      _{n \in {\mathbb Z}}
 $$
is real analytic and bounded -- see Proposition \ref{Asintotica frequenze integrabile} in 
Subsection \ref{Hamiltoniana trasformata} below. 
The main result of this paper is the following one.

\begin{theorem}\label{modified Birkhoff map}
Let $S \subseteq \Z$ be finite. For any compact subset ${\cal K} \subseteq \R^S \times \R^S$, there exists an open, bounded, complex neighbourhood ${\cal V} \subseteq h^0_c$ of ${\cal K} \times \{ 0 \}$ and a
bounded analytic map 
$$
\Psi : {\cal V} \to H^0_c\,, \,\,\, (z_n)_{n \in \Z} \mapsto w
$$ 
so that the following holds: 
\begin{description}
\item[({\rm C1})] For any $s \in \Z_{\geq 0}$, $\Psi({\cal V} \cap h^s_r) \subseteq H^s_r$ and $\Psi : {\cal V} \cap h^s_r \to H^s_r$ is a real analytic diffeomorphism onto its image. 
\item[({\rm C2})] $\Psi$ is canonical, meaning that on $\Psi({\cal V} \cap h^0_r)$,  $\{ x_n, y_n \} = - 1$ for any $n \in \Z$, whereas all the other brackets between coordinate functions vanish. 
\item[({\rm C3})] The transformation $\Psi$ is related to $\Psi^{nls} = (\Phi^{nls})^{- 1}$ by 
$$
\Psi\mid_{ {\cal K} \times \{ 0 \}} = \Psi^{nls}\mid_{ {\cal K} \times \{ 0 \}}\,, \quad d \Psi(z) = d \Psi^{nls}(z)\,, \quad \forall z \in {\cal K} \times \{ 0 \}\,.
$$
\item[({\rm C4})] The Hamiltonian ${\cal H} := {\cal H}^{nls} \circ \Psi$, defined on ${\cal V} \cap h^1_r$, is in normal form up to order three. More  precisely,
$$
{\cal H}(z) = H^{nls}(I_S, 0) + \sum_{n \in S^\bot}   {\omega}^{nls}_n(I_S, 0)I_n(z) \, + \, {\cal P}_3(z)
$$
where 
the Hamiltonian ${\cal P}_3 : {\cal V} \cap h^0_r \to \R$ is real analytic. Furthermore, ${\cal P}_3$ satisfies the following tame estimates: for any $s \in \Z_{\geq 0}$, $z \in {\cal V} \cap h^s_r\,,$
 $\widehat z   \in h^s_c$,
\begin{align}\label{estimate error term}
&  \| \nabla {\cal P}_3(z)\|_s \lesssim_s \| z_\bot\|_s \| z_\bot\|_0\,, \quad \| d \nabla {\cal P}_3(z) [\widehat z] \|_s \lesssim_s \| z_\bot\|_s \| \widehat z\|_0 + \| z_\bot\|_0 \| \widehat z\|_s
\end{align}
and for any $k \in \mathbb Z_{\geq 2}$, $ \widehat z_1, \ldots, \widehat z_k \in h^s_c$,
$$
\| d^k \nabla {\cal P}_3(z)[\widehat z_1, \ldots, \widehat z_k]\|_s \lesssim_s \sum_{j = 1}^k \| \widehat z_j\|_s \prod_{i \neq j} \| \widehat z_i\|_0 + \| z_\bot\|_s \prod_{j = 1}^k \| \widehat z_j\|_0\,.
$$
Here, the meaning of $\lesssim_s$ is the standard one. So e.g.  $\| \nabla {\cal P}_3(z)\|_s \lesssim_s \| z_\bot\|_s \| z_\bot\|_0$ says that there exists a constant
$C \equiv C(s) > 0$ so that
$$
\| \nabla {\cal P}_3(z)\|_s \leq C \| z_\bot\|_s \| z_\bot\|_0\,, \quad \forall \, z \in {\cal V} \cap h^s_r \,.
$$
\item[({\rm C5})] The nonlinear maps $B:= \Psi - F_{nls}^{- 1} : {\cal V} \cap h^0_r \to H^0_r$ and $A := \Psi^{- 1} - F_{nls} : \Psi({\cal V}) \cap H^0_r \to h^0_r$ are real analytic maps and so is 
$${\cal A} : {\cal V} \cap h^0_r \to {\cal L}(H^0_c, h^0_c), \, z \mapsto {\cal A}(z) := d \Psi(z)^{- 1} - F_{nls}.
$$
On ${\cal V} \cap h^0_r$,  the maps $B$ and ${\cal A }$ satisfy the following estimates: for any $z \in {\cal V} \cap h^0_r$, $k \in \Z_{\geq 1}$, $\widehat z_1, \ldots, \widehat z_k \in h^0_c$, 
and $\widehat w \in H^0_c$, 
$$
\| B(z)\|_0 \lesssim 1 \,, \quad \| d^k B(z)[\widehat z_1, \ldots, \widehat z_k]\|_0 \lesssim_k \prod_{j = 1}^k\|\widehat z_j \|_0\,,
$$
$$
\| {\cal A}(z)[\widehat w]\|_0 \lesssim \| \widehat w\|_0\,, \quad \| d^k \big( {\cal A}(z)[\widehat w] \big)[\widehat z_1, \ldots, \widehat z_k] \|_{0} \lesssim_k \| \widehat w\|_0\prod_{j = 1}^k \| \widehat z_j\|_0 \,.
$$
Furthermore,  $B$ is one smoothing, meaning that for any $s \in \Z_{\geq 1}$, $B: {\cal V} \cap h^s_r \to H^{s+1}_r$ is real analytic, and satisfies the following  tame estimates:   for any $k \in \Z_{\geq 1}$,
$ z \in {\cal V} \cap h^s_r$, and   $ \widehat z_1, \ldots , \widehat z_k \in h^s_c$,
 $$ \| B(z) \|_{s + 1} \lesssim_s \, 1 +  \| z_\bot\|_s\,, \quad
    \| d^k B(z)[\widehat z_1, \ldots, \widehat z_k]\|_{s + 1} \lesssim_{s, k} 
    \sum_{j = 1}^k \| \widehat z_j\|_s \prod_{i \neq j} \| \widehat z_i\|_0 
    + \|  z_\bot\|_s \prod_{j = 1}^k \|\widehat z_j \|_0\,.
$$
 
\noindent
Similarly, the maps  $A$ and ${\cal A }$ are one smoothing, meaning that for any $s \in \Z_{\geq 1}$, 
$A: \Psi( {\cal V} \cap h^s_r) \to h^{s+1}_r$ and 
${\cal A } : {\cal V} \cap h^s_r \to {\cal L}(H^s_c, h^{s + 1}_c)$
are real analytic. Moreover, ${\cal A }$  satisfies the following tame estimates: 
for any $z \in {\cal V} \cap h^s_r$, $\widehat w \in H^s_c$,
$$
 \| {\cal A}(z)[\widehat w] \|_{s+1} \lesssim_{s}  \| z_\bot\|_s \| \widehat w \|_0  +  \| \widehat w\|_s
$$
and for any $k \in \Z_{\geq 1},$  $\widehat z_1, \ldots, \widehat z_k \in h^s_c$,
$$
\| d^k \big( {\cal A}(z)[\widehat w] \big)[\widehat z_1, \ldots, \widehat z_k] \|_{s + 1}
 \lesssim_{s, k}  
\,\, \big(   \| z_\bot\|_s \| \widehat w \|_0  +  \| \widehat w\|_s \big) \, \prod_{j = 1}^k \| \widehat z_j\|_0  \, 
+ \,    \| \widehat w\|_0 \, \sum_{j = 1}^k  \| \widehat z_j\|_s \prod_{i \neq j} \| \widehat z_i\|_0  \,.
$$  
\end{description}
\end{theorem}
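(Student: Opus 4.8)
The plan is to build $\Psi$ as a composition $\Psi = \Psi^{nls} \circ \mathcal{T}$ where $\mathcal{T}$ is a near-identity symplectomorphism of $h^0_c$ that corrects the quadratic and cubic terms of the Birkhoff Hamiltonian $H^{nls}$, expanded around the torus $\mathcal{K} \times \{0\}$. First I would use (B3) of Theorem \ref{Theorem Birkhoff coordinates} together with Proposition \ref{Asintotica frequenze integrabile} to Taylor expand $H^{nls}(I)$ in the normal variable $z_\bot$ about a point $(I_S, 0)$: since $H^{nls}$ depends only on the actions and is real analytic on $\ell^{1,2}_+$, one gets $H^{nls}(I) = H^{nls}(I_S, 0) + \sum_{n \in S^\bot} \omega^{nls}_n(I_S,0) I_n + \frac12 \sum_{m,n \in S^\bot} \partial_{I_m}\partial_{I_n} H^{nls}(I_S, 0) I_m I_n + \ldots$, where the only subtle point is to control the Hessian and higher derivatives of $H^{nls}$ on $\ell^{1,2}_+$ with tame bounds in $s$; this is where the asymptotics $\omega_k = 4\pi^2 k^2 + O(1)$ and analyticity are essential, and it parallels the estimates in \cite{Molnar}. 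Writing $H^{nls}$ in the $z$-variables, the ``unwanted'' part is $\mathcal{Q}(z) = H^{nls}(I_S(z), 0)$-expansion minus the linear-in-$I_\bot$ term, and one must absorb its contribution up to order three in $\|z_\bot\|$ into the frequency-times-action normal form, leaving a genuinely third order (in $z_\bot$) remainder $\mathcal{P}_3$.

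Next I would construct $\mathcal{T}$ by a single time-one Hamiltonian flow, $\mathcal{T} = \Phi^t_{\chi}|_{t=1}$, generated by a carefully chosen analytic Hamiltonian $\chi$ quadratic/cubic in $z_\bot$ with coefficients depending on $z_S$, solving the homological equation $\{H^{nls}_{\le 2}, \chi\} + (\text{bad part}) = 0$ modulo admissible terms. The key structural fact making this work is that near $\mathcal{K} \times \{0\}$ the $S$-modes are ``elliptic with nonzero action'' so one can introduce action-angle-type or polar-type adapted coordinates on the $S$-block without small-divisor problems --- the homological equation here is solved by algebraic division by the (nonresonant, by genericity of $\mathcal{K}$, or by direct inspection since only finitely many $S$-frequencies and the unperturbed $n^2$-frequencies of the normal modes enter) combinations $\omega_n^{nls}(I_S,0)$, and since we only normalize up to order three and the perturbation vanishes quadratically in $z_\bot$, no genuine KAM-type small divisors appear. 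Then (C3) will follow from choosing $\chi$ to vanish to second order on $\mathcal{K}\times\{0\}$ so that $\mathcal{T}$ and $d\mathcal{T}$ are the identity there, and (C2) is automatic since $\mathcal{T}$ is a Hamiltonian flow and $\Psi^{nls}$ is canonical by (B2).

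For (C1) and (C5) --- the tame and one-smoothing estimates --- I would combine three ingredients: (i) the tame estimates for $\Psi^{nls}, \Phi^{nls}$ and the one-smoothing property of $A^{nls}, B^{nls}$ from (B5) and \cite{Molnar}; (ii) tame estimates for the flow map $\mathcal{T}$ and its differential, obtained by differentiating the ODE $\partial_t z = X_\chi(z)$ and using Gr\"onwall in the scale of spaces $h^s_c$, which requires $X_\chi$ and $dX_\chi$ to satisfy tame product estimates --- this follows because $\chi$ is a polynomial in $z_\bot$ with smooth $z_S$-dependence, so its vector field is a finite sum of terms each of which is ``tame of the right type'' (and in fact the $z_\bot$-independent-direction part is finite-dimensional, hence harmless); and (iii) the chain and Leibniz rules for tame maps, which propagate the estimates through the composition $\Psi = \Psi^{nls} \circ \mathcal{T}$ and through the inversion $\Psi^{-1}, d\Psi^{-1}$. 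The estimates on $\mathcal{A}(z) = d\Psi(z)^{-1} - F_{nls}$ come from writing $d\Psi(z)^{-1} = d\mathcal{T}(z)^{-1} \circ d\Psi^{nls}(\mathcal{T}z)^{-1}$, using the Neumann series for $d\mathcal{T}(z)^{-1} = (I + (d\mathcal{T}(z) - I))^{-1}$ on the small neighbourhood $\mathcal{V}$, and matching against $F_{nls} = d_0\Phi^{nls}$ via (B4).

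The main obstacle I expect is step (i)--(ii) of the estimates: proving that the Hessian and higher $I$-derivatives of $H^{nls}$, and hence the generating Hamiltonian $\chi$ and its Hamiltonian vector field, obey tame estimates of exactly the asymmetric form stated in (C4)--(C5) --- linear in the high norm $\|z_\bot\|_s$ times products of low norms, with the ``loss'' sitting only on one factor. This is delicate because it requires that all the nonlinear bookkeeping (the Taylor remainders, the flow, the inverse) preserve the one-smoothing gain $h^s \to H^{s+1}$; any place where one naively estimates a product by two high norms would destroy the tame structure. Controlling this amounts to a careful, but in principle routine, induction on the order $k$ of the differential, leaning on the one-smoothing property (B5) and the multilinear tame estimates for $A^{nls}, B^{nls}$ established in \cite{Molnar}, combined with the fact that $\chi$ is built from $A^{nls}$-type quantities and is itself one-smoothing in the normal directions.
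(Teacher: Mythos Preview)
Your plan has a genuine gap at the very first step: you propose $\Psi = \Psi^{nls} \circ \mathcal{T}$ and then derive the tame estimates in (C5) by the chain rule, citing ``the multilinear tame estimates for $A^{nls}, B^{nls}$ established in \cite{Molnar}''. But such estimates for the \emph{derivatives} of $\Psi^{nls}$ (equivalently, for $d^k B^{nls}$) are precisely what is \emph{not} available --- the introduction states this explicitly. The chain rule for $d^k(\Psi^{nls}\circ\mathcal{T})$ forces you to evaluate $d^j\Psi^{nls}$ at points $\mathcal{T}(z)$ with $z_\bot\neq 0$, and you would need the asymmetric tame bounds (linear in one high norm, products of low norms) for these derivatives at genuinely infinite-dimensional arguments. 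Without them, none of the estimates in (C4)--(C5) go through. A second, smaller confusion: since $H^{nls}$ is already a function of the actions only, its Taylor expansion in $z_\bot$ has no odd terms and the quadratic-in-$I_\bot$ piece is quartic in $z_\bot$; there is nothing of order $\leq 3$ to remove by a homological equation, so your generating Hamiltonian $\chi$ would be zero and $\mathcal{T}=\mathrm{id}$, i.e.\ your map is just $\Psi^{nls}$, which trivially satisfies (C1)--(C4) but not the tame part of (C5).

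The paper avoids this obstruction by \emph{not} using $\Psi^{nls}$ globally. It sets $\Psi_L(z) := \Psi^{nls}(z_S,0) + d_\bot\Psi^{nls}(z_S,0)[z_\bot]$, the first-order Taylor polynomial of $\Psi^{nls}$ in $z_\bot$. Because $\Psi^{nls}$ and all its derivatives are evaluated only on the finite-dimensional compact set $\overline{\mathcal{V}}_S\times\{0\}$, the required bounds are just ordinary smoothness estimates on a compact, and the tame structure in $z_\bot$ comes for free from linearity; this is the content of Proposition~\ref{Psi L 2} and Proposition~\ref{estimates of d Psi L inv}. The price is that $\Psi_L$ is no longer symplectic; the defect $\Psi_L^*\Lambda - \Lambda_M$ is computed explicitly (Lemma~\ref{stime Omega (w)}) and removed by a Moser--Weinstein deformation argument, producing a symplectic corrector $\Psi_C$ as the time-one flow of a non-autonomous vector field (Section~\ref{costuzione mappa Psi simplettica}). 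The final map is $\Psi = \Psi_L\circ\Psi_C$. The normal form (C4) then requires real work (Subsection~\ref{Hamiltoniana trasformata}): $\Psi_L$ is not $\Psi^{nls}$, so $\mathcal{H}^{nls}\circ\Psi_L$ is not a function of actions, and one must check that the spurious quadratic terms cancel (Lemma~\ref{cancellazione finale termini quadratici}) and that the cubic remainder obeys the tame estimates, which involves a delicate treatment of the unbounded operator $D_\bot^2$ interacting with $\Psi_C$. In short, the whole architecture of the proof is driven by the absence of tame derivative estimates for $\Psi^{nls}$, which your plan presupposes.
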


\medskip

\medskip

\begin{remark}
Note that in Theorem \ref{modified Birkhoff map}, apart from being compact, no further assumptions on ${\cal K}$ are being made. In particular, ${\cal K}$ may contain the equilibrium point $0$ in which case ${\cal K}$ does not admit action-angle coordinates. In subsequent work, the estimates for ${\cal A}(z) =d \Psi(z)^{- 1} - F_{nls}$ will be used to study 
perturbations of the dNLS equation. Since such estimates are not needed for $A(\Psi(z))$,  we have not included them in Theorem \ref{modified Birkhoff map}.

\end{remark}

\medskip

\noindent
{\em Outline of the construction of $\Psi$:}
Let ${\cal V}$ be of the form ${\cal V} = {\cal V}_S \times {\cal V}_\bot \subset h^0_c$ where ${\cal V}_S$ is a
bounded, open neighbourhood of ${\cal K}$ in $\C^S \times \C^S$ and ${\cal V}_\bot$ an open ball in $h^0_{\bot c}$,
centered at $\{ 0 \}$. By Theorem \ref{Theorem Birkhoff coordinates}, ${\cal V}_S$ and ${\cal V}_\bot$ can be
chosen so that the Birkhoff map $\Psi^{nls}$ is defined on ${\cal V}$ and all the estimates of $\Psi^{nls}$ and its derivatives used in the sequel are uniform on ${\cal V}$. The canonical map 
$\Psi$ is then defined to be the composition $\Psi := \Psi_L \circ \Psi_C$ where 
$\Psi_L$ is the Taylor expansion of $\Psi^{nls}$ of order one in the normal directions $z_\bot$ 
around $(z_s, 0)$,
\begin{equation}\label{prima def Psi L}
\Psi_L(z_S, z_\bot) : = \Psi^{nls}(z_S, 0) + d \Psi^{nls}(z_S, 0)[0, z_\bot]\,,
\end{equation}
and $\Psi_C$, referred to as symplectic corrector, is chosen so that $\Psi_L \circ \Psi_C$ becomes symplectic
and satisfies the claimed tame estimates.\\
\noindent
In his pioneering work \cite{K}, Kuksin presents a general scheme for proving KAM type theorems for semilinear Hamiltonian perturbations of integrable PDEs in one space dimension, such as the Korteweg de Vries (KdV) or the sine Gordon (sG) equations, which possess a Lax pair formulation and admit finite dimensional integrable subsystems, foliated by invariant tori. One of the key elements of his work is a normal form theory for such PDEs. Expanding on work of Krichever \cite{Krichever}, Kuksin considers bounded integrable finite dimensional subsystems (iSS) of such an integrable PDE (iPDE) which admit action-angle coordinates. In the case of the KdV and the sG equations, the angle variables are given by the celebrated Its Matveev formulas. These action-angle coordinates are complemented by infinitely many coordinates whose construction is based on a set of time periodic solutions, referred to as Floquet solutions, of the partial differential equation obtained by linearizing iPDE along solutions in iSS. The resulting coordinate transformation, denoted in \cite{K} by $\Phi$, is typically not symplectic and to obtain canonical coordinates, an additional coordinate transformation needs to be applied. 
In \cite{K}, Kuksin constructs such a transformation, which he denotes by $\phi$, 
using arguments of Moser and Weinstein in the given infinite dimensional setup -- see \cite{K}, Lemma 1.4
and Section 7.1. To construct the map $\Psi_C$ we follow the same scheme of proof. 
Actually, the following result holds.

 \begin{theorem}\label{comparison Kuksin}
 Assume that in addition to the assumptions made in Theorem \ref{modified Birkhoff map},
 the set ${\cal K}$ is contained in $(\mathbb R \setminus 0)^S \times (\mathbb R \setminus 0)^S.$
 Then, up to normalizations and natural identifications,  $\Psi_L$ coincides with the map $\Phi$,
 obtained by applying the scheme of construction in \cite{K} to the dNLS equation.
 As a consequence, so does  $\Psi = \Psi_L \circ \Psi_C$ with $\Phi \circ \phi$.
\end{theorem}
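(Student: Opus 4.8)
\noindent\emph{Proof strategy.} The plan is to compare, piece by piece, the two building blocks of $\Psi_L$ in \eqref{prima def Psi L} — the ``torus part'' $\Psi^{nls}(z_S,0)$ and the ``normal part'' $d\Psi^{nls}(z_S,0)[0,\cdot]$ — with the two building blocks of Kuksin's map $\Phi$, namely the action--angle parametrization of the finite dimensional integrable subsystem and the coordinates built from Floquet solutions of the equation obtained by linearizing the dNLS equation along solutions in that subsystem. Once $\Psi_L=\Phi$ is established (up to normalizations and natural identifications), the identification of $\Psi=\Psi_L\circ\Psi_C$ with $\Phi\circ\phi$ follows because $\Psi_C$ and $\phi$ are manufactured from the same data by the same Moser--Weinstein argument.

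First I would treat the torus part. The additional hypothesis ${\cal K}\subseteq(\R\setminus 0)^S\times(\R\setminus 0)^S$ is used precisely here: it guarantees that along the set $\{(z_S,0)\}$ all the actions $I_k$, $k\in S$, stay strictly positive, so that the polar substitution $x_k=\sqrt{2I_k}\cos\theta_k$, $y_k=\sqrt{2I_k}\sin\theta_k$ is a genuine change of variables and $((\theta_k,I_k))_{k\in S}$ are action--angle coordinates on this set. By Theorem \ref{Theorem Birkhoff coordinates}, in particular (B3), $\Psi^{nls}$ maps $\{(z_S,0)\}$ onto a family of $|S|$-dimensional tori invariant under the dNLS flow, i.e.\ onto finite-gap potentials, on which the Hamiltonian depends on $I_S$ only; this is exactly the bounded integrable finite dimensional subsystem feeding Kuksin's scheme, and the above $(\theta,I)$ coincide, up to the normalization constants fixed in \cite{K} (for dNLS these are accounted for by the scaling factors in \eqref{F nls} and in the Its--Matveev type formulas), with the action--angle coordinates used there. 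After the natural identification of $H^0_c$ with its real form via $v=\overline u$, this gives that $\Psi^{nls}(z_S,0)$ agrees with the torus part of $\Phi$.

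The heart of the argument is the identification of the normal directions. Fix $z_S$ on one of these tori and let $t\mapsto z_S(t)$ be the corresponding solution of the (linear) Birkhoff flow, so that $u(t):=\Psi^{nls}(z_S(t),0)$ is a finite-gap solution of \eqref{1.1}. Since $\Psi^{nls}$ conjugates the dNLS flow to the Birkhoff flow, $d\Psi^{nls}(z_S(t),0)$ conjugates the linearization of the Birkhoff flow along $(z_S(t),0)$ to the linearization of the dNLS flow along $u(t)$. By (B3), the linearized Birkhoff flow at a point with $z_\bot=0$ leaves the normal subspace $h^0_{\bot c}$ invariant and acts on each coordinate plane $(x_n,y_n)$, $n\in S^\bot$, by rotation with frequency $\omega^{nls}_n(I_S,0)$ (note that this frequency is constant along the orbit, since $I_S$ is); hence the curves $t\mapsto d\Psi^{nls}(z_S(t),0)[0,R_n(\omega^{nls}_n(I_S,0)\,t)\,e_n]$, $n\in S^\bot$ (here $R_n(\cdot)$ denotes the rotation in the $n$-th normal coordinate plane and $e_n$ a corresponding constant vector), are precisely the Floquet solutions, with Floquet exponents $\omega^{nls}_n(I_S,0)$, of the dNLS equation linearized along $u(t)$, their $\theta(t)$-dependence arising exactly from the $z_S(t)$-dependence of $d\Psi^{nls}(z_S(t),0)$. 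Up to Kuksin's normalization of the Floquet solutions these are the solutions from which $\Phi$ is built, and evaluating at $t=0$ shows that $d\Psi^{nls}(z_S,0)[0,\cdot]$, restricted to $h^0_{\bot c}$, coincides with the normal part of $\Phi$. Together with the previous step this yields $\Psi_L=\Phi$ up to normalizations and natural identifications.

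Finally, $\Psi_C$ and $\phi$ coincide because both are obtained by applying the Moser--Weinstein construction of Lemma~1.4 and Section~7.1 of \cite{K} to the same non-symplectic map $\Psi_L=\Phi$: one interpolates linearly between the symplectic form pulled back by this map and the standard one, and defines the corrector as the time-one map of the (unique, once the standard normalization is imposed) vector field solving the associated linear equation. Equal input and equal normalization give equal output, hence $\Psi=\Psi_L\circ\Psi_C$ coincides with $\Phi\circ\phi$. I expect the main obstacle to be the bookkeeping in the third paragraph: reconciling Kuksin's spectral-theoretic normalization of the Floquet solutions, and the complex/symplectic structure he equips them with, with the scaling conventions built into $F_{nls}$ and into the construction of $\Phi^{nls}$ in \cite{GK}, and checking that possible resonances among the normal frequencies $\omega^{nls}_n(I_S,0)$ play no role — neither construction requires non-resonance for the map itself to be well defined.
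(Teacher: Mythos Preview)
Your proposal is correct and follows essentially the same approach as the paper: identify the torus part $\Psi^{nls}(z_S,0)$ with the action--angle parametrization, show that the images under $d\Psi^{nls}(z_S(t),0)$ of the normal coordinate directions are Floquet solutions of the linearized dNLS equation with exponents $\omega^{nls}_n(I_S,0)$ (the paper does this in complex Birkhoff coordinates $\zeta_n=(x_n-\ii y_n)/\sqrt 2$, where the rotations you mention become diagonal multiplication by $e^{\mp\ii\omega_n t}$), and then observe that both symplectic correctors arise from the same Moser--Weinstein construction applied to the same input. Your anticipated bookkeeping on normalizations is exactly what the paper relegates to an appendix, and the non-resonance remark you make is also noted there.
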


\medskip
Since Birkhoff coordinates provide a concise,
self-contained, and efficient framework for proving Theorem \ref{modified Birkhoff map}
-- in particular  the claimed tame estimates, the main goal of our study -- 
Theorem \ref{comparison Kuksin} also provides in the case of the dNLS equation 
a valuable alternative for proving the normal form result for this equation,
obtained by applying the scheme of proof  in $\cite{K}$.
Note also that the assumptions on ${\cal K}$ in Theorem  \ref{modified Birkhoff map} are slightly weaker than
the ones made in the setup of \cite{K}. 
\medskip

\noindent
{\em Organization:}
The maps $\Psi_L$ and $\Psi_C$ are introduced and studied in Sections \ref{section Psi L} and 
\ref{costuzione mappa Psi simplettica} respectively, after a short Section \ref{formalismo hamiltoniano}, describing the Hamiltonian setup. 
In Section \ref{Proof of main theorem}, we prove Theorem \ref{modified Birkhoff map}: 
in Subsection \ref{sec new coordinates}, we show that the  composition $\Psi = \Psi_L \circ \Psi_C$ 
satisfies the analytic properties, stated in Theorem \ref{modified Birkhoff map}, and in the subsequent
Subsection \ref{Hamiltoniana trasformata},  the expansion of the dNLS Hamiltonian in the new coordinates is computed up to order three. In Subsection \ref{summary proof}  we summarize the proof of Theorem \ref{modified Birkhoff map}.
Finally, in Section \ref{comparison with Kuksin's map} we prove Theorem \ref{comparison Kuksin}.
In Appendix A, we recall an infinite dimensional version of the Poincar\'e Lemma, 
needed in Section \ref{costuzione mappa Psi simplettica} (cf from \cite{K}, \cite{Lang}).

\medskip

\noindent
{\em Notation:}
For any $C^1$ map $F : h^0_c \to X$ with $X$ being a Banach space, we denote by $d_\bot F(z)$ the differential of $F$ at $z$ with respect to the variable $z_\bot $,  
$$
d_\bot F(z) [\widehat z_\bot] = \sum_{j \in S^\bot }\widehat x_j \partial_{x_j} F(z)\,  + \widehat y_j\partial_{y_j} F(z)\, \,,\qquad \widehat z_\bot := \big( (\widehat x_j)_{j \in S^\bot}, (\widehat y_j)_{j \in S^\bot} \big) \in h^0_{\bot c}\,,
$$
where for any $j \in S^\bot$, $\partial_{x_j} F,\, \partial_{y_j} F \in X$ denote the partial derivatives of $F$ with respect to the variables $x_j$ respectively $y_j$. Similarly, we  define the gradient with respect to the variable 
$z_\bot$ as 
$$
\nabla_\bot F := \big( (\partial_{x_j} F)_{j \in S^\bot}, (\partial_{y_j} F)_{j \in S^\bot} \big)\,.
$$
The gradient of $F$ with respect to $z_S$ is denoted by
$$
\nabla_S F := \big( (\partial_{x_j} F)_{j \in S}, (\partial_{y_j} F)_{j \in S} \big)\,
$$
and the differential of $F$ at $z$ with respect to $z_S$ by $d_SF(z)$, 
$$
d_S F(z) [\widehat z_S] = \sum_{j \in S }\widehat x_j \partial_{x_j} F(z)\,  + \widehat y_j\partial_{y_j} F(z)\, \,,\qquad \widehat z_S := \big( (\widehat x_j)_{j \in S}, (\widehat y_j)_{j \in S} \big) \in \C^S \times \C^S\,.
$$
\noindent
For the partial derivatives of $F$ with respect to $z_j$, $j \in S$, we use the multi-index notation and write for any  $\alpha, \beta \in \Z^S_{\geq 0}$
$$
\partial_S^{\alpha, \beta} F := \Big( \prod_{j \in S} \partial_{x_j}^{\alpha_j} \partial_{y_j}^{\beta_j}  \Big) F\,.
$$
If not stated otherwise, ${\cal K}$ denotes a compact subset of $\R^S \times \R^S$ and ${\cal V}$ an open,
bounded neighborhood of ${\cal K}\times \{0\}$ in $h_c^0$ of the form ${\cal V}_S \times {\cal V}_\bot$
where ${\cal V}_\bot$ is a ball in $h_{\bot c}$, centered at $0$. We write ${\cal V}_\bot (\delta)$
to indicate that the radius of the ball ${\cal V}_\bot$  is $\delta > 0.$ Finally, we frequently will use the symbols 
$ \lesssim$,  $\lesssim_{s}$, \ldots \,
to express that a quantity is bounded by another one  up to a constant which is 'universal', respectively depends
only on the Sobolev index $s$. E.g.,  given two real valued functionals $A, B$ on ${\cal V}$
we write  $A \lesssim_{s} B$ if there is a constant $C\equiv C(s)$ so that 
$A(z) \le C B(z)$ for any $z \in {\cal V} \cap h_r^s$.

\medskip

\section{Hamiltonian setup}\label{formalismo hamiltoniano}

In this preliminary section we discuss the Hamiltonian setup, introduced in Section \ref{introduzione paper},
 in more detail and introduce some additional notations.

\noindent
The Hamiltonian vector field associated to a sufficiently smooth functional ${\cal F} : H^0_c \to \C$ and the Poisson bracket \eqref{Poisson brackets} on $H^0_c$ is denoted by
$$
X_F = \ii {\mathbb J} \nabla {\cal F}\,, \qquad \nabla{\cal F} :=
(\nabla_u {\cal F} ,
\nabla_v {\cal F})
$$
where $\nabla_u {\cal F}$, $\nabla_v {\cal F}$ denote the $L^2$ gradients with respect to $u$ and $v$, namely 
$$
d F [(\widehat u, 0)] = \int_\T \nabla_u F\,\widehat u\, d x \,, \quad d F[(0, \widehat v)] = \int_\T \nabla_v F\, \widehat v\, d x\,,
$$ 
\begin{equation}\label{J spazio di funzioni}
{\mathbb J} := \begin{pmatrix}
0 & - {\rm Id}\\
 {\rm Id} & 0
\end{pmatrix} : H^0_c \to H^0_c\,,
\end{equation}
and ${\rm Id} : H^0_\C \to H^0_\C$ is the identity operator. Furthermore we introduce the non degenerate bilinear form 
$$
\langle \cdot, \cdot \rangle_r : H^0_c \times H^0_c \to \C
$$
defined for any $w= (u, v)$, $w' = (u', v') \in H^0_c$ by
\begin{equation}\label{definizione forma bilineare H0 c}
\langle w, w' \rangle_r := \int_\T u(x) u'(x)\, d x + \int_\T v(x) v'(x)\, d x\,.
\end{equation}
The subscript $r$ indicates that in the latter integrals, no complex conjugation appears. The Poisson bracket
\eqref{Poisson brackets} then reads
$$
\{ {\cal F}, {\cal G} \} = \langle \nabla {\cal F}, \,  \ii {\mathbb J}  \nabla  {\cal G} \rangle_r
$$
and the symplectic form, associated to it,  is the two form  
\begin{equation}\label{definizione 2 forma L2}
\Lambda [\widehat w, \widehat w'] := - \ii \langle {\mathbb J}^{- 1} \widehat w, \widehat w' \rangle_r = 
\ii \langle {\mathbb J} \widehat w, \widehat w' \rangle_r  = 
\ii \int_\T \big(\widehat u \widehat v' -  \widehat v \widehat u'  \big)\, d x \,, \qquad 
\forall\, \widehat w = (\widehat u, \widehat v), \,\,\widehat w' = (\widehat u', \widehat v') \in H^0_c\,.
\end{equation}
 For any sufficiently smooth functionals ${\cal F}, {\cal G} : H^0_c \to \C$, one has  
$$
\Lambda(X_{\cal F}, X_{\cal G}) = \{ {\cal F}, {\cal G} \}\,.
$$
\noindent
In terms of the Fourier coefficients of $\widehat w$ and $\widehat w'$, $\Lambda[\widehat w, \widehat w']$ can be expressed as  
$$
\Lambda [\widehat w, \widehat w'] = \ii \sum_{k \in \Z} (\widehat u_k \widehat v_{-k}' - \widehat v_{- k} \widehat u_k')
$$
and hence $\Lambda$ can be conveniently written as 
$$
\Lambda =  \ii \sum_{k \in \Z} d u_k \wedge d v_{- k}\,,
$$
where 
$$
\big( d u_k \wedge d v_{- k} \big)[(\widehat u, \widehat v), (\widehat u', \widehat v')] = 
\widehat u_k \widehat v_{-k}' -  \widehat v_{- k} \widehat u_k'\,, \quad \forall k \in \Z\,.
$$
In addition, we define the one form $\lambda$ on $H^0_c$ as 
$$
\lambda \equiv  \lambda(w) = \ii \sum_{k \in \Z} u_k d v_{-k}\,.
$$
Its action on a function $\widehat w = (\widehat u,  \widehat v) \in H^0_c$ is given by 
$$
\lambda[\widehat w] =  \ii \int_\T u(x) \widehat v(x)\, d x  =   \ii \sum_{k \in \Z} u_k \widehat v_{-k}\,.
$$
The exterior differential of $\lambda$, defined by $d \lambda = \ii \sum_{k \in \Z} d u_k \wedge d v_{- k}$, 
thus satisfies $d \lambda = \Lambda$. 

\bigskip

The Poisson bracket on the model space $h^0_c$ is determined by defining it for the coordinate functions,  
$$
\{ x_n, y_m \}_M = - \delta_{nm}\,, \quad \{ y_n, x_m \}_M =  \delta_{nm}\,, \quad
\{ x_n, x_m \}_M = 0\,, \quad \{ y_n, y_m \}_M = 0\,, \quad \forall n, m \in \Z\,.
$$
By a slight abuse of terminology in connection with the definition \eqref{bilinear form on bot}, we also denote by
$\big(\cdot, \cdot \big)_r$ the non degenerate bilinear form 
$
\big(\cdot, \cdot \big)_r : h^0_c \times h^0_c \to \C
$
\begin{equation}\label{forma bilineare h 0c}
\big( z, z' \big)_r := x \cdot x' + y \cdot y'\,, \qquad \forall z = (x, y), \, \,z' = (x', y') \in h^0_c
\end{equation}
where $x \cdot x' := \sum_{k \in \Z} x_k x_k'$.
Given two sufficiently smooth functionals $F, G : h^0_c \to \C$, one has 
$$
\{ F, G \}_M = - \sum_{k} \Big( \partial_{x_k} F \partial_{y_k} G - \partial_{y_k} F \partial_{x_k} G \Big) 
= \big( \nabla F,  J \nabla G \big)_r
$$
where
$$
J := \begin{pmatrix}
0 & - {\rm Id} \\
{\rm Id} & 0
\end{pmatrix} : h^0_c \to h^0_c\,, 
$$
${\rm Id} : h^0_c \to h^0_c$ is the identity operator and
$$
 \nabla F = (\nabla_x F, \nabla_y F)\,, \qquad  \nabla_x F = (\partial_{x_k} F)_{k \in \Z}\,, \qquad \nabla_y F = (\partial_{y_k} F)_{k \in \Z}\,.
$$
 The Hamiltonian vector field $X_F$ of $F : h^0_c \to \C$,  corresponding to the Poisson bracket
$\{ \cdot,   \cdot \}_M$, is then given by 
$$
X_F = J \nabla F
$$
and the symplectic form $\Lambda_M$, associated to it, by 
\begin{equation}\label{definizione due forma h0 c}
\Lambda_M [\widehat z, \widehat z'] := \big( J^{- 1} \widehat z, \widehat z'  \big)_r 
=  \widehat y \cdot \widehat x' - \widehat x \cdot \widehat y'\,, 
\quad \forall \widehat z = (\widehat x, \widehat y), \,\,  \widehat z' = (\widehat x', \widehat y') \in h^0_c\,.
\end{equation}
Note that 
$$
\Lambda_M = - \sum_{k \in \Z} d x_k \wedge d y_k
$$
where as above, for any $k \in \Z$, the two form $d x_k \wedge d y_k$ is defined as 
$$
(d x_k \wedge d y_k) [(\widehat x, \widehat y), (\widehat x', \widehat y')] = 
\widehat x_k \widehat y_{k}' - \widehat y_k \widehat x_{k}' \,.
$$
Then  
$$
\Lambda_M(X_F, X_G) = \big( \nabla F, J \nabla G \big)_r  = \{ F, G\}_M\,.
$$
The one form associated to $\Lambda_M$ is defined as 
\begin{equation}\label{definizione uno forma h0 c}
\lambda_M \equiv \lambda_M (z) := \sum_{k \in \Z} y_k \, d \, x_k\,.
\end{equation}
Its action on a vector $\widehat z = (\widehat x, \widehat y) \in h_c^0$ is given by 
$$
\lambda_M [\widehat z] = \sum_{k \in \Z} y_k\, \widehat x_k\,.
$$
The exterior differential of $\lambda_M$ then satisfies $d \lambda_M = \Lambda_M$.


\section{The map $\Psi_L$}\label{section Psi L}
In this section, we study the map $\Psi_L$ introduced in \eqref{prima def Psi L}. In particular, we prove tame estimates and one smoothing properties for $\Psi_L$. First we introduce some more notations. Denote by $\Pi_S$ and $\Pi_\bot$ the standard projections
\begin{equation}\label{Pi S}
\Pi_S : (\C^S \times \C^S) \times h^0_{\bot c} \to (\C^S \times \C^S) \times \{ 0 \}\,, \, z = (z_S, z_\bot) \mapsto (z_S, 0) 
\end{equation}
\begin{equation}\label{Pi bot}
\Pi_\bot : (\C^S \times \C^S) \times h^0_{\bot c} \to \{ 0 \} \times h^0_{\bot c} \,, \, z = (z_S, z_\bot) \mapsto (0, z_\bot) \,.
\end{equation}
The formula \eqref{prima def Psi L} for $\Psi_L(z)$  with $z = (z_S, z_\bot)$ then reads 
\begin{equation}\label{seconda def Psi L}
\Psi_L(z) = \Psi^{nls}(\Pi_S z) + d_\bot \Psi^{nls}(\Pi_S z) [ z_\bot]\,.
\end{equation}
For a quite explicit formula for $d_\bot \Psi^{nls}(\Pi_S z) [ z_\bot], $ we refer to Appendix B.
The map $\Psi_L$ is defined on
$$
{\cal V}^{max} := {\cal V}_S^{max} \times h^s_{\bot c} \,, \qquad {\cal V}_S^{max} := \Pi_S \Phi^{nls}({\cal W})
$$
where ${\cal W} \subseteq H^0_c$ is the domain of definition of the Birkhoff map $\Phi^{nls}$ of Theorem \ref{Theorem Birkhoff coordinates}. Note that 
$$
\R^S \times \R^S \subseteq {\cal V}_S^{max} \subseteq \C^S \times \C^S,  \qquad h^0_r \subset {\cal V}^{max} \subset h^0_c,\qquad \Psi_L(0) = 0\,.
$$
Furthermore, the differential $d \Psi_L(z)$ of $\Psi_L$ at $z = (z_S, z_\bot) \in {\cal V}^{max}$ applied to a vector $\widehat z = (\widehat z_S, \widehat z_\bot) \in h^0_c$ is given by 
\begin{align}
d \Psi_L (z)[\widehat z_S, \widehat z_\bot] & = d_S \Psi^{nls}(\Pi_S z)[\widehat z_S] + 
d_\bot \Psi^{nls}(\Pi_S z)[\widehat z_\bot] + d_S \big( d_\bot \Psi^{nls}(\Pi_S z)[ z_\bot] \big) [\widehat z_S] 
\label{differenziale mappa Psi L 0}\\
& = d \Psi^{nls}(\Pi_S z)[\widehat z] + d^2 \Psi^{nls}(\Pi_S z)[\Pi_S \widehat z, \Pi_\bot z]\,. \label{differenziale mappa Psi L}
\end{align}
Note that the latter expression is independent of $\Pi_\bot \widehat z$ and that by Theorem \ref{Theorem Birkhoff coordinates}, $d \Psi_L(0) = d \Psi^{nls}(0) = F_{nls}^{- 1}$.
First we establish the following auxiliary results.
\begin{lemma}\label{Psi L 1}
$(i)$ The map $\Psi_L : {\cal V}^{max} \to H^0_c$ is analytic and for any $s \in \Z_{\geq 0}$, the restriction $\Psi_L \mid_{h^s_r} :  h^s_r \to H^s_r$ is real analytic. Furthermore, for any $z_S \in \R^S \times \R^S$ and any $s \in \Z_{\geq 0}$, $d \Psi_L(z_S, 0): h^s_c \to H^s_c$ is a linear isomorphism. 

\noindent
$(ii)$ For any compact subset ${\cal K} \subseteq \R^S \times \R^S$, there exists a ball ${\cal V}_\bot$ in $h^0_{\bot r}$, centered at $0$, so that the restriction $\Psi_L : {\cal K}  \times {\cal V}_\bot \to H^0_r$ is one to one. 
Furthermore, after shrinking the radius of the ball ${\cal V}_\bot$, if necessary, the map
$\Psi_L : {\cal K}  \times {\cal V}_\bot \to H^0_r$ is a local diffeomorphism.
\end{lemma}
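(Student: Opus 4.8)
The plan is to treat the two parts separately, with part $(i)$ reducing essentially to bookkeeping on top of Theorem \ref{Theorem Birkhoff coordinates}, and part $(ii)$ being a standard inverse-function-theorem-with-compactness argument.

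For part $(i)$, I would argue as follows. Analyticity of $\Psi^{nls}$ on ${\cal W}$ (Theorem \ref{Theorem Birkhoff coordinates}) together with the fact that $\Pi_S\Phi^{nls}({\cal W}) = {\cal V}_S^{max}$ is an open set containing $\R^S\times\R^S$ shows that $z_S\mapsto\Psi^{nls}(\Pi_S z)$ is analytic $H^0_c$-valued on ${\cal V}_S^{max}$, hence so is its differential $d_\bot\Psi^{nls}(\Pi_S z)$, which is linear in $z_\bot$. Thus, by formula \eqref{seconda def Psi L}, $\Psi_L$ is the sum of a $z_S$-analytic map and a map that is linear in $z_\bot$ with $z_S$-analytic coefficients; this makes $\Psi_L : {\cal V}^{max}\to H^0_c$ analytic. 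For the Sobolev statement, I would use (B1): $\Psi^{nls}: H^s_r\to h^s_r$ is a real analytic diffeomorphism, so $\Psi^{nls}(\Pi_S z)\in H^s_r$ for $z_S$ real, and $d\Psi^{nls}$ maps $h^s_r$ into $H^s_r$ (differentiating the diffeomorphism, or invoking the one-smoothing decomposition (B5): $d\Psi^{nls} = F_{nls}^{-1} + dB^{nls}$ with $dB^{nls}$ mapping $h^s$ into $H^{s+1}$), and similarly $d^2\Psi^{nls}$ preserves the relevant Sobolev classes since $B^{nls}$ is analytic $h^s_r\to H^{s+1}_r$; then \eqref{differenziale mappa Psi L} shows $d\Psi_L(z): h^s_c\to H^s_c$. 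For the isomorphism claim at $z=(z_S,0)$: by \eqref{differenziale mappa Psi L} the second-order term drops out (it is proportional to $\Pi_\bot z = 0$), so $d\Psi_L(z_S,0) = d\Psi^{nls}(z_S,0)$, which is a linear isomorphism $h^s_c\to H^s_c$ because $\Psi^{nls}$ is a diffeomorphism on $H^s_r$ — more precisely $d\Psi^{nls}(z_S,0) = d(\Phi^{nls})^{-1}(\Phi^{nls}(z_S,0))$ is the inverse of the (invertible) differential of $\Phi^{nls}$, and this extends to an isomorphism on the complexification since all the maps are analytic.

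For part $(ii)$, fix a compact ${\cal K}\subseteq\R^S\times\R^S$. The idea is: $\Psi_L$ restricted to the slice $z_\bot = 0$ equals $\Psi^{nls}(z_S,0)$, which is injective on $\R^S\times\R^S$ by (B1); and at each point $(z_S,0)$ with $z_S\in\R^S\times\R^S$ the differential $d\Psi_L(z_S,0)$ is an isomorphism by part $(i)$. So near each such point $\Psi_L$ is a local diffeomorphism, and I would patch these together by a standard compactness argument. Concretely: for each $z_S^{(0)}\in{\cal K}$ the inverse function theorem gives a ball $B(z_S^{(0)},r_0)\times {\cal V}_\bot^{(0)}$ on which $\Psi_L$ is a diffeomorphism onto its image; covering ${\cal K}$ by finitely many such neighborhoods yields a uniform radius $\delta>0$ so that $\Psi_L$ is a local diffeomorphism on ${\cal K}\times {\cal V}_\bot(\delta)$. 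Global injectivity on ${\cal K}\times {\cal V}_\bot$ then follows by a standard contradiction/compactness argument (the Moser-type lemma): if it failed for every $\delta$, one would get sequences $z^{(n)}\neq \tilde z^{(n)}$ in ${\cal K}\times {\cal V}_\bot(1/n)$ with $\Psi_L(z^{(n)}) = \Psi_L(\tilde z^{(n)})$; passing to subsequences, $z^{(n)},\tilde z^{(n)}$ converge (using compactness of ${\cal K}$ and that $z_\bot^{(n)},\tilde z_\bot^{(n)}\to 0$) to a common limit $(z_S^*,0)$, contradicting local injectivity there.

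The main obstacle — really the only non-routine point — is handling the complexification carefully in part $(i)$: Theorem \ref{Theorem Birkhoff coordinates} states the diffeomorphism property on the real spaces $H^s_r$, $h^s_r$, so I need to make sure that "$d\Psi^{nls}(z_S,0)$ is an isomorphism $h^s_c\to H^s_c$" really follows, not just on the real forms. This is where I would lean on the analyticity of $\Phi^{nls}$ on the complex neighborhood ${\cal W}$ and the smoothing decomposition (B5): $d\Psi^{nls}(z) = F_{nls}^{-1} + dB^{nls}(z)$ where $F_{nls}^{-1}: h^s_c\to H^s_c$ is an isomorphism and $dB^{nls}(z): h^s_c\to H^{s+1}_c$ is compact (being one-smoothing); so $d\Psi^{nls}(z)$ is Fredholm of index zero on the complex space, and its invertibility on the real form $h^s_r$ — which holds for $z_S$ real since $\Phi^{nls}$ is a diffeomorphism there — forces triviality of the kernel, hence invertibility, on $h^s_c$. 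Everything else is routine real/complex analysis and the standard compactness packaging of the inverse function theorem.
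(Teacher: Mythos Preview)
Your proposal is correct and follows essentially the same route as the paper: part $(i)$ reduces to $d\Psi_L(z_S,0)=d\Psi^{nls}(z_S,0)$ plus the diffeomorphism property of $\Psi^{nls}$, and part $(ii)$ is exactly the compactness-plus-inverse-function-theorem contradiction argument the paper uses (extract convergent subsequences, identify the two limits via injectivity of $\Psi^{nls}$ on the slice $z_\bot=0$, contradict local injectivity). Your Fredholm argument for the complexification in $(i)$ is more explicit than what the paper writes --- the paper simply invokes Theorem~\ref{Theorem Birkhoff coordinates} --- but it is correct and harmless; in fact invertibility on $h^s_r$ already forces invertibility on $h^s_c = h^s_r \oplus \ii h^s_r$ since the differential of the analytic map $\Psi^{nls}$ is $\C$-linear, so the Fredholm machinery is not strictly needed.
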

\begin{proof}
$(i)$ The claimed analyticity follows from the definition of $\Psi_L$ and the corresponding properties of $\Psi^{nls}$, stated in Theorem \ref{Theorem Birkhoff coordinates}. Concerning the statement on the differential $d \Psi_L(z_S, 0)$, note that by \eqref{differenziale mappa Psi L}, $d \Psi_L(z_S, 0) = d \Psi^{nls}(z_S, 0)$ and hence by Theorem \ref{Theorem Birkhoff coordinates}, $d \Psi_L(z_S, 0) : h^s_c \to H^s_c$ is a linear isomorphism for any $s \in \Z_{\geq 0}$. 

\noindent
$(ii)$ Let ${\cal K} \subseteq \R^S \times \R^S$ be a given compact subset. Assume that there exists no ball
 ${\cal V}_\bot$ in $h^0_{\bot r}$, centered at $0$, so that $\Psi_L\mid_{ {\cal K} \times {\cal V}_\bot}$ is 1-1. Then there exist two sequences $z^{(j)} = (z^{(j)}_S, z_\bot^{(j)})$, ${j \geq 1}$, and 
$\tilde z^{(j)} = (\tilde z^{(j)}_S, \tilde z_\bot^{(j)})$, ${j \geq 1}$, in ${\cal K} \times h^0_{\bot r}$ such that for any $j \geq 1$ 
$$
z^{(j)} \neq \tilde z^{(j)}, \quad \Psi_L(z^{(j)}) = \Psi_L (\tilde z^{(j)}), \quad \lim_{j \to \infty} z^{(j)}_\bot  
= \lim_{j \to \infty} \tilde z^{(j)}_\bot = 0 \,.
$$ 
Since by assumption ${\cal K}$ is compact, there exist subsequences of $(z^{(j)})_{j \geq 1}, (\tilde z^{(j)})_{j \geq 1}$, denoted for simplicity in the same way, such that $(z^{(j)}_S)_{j \geq 1}, (\tilde z^{(j)}_S)_{j \geq 1}$ converge. Denote their limits by $z_S^{(\infty)}$ and $\tilde z_S^{(\infty)}$, respectively. 
Then 
$$
\lim_{j \to \infty} z^{(j)} = (z_S^{(\infty)}, 0), \qquad \lim_{j \to \infty} \tilde z^{(j)} = (\tilde z_S^{(\infty)}, 0)
$$
are elements in ${\cal K} \times \{ 0 \}$. By the continuity of $\Psi_L$, one has $\Psi_L(z_S^{(\infty)}, 0) = \Psi_L(\tilde z_S^{(\infty)}, 0)$ and since $\Psi_L$ and $\Psi^{nls}$ coincide on ${\cal V}_S^{max} \times \{ 0 \}$ it then follows from Theorem \ref{Theorem Birkhoff coordinates} that $z_S^{(\infty)} = \tilde z_S^{(\infty)}$. By item $(i)$ and the local inversion theorem one then concludes that in contradiction to our assumption, $z^{(j)} = \tilde z^{(j)}$ for $j$ sufficiently large. This proves the first part of item $(ii)$. 
Since according to item (i), for any given $z_S \in {\cal K}$,
$d \Psi_L(z_S, 0) : h^0_c \to H^0_c$ is a linear isomorphism,   
$d \Psi_L(z)$ is such an operator for $z$  in a whole neighborhood of $(z_S, 0)$. 
Using that ${\cal K}$ is compact it then follows that after shrinking the radius of the ball ${\cal V}_\bot$,
if necessary, $\Psi_L : {\cal K}  \times {\cal V}_\bot \to H^0_r$ is a local diffeomorphism.
\end{proof}
\begin{proposition}\label{Psi L 2}
For any compact subset ${\cal K} \subseteq \R^S \times \R^S$ there exists an open complex neighbourhood ${\cal V}$ of ${\cal K} \times \{ 0 \}$ in $h^0_c$ of the form ${\cal V}_S \times {\cal V}_\bot$ where $\overline{\cal V}_S$ is compact with $\overline{\cal V}_S \subseteq {\cal V}^{max}_S$ and ${\cal V}_\bot \subset h^0_{\bot c}$ 
is an open ball, centered at $0$, so that the restriction of $\Psi_L$ to ${\cal V}$ has the following properties:  

\noindent
$(L1)$ $\Psi_L$ is analytic on ${\cal V}$ and 
\begin{equation}\label{uguaglianza Psi L Psi nls}
\Psi_L \mid_{ {\cal V}_S \times \{ 0 \}} = \Psi^{nls}\mid_{ {\cal V}_S \times \{ 0 \}}\,, \qquad d \Psi_L(z_S, 0) = d \Psi^{nls}(z_S, 0)\,, \quad \forall z_S \in {\cal V}_S\,.
\end{equation} 
Furthermore, $\Psi_L : {\cal V} \cap h^0_r \to H^0_r$ is a real analytic diffeomorphism onto its image.

\noindent
$(L2)$ The map $B_L := \Psi_L - F_{nls}^{- 1} : {\cal V} \to H^0_c$ is analytic and one smoothing. More precisely, the analytic map $B_L$ is given by 
\begin{equation}\label{espressione BL(z)}
B_L (z) = B^{nls}(\Pi_S z) + d_\bot B^{nls}(\Pi_S z)[z_\bot]
\end{equation}
with $B^{nls}$ being the map introduced in Theorem \ref{Theorem Birkhoff coordinates}, and for any $s \in \Z_{\geq 1}$, $B_L : {\cal V} \cap h^s_r \to H^{s + 1}_r$ is real analytic. Furthermore 
$$
d_\bot B_L(z) = d_\bot B^{nls}(\Pi_S z)\,, \qquad d^2_\bot B_L(z) = 0\,, \qquad \forall z \in {\cal V}
$$
and for any $z \in {\cal V} \cap h^0_r$, $\alpha, \beta \in \Z_{\geq 0}^S$, 
\begin{equation}\label{stime BL Lemma 0}
\| \partial_{S}^{\alpha, \beta} B_L(z)\|_{0} \lesssim_{ \a, \b} \,\, 1 \,, \qquad 
\| \partial_{S}^{\alpha, \beta} d_\bot B_L(z) [\widehat z_\bot] \|_{0} 
\lesssim_{ \a, \b} \,  \| \widehat z_\bot\|_0\,, \quad \forall \widehat z_\bot \in h^0_{\bot c} \,
\end{equation}
and for  any $s \in \Z_{\geq 1}$, $z \in {\cal V} \cap h^s_{r}$, 
\begin{equation}\label{stime BL Lemma}
\| \partial_{S}^{\alpha, \beta} B_L(z)\|_{s + 1} \lesssim_{s, \a, \b} \, 1 + \| z_\bot\|_s\,, \qquad 
\| \partial_{S}^{\alpha, \beta} d_\bot B_L(z) [\widehat z_\bot] \|_{s + 1} 
\lesssim_{s, \a, \b} \,  \| \widehat z_\bot\|_s\,, \quad \forall \widehat z_\bot \in h^s_{\bot c} \,.
\end{equation}

\noindent
$(L3)$ For any $s \in \Z_{\geq 1}$, the restriction $\Psi_L \mid_{{\cal V} \cap h^s_r}$ is a map ${\cal V} \cap h^s_r \to H^s_r$ which is a real analytic diffeomorphism onto its image. 

\noindent
$(L4)$ The map $A_L :=  \Psi_L^{- 1} - F_{nls}: \Psi_L({\cal V}) \to h^0_c$ is analytic and one smoothing,  meaning that for any $s \in \Z_{\geq 1}$, $A_L : \Psi_L ({\cal V}) \cap H^s_r \to h^{s + 1}_r$ is real analytic.
\end{proposition}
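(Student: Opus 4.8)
\medskip
\noindent
\emph{Proof proposal.} The plan is to derive $(L1)$–$(L4)$ from Theorem \ref{Theorem Birkhoff coordinates} and Lemma \ref{Psi L 1}, exploiting two structural features visible in \eqref{seconda def Psi L}–\eqref{differenziale mappa Psi L}: $\Psi_L$ is affine in the normal variable $z_\bot$, and its differential splits as $d\Psi_L(z)=d\Psi^{nls}(\Pi_S z)+R(z)$, where $R(z)[\widehat z]:=d^2\Psi^{nls}(\Pi_S z)[\Pi_S\widehat z,\Pi_\bot z]$ has finite rank (it depends on $\widehat z$ only through $\Pi_S\widehat z\in\C^S\times\C^S$). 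First I would fix $\mathcal V$: take $\mathcal V_\bot$ as in Lemma \ref{Psi L 1}$(ii)$ and, using that $\mathcal K$ is compact, that $d\Psi_L(z_S,0)$ and $d\Psi^{nls}(z_S,0)$ are isomorphisms $h^0_c\to H^0_c$ for $z_S\in\mathcal K$ (Lemma \ref{Psi L 1}$(i)$ and Theorem \ref{Theorem Birkhoff coordinates}$(B1)$), and that invertibility is an open condition, choose a conjugation-symmetric complex neighbourhood $\mathcal V_S$ of $\mathcal K$ with $\overline{\mathcal V_S}$ compact in $\mathcal V_S^{max}$, and then, after shrinking $\mathcal V_\bot$, arrange that on $\mathcal V=\mathcal V_S\times\mathcal V_\bot$ the map $\Psi_L$ is one to one (by the argument of Lemma \ref{Psi L 1}$(ii)$ run on the complex domain) and both $d\Psi_L(z)$ and $d\Psi^{nls}(\Pi_S z)$ are isomorphisms $h^0_c\to H^0_c$ for every $z\in\mathcal V$.

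\medskip
\noindent
Item $(L1)$ is then essentially a restatement: analyticity is Lemma \ref{Psi L 1}$(i)$, the identities \eqref{uguaglianza Psi L Psi nls} follow by putting $z_\bot=0$ in \eqref{seconda def Psi L} and $\Pi_\bot z=0$ in \eqref{differenziale mappa Psi L}, and since $\Psi_L$ maps $\mathcal V\cap h^0_r$ into $H^0_r$, is injective, and has everywhere invertible differential, it is a real analytic diffeomorphism onto its open image. For $(L2)$ I would substitute $\Psi^{nls}=F_{nls}^{-1}+B^{nls}$ into \eqref{seconda def Psi L}; linearity of $F_{nls}^{-1}$ (so that $d_\bot F_{nls}^{-1}(\Pi_S z)[z_\bot]=F_{nls}^{-1}(\Pi_\bot z)$) produces \eqref{espressione BL(z)}, and $d_\bot B_L(z)=d_\bot B^{nls}(\Pi_S z)$, $d^2_\bot B_L(z)=0$ are immediate from the affine dependence on $z_\bot$. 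The estimates \eqref{stime BL Lemma 0} and \eqref{stime BL Lemma} would follow from Cauchy estimates on the compact set $\overline{\mathcal V_S}$: by $(B5)$, $B^{nls}$ is real analytic $h^s_r\to H^{s+1}_r$, hence extends analytically to a complex neighbourhood of the finite dimensional real slice with values in $H^{s+1}_c$, and $dB^{nls}(\cdot)$ into $\mathcal L(h^s_c,H^{s+1}_c)$; inserting these bounds (and the plain $H^0_c$ bounds, which use only analyticity into $H^0_c$) into \eqref{espressione BL(z)} and its $\partial_S^{\alpha,\beta}$ and $d_\bot$ derivatives gives the claim, and shows at the same time that $B_L:\mathcal V\cap h^s_r\to H^{s+1}_r$ is real analytic, i.e. one smoothing.

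\medskip
\noindent
For $(L3)$, fix $s\in\Z_{\geq1}$; by $(L2)$, $\Psi_L=F_{nls}^{-1}+B_L$ maps $\mathcal V\cap h^s_r$ analytically into $H^s_r$ and is injective by $(L1)$, so the only point is that $d\Psi_L(z):h^s_c\to H^s_c$ is an isomorphism. Since, by $(L2)$, $\partial_S^{\alpha,\beta}B_L(z)\in H^{s+1}_c$ and $d_\bot B_L(z):h^s_{\bot c}\to H^{s+1}_c$, the full differential $dB_L(z)$ maps $h^s_c$ into $H^{s+1}_c$, so I can write $d\Psi_L(z)=F_{nls}^{-1}(\mathrm{Id}+F_{nls}dB_L(z))$ with $F_{nls}dB_L(z):h^s_c\to h^{s+1}_c\hookrightarrow h^s_c$ compact; hence $d\Psi_L(z)$ is Fredholm of index $0$, so invertible as soon as it is injective, and injectivity on $h^s_c\subseteq h^0_c$ is inherited from the injectivity of $d\Psi_L(z)$ on $h^0_c$. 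The same Fredholm argument applied to $d\Psi^{nls}(\Pi_S z)=F_{nls}^{-1}+dB^{nls}(\Pi_S z)$ upgrades the $s=0$ isomorphism fixed above to an isomorphism $h^s_c\to H^s_c$ for all $s$. For $(L4)$, analyticity of $A_L=\Psi_L^{-1}-F_{nls}$ on the open set $\Psi_L(\mathcal V)$ is the holomorphic inverse function theorem together with the linearity of $F_{nls}$. For the one smoothing, let $w\in\Psi_L(\mathcal V)\cap H^s_r$, $s\geq1$, and $w=\Psi_L(z)$ with $z\in\mathcal V$; since $\Psi_L$ commutes with complex conjugation and is injective on the symmetric set $\mathcal V$, $w=\overline w$ forces $z=\overline z\in\mathcal V\cap h^0_r$. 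Now \eqref{seconda def Psi L} gives $d\Psi^{nls}(\Pi_S z)[(0,z_\bot)]=d_\bot\Psi^{nls}(\Pi_S z)[z_\bot]=w-\Psi^{nls}(\Pi_S z)$, and the right hand side lies in $H^s_r$ because $\Pi_S z$ has finite support, so $\Psi^{nls}(\Pi_S z)\in H^t_r$ for all $t$ by $(B1)$; applying $d\Psi^{nls}(\Pi_S z)^{-1}:H^s_c\to h^s_c$ yields $z_\bot\in h^s_{\bot r}$, i.e. $\Psi_L^{-1}(w)\in\mathcal V\cap h^s_r$. Then $\Psi_L^{-1}(w)=F_{nls}w-F_{nls}B_L(\Psi_L^{-1}(w))$ gives $A_L(w)=-F_{nls}\big(B_L(\Psi_L^{-1}(w))\big)\in h^{s+1}_r$ by the one smoothing of $B_L$ and the isomorphism $F_{nls}:H^{s+1}_r\to h^{s+1}_r$; real analyticity follows since $A_L$ is a composition of real analytic maps, and the analytic statement on $\Psi_L(\mathcal V)$ is obtained the same way from the analytic extensions.

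\medskip
\noindent
The two steps I expect to be the real content, rather than routine, are the Fredholm/finite rank splitting of $d\Psi_L(z)$ and $d\Psi^{nls}(\Pi_S z)$, which upgrades invertibility from level $0$ to level $s$ with no estimates, and the use of the identity $d\Psi^{nls}(\Pi_S z)[(0,z_\bot)]=w-\Psi^{nls}(\Pi_S z)$ in $(L4)$ to recover the regularity of $\Psi_L^{-1}(w)$ directly — this is what circumvents the fact that Theorem \ref{Theorem Birkhoff coordinates}$(B5)$ only records the one smoothing of $B^{nls}$ for $s\geq1$. The remaining work, namely the shrinking arguments fixing $\mathcal V$ and the Cauchy-estimate bookkeeping in $(L2)$, is standard.
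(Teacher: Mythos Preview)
Your proposal is correct and follows essentially the same approach as the paper: the Fredholm argument via the one-smoothing of $B_L$ for $(L3)$ is exactly what the paper uses, and the derivation of \eqref{espressione BL(z)} and the estimates from Theorem~\ref{Theorem Birkhoff coordinates} and the compactness of $\overline{\mathcal V_S}$ matches the paper's reasoning. Your treatment of $(L4)$ is in fact more explicit than the paper's, which simply says ``item $(L4)$ follows from $(L3)$ and Theorem~\ref{Theorem Birkhoff coordinates}''; your regularity bootstrap showing $\Psi_L(\mathcal V)\cap H^s_r\subseteq\Psi_L(\mathcal V\cap h^s_r)$ via the identity $d\Psi^{nls}(\Pi_S z)[(0,z_\bot)]=w-\Psi^{nls}(\Pi_S z)$ fills in a step the paper leaves implicit.
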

\begin{remark} For convenience, in the sequel, we always choose ${\cal V}_\bot$ to be a 
ball of radius smaller than one.
\end{remark}

\begin{proof}
Choose ${\cal V}_S$ to be an open bounded neighbourhood of ${\cal K}$ in $\C^S \times \C^S$ so that $\overline{\cal V}_S \subseteq {\cal V}_S^{max}$ and ${\cal V}_\bot$ an open ball in $h^0_{\bot c}$,
centered at $0$, so that item $(ii)$ of Lemma \ref{Psi L 1} applies to ${\cal V} := {\cal V}_S \times {\cal V}_\bot$, implying that $\Psi_L : {\cal V} \cap h^0_r \to H^0_r$, is 1-1 and a local diffeomorphism. The identities \eqref{uguaglianza Psi L Psi nls}  hold by the definition of $\Psi_L$ and the analyticity of $\Psi_L$, stated in $(L1)$, follows by Lemma \ref{Psi L 1}$(i)$. One then concludes that  
$$
\Psi_L : {\cal V} \cap h^0_r \to H^0_r
$$
is a real analytic diffeomorphism onto its image. 
$(L2)$ follows from the definition of $\Psi_L$, Theorem \ref{Theorem Birkhoff coordinates}, the compactness of 
$\overline{\cal V}_S$, and standard estimates in Sobolev spaces. Concerning $(L3)$, first note that
by Theorem \ref{Theorem Birkhoff coordinates}, for any $s \in \Z_{\ge 1}$,
 the restriction $\Psi_L \mid_{{\cal V} \cap h^s_r}$ is a map with values in $ H^s_r$
 and as such real analytic. By item (L1), $\Psi_L \mid_{{\cal V} \cap h^s_r}$ is $1-1$ 
and so is its differential $d \Psi_L (z) :  h^s_c \to H^s_c$ at any point $z \in {\cal V} \cap h^s_r$. 
Since by  $(L2)$ the map $B_L$ is one smoothing,
 $d \Psi_L(z) : h^s_c  \to H^s_c$ is Fredholm and hence a linear isomorphism, implying that
 $\Psi_L : {\cal V} \cap h^s_r \to H^s_r$ is a real analytic diffeomorphism onto its image.
Finally,  item $(L4)$ follows from $(L3)$ and Theorem \ref{Theorem Birkhoff coordinates}.
\end{proof}
\noindent
Whereas the tame estimates \eqref{stime BL Lemma} for $B_L$ are an immediate consequence of the definition of $\Psi_L$, Theorem \ref{Theorem Birkhoff coordinates} and the compactness of $\overline{\cal V}_S$, this is not so for $A_L$. Actually, for the applications in perturbation theory considered in subsequent work,
we only need to derive tame estimates for 
 \begin{equation}\label{cuba 0}
   {\cal A}_L: {\cal V} \cap h^0_r \to {\cal L}(H^0_c, h^0_c)\,, \qquad z \mapsto 
   {\cal A}_L(z) := dA_L(\Psi_L(z)) =  d \Psi_L(z)^{- 1} - F_{nls}
 \end{equation}
 with $\cal V$ denoting the neighborhood of ${\cal K} \times  \{ 0 \}$ of Proposition \ref{Psi L 2}.
 By formula \eqref{differenziale mappa Psi L}, for any $z \in {\cal V} \cap h^0_r,$ the operator
 $d \Psi_L(z) \in {\cal L}(h^0_c, H^{0}_c)$ can be written as 
 \begin{equation}\label{peter pan 0}
d \Psi_L(z) = {\cal T}(z) + {\cal R}(z)\, , \qquad   {\cal T}(z) := d \Psi^{nls}(\Pi_S z)
\end{equation}
with ${\cal R}(z) \in {\cal L}(h^0_c ,  H^{0}_c)$ given by
\begin{equation}\label{definizione cal R (w) nel lemma}
{\cal R}(z) : h^0_c \to  H^{0}_c\,, \quad  \widehat z \mapsto  {\cal R}(z)[\widehat z] := d^2 \Psi^{nls}(\Pi_S z)[\Pi_S \widehat z, \Pi_\bot z]= d^2 B^{nls}(\Pi_S z)[\Pi_S \widehat z, \Pi_\bot z]\,.
\end{equation}
Since  by Theorem \ref{Theorem Birkhoff coordinates}, respectively Proposition \ref{Psi L 2},
the operators ${\cal T}(z)$,  $d \Psi_L(z) : h^0_c \to  H^{0}_c$ are invertible, so is
${\cal T}(z)^{-1} d \Psi_L(z)= {\rm Id} + {\cal T}(z)^{- 1}{\cal R}(z)$, implying that
 \begin{equation}\label{peter pan 1}
d \Psi_L(z)^{-1}= \big( {\rm Id} + {\cal T}(z)^{- 1}{\cal R}(z) \big) ^{-1}{\cal T}(z)^{-1}
= {\cal T}(z)^{-1} - {\cal T}(z)^{- 1}{\cal R}(z) {\cal S}(z)
\end{equation}
where 
 \begin{equation}\label{definizione cal S (w) 0}
 {\cal S}(z) := \big( {\rm Id} + {\cal T}(z)^{- 1}{\cal R}(z) \big)^{- 1} {\cal T}(z)^{-1} \in {\cal L}(H^0_c, h^0_c) \,.
 \end{equation}
Furthermore, by Theorem \ref{Theorem Birkhoff coordinates}
$$
{\cal T}(z)^{- 1} = \big( d \Psi^{nls}(\Pi_S z) \big)^{- 1} = 
d \Phi^{nls} (\Pi_S z) = F_{nls} + d A^{nls}(\Psi^{nls}(\Pi_S z))\,.
$$
Altogether, it follows that for any $z \in {\cal V} \cap h^0_r$, 
the operator ${\cal A}_L(z) = d \Psi_L(z)^{-1} - F_{nls} : h^0_c \to  H^{0}_c$
can be written as
 \begin{equation}\label{forma finale d Psi L (w)}
  {\cal A}_L(z) = d A^{nls}(\Psi^{nls}(\Pi_S z)) - {\cal T}(z)^{- 1} {\cal R}(z){\cal S}(z) \,. 
 \end{equation}
 Finally we note that by $(L4)$ of Proposition \ref{Psi L 2}, $ {\cal A}_L = d A_L \circ \Psi_L$ is one smoothing.
 More precisely, for any $s \in \Z_{\ge 1}$, the restriction of $ {\cal A}_L$ to
 ${\cal V} \cap h^s_r$ is a real analytic map,
 $$
  {\cal A}_L: {\cal V} \cap h^s_r \to {\cal L}( H^s_c,\, h^{s+1}_c)\,, \,\,\, z \mapsto {\cal A}_L(z)\,.
 $$
\begin{proposition}[Tame estimates for ${\cal A}_L$] \label{estimates of d Psi L inv}
After shrinking, if necessary, the radius of the ball $\cal V_\bot$ in ${\cal V} = {\cal V}_S \times {\cal V}_\bot$
of Proposition \ref{Psi L 2}, the map $ {\cal A}_L$ satisfies  for any 
$z \in {\cal V} \cap h^0_r$, $\widehat w \in h^0_c$,
$$
\| {\cal A}_L(z)[\widehat w] \|_0 \lesssim \| \widehat w\|_0
$$
and for any $k \in \Z_{\geq 1}$, $\widehat z_1, \ldots, \widehat z_k \in h^0_c$,  
$$
\| d^k \big( {\cal A}_L(z)[\widehat w] \big)[\widehat z_1, \ldots, \widehat z_k] \|_0 \lesssim_k \| \widehat w\|_0 \prod_{j = 1}^k \| \widehat z_j\|_0\,.
$$
Furthermore, for any $s \in \Z_{\ge 1}$, $z \in {\cal V} \cap h^s_r$, $\widehat w  \in H^s_c$,
\begin{equation}\label{estimate for A L}
 \| {\cal A}_L(z)[\widehat w] \|_{s+1} \lesssim_{s}  \| z_\bot\|_s \|\widehat w \|_0  +  \| \widehat w\|_s
\end{equation}
and for any $k \ge 1,$ $\widehat z_1, \ldots, \widehat z_k \in h^s_c$, 
\begin{align}\label{estimate for d A L}
\| d^k \big( {\cal A}_L(z)[\widehat w] \big)[\widehat z_1, \ldots, \widehat z_k] \|_{s+1} \lesssim_{s, k}
\,\, \big(   \| z_\bot\|_s \|\widehat w \|_0  +  \| \widehat w\|_s \big) \, \prod_{j = 1}^k \| \widehat z_j\|_0  \, \, +   \| \widehat w \|_0 \, \sum_{j = 1}^k  \| \widehat z_j\|_s \prod_{i \neq j} \| \widehat z_i\|_0  \,.
 \end{align}  
\end{proposition}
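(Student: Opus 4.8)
The plan is to read all the estimates off the factorisation \eqref{forma finale d Psi L (w)},
$$
{\cal A}_L(z) = d A^{nls}(\Psi^{nls}(\Pi_S z)) \, - \, {\cal T}(z)^{- 1} {\cal R}(z){\cal S}(z) \,,
$$
together with the definitions \eqref{definizione cal R (w) nel lemma}, \eqref{definizione cal S (w) 0} of ${\cal R}(z)$ and ${\cal S}(z)$, exploiting throughout that on the right every factor is evaluated at the base point $\Pi_S z = (z_S, 0)$, with $z_S$ ranging only over the compact, finite-dimensional set $\overline{\cal V}_S \subseteq \C^S \times \C^S$ --- contained, by Proposition \ref{Psi L 2}, in the domain of analyticity of $\Psi^{nls}$ and $\Phi^{nls}$. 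First I would collect uniform bounds at these base points. Since, for each fixed $s \in \Z_{\ge 0}$, the maps $A^{nls}, B^{nls}$ are analytic and one smoothing (Theorem \ref{Theorem Birkhoff coordinates}), Cauchy estimates on the compact set $\overline{\cal V}_S$ give that for all $s$ and all $\alpha, \beta \in \Z^S_{\ge 0}$ the derivatives $\partial_S^{\alpha, \beta}$ of the operator-valued maps $z \mapsto d A^{nls}(\Psi^{nls}(\Pi_S z))$ and $z \mapsto {\cal T}(z)^{- 1} = F_{nls} + d A^{nls}(\Psi^{nls}(\Pi_S z))$ are bounded, uniformly for $z \in {\cal V}$, as maps $H^{s}_c \to h^{s + 1}_c$, resp. $H^{s}_c \to h^{s}_c$ and $H^{s + 1}_c \to h^{s + 1}_c$, with constants $\lesssim_{s, \alpha, \beta} 1$; likewise each multilinear map $\partial_S^{\alpha, \beta} d^m B^{nls}(\Pi_S z)$ is bounded into $H^{s + 1}_c$ with constant $\lesssim_{s, \alpha, \beta, m} 1$. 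The new ingredient is a \emph{mixed} tame bound for ${\cal R}(z)$: by \eqref{definizione cal R (w) nel lemma} and \eqref{espressione BL(z)}, ${\cal R}(z)[\widehat z] = d_S\big( d_\bot B^{nls}(\Pi_S z) \big)[\Pi_S \widehat z]\,[z_\bot]$, so that \eqref{stime BL Lemma}, together with the compactness of $\overline{\cal V}_S$ and the identity $\| \Pi_S \widehat z\|_{s} = \| \Pi_S \widehat z\|_0 \le \| \widehat z\|_0$, yields for all $s \in \Z_{\ge 0}$, $\alpha, \beta \in \Z^S_{\ge 0}$ and $\widehat z, \widehat z_1 \in h^s_c$
$$
\| \partial_S^{\alpha, \beta} {\cal R}(z)[\widehat z] \|_{s + 1} \lesssim_{s, \alpha, \beta} \| \widehat z\|_0 \, \| z_\bot\|_s \,, \qquad \| \partial_S^{\alpha, \beta} d {\cal R}(z)[\Pi_\bot \widehat z_1][\widehat z] \|_{s + 1} \lesssim_{s, \alpha, \beta} \| \widehat z\|_0 \, \| \widehat z_1\|_s \,,
$$
and in particular $\| {\cal R}(z)\|_{{\cal L}(h^0_c)} \lesssim \| z_\bot\|_0$; here one uses that ${\cal R}(z)$ is affine linear in $z_\bot$, so $d_\bot^2 {\cal R} \equiv 0$ and the whole $z_\bot$-dependence is carried by one slot.

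Next I would set up the corrector ${\cal S}(z)$ and assemble. Since $\| {\cal T}(z)^{- 1}{\cal R}(z)\|_{{\cal L}(h^0_c)} \lesssim \| z_\bot\|_0$, after shrinking the radius of ${\cal V}_\bot$ this is $\le 1/2$, so ${\rm Id} + {\cal T}(z)^{- 1}{\cal R}(z)$ is invertible on $h^0_c$ with inverse of norm $\le 2$; hence, by \eqref{definizione cal S (w) 0}, ${\cal S}(z) \in {\cal L}(H^0_c, h^0_c)$ with $\| {\cal S}(z)\| \lesssim 1$, and differentiating the inverse by the usual rule and invoking the $s = 0$ bounds of the previous step, Leibniz gives $\| d^m {\cal S}(z)[\widehat z_1, \ldots, \widehat z_m]\|_{{\cal L}(H^0_c, h^0_c)} \lesssim_m \prod_{l = 1}^m \| \widehat z_l\|_0$ --- the point being that every occurrence of ${\cal R}(z)$ inside ${\cal S}(z)$ and its derivatives is sandwiched between operators bounded on $h^0_c$ only, hence never sees a higher Sobolev norm. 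Inserting all this into \eqref{forma finale d Psi L (w)} and expanding $d^k$ by Leibniz, the $s = 0$ bounds for ${\cal A}_L(z)$ and its derivatives follow at once, since every factor is bounded at the $H^0_c \to h^0_c$ (or $h^0_c \to h^0_c$) level, each differentiation in a direction $\widehat z_j$ costs $\lesssim \| \widehat z_j\|_0$, and $\| {\cal R}(z)\| \lesssim \| z_\bot\|_0 \lesssim 1$. For \eqref{estimate for A L} I would bound the two summands of \eqref{forma finale d Psi L (w)} separately: $\| d A^{nls}(\Psi^{nls}(\Pi_S z))[\widehat w]\|_{s + 1} \lesssim_s \| \widehat w\|_s$ directly; and for the second, ${\cal S}(z)[\widehat w] \in h^0_c$ with $\| {\cal S}(z)[\widehat w]\|_0 \lesssim \| \widehat w\|_0$, and since ${\cal R}(z)$ depends only on the finite-dimensional part $\Pi_S {\cal S}(z)[\widehat w]$, the mixed bound above gives $\| {\cal R}(z){\cal S}(z)[\widehat w]\|_{s + 1} \lesssim_s \| \widehat w\|_0 \, \| z_\bot\|_s$, after which ${\cal T}(z)^{- 1} : H^{s + 1}_c \to h^{s + 1}_c$ is bounded; adding yields \eqref{estimate for A L}.

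For the derivative tame estimate \eqref{estimate for d A L} I would expand $d^k$ of \eqref{forma finale d Psi L (w)} by Leibniz and verify, using the previous two steps, that each of the finitely many resulting terms carries exactly one factor measured in a high Sobolev norm, of one of three types: (a) a derivative hitting the summand $d A^{nls}(\Psi^{nls}(\Pi_S z))$, producing $\| \widehat w\|_s \prod_j \| \widehat z_j\|_0$; (b) the factor $z_\bot$ surviving unhit inside the explicit ${\cal R}(z)$ --- all remaining derivatives, including those falling on ${\cal S}(z)$ or on the buried copies of ${\cal R}$, being absorbed at the $h^0_c$ level --- producing $\| z_\bot\|_s \, \| \widehat w\|_0 \prod_j \| \widehat z_j\|_0$; (c) exactly one derivative $\widehat z_j$ hitting the $z_\bot$-slot of the explicit ${\cal R}(z)$ and replacing $z_\bot$ by $\Pi_\bot \widehat z_j$, producing $\| \widehat w\|_0 \, \| \widehat z_j\|_s \prod_{i \ne j} \| \widehat z_i\|_0$ (no second derivative can land on that slot, since ${\cal R}(z)$ is affine in $z_\bot$). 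Summing these terms gives \eqref{estimate for d A L}.

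I expect the main obstacle to be the mixed tame estimate for ${\cal R}(z)$ used above, i.e. that $d^2 B^{nls}(\Pi_S z)[\widehat z_S, z_\bot]$ and its $z_S$-derivatives land in $H^{s + 1}_c$ with norm $\lesssim_s \| \widehat z_S\|_0 \, \| z_\bot\|_s$: this is precisely where the one-smoothing property of $B^{nls}$ from Theorem \ref{Theorem Birkhoff coordinates}(B5), the explicit description of $d_\bot \Psi^{nls}$ recorded in Proposition \ref{Psi L 2}(L2) and in Appendix B, and the compactness of $\overline{\cal V}_S$ have to be combined; the favourable point is that only $z_S$-derivatives of the one-smoothing operator $d_\bot B^{nls}$ enter, and these stay one smoothing with constants that depend on $s$ but are uniform in $z \in {\cal V}$. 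The remaining difficulty is organisational, namely carrying out the Leibniz expansion of $d^k$ of \eqref{forma finale d Psi L (w)} --- in which ${\cal S}(z)$ is itself a composition involving an operator inverse and several occurrences of ${\cal R}(z)$ --- carefully enough to confirm that every term has precisely one high-norm factor, of type (a), (b) or (c).
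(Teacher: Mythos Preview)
Your proposal is correct and follows essentially the same route as the paper's own proof: both start from the factorisation \eqref{forma finale d Psi L (w)}, use the compactness of $\overline{\cal V}_S$ to control all $z_S$-derivatives of the base-point operators, exploit that ${\cal R}(z)$ factors through $\Pi_S$ on its input and is affine in $z_\bot$, shrink the radius of ${\cal V}_\bot$ so that the Neumann series for $\big({\rm Id} + {\cal T}(z)^{-1}{\cal R}(z)\big)^{-1}$ converges in ${\cal L}(h^0_c)$, and then assemble the derivative estimates via Leibniz. Your organisation into types (a), (b), (c) makes the bookkeeping of the single high-norm factor slightly more explicit than the paper, but the substance is the same.
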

\begin{proof}
First we prove estimate \eqref{estimate for A L}.
The starting point is formula \eqref{forma finale d Psi L (w)} for ${\cal A}_L(z)$. The two terms 
$d A^{nls}(\Psi^{nls}(\Pi_S z))$ and ${\cal T}(z)^{- 1} {\cal R}(z){\cal S}(z)$ 
are estimated separately. By Theorem \ref{Theorem Birkhoff coordinates}, 
 $\{ \Psi^{nls}(\Pi_S z ) \,  | \, z  \in {\cal V} \cap h^0_r \}$ 
is a relatively compact subset of $ H^s_r$ for any $s \in \Z_{\geq 0}$,
and $ A^{nls}$, $ B^{nls}$ are one smoothing maps. It implies that 
for any $s \in \Z_{\geq 1}$, 
 \begin{equation}\label{peter pan 3}
 \| d A^{nls}(\Psi^{nls}(\Pi_S z))[\widehat w] \|_{s + 1} \lesssim_s \| \widehat w\|_s\,, 
 \qquad \forall \, z \in {\cal V} \cap h^0_r, \quad \forall \, \widehat w \in H^s_c\,.
 \end{equation}
 Since $\| \Pi_S \widehat z\|_s \lesssim_s \| \Pi_S \widehat z\|_0$ for any $z \in h^0_c$, 
 the linear operator ${\cal R}(z)$, defined in \eqref{definizione cal R (w) nel lemma}, satisfies 
  \begin{equation}\label{one smoothing estimate R}
\| {\cal R}( z)[\widehat z] \|_{s + 1} \lesssim_s \,
 \| z_\bot\|_s \| \Pi_S \widehat z\|_s  \lesssim_s \,
\| z_\bot\|_s \| \Pi_S \widehat z\|_0\
  \qquad \forall z \in {\cal V} \cap h^0_r, \quad \forall  \, \widehat z \in h^0_c\,.
\end{equation}
Furthermore, also by Theorem \ref{Theorem Birkhoff coordinates},
one has for any $s \in \Z_{\geq 0}$,
 \begin{equation}\label{estimate T inverse}
 \| T(z)^{-1}[\widehat w] \|_{s} \lesssim_s \| \widehat w\|_s\,, 
 \qquad \forall \, z \in {\cal V} \cap h^0_r, \quad \forall \, \widehat w \in H^s_c\,.
 \end{equation}
 Combining \eqref{peter pan 3}-\eqref{estimate T inverse}, formula \eqref{forma finale d Psi L (w)} leads to the estimate 
 \begin{align}
 \|{\cal A}_L(z)[\widehat w] \|_{s + 1} & \leq \| d A^{nls}(\Psi^{nls}(\Pi_S z))[\widehat w]\|_{s + 1} + \| {\cal T}(z)^{- 1} {\cal R}(z) {\cal S}(z)[\widehat w]\|_{s + 1} \nonumber\\
 & \lesssim_s \| \widehat w\|_s + \| z_\bot\|_s \| \Pi_S {\cal S}(z)[\widehat w]\|_0\,. \label{first estimate cal AL (z) in lemma}
 \end{align}
It remains to estimate $\| {\cal S}(z)[\widehat w]\|_0$. Recall that by \eqref{definizione cal S (w) 0}, $ {\cal S}(z) = \big( {\rm Id} + {\cal T}(z)^{- 1}{\cal R}(z) \big)^{- 1} {\cal T}(z)^{-1}$. By Theorem \ref{Theorem Birkhoff coordinates} there exists $C_0 > 0$ so that 
  \begin{equation}\label{peter pan 10}
 \| {\cal T}(z)^{- 1} {\cal R}(z)[\widehat z]\|_0 \leq 
 C_0 \| z_\bot\|_0 \| \Pi_S \widehat z\|_0 \,  
 \qquad \forall z \in {\cal V} \cap h^0_r, \quad \forall  \, \widehat z \in h^0_c\,.
 \end{equation}
Shrinking the radius of the ball ${\cal V}_\bot$ in $ h^0_{\bot c}$ , if necessary,
  so that $C_0 \| z_\bot\|_0 \leq 1/2$ for any $z_\bot \in {\cal V}_\bot$, the Neumann series of the operator 
  $\big( {\rm Id} + {\cal T}(z)^{- 1}{\cal R}(z) \big)^{- 1}$ absolutely converges in ${\cal L}(h^0_c, h^0_c)$ 
and the operator norm of $\big( {\rm Id} + {\cal T}(z)^{- 1}{\cal R}(z) \big)^{- 1}$ in ${\cal L}(h^0_c, h^0_c)$ is bounded by $2$.
Hence   
 \begin{equation}\label{definizione cal S (w)}
\| {\cal S}(z)[\widehat w] \|_0 
\lesssim_s \| \widehat w\|_0\,, 
 \qquad \forall \, z \in {\cal V} \cap h^0_r, \quad \forall \, \widehat w \in H^0_c\,,
 \end{equation}
implying together with \eqref{first estimate cal AL (z) in lemma} the claimed estimate \eqref{estimate for A L}. 

\noindent
Finally let us prove the estimate \eqref{estimate for d A L} for the derivatives of ${\cal A}_L(z)$. By formula \eqref{forma finale d Psi L (w)} for any $k, s \in \Z_{\geq 1}$, 
$z \in {\cal V } \cap h^s_r$, $\widehat w \in H^s_c$, and $\widehat z_1, \ldots, \widehat z_k \in h^s_c$, 
\begin{align}
 \|d^k \big({\cal A}_L(z)[\widehat w] \big)[\widehat z_1, \ldots, \widehat z_k]\|_{s + 1} & \leq \| d^k \big( dA^{nls}(\Psi^{nls}(\Pi_S z))[\widehat w]\big) [\widehat z_1, \ldots, \widehat z_k]\|_{s + 1} \nonumber\\
 & \quad + \| d^k \big({\cal T}(z)^{- 1} {\cal R}(z) {\cal S}(z)[\widehat w] \big)[\widehat z_1, \ldots, \widehat z_k]\|_{s + 1}\,.  \label{first estimate cal AL (z) in lemma 1}
 \end{align}
By Theorem \ref{Theorem Birkhoff coordinates}, one concludes that 
 \begin{equation}\label{peter pan 3 k ppp}
 \| \, d^k \big( d A^{nls}(\Psi_L(\Pi_S z))[\widehat w]  \big) 
 [\widehat z_1, \ldots, \widehat z_k] \, \|_{s + 1}  \lesssim_{s, k}  
  \| \widehat w\|_s   \, \prod_{j = 1}^k \| \widehat  z_j\|_0 \,.
 \end{equation}
Furthermore 
\begin{equation}\label{peter pan 3 k pp}
 \| \, d^k \big( {\cal T}( z)^{-1}[\widehat w] \big) 
 [\widehat z_1, \ldots, \widehat z_k] \, \|_{s }  \lesssim_{s, k}  
  \| \widehat w\|_s   \, \prod_{j = 1}^k \|  \widehat  z_j\|_0 \,,
 \end{equation}
 \begin{align}\label{peter pan 3 k p}
 \| \, d^k \big( {\cal R}( z)[\widehat z] \big) &
 [\widehat z_1, \ldots, \widehat z_k] \, \|_{ s+1}  \lesssim_{s, k}  \,\,
  \| z_\bot \|_s   \| \widehat  z \|_0 \, \prod_{j = 1}^k \| \widehat  z_j\|_0 \, \,
  +  \,\,  \|  \widehat  z \|_0 \,   \sum_{j = 1}^k \|  \widehat z_j \|_s \prod_{i \ne j} \|  \widehat  z_j\|_0 \, ,
 \end{align}
 and
  \begin{equation}\label{definizione cal S derivate (w)}
\| d^k \big( {\cal S}(z)[\widehat w] \big)[\widehat z_1, \ldots, \widehat z_k] \|_0 
\lesssim_s \| \widehat w\|_0  \prod_{j = 1}^k \| \widehat z_j\|_0 \,.
 \end{equation}
 Combining the estimates \eqref{peter pan 3 k pp}-\eqref{definizione cal S derivate (w)} and using the product rule implies that 
 \begin{align}
 \| & d^k \big({\cal T}(z)^{- 1} {\cal R}(z)  {\cal S}(z)[\widehat w] \big)[\widehat z_1, \ldots, \widehat z_k]\|_{s + 1}  \, \, \lesssim_{s, k} \, \, \nonumber\\
 & \big(   \| z_\bot\|_s \|\widehat w \|_0  +  \| \widehat w\|_s \big) \, \prod_{j = 1}^k \| \widehat z_j\|_0  
  \,  +  \,  \| \widehat w \|_0 \, \sum_{j = 1}^k  \| \widehat z_j\|_s \prod_{i \neq j} \| \widehat z_i\|_0 \,.
  \label{TRS derivate}
 \end{align}
The three estimates \eqref{first estimate cal AL (z) in lemma 1}, \eqref{peter pan 3 k ppp}, \eqref{TRS derivate} 
together yield \eqref{estimate for d A L}.  
 \end{proof}
In the remaining part of this section we describe the pullback $\Psi_L^* \Lambda$ by $\Psi_L$ of the standard symplectic form $\Lambda$ on $H^0_r$, introduced in \eqref{definizione 2 forma L2}. It turns out that $\Psi_L^* \Lambda$ is not the symplectic form $\Lambda_M$ of \eqref{definizione due forma h0 c}, making it necessary to construct the symplectic corrector $\Psi_C$ -- see Section \ref{costuzione mappa Psi simplettica} 
below. 

\noindent
Given a bounded linear operator ${\cal P} : h^0_c \to h^0_c$, its transpose ${\cal P}^t : h^0_c\to h^0_c$ is defined to be the operator determined by  
\begin{equation}\label{trasposto successioni}
\big( {\cal P}[\widehat z], \widehat z' \big)_r = \big(\widehat z, {\cal P}^t [\widehat z'] \big)_r\,, \quad \forall \widehat z, \widehat z' \in h^0_c\, ,
\end{equation}
where the blilinear form $(\cdot, \cdot)_r$ on $h^0_c$ is defined in \eqref{forma bilineare h 0c}. Similarly, for a bounded linear operator ${\cal Q} : h^0_c \to H^0_c$, we denote its transpose by ${\cal Q }^t : H^0_c \to h^0_c$, determined by 
\begin{equation}\label{trasposto funzioni successioni}
\langle {\cal Q}[\widehat z], \widehat w\rangle_r = \big(\widehat z\,,\, {\cal Q}^t [\widehat w] \big)_r\,, \qquad \forall \widehat z \in h^0_c\,,\,\widehat w \in H^0_c\,,
\end{equation}
where the bilinear form $\langle \cdot, \cdot \rangle_r$ on $H^0_c$ is the one introduced in 
\eqref{definizione forma bilineare H0 c}. 
  We now compute the pullback $\Psi_L^*\Lambda(z)$ at $z = (z_S, z_\bot) \in h^0_c$ applied to 
  $ \widehat z = ( \widehat z_S, \widehat z_\bot)$, $ \widehat z' = ( \widehat z_S', \widehat z_\bot')$. 
  By the definition of the pullback and the one of $\Lambda$ in \eqref{definizione 2 forma L2}
  we have 
  \begin{align}
  & \Psi_L^* \Lambda({z})[ \widehat z, \widehat z']  = 
  \Lambda(\Psi_L(z)) \, [d \Psi_L(z)[\widehat z], \, d \Psi_L(z)[\widehat z'] ] = 
  \ii \langle {\mathbb J} d \Psi_L(z)[\widehat z] , \, d \Psi_L(z)[\widehat z']\rangle_r\,. 
  \end{align}
  By formula \eqref{differenziale mappa Psi L} for $d \Psi_L(z)$, 
  $$
  d \Psi_L(z)[\widehat z] = d \Psi^{nls}(\Pi_S z)[ \widehat z] + d_S \big( d_\bot \Psi^{nls}(\Pi_S z)[  z_\bot]  \big)  [ \widehat z_S] \,,
  $$ 
  one gets 
  \begin{align}
   &\Psi_L^* \Lambda({z})[ \widehat z, \widehat z']   = (I) + (II) + (III) + (IV) 
   \end{align}
   where 
   \begin{align}
   (I) := &\ii \Big\langle {\mathbb J} d \Psi^{nls}(\Pi_S z)[\widehat z], \, d \Psi^{nls}(\Pi_S z)[\widehat z'] \Big\rangle_r 
   = ((\Psi^{nls})^* \Lambda )(\Pi_S z)[\widehat z, \widehat z']\,, \label{cavolo romano 1} 
   \end{align}
   \begin{align}
  (I I) := & \ii \Big\langle {\mathbb J}  d \Psi^{nls}(\Pi_S z)[ \widehat z] , \, 
   d_S \big( d_\bot \Psi^{nls}(\Pi_S z)[  z_\bot]  \big)  [\widehat z_S'] \Big\rangle_r\,.\label{cavolo romano 2}
  \end{align}
  Writing $d \Psi^{nls}(\Pi_S z)[\widehat z]$ as $d_S \Psi^{nls}(\Pi_S z)[\widehat z_S] + d_\bot \Psi^{nls}(\Pi_S z)[\widehat z_\bot]$ one gets
  \begin{align}
  (II) & = \ii \Big\langle {\mathbb J}  d_S \Psi^{nls}(\Pi_S z)[ \widehat z_S] \,,\, d_S \big( d_\bot \Psi^{nls}(\Pi_S z)[ z_\bot]  \big) [  \widehat z_S' ] \Big\rangle_r \nonumber\\
    &  + \ii \Big\langle {\mathbb J}  d_\bot \Psi^{nls}(\Pi_S z)[ \widehat z_\bot] \,,\, d_S \big( d_\bot \Psi^{nls}(\Pi_S z)[ z_\bot]  \big) [  \widehat z_S']  \Big\rangle_r\,. \label{cavolo romano 2}
  \end{align}
  Similarly one has 
  \begin{align}
  (III) := & \ii \Big\langle {\mathbb J} d_S \big( d_\bot \Psi^{nls}(\Pi_S z)[  z_\bot]  \big) [\widehat z_S] \,,\,  d \Psi^{nls}(\Pi_S z)[\widehat z']  \big) \Big\rangle_r \nonumber\\
  & = \ii \Big\langle {\mathbb J} d_S \big( d_\bot \Psi^{nls}(\Pi_S z)[  z_\bot]  \big) [ \widehat z_S] \,,\,  d_S \Psi^{nls}(\Pi_S z)[\widehat z_S']   \Big\rangle_r \nonumber \\
  & +   \ii \Big\langle {\mathbb J} d_S \big( d_\bot \Psi^{nls}(\Pi_S z)[  z_\bot]  \big) [\widehat z_S] \,,\,  d_\bot \Psi^{nls}(\Pi_S z)[\widehat z'_\bot]   \Big\rangle_r \label{cavolo romano 3}
     \end{align}
     and finally 
  \begin{align}
  (I V) := & \ii \Big\langle {\mathbb J}  d_S \big( d_\bot \Psi^{nls}(\Pi_S z)[  z_\bot]  \big) [\widehat z_S] \,,\,   d_S \big( d_\bot \Psi^{nls}(\Pi_S z)[ z_\bot]  \big) [\widehat z_S']  \Big\rangle_r \label{cavolo romano 4}
   \end{align}
 Since by Theorem \ref{Theorem Birkhoff coordinates}, $\Psi^{nls}$ is symplectic, one has $(\Psi^{nls})^* \Lambda = \Lambda_M$. Hence for any $z \in {\cal V}$, $\Psi_L^* \Lambda(z)$ can be written as   
 \begin{equation}\label{definizione delta omega Omega}
\Psi_L^* \Lambda(z) = \Lambda_M +  \Lambda_L(z)\,, \quad  
\Lambda_L(z) [ \widehat z, \widehat z'] := \big( L(z)[\widehat z],  \widehat z' \big)_r \,, 
 \end{equation}
 where $L(z) : \C^S \times \C^S \times h^0_{\bot c} \to \C^S \times \C^S \times h^0_{\bot c}$ is the linear operator of the form  
 \begin{equation}\label{definizione Omega(w)}
 L(z) = \begin{pmatrix}
 L_S^S(z) & L_{S}^\bot(z) \\
 L_\bot^S(z) & 0
 \end{pmatrix}\,.
 \end{equation}
 By the computations above, $L_S^S(z) : \C^S \times \C^S \to \C^S \times \C^S$, $L_S^\bot(z) : h^0_{ \bot c} \to \C^S \times \C^S$, and $L_\bot^S(z) : \C^S \times \C^S \to h^0_{ \bot c}$ are the linear operators defined by 
 ($z \in {\cal V} \cap h^0_r, \, \widehat z_S \in \C^S \times \C^S, \, \widehat z_\bot \in h^0_{\bot c}$)  
 \begin{align}
 & L_S^S(z)[\widehat z_S] := 
 \ii \begin{pmatrix} \big(\big\langle {\mathbb J}  d_S \Psi^{nls}(\Pi_S z)[ \widehat z_S] \,,\,
 \partial_{x_j } d_\bot \Psi^{nls}(\Pi_S z)[ z_\bot]   \big\rangle_r \big)_{j \in S}  \\
 \big(\big\langle {\mathbb J}  d_S \Psi^{nls}(\Pi_S z)[ \widehat z_S] \,,\,
 \partial_{y_j } d_\bot \Psi^{nls}(\Pi_S z)[ z_\bot]   \big\rangle_r \big)_{j \in S} 
 \end{pmatrix} \nonumber\\
 & + \ii \begin{pmatrix}
 \big( \big\langle {\mathbb J} d_S \big( d_\bot \Psi^{nls}(\Pi_S z)[  z_\bot]  \big) [ \widehat z_S] \,,\,  \partial_{x_j} \Psi^{nls}(\Pi_S z)  \big\rangle_r \big)_{j \in S} \\
 \big( \big\langle {\mathbb J} d_S \big( d_\bot \Psi^{nls}(\Pi_S z)[  z_\bot]  \big) [ \widehat z_S] \,,\,  \partial_{y_j} \Psi^{nls}(\Pi_S z)  \big\rangle_r \big)_{j \in S}
 \end{pmatrix} \nonumber\\
 & + \ii \begin{pmatrix} \big(\big\langle {\mathbb J} d_S \big( d_\bot \Psi^{nls}(\Pi_S z)[  z_\bot]  \big) [\widehat z_S]\,,\,   \partial_{x_j}  d_\bot \Psi^{nls}(\Pi_S z)[ z_\bot]     \big\rangle_r \big)_{j \in S} \\
 \big(\big\langle {\mathbb J} d_S \big( d_\bot \Psi^{nls}(\Pi_S z)[  z_\bot]  \big) [\widehat z_S]\,,\,   \partial_{y_j}  d_\bot \Psi^{nls}(\Pi_S z)[ z_\bot]     \big\rangle_r \big)_{j \in S} 
 \end{pmatrix}\, \label{definizione L SS} 
 \end{align}
and similarly 
 \begin{align}
 L_S^\bot(z)[\widehat z_\bot] & :=  
 \ii \begin{pmatrix}\big(\big\langle {\mathbb J}  d_\bot \Psi^{nls}(\Pi_S z)[ \widehat z_\bot] \,,\, \partial_{x_j}  d_\bot \Psi^{nls}(\Pi_S z)[ z_\bot]    \big\rangle_r \big)_{j \in S} \\
 \big(\big\langle {\mathbb J}  d_\bot \Psi^{nls}(\Pi_S z)[ \widehat z_\bot] \,,\, \partial_{y_j}  d_\bot \Psi^{nls}(\Pi_S z)[ z_\bot]    \big\rangle_r \big)_{j \in S}
 \end{pmatrix}\,, \label{definizione L S bot} \\
 L_\bot^S(z)[\widehat z_S] & := \ii \begin{pmatrix}
 \big(\big\langle {\mathbb J} d_S \big(d_\bot \Psi^{nls}(\Pi_S z) [z_\bot] \big)[\widehat z_S]\,,\, \partial_{x_j} \Psi^{nls}(\Pi_S z) \big\rangle_r \big)_{j \in S^\bot} \\
  \big(  \big\langle {\mathbb J} d_S \big(d_\bot \Psi^{nls}(\Pi_S z) [z_\bot] \Big)[\widehat z_S]\,,\, \partial_{y_j} \Psi^{nls}(\Pi_S z) \big\rangle_r \big)_{j \in S^\bot}
 \end{pmatrix}\,.\label{definizione L bot S}
 \end{align}
 The operator valued map $z \mapsto L(z)$ has the following properties:
 \begin{lemma}\label{stime Omega (w)}
The map $L : {\cal V} \cap h^0_r \to {\cal L}(h^0_c, h^0_c)$, $z \mapsto L(z)$ is real analytic.
For any $z \in {\cal V} \cap h^0_r$, $\widehat z \in h^0_c$,
$$
\| L(z)[\widehat z]\|_0 \lesssim \|z_\bot\|_0 \| \widehat z\|_0
$$
and for any $k \in \Z_{\geq 1}$, $\widehat z_1, \ldots, \widehat z_k \in h^0_c$, 
$$
\| d^k \big( L(z)[\widehat z] \big)[\widehat z_1, \ldots, \widehat z_k] \|_{0} \lesssim_k  \| \widehat z\|_0 \prod_{j = 1}^k \| \widehat z_j\|_0\,.
$$
Furthermore, the map $L$ is one smoothing, meaning that for any $s \in \Z_{\geq 1}$,  $L : {\cal V} \cap h^s_r \to {\cal L}(h^0_c, h^{s + 1}_c)$, $z \mapsto L(z)$ is real analytic and satisfies the following estimates:
for any $z \in {\cal V} \cap h^s_r$, $\widehat z \in h^s_c$,
\begin{equation}\label{estimates for L(z)}
 \| L (z) [\widehat z] \|_{s + 1} \lesssim_s \| z_\bot \|_s \| \widehat z\|_0
 \end{equation}
and for any $k \in \Z_{\geq 1}$, $z \in {\cal V} \cap h^s_r$, $ \widehat z_1, \ldots, \widehat z_k \in h^s_c$,
\begin{equation}\label{estimates of derivatives of L(z)}
\| d^k \big( L(z)[\widehat z] \big)[\widehat z_1, \ldots, \widehat z_k] \|_{s + 1} \lesssim_{s, k} 
\| \widehat z\|_0\sum_{j = 1}^k \| \widehat z_j\|_s \prod_{i \neq j} \| \widehat z_i\|_0  +
\| \widehat z\|_0 \| z_\bot\|_s \prod_{j = 1}^k \| \widehat z_j\|_0 \,.
\end{equation}
 In particular, $L(z) = 0$ for any $z \in {\cal V} \cap h^0_r$ with $z_\bot = 0$.
Finally, $L(z) = - L(z)^t$ or, more explicitly, for any
$ z \in {\cal V} \cap h^0_r,$
 \begin{equation}\label{proprieta blocchi L(w)}
 L_S^S(z)^t = - L_S^S(z),\quad L_S^\bot(z)^t = - L_\bot^S(z), \quad L_\bot^S(z)^t = - L_S^\bot(z)\,.
 \end{equation}
 \end{lemma}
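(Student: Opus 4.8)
The plan is to establish the claims about $L(z)$ by working directly from the explicit formulas \eqref{definizione L SS}--\eqref{definizione L bot S}, treating the four blocks $L_S^S, L_S^\bot, L_\bot^S$ (and the vanishing $(\bot,\bot)$-block) one at a time, and reducing everything to already-known properties of $\Psi^{nls}$, namely: its analyticity and the one-smoothing property of $B^{nls}$ (Theorem \ref{Theorem Birkhoff coordinates}), together with the elementary fact that on the finite set $S$ one has $\|\Pi_S\widehat z\|_s \lesssim_s \|\Pi_S\widehat z\|_0$ and that $\{\Psi^{nls}(\Pi_S z) : z\in{\cal V}\cap h^0_r\}$ is relatively compact in every $H^s_r$. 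First I would record that each block is assembled from quantities of the shape $\langle {\mathbb J}\, d_\bot\Psi^{nls}(\Pi_S z)[\cdot]\,,\,\partial_S^{\alpha,\beta}\,d_\bot\Psi^{nls}(\Pi_S z)[z_\bot]\rangle_r$, $\langle {\mathbb J}\, d_S(d_\bot\Psi^{nls}[z_\bot])[\cdot]\,,\,\partial_S^{\alpha,\beta}\Psi^{nls}\rangle_r$, and $\langle {\mathbb J}\, d_S(d_\bot\Psi^{nls}[z_\bot])[\cdot]\,,\,\partial_S^{\alpha,\beta}\,d_\bot\Psi^{nls}[z_\bot]\rangle_r$. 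The real-analyticity of $z\mapsto L(z)$ as a map into ${\cal L}(h^0_c,h^0_c)$, and into ${\cal L}(h^0_c,h^{s+1}_c)$, then follows from the real-analyticity of $\Psi^{nls}$ and its derivatives (composition and multilinear contraction of analytic maps is analytic), so I would dispatch that first and briefly.

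Next I would prove the estimates. The structural point is that every entry of $L(z)$ is \emph{linear in $z_\bot$} — it carries exactly one factor $d_\bot\Psi^{nls}(\Pi_S z)[z_\bot]$ or $d_S(d_\bot\Psi^{nls}(\Pi_S z)[z_\bot])[\cdot]$ — which is why $L(z)=0$ when $z_\bot=0$ and why the bounds are all of the form $\|z_\bot\|_\ast(\cdots)$. For the $h^0$-estimate $\|L(z)[\widehat z]\|_0 \lesssim \|z_\bot\|_0\|\widehat z\|_0$ I would: use that ${\mathbb J}$ is an isometry; use boundedness of $d\Psi^{nls}(\Pi_S z)$ and $d^2\Psi^{nls}(\Pi_S z)$ on the compact parameter set $\overline{\cal V}_S$ (or, for the pieces hitting $\partial_S$ of $d_\bot\Psi^{nls}$, of $d^2$ and $d^3$); bound $\|\Pi_S\widehat z\| \le \|\widehat z\|_0$; and bound the remaining $d_\bot\Psi^{nls}(\Pi_S z)[z_\bot]$ by $\lesssim\|z_\bot\|_0$. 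For the one-smoothing estimate \eqref{estimates for L(z)} the only extra input is that $B^{nls}=\Psi^{nls}-F_{nls}^{-1}$ and its $z_S$-derivatives are one smoothing, so the factor $d_\bot\Psi^{nls}(\Pi_S z)[z_\bot] = d_\bot B^{nls}(\Pi_S z)[z_\bot]$ (the linear part $F_{nls}^{-1}$ has no $z_\bot$-$z_S$ cross term, hence drops out, or is absorbed) gains one derivative, contributing $\|z_\bot\|_{s}$ in place of $\|z_\bot\|_0$, while every other factor is estimated at regularity $0$ using compactness of the $\Psi^{nls}(\Pi_S z)$-orbit in $H^{s+1}_r$; there is no loss on the $S$-components, so $\widehat z$ enters only through $\|\widehat z\|_0$. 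For the derivative bounds, I would differentiate the three generic bilinear templates $k$ times in $z$ by the product/chain rule: each derivative either lands on a $\Psi^{nls}$-factor (still bounded, by compactness, at the relevant regularity and picking up one $\|\widehat z_j\|_0$), or — in exactly the one term per monomial that still carries the explicit $z_\bot$ — lands on that $z_\bot$, replacing $\|z_\bot\|_s\prod\|\widehat z_j\|_0$ by $\|\widehat z\|_0\sum_{j}\|\widehat z_j\|_s\prod_{i\ne j}\|\widehat z_i\|_0$; collecting the two resulting types of terms yields exactly the right-hand side of \eqref{estimates of derivatives of L(z)} (and of the $h^0$ derivative bound when $s=0$).

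Finally, for the antisymmetry $L(z)=-L(z)^t$ and the block identities \eqref{proprieta blocchi L(w)}, I would not compute with the explicit formulas but instead argue structurally: $\Psi_L^*\Lambda(z)$ is, by naturality of pullback, a $2$-form on $h^0_c$, hence antisymmetric, and by \eqref{definizione delta omega Omega} it equals $\Lambda_M + (L(z)\cdot,\cdot)_r$; since $\Lambda_M = (J^{-1}\cdot,\cdot)_r$ with $J^{-1}$ antisymmetric with respect to $(\cdot,\cdot)_r$, the form $(L(z)\widehat z,\widehat z')_r$ is antisymmetric, i.e. $(L(z)\widehat z,\widehat z')_r = -(L(z)\widehat z',\widehat z)_r = -(\widehat z, L(z)^t\widehat z')_r$, which gives $L(z)^t=-L(z)$; reading this off in the block decomposition \eqref{definizione Omega(w)} with respect to the splitting $\C^S\times\C^S$ versus $h^0_{\bot c}$ yields precisely $(L_S^S)^t=-L_S^S$, $(L_\bot^S)^t=-L_S^\bot$, $(L_S^\bot)^t=-L_\bot^S$, and forces the $(\bot,\bot)$-block to be its own negative transpose — consistent with it being $0$, as we already see from the formulas (no monomial there is linear in $z_\bot$, since the $(\bot,\bot)$ entry of $\Psi_L^*\Lambda$ involves only $d_\bot\Psi^{nls}$ paired with $d_\bot\Psi^{nls}$, which is $(\Psi^{nls})^*\Lambda$ restricted to $\bot$, i.e. $\Lambda_M$ on the $\bot$-block). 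I expect the main obstacle to be purely bookkeeping: carefully verifying, in the $k$-th derivative of each of the three bilinear templates, that precisely one term retains the explicit $\|z_\bot\|_s$ factor while all others are controlled with the compactness of $\{\Psi^{nls}(\Pi_S z)\}$ in high Sobolev norm — there is no genuine analytic difficulty, only the risk of miscounting which term carries the surviving $z_\bot$. I would streamline this by proving once and for all an auxiliary estimate for expressions of the form $\langle {\mathbb J} G_1(z)[\widehat z]\,,\,G_2(z)[z_\bot]\rangle_r$ with $G_1,G_2$ analytic, bounded, $G_2$ one smoothing, and then applying it to each of the (finitely many) entries.
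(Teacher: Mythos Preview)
Your approach is essentially the paper's: analyticity and the estimates follow from Theorem \ref{Theorem Birkhoff coordinates} by working directly with the explicit formulas \eqref{definizione L SS}--\eqref{definizione L bot S} and differentiating, and the paper's proof is correspondingly terse. One small inaccuracy to flag: you assert that every entry carries ``exactly one'' factor of $z_\bot$, but the third block in \eqref{definizione L SS} (arising from the term $(IV)$ in \eqref{cavolo romano 4}) is in fact quadratic in $z_\bot$. This does not break anything --- $L_S^S$ takes values in the finite-dimensional space $\C^S\times\C^S$, where all $\|\cdot\|_s$ norms are equivalent, and ``at least one factor of $z_\bot$'' already suffices both for $L(z_S,0)=0$ and for the stated estimates (using $\|z_\bot\|_0\lesssim 1$ on ${\cal V}$) --- but it is worth correcting in your write-up. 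For the antisymmetry \eqref{proprieta blocchi L(w)}, the paper reads it off directly from ${\mathbb J}^t=-{\mathbb J}$ applied to the explicit formulas; your structural argument via antisymmetry of the pulled-back $2$-form $\Psi_L^*\Lambda$ is equally valid and arguably cleaner.
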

 \begin{proof}
 The analyticity of $L$ follows by Theorem \ref{Theorem Birkhoff coordinates}, using again that $d_Sd_\bot\Psi^{nls} = d_S d_\bot B^{nls}$. 
 Since ${\mathbb J}^t = - {\mathbb J}$, one reads off from the expressions \eqref{definizione L SS}-\eqref{definizione L bot S} that \eqref{proprieta blocchi L(w)} holds. 
The estimates \eqref{estimates for L(z)} and \eqref{estimates of derivatives of L(z)} follow from 
Theorem \ref{Theorem Birkhoff coordinates} by differentiating the expressions in the definitions of 
$L_S^S(z)$, $L_S^\bot(z)$, and $L_\bot^S(z)$ with respect to $z$. 
 \end{proof}

 \section{The symplectic corrector $\Psi_C$}\label{costuzione mappa Psi simplettica}
 In this section we construct the coordinate transformation $\Psi_C$ on ${\cal V} \cap h^0_r$ so that the composition $\Psi_L \circ \Psi_C$ is symplectic. As mentioned in the introduction, we follow Kuksin's scheme of proof in \cite{K},
 which uses arguments of Moser and Weinstein in the given infinite dimensional setup.
 The map $\Psi_C$ will be defined as the time one flow of an appropriately chosen non autonomous vector field. 
In the sequel, ${\cal V}$ denotes the neighborhood of ${\cal K}\times {0}$, given by Proposition \ref{Psi L 2}
and Proposition \ref{estimates of d Psi L inv}.

For any $z \in {\cal V}$ define the following two and one forms on $h^0_c$,
 \begin{equation}\label{definizione omega 0 1}
\Lambda_0 := \Lambda_M\,, \qquad \Lambda_1 (z) := \Psi_L^* \Lambda(z) = \Lambda_M+ \Lambda_L(z)\,,
 \end{equation}
 \begin{equation}\label{definizione lambda 0 1}
 \lambda_0 := \lambda_M\,, \qquad \lambda_1 (z) := \Psi_L^* \lambda(z) \,.
 \end{equation}
{\bf Analysis of the two form $\Lambda_1 (z)$:} 
Note that $d \lambda_i = \Lambda_i$, $i = 0,1$, and 
\begin{equation}\label{closeness Lambda 1 0}
\Lambda_1 - \Lambda_0 = \Lambda_L = d(\lambda_1 - \lambda_0)\,.
\end{equation}
In particular, the two form $\Lambda_L$ is closed. 
By \eqref{definizione due forma h0 c}, \eqref{definizione delta omega Omega} one has 
$$
\Lambda_1(z) [\widehat z, \widehat z'] = \big( {\cal L}_1(z)[\widehat z], \, \widehat z' \big)_r\,, \qquad {\cal L}_1(z) := J^{- 1} + L(z)\,.
$$
For any $\tau \in [0, 1]$, define the two form $\Lambda_\tau = \Lambda_\tau(z)$,
\begin{equation}\label{definizione Lambda tau}
\Lambda_\tau := \tau \Lambda_1 + (1 - \tau) \Lambda_0\,,
\end{equation}
which can be written as 
\begin{equation}\label{rappresentazione omega t}
\Lambda_\tau(z)[\widehat z, \widehat z'] = \big( {\cal L}_\tau(z)[\widehat z], \, \widehat z' \big)_r\,, \qquad {\cal L}_\tau(z) = J^{- 1} + \tau L(z)\,.
\end{equation}

\noindent
 It turns out that for any $\tau \in [0, 1]$ and $z \in {\cal V} \cap h^0_r$, the map ${\cal L}_\tau(z)$ is invertible and one smoothing. More precisely, the following holds:
  \begin{lemma}\label{stime cal Lt inverso}
After shrinking the ball ${\cal V}_\bot \subset h^0_{\bot c}$ in ${\cal V} = {\cal V}_S \times {\cal V}_\bot$, if necessary, one has that for any $s \in \Z_{\geq 0}$, $z \in {\cal V} \cap h^s_r$, and $\tau \in [0, 1]$, the operator ${\cal L}_\tau(z) : h^s_c \to h^s_c$ is invertible and 
for any $k \in \Z_{\geq 1}$, $z \in {\cal V} \cap h^0_r$, $\widehat z, \widehat z_1, \ldots, \widehat z_k \in h^0_c$,
$$
 \| ({\cal L}_\tau (z)^{- 1} - J) [\widehat z] \|_{0} \lesssim \| z_\bot \|_0 \| \widehat z\|_0\,, \qquad
 \| d^k \big( {\cal L}_\tau (z)^{- 1} [\widehat z] \big)[\widehat z_1, \ldots, \widehat z_k] \|_{0} \lesssim_{ k} \| \widehat z\|_0  \prod_{j = 1}^k \| \widehat z_j\|_0\,.
 $$
 Moreover for any $s \in \Z_{\geq 1}$ and $\tau \in [0, 1]$, the map 
$$
{\cal L}_\tau^{- 1} - J : {\cal V} \cap h^s_r \to {\cal L}(h^s_c, h^{s + 1}_c), \quad z \mapsto {\cal L}_\tau(z)^{- 1} - J
$$
is real analytic and the following tame estimates hold: for any $k \in \Z_{\geq 1}$, $z \in {\cal V} \cap h^s_r$, 
$\widehat z, \widehat z_1, \ldots, \widehat z_k \in h^s_c$,
$$
 \| ({\cal L}_\tau (z)^{- 1} - J) [\widehat z] \|_{s + 1} \lesssim_s \| z_\bot \|_s \| \widehat z\|_0\,,
 $$
 $$
 \| d^k \big( {\cal L}_\tau (z)^{- 1} [\widehat z] \big)[\widehat z_1, \ldots, \widehat z_k] \|_{s + 1} \lesssim_{s, k} \| \widehat z\|_0 \sum_{j = 1}^k \| \widehat z_j\|_s \prod_{i \neq j} \| \widehat z_i\|_0  +  \| \widehat z\|_0 \| z_\bot\|_s \prod_{j = 1}^k \| \widehat z_j\|_0\,.
 $$

 \end{lemma}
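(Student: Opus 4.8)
The plan is to reduce the inversion of ${\cal L}_\tau(z) = J^{-1} + \tau L(z)$ (see \eqref{rappresentazione omega t}) to a Neumann series on $h^0_c$ and then to propagate one degree of smoothing through the inverse. Writing ${\cal L}_\tau(z) = J^{-1}\big( {\rm Id} + \tau J L(z) \big)$, Lemma \ref{stime Omega (w)} gives $\| J L(z)[\widehat z]\|_0 = \| L(z)[\widehat z]\|_0 \lesssim \| z_\bot\|_0 \| \widehat z\|_0$, so after shrinking the radius of ${\cal V}_\bot$ so that the ${\cal L}(h^0_c, h^0_c)$-operator norm of $\tau J L(z)$ is at most $1/2$ for all $z \in {\cal V}$ and all $\tau \in [0, 1]$, the Neumann series for $\big( {\rm Id} + \tau J L(z) \big)^{-1}$ converges absolutely in ${\cal L}(h^0_c, h^0_c)$ with operator norm bounded by $2$, uniformly in $z$ and $\tau$. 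Hence ${\cal L}_\tau(z)^{-1} = \big( {\rm Id} + \tau J L(z) \big)^{-1} J$ is a bounded operator on $h^0_c$, and the algebraic identity ${\cal L}_\tau(z)^{-1} - J = - \tau \big( {\rm Id} + \tau J L(z) \big)^{-1} J L(z) J$ together with the $h^0$-bound on $L$ yields the first estimate of the lemma at once; the corresponding $h^0$ estimate for the derivatives then follows by differentiating ${\cal L}_\tau(z)^{-1}$ in $z$ (see below) and using only the $h^0$-bounds for $L$ and its $z$-derivatives from Lemma \ref{stime Omega (w)}.

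For $s \in \Z_{\geq 1}$, I would obtain invertibility of ${\cal L}_\tau(z) : h^s_c \to h^s_c$ by noting that, by the one smoothing property of $L$ in Lemma \ref{stime Omega (w)}, $\tau J L(z)$ maps $h^s_c$ into $h^{s + 1}_c$, which embeds compactly into $h^s_c$; hence ${\rm Id} + \tau J L(z)$ is a Fredholm operator of index zero on $h^s_c$, and since it is injective on $h^0_c \supseteq h^s_c$ it is invertible on $h^s_c$. Real analyticity of $z \mapsto {\cal L}_\tau(z)^{-1} - J$ on ${\cal V} \cap h^s_r$, as a map into ${\cal L}(h^s_c, h^{s + 1}_c)$, then follows from the real analyticity of $L$ stated in Lemma \ref{stime Omega (w)}, the analyticity of operator inversion, and the factorization ${\cal L}_\tau(z)^{-1} - J = - \tau \big( {\rm Id} + \tau J L(z) \big)^{-1} J L(z) J$, which exhibits the smoothing gain explicitly (the factor $L(z)$ already maps $h^0_c$ into $h^{s + 1}_c$).

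The only genuine subtlety is the tame estimate at the top Sobolev level: the resolvent $\big( {\rm Id} + \tau J L(z) \big)^{-1}$ is only a priori controlled on $h^0_c$, since for $z \in {\cal V}$ the quantity $\| z_\bot\|_s$ need not be bounded, so one cannot claim a uniform bound for it on $h^{s + 1}_c$. I would resolve this by the resolvent identity $\big( {\rm Id} + \tau J L(z) \big)^{-1} = {\rm Id} - \tau J L(z) \big( {\rm Id} + \tau J L(z) \big)^{-1}$, in which the resolvent on the right acts only on the given element of $h^0_c$ while the prefactor $\tau J L(z)$ supplies one degree of smoothing; together with the one smoothing bound \eqref{estimates for L(z)}, i.e. $\| L(z)[\widehat w]\|_{s + 1} \lesssim_s \| z_\bot\|_s \| \widehat w\|_0$, and the uniform bound $\| \big( {\rm Id} + \tau J L(z) \big)^{-1}\widehat w\|_0 \le 2 \| \widehat w\|_0$, this gives $\| \big( {\rm Id} + \tau J L(z) \big)^{-1} \widehat w\|_{s + 1} \lesssim_s \| \widehat w\|_{s + 1} + \| z_\bot\|_s \| \widehat w\|_0$ for $\widehat w \in h^{s + 1}_c$. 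Inserting this into ${\cal L}_\tau(z)^{-1} - J = - \tau \big( {\rm Id} + \tau J L(z) \big)^{-1} J L(z) J$, bounding the inner factor $L(z) J$ again by \eqref{estimates for L(z)}, and recalling that ${\cal V}_\bot$ has radius less than one so that $\| z_\bot\|_0 \lesssim 1$, we obtain $\| ({\cal L}_\tau(z)^{-1} - J)[\widehat z]\|_{s + 1} \lesssim_s \| z_\bot\|_s \| \widehat z\|_0$.

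Finally, for the derivatives I would differentiate the inverse: by Fa\`a di Bruno's formula and $d {\cal L}_\tau(z) = \tau\, d L(z)$, the quantity $d^k\big( {\cal L}_\tau(z)^{-1}[\widehat z]\big)[\widehat z_1, \ldots, \widehat z_k]$ is a finite linear combination of terms $\tau^m\, {\cal L}_\tau(z)^{-1}\big( d^{j_1} L(z)[\cdots]\big) {\cal L}_\tau(z)^{-1} \cdots \big( d^{j_m} L(z)[\cdots]\big) {\cal L}_\tau(z)^{-1}[\widehat z]$ with $m \ge 1$, $j_1 + \cdots + j_m = k$, each $j_i \ge 1$, and the directions $\widehat z_1, \ldots, \widehat z_k$ distributed among the factors. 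The $h^0$ estimate follows from the uniform $h^0$-bound on ${\cal L}_\tau(z)^{-1}$ and the $h^0$ bounds for $d^j L$ in Lemma \ref{stime Omega (w)}. For the $h^{s + 1}$ estimate I would again invoke the resolvent identity so that the left-most resolvent acts only on an $h^0_c$-element, let exactly one $d^j L$-factor carry the gain of smoothing to level $s + 1$ via \eqref{estimates of derivatives of L(z)}, and keep track of whether the high norm $\| \cdot\|_s$ is carried by $z_\bot$ or by one of the $\widehat z_i$. This last step is routine but somewhat lengthy bookkeeping, and it is where the bulk of the verification lies, the remaining steps being immediate consequences of the Neumann-series argument; uniformity in $\tau \in [0, 1]$ is automatic throughout, since every bound on $\tau J L(z)$ is $\tau$-independent.
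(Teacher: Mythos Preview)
Your argument is correct and follows essentially the same approach as the paper: both factor ${\cal L}_\tau(z) = J^{-1}({\rm Id} + \tau J L(z))$, shrink ${\cal V}_\bot$ so that the Neumann series converges in ${\cal L}(h^0_c, h^0_c)$, and then exploit that a single factor of $L(z)$ provides the gain to $h^{s+1}_c$ while the remaining factors are controlled in $h^0_c$. The paper writes out the Neumann series explicitly as ${\cal L}_\tau(z)^{-1} = J + \sum_{n\ge 1}(-1)^n (\tau J L(z))^n J$ and bounds each term $\|(\tau J L(z))^n J[\widehat z]\|_{s+1} \le C(s)(C_0\|z_\bot\|_0)^{n-1}\|z_\bot\|_s\|\widehat z\|_0$ directly, whereas you package the same computation via the resolvent identity; these are equivalent. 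Your Fredholm argument for invertibility on $h^s_c$ is correct but slightly heavier than needed: the paper's explicit series already shows that ${\cal L}_\tau(z)^{-1} - J$ maps $h^0_c$ into $h^{s+1}_c$, so the $h^0$-inverse automatically restricts to an inverse on $h^s_c$.
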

 \begin{proof} 
 For any $\tau \in [0, 1]$, we write 
 $$
 {\cal L}_\tau(z) = J^{- 1} \big({\rm Id} + L_\tau(z) \big)\,, \qquad L_\tau(z) := \tau J L(z)\,.
 $$
 By  \eqref{definizione Omega(w)} and Theorem \ref{Theorem Birkhoff coordinates}, the operator $L_\tau (z)$ satisfies the estimate $\| L_\tau(z)[\widehat z]\|_0 \leq C_0 \| z_\bot\|_0 \| \widehat z\|_0$, for any $z \in {\cal V} \cap h^0_r$ and $\widehat z \in h^0_c$ for some constant $C_0 > 0$. By shrinking the ball ${\cal V}_\bot$, if necessary, one has that for any $z_\bot \in {\cal V}_\bot$, $C_0\| z_\bot\|_0 \leq 1/2$, implying that the operator ${\cal L}_\tau(z)$ is invertible and its inverse ${\cal L}_\tau(z)^{- 1}$ is given by the Neumann series 
 \begin{equation}\label{definizione Lt (w)}
 {\cal L}_\tau(z)^{- 1} =  J + \sum_{n \geq 1}(- 1)^n L_\tau(z)^n J\,.
 \end{equation}
 By Lemma \ref{stime Omega (w)}, for any $s, n \in \Z_{\geq 1}$ and $\tau \in [0, 1]$,  one has  
 \begin{align}
 \| L_\tau(z)^n J[\widehat z] \|_{s + 1} & \leq C(s) \| z_\bot\|_s \| L_\tau(z)^{n - 1} J[\widehat z]\|_0 \leq C(s) (C_0 \| z_\bot\|_0)^{n - 1} \| z_\bot\|_s \| \widehat z\|_0\,
 \end{align}
 for some constant $C(s) > 0$.
 Since $C_0 \| z_\bot\|_0 \leq 1/2$, one gets  
 $$
 \|({\cal L}_\tau(z)^{- 1} - J)[\widehat z] \|_{s + 1} \lesssim_s \| z_\bot\|_s \| \widehat z\|_0\,.
 $$
 The estimates for the derivatives $d^k \big({\cal L}_\tau(z)^{- 1}[\widehat z]\big)$ follow by differentiating the expression \eqref{definizione Lt (w)} with respect to $z$ and applying the estimates for $d^k\big( L(z)[\widehat z]\big)$ of Lemma \ref{stime Omega (w)}. 
 \end{proof}

\noindent
Since by \eqref{closeness Lambda 1 0}, the two form $\Lambda_L = \Lambda_1 - \Lambda_0$ is closed and by Lemma  \ref{stime Omega (w)}, for any $z \in {\cal V} \cap h^0_r$, $\Lambda_L (\Pi_S z) = 0$, we can apply Lemma \ref{lemma di poincare} in Appendix A. It says that  the one form    
 \begin{equation}\label{1 forma Kuksin}
 \lambda_L (z) [\widehat z] : = \int_0^1 \Lambda_L (z_S, t z_\bot)[(0, z_\bot), (\widehat z_S, t \widehat z_\bot)]\,d t\,
 \end{equation}
satisfies $d \lambda_L = \Lambda_L$. By \eqref{definizione delta omega Omega}, \eqref{definizione Omega(w)}, the one form $\lambda_L(z)$ can be written as 
 $$
  \lambda_L (z) [\widehat z] =   \int_0^1 \big( L(z_S, t z_\bot)(0, z_\bot), \,  (\widehat z_S, t \widehat z_\bot) \big)_r \, d t \,
=  \int_0^1  L_S^\bot(z_S, t z_\bot)[ z_\bot]\,\cdot\,\widehat z_S  \,d t\,.
 $$
  Moreover, using that by \eqref{definizione L S bot}, $L_S^\bot(z_S, t z_\bot) = t L_S^\bot(z_S,  z_\bot)$, it turns out that 
 \begin{equation}
   \lambda_{L} (z) [\widehat z] = \big({ E}(z) \,,\, \widehat z\big)_r\,, \quad E(z) := (E_S(z), 0)\in \C^S \times \C^S \times h^0_{\bot c}  \label{forma finale 1 forma Kuksin}
 \end{equation}
 where
 \begin{equation}\label{forma finale 1 forma Kuksin 2}
  E_S(z) := \frac12 L_S^\bot(z)[z_\bot] =  \frac{\ii}{2}\begin{pmatrix} \big(\big\langle {\mathbb J}  d_\bot \Psi^{nls}(\Pi_S z)[  z_\bot] \,,\, \partial_{x_j}  d_\bot \Psi^{nls}(\Pi_S z)[ z_\bot]     \big\rangle_r \big)_{j \in S} \\
\big(\big\langle {\mathbb J}  d_\bot \Psi^{nls}(\Pi_S z)[  z_\bot] \,,\, \partial_{y_j}  d_\bot \Psi^{nls}(\Pi_S z)[ z_\bot]     \big\rangle_r \big)_{j \in S}
\end{pmatrix}\,.
 \end{equation}
 One of the features of $\lambda_L(z)$ is that it is quadratic in $z_\bot$. In more detail, we have the following
 \begin{lemma}\label{stima campo vettoriale 1-forma}
 For any $s \in \Z_{\geq 0}$, the map $E : {\cal V} \cap h^0_r \to h^s_r$ is real analytic and satisfies the following tame estimates: for any $z \in {\cal V} \cap h^0_r$, $\widehat z \in h^0_c$,
 $$
 \| E(z) \|_s \lesssim_s \| z_\bot\|_0^2\,, \qquad \| d E(z)[\widehat z] \|_s \lesssim_s \| z_\bot\|_0 \| \widehat z\|_0\,, 
 $$
 and any $k \geq 2$, $\widehat z_1, \ldots, \widehat z_k \in h^0_c$,
 $$
\| d^k E(z)[\widehat z_1, \ldots, \widehat z_k] \|_s \lesssim_{s, k} \prod_{j = 1}^k \| \widehat z_j \|_0\,.
 $$
 \end{lemma}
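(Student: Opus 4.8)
The plan is to read off all estimates directly from the explicit formula \eqref{forma finale 1 forma Kuksin 2} for $E(z) = (E_S(z),0)$, using that $\Psi^{nls} = F_{nls}^{-1} + B^{nls}$ with $B^{nls}$ one smoothing, together with the compactness of $\overline{\cal V}_S$. First I would note that since $S$ is finite, $\C^S\times\C^S$ is finite dimensional, so all norms $\|\cdot\|_s$ on $E_S(z)$ are equivalent; hence it suffices to bound each component $\big\langle {\mathbb J}\, d_\bot\Psi^{nls}(\Pi_S z)[z_\bot]\,,\,\partial_{x_j}d_\bot\Psi^{nls}(\Pi_S z)[z_\bot]\big\rangle_r$ (and the $y_j$-analogue) in $\C$, and real analyticity as a map ${\cal V}\cap h^0_r \to h^s_r$ follows once we know it is analytic with values in the finite dimensional space $\C^S\times\C^S$, which in turn follows from the analyticity of $\Psi^{nls}$ (equivalently $B^{nls}$) asserted in Theorem \ref{Theorem Birkhoff coordinates}, exactly as in Lemma \ref{stime Omega (w)}.

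The key structural observation is that $E_S(z)$ is a sum of terms of the shape $\langle {\mathbb J}\, P(\Pi_S z)[z_\bot]\,,\, Q(\Pi_S z)[z_\bot]\rangle_r$, where $P = d_\bot\Psi^{nls}(\Pi_S z)$ and $Q = \partial_{x_j}d_\bot\Psi^{nls}(\Pi_S z) = \partial_{x_j}d_\bot B^{nls}(\Pi_S z)$ (since $F_{nls}^{-1}$ is linear, its second derivative vanishes, so the $z$-derivative of $d_\bot\Psi^{nls}$ sees only $B^{nls}$). Both $P(\Pi_S z)$ and $Q(\Pi_S z)$ are bounded linear operators $h^0_{\bot c}\to H^0_c$ (indeed $Q$ even maps into $H^1_c$ by the one-smoothing property of $B^{nls}$), with operator norms bounded uniformly for $z\in{\cal V}$ because $\overline{\cal V}_S$ is compact and $\Pi_S z$ ranges over it. Since $\langle\cdot,\cdot\rangle_r$ is the (un-conjugated) $L^2$-pairing and $\mathbb J$ is bounded on $H^0_c$, Cauchy--Schwarz gives $|E_S(z)| \lesssim \|P(\Pi_S z)[z_\bot]\|_0\,\|Q(\Pi_S z)[z_\bot]\|_0 \lesssim \|z_\bot\|_0^2$, which is the claimed bound $\|E(z)\|_s\lesssim_s\|z_\bot\|_0^2$ (the $s$-dependence enters only through the norm-equivalence constant on the finite dimensional range). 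The bilinearity in $z_\bot$ is what makes this quadratic; note no loss of derivatives occurs because we only ever use the $H^0$-norm of $z_\bot$.

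For the derivative estimates I would differentiate the bilinear expression with respect to $z = (z_S,z_\bot)$ and apply the Leibniz rule. A $z_\bot$-derivative hits one of the two arguments $z_\bot$ and removes one factor of $\|z_\bot\|_0$, replacing it by $\|\widehat z\|_0$; a $z_S$-derivative hits $\Pi_S z$ inside $P$ or $Q$, producing a further derivative of $\Psi^{nls}$ (again, of $B^{nls}$), which is still a bounded operator $h^0_{\bot c}\to H^0_c$ uniformly on the compact set $\overline{\cal V}_S$ by Theorem \ref{Theorem Birkhoff coordinates}, and costs only $\|\widehat z\|_0$ (no extra $z_\bot$-factor is consumed, which is why $d^k E$ for $k\ge 2$ has the clean bound $\prod_j\|\widehat z_j\|_0$ with no $\|z_\bot\|_0$ prefactor: after two differentiations both $z_\bot$'s can be gone). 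Concretely: $dE(z)[\widehat z]$ is a sum of terms each containing exactly one factor $z_\bot$ and one factor $\widehat z$, so it is $\lesssim_s \|z_\bot\|_0\|\widehat z\|_0$; $d^2 E(z)[\widehat z_1,\widehat z_2]$ is a sum of terms, each of which is either $O(\|\widehat z_1\|_0\|\widehat z_2\|_0)$ (both $z_\bot$'s differentiated) or $O(\|z_\bot\|_0\|\widehat z_1\|_0\|\widehat z_2\|_0)$ — but the latter is absorbed since ${\cal V}_\bot$ has radius $<1$, so $\|z_\bot\|_0\lesssim 1$; and for $k\ge 3$ every term has at least one $z_S$-derivative with all $z_\bot$-factors either differentiated or bounded by $1$, giving $\lesssim_{s,k}\prod_{j=1}^k\|\widehat z_j\|_0$. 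The only mild subtlety — and the point I would be most careful about — is organizing the Leibniz expansion so that one checks the $k\ge 2$ case really does lose the $\|z_\bot\|_0$ prefactor; this is a bookkeeping matter about which factors survive, not an analytic difficulty, since all operator norms in sight are uniformly bounded on ${\cal V}$.
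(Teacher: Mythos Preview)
Your proposal is correct and follows essentially the same approach as the paper: the paper's proof simply cites the three key ingredients you use --- that $E = \Pi_S E$ takes values in the finite dimensional $S$-block (whence $\|\Pi_S z\|_s \lesssim_s \|z\|_0$), the analyticity and boundedness properties of $\Psi^{nls}$ from Theorem \ref{Theorem Birkhoff coordinates}, and the fact that ${\cal V}_\bot$ has radius less than $1$ --- and leaves the Leibniz bookkeeping implicit. Your write-up just fills in those details explicitly.
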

 \begin{proof}
 The lemma follows by the properties of the map $\Psi^{nls}$, stated in Theorem \ref{Theorem Birkhoff coordinates}, and the fact that $E = \Pi_S E$, $\|\Pi_S z \|_s \lesssim_s \| z \|_0$ for any vector $z \in h^0_c$, and ${\cal V}_\bot \subset h^0_{\bot c}$ is a ball of radius smaller than $1$.
 \end{proof}
 \noindent
{\bf Outline of the construction of $\Psi_C$:} Following arguements of Moser and Weinstein,
our candidate for $\Psi_C$ is $\Psi^{0, 1}_X$
where $X \equiv X(z, \tau) \in h^0_r$ is a non autonomous vector field 
 with well defined flow $\Psi^{\tau_0, \tau}_X$, $0 \le \tau_0, \tau \le 1$, 
 so that $(\Psi^{0, 1}_X)^* \Lambda_1 = \Lambda_0$. 
 Here $z \in {\cal V}$ and the flow is normalized by $\Psi^{\tau_0, \tau_0}_X (z) = z$.
 To see how to choose $X(z, \tau)$,
 consider the pullback of the two form $\Lambda_\tau$
 by $\Psi^{0, \tau}_X$, $(\Psi^{0, \tau}_X)^* \Lambda_\tau$. Since $(\Psi^{0, 0}_X)^* = Id$, one has
 $(\Psi^{0, 0}_X)^* \Lambda_0 = \Lambda_0$. The desired identity $(\Psi^{0, 1}_X)^* \Lambda_1 = \Lambda_0$
 then follows provided that $(\Psi^{0, \tau}_X)^* \Lambda_\tau$ is independent of $\tau$, i.e., 
 $\partial_\tau \big( (\Psi^{0, \tau}_X)^* \Lambda_\tau \big) = 0$. Since 
 $\partial_\tau \Lambda_\tau = \Lambda_1 - \Lambda_0 = d\lambda_L$, 
it turns out that the latter identity holds if
 $\lambda_L + \Lambda_\tau [ X(\cdot , \tau), \, \cdot \,] = 0$. When expressed in terms of the bilinear form 
 $( \cdot , \cdot )_r$ and taking into account the representation \eqref{rappresentazione omega t} of $\Lambda_\tau$
 and  \eqref{forma finale 1 forma Kuksin} of $\lambda_L$, the latter identity reads
 \begin{equation}\label{fundamental identity}
  \big({ E}(z) \,,\, \widehat z\big)_r + \big( {\cal L}_\tau(z)[X(z, \tau)], \, \widehat z \big)_r = 0\,.
\end{equation}
We choose the vector field $X(z, \tau)$ so that \eqref{fundamental identity} is satisfied.

\bigskip

\noindent
{\bf Vector field $X(z, \tau)$ and its flow:} Motivated by \eqref{fundamental identity},
 the non autonomous vector field $X(z, \tau)$ is defined by
 \begin{equation}\label{definizione campo vettoriale ausiliario}
 { X}( z, \tau) := - {\cal L}_\tau(z)^{- 1} E(z)\,, \quad z \in {\cal V}_S \times {\cal V}_\bot\,, \quad \tau \in [0, 1]\,.
 \end{equation}
 Lemmata \ref{stime cal Lt inverso}, \ref{stima campo vettoriale 1-forma} lead to the following 
 \begin{lemma}\label{stime campo vettoriale ausiliario}
The vector field $X : ({\cal V} \cap h^0_r)  \times [0, 1]\to h^0_r$ is real analytic and one smoothing, meaning that for any $s \in \Z_{\geq 1}$   
 $$
 X : ({\cal V} \cap h^s_r)  \times [0, 1] \to h^{s + 1}_r
 $$
 is real analytic. In addition, the following tame estimates hold: 
 for any $\tau \in [0, 1]$, $z \in {\cal V} \cap h^0_r$, $\widehat z \in h^0_c$,
 \begin{equation}\label{estimate vector field and its derivative0}
 \| X(z, \tau) \|_{0} \lesssim \| z_\bot \|_0^2\,,  \quad 
 \| d X(z, \tau)[\widehat z] \|_{0} \lesssim \| z_\bot\|_0 \| \widehat z\|_0
 \end{equation}
 and for any $k \geq 2$, $\widehat z_1, \ldots , \widehat z_k \in h^0_c$,
 $$
 \| d^k X(z, \tau)[\widehat z_1, \ldots, \widehat z_k]\|_{0} \lesssim_k \prod_{j =1 }^k \| \widehat z_j\|_0\,.
  $$
Moreover, for any $s \in \Z_{\geq 1}$, $z \in {\cal V} \cap h^s_r$, $\widehat z \in h^s_c$,
 \begin{equation}\label{estimate vector field and its derivative}
 \| X(z, \tau) \|_{s + 1} \lesssim_s \| z_\bot \|_s \| z_\bot \|_0\,,  \quad 
 \| d X(z, \tau)[\widehat z] \|_{s + 1} \lesssim_s \| z_\bot\|_0 \| \widehat z\|_s + \| z_\bot\|_s \| \widehat z\|_0\,
 \end{equation}
 and for any $k \geq 2$, $\widehat z_1, \ldots , \widehat z_k \in h^s_c$,
 $$
 \| d^k X(z, \tau)[\widehat z_1, \ldots, \widehat z_k]\|_{s + 1} \lesssim_{s, k} \sum_{j = 1}^k \| \widehat z_j\|_s \prod_{i \neq j} \| \widehat z_i\|_0 + \| z_\bot\|_s \prod_{j = 1}^k \|\widehat z_j \|_0\,.
 $$
  \end{lemma}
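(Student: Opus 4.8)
The plan is to exploit the factorisation $X(z,\tau) = -{\cal L}_\tau(z)^{-1}E(z)$ together with the splitting ${\cal L}_\tau(z)^{-1} = J + \big({\cal L}_\tau(z)^{-1} - J\big)$ furnished by Lemma~\ref{stime cal Lt inverso}, so that
$$
X(z,\tau) = -\,J\,E(z) \; - \; \big({\cal L}_\tau(z)^{-1} - J\big)\,E(z)\,.
$$
Every claim will then follow by combining, term by term, the estimates for $E$ from Lemma~\ref{stima campo vettoriale 1-forma} with those for ${\cal L}_\tau^{-1} - J$ from Lemma~\ref{stime cal Lt inverso}, using throughout that $E$ takes values in $h^m_r$ for \emph{all} $m$ (because $E = \Pi_S E$ and $\Pi_S$ smooths) and that ${\cal V}_\bot$ has radius less than one, so $\|z_\bot\|_0 \le 1$, while always $\|z_\bot\|_0 \le \|z_\bot\|_s$. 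Well-definedness, the mapping properties and real analyticity are read off the displayed formula: $J$ preserves each $h^m_c$, ${\cal L}_\tau^{-1}-J$ is one smoothing, and $E$ gains all derivatives, so $X$ takes values in $h^0_r$ and, for $s\in\Z_{\ge1}$, in $h^{s+1}_r$; analyticity in $z$ for fixed $\tau$ comes from that of $E$ and ${\cal L}_\tau^{-1}-J$, and joint analyticity in $(z,\tau)$ from ${\cal L}_\tau(z)=J^{-1}+\tau L(z)$ being polynomial in $\tau$ together with the uniform convergence of the Neumann series \eqref{definizione Lt (w)}.

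For the $s=0$ bounds, $\|X(z,\tau)\|_0 \le \|E(z)\|_0 + \|({\cal L}_\tau(z)^{-1}-J)E(z)\|_0 \lesssim \|z_\bot\|_0^2 + \|z_\bot\|_0\|E(z)\|_0 \lesssim \|z_\bot\|_0^2$ is immediate. For the derivatives I would apply the Leibniz rule to $\big({\cal L}_\tau(z)^{-1}-J\big)E(z)$: in each term $j$ derivatives hit the operator and $k-j$ hit $E$; one bounds $d^j{\cal L}_\tau(z)^{-1}$ by the operator-norm estimates of Lemma~\ref{stime cal Lt inverso} applied to the vector $d^{k-j}E(z)[\cdots]$, whose $h^0$-norm is controlled by Lemma~\ref{stima campo vettoriale 1-forma}, the cases $k-j\in\{0,1\}$ only introducing harmless extra factors $\|z_\bot\|_0\le1$. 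Summing the $O_k(1)$ terms gives $\|d^kX(z,\tau)[\widehat z_1,\dots,\widehat z_k]\|_0 \lesssim_k \prod_j\|\widehat z_j\|_0$ for $k\ge2$ and $\|dX(z,\tau)[\widehat z]\|_0\lesssim\|z_\bot\|_0\|\widehat z\|_0$, i.e. \eqref{estimate vector field and its derivative0}; the quadratic lower-order gain for $\|X\|_0$ and $\|dX[\widehat z]\|_0$ comes from the same terms, since the factor originating from $E$ always carries $\|z_\bot\|_0^2$ (for $E$ undifferentiated) or $\|z_\bot\|_0$ (for $dE$).

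Finally I would upgrade to the tame, one-smoothing estimates for $s\in\Z_{\ge1}$. Here the $h^{s+1}$-norm of each Leibniz term is estimated by the \emph{tame} bounds of Lemma~\ref{stime cal Lt inverso} — $\|({\cal L}_\tau(z)^{-1}-J)[w]\|_{s+1}\lesssim_s\|z_\bot\|_s\|w\|_0$ and the corresponding bounds for $d^j{\cal L}_\tau^{-1}$, in which a high norm $\|\widehat z_i\|_s$ enters only linearly — together with the fact that $E$ and all its derivatives are bounded in \emph{every} $h^s$-norm purely by $h^0$-quantities (more precisely by $\|z_\bot\|_0^2$, $\|z_\bot\|_0$, or $1$ according to the order of the derivative). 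Consequently the $s$-loss in any term is carried either by the single factor $\|z_\bot\|_s$ produced by ${\cal L}_\tau^{-1}-J$, giving the summand $\|z_\bot\|_s\prod_j\|\widehat z_j\|_0$, or by exactly one differentiation direction $\widehat z_i$ passing through the derivative estimates of ${\cal L}_\tau^{-1}$, giving $\sum_j\|\widehat z_j\|_s\prod_{i\ne j}\|\widehat z_i\|_0$; the $E$-factors never contribute an $s$-norm. Collecting the $O_{s,k}(1)$ terms, absorbing pure products $\prod_j\|\widehat z_j\|_0$ into the tame sum since $\|\widehat z_j\|_0\le\|\widehat z_j\|_s$, and using $\|z_\bot\|_0\le1$ and $\|z_\bot\|_0\le\|z_\bot\|_s$ to simplify the lower-order factors, yields \eqref{estimate vector field and its derivative} and the higher-order bound. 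The only genuine obstacle is this last piece of bookkeeping: one must check that no term carries a product of two high norms — guaranteed because both source lemmas are themselves tame and because $E$ is smoothing — and that the quadratic lower-order gain survives the composition, which it does since $E$ is quadratic in $z_\bot$ while the factor coming from ${\cal L}_\tau^{-1}-J$ costs at most one further power of $\|z_\bot\|_0\le1$.
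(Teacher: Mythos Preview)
Your proposal is correct and follows the same approach as the paper, which simply states that the lemma follows from Lemmata~\ref{stime cal Lt inverso} and~\ref{stima campo vettoriale 1-forma}. You have merely spelled out the Leibniz-rule bookkeeping that the paper leaves implicit, and your handling of the $-JE(z)$ term via the smoothing property $E=\Pi_S E$ together with $\|z_\bot\|_0\le\|z_\bot\|_s$ is exactly what is needed.
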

 \begin{proof}
 The lemma follows from Lemmata \ref{stime cal Lt inverso}, \ref{stima campo vettoriale 1-forma}. 
 \end{proof}
 We now want to study the flow of the non autonomous differential equation 
 \begin{equation}\label{equazione differenziale mappa cal Ft0}
 \partial_\tau z = X(z, \tau)\,.
 \end{equation}
Recall that for any $ r > 0$, we denote by ${\cal V}_{\bot }(r)$ the ball in $h^0_{\bot c}$ of radius $r$, centered at $0$, and for any $\tau_0, \tau \in [0, 1]$ by $\Psi^{\tau_0, \tau}_{X}$ the flow map of the differential equation \eqref{equazione differenziale mappa cal Ft0}, satisfying $\Psi^{\tau_0, \tau_0}_{X} (z) = z$. 
  By a standard contraction argument,  there exists an open neighborhood ${\cal V}_S' \subseteq {\cal V}_S$ of ${\cal K}$ in $\C^S \times \C^S$ and $\delta > 0$ with ${\cal V}_{\bot}(2 \delta) \subset {\cal V}_\bot$ such that for any $\tau, \tau_0 \in [0, 1]$ 
 \begin{equation}\label{domini flusso cal F}
\Psi^{\tau_0, \tau}_{X}  : {\cal V}_\delta' \cap h^0_r \to {\cal V}_{2 \delta} \cap h^0_r\,, \qquad {\cal V}_\delta' := {\cal V}_S' \times {\cal V}_\bot(\delta)\,, \quad {\cal V}_{2 \delta} := {\cal V}_S \times {\cal V}_\bot(2 \delta)\,
  \end{equation}
  is well defined and real analytic. In the next lemma we state the smoothing estimates for 
  $ \Psi^{\tau_0, \tau}_{X} - { \io d}$ where ${ \io d}$ denotes the identity map on ${\cal V}_\delta' \cap h^0_r$.
 \begin{lemma}\label{lemma stima flusso correttore}
By choosing $ 0 < \delta < 1 $ smaller, if necessary, it follows that for any $\tau, \tau_0 \in [0, 1]$,
 the map $\Psi^{\tau_0, \tau}_{X} - {\io d} : {\cal V}_\delta' \cap h^0_r \to h^{0}_r$ is one smoothing, 
 meaning that for any $s \in \Z_{\geq 1}$, the map  
 $$
 \Psi^{\tau_0, \tau}_{X} - {\io d} : {\cal V}_\delta' \cap h^s_r \to h^{s + 1}_r
 $$
 is real analytic. Furthermore, the following tame estimates hold: 
 for any $ z \in {\cal V}_\delta' \cap h^0_r$, $\widehat z \in h^0_c$, 
 \begin{equation}\label{stima flusso e sua derivata0}
 \|  \Psi^{\tau_0, \tau}_{X}(z) - z \|_{0} \lesssim \| z_\bot \|_0^2 \,, \quad 
 \| (d  \Psi^{\tau_0, \tau}_{X}(z) - {\rm Id}) [\widehat z] \|_{0} \lesssim \| z_\bot\|_0 \| \widehat z\|_0
 \end{equation}
 and for any $k \geq 2$, $ \widehat z_1, \ldots , \widehat z_k \in h^0_c$,
 $$
 \| d^k   \Psi^{\tau_0, \tau}_{X}(z)[\widehat z_1, \ldots, \widehat z_k]\|_0 \lesssim_k \,  \prod_{j = 1}^k \|\widehat z_j \|_0\,
 $$
whereas for any $s \in \Z_{\geq 1}$, $ z \in {\cal V}_\delta' \cap h^s_r$, $\widehat z \in h^s_c$,
 \begin{equation}\label{stima flusso e sua derivata}
 \|  \Psi^{\tau_0, \tau}_{X}(z) - z \|_{s + 1} \lesssim_s \| z_\bot \|_s \| z_\bot \|_0\,, \quad 
 \| (d  \Psi^{\tau_0, \tau}_{X}(z) - {\rm Id}) [\widehat z] \|_{s + 1} \lesssim_s \| z_\bot\|_0 \| \widehat z\|_s + \| z_\bot\|_s \| \widehat z\|_0\,
 \end{equation}
 and for any $k \geq 2$, $ \widehat z_1, \ldots , \widehat z_k \in h^s_c$,
 $$
 \| d^k   \Psi^{\tau_0, \tau}_{X}(z)[\widehat z_1, \ldots, \widehat z_k]\|_{s + 1} \lesssim_{s, k} \, 
 \sum_{j = 1}^k \| \widehat z_j\|_s \prod_{i \neq j} \| \widehat z_i\|_0 + \| z_\bot\|_s \prod_{j = 1}^k \|\widehat z_j \|_0\,.
 $$
  \end{lemma}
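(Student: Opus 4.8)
The plan is to read off every assertion from the integral form of the flow equation \eqref{equazione differenziale mappa cal Ft0},
$$
\Psi^{\tau_0,\tau}_{X}(z) = z + \int_{\tau_0}^{\tau} X\big(\Psi^{\tau_0,\sigma}_{X}(z),\sigma\big)\,d\sigma\,,
$$
together with the tame and one smoothing estimates for $X$ from Lemma \ref{stime campo vettoriale ausiliario} and a Gronwall bootstrap exploiting the quadratic smallness $\|X(z,\tau)\|_0\lesssim\|z_\bot\|_0^2$. The first and key step is to control the normal component along the flow: writing $z(\tau):=\Psi^{\tau_0,\tau}_{X}(z)$ and applying $\Pi_\bot$ to \eqref{equazione differenziale mappa cal Ft0}, the function $m(\tau):=\|z_\bot(\tau)\|_0$ obeys a differential inequality $m'\le C m^2$ with $m(\tau_0)=\|z_\bot\|_0<\delta$; after shrinking $\delta$ so that $C\delta\le 1/2$, a comparison argument yields $\|z_\bot(\tau)\|_0\le 2\|z_\bot\|_0$ for all $\tau,\tau_0\in[0,1]$, which in particular re-proves that the flow maps ${\cal V}_\delta'$ into ${\cal V}_{2\delta}$, cf.\ \eqref{domini flusso cal F}. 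For $s\ge1$ the flow leaves $h^s_r$ invariant by the standard theory of ODEs in the Banach scale $(h^s_r)_{s\ge0}$ (here $X$ is bounded, analytic and one smoothing), and, using $\|X(z,\tau)\|_s\le\|X(z,\tau)\|_{s+1}\lesssim_s\|z_\bot\|_s\|z_\bot\|_0$ together with the bound just obtained, $\tfrac{d}{d\tau}\|z_\bot(\tau)\|_s\lesssim_s\|z_\bot(\tau)\|_s\,\|z_\bot\|_0$, hence $\|z_\bot(\tau)\|_s\lesssim_s\|z_\bot\|_s$, uniformly in $\tau,\tau_0$.

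Granting this, the zeroth order estimates in \eqref{stima flusso e sua derivata0} and the first estimate in \eqref{stima flusso e sua derivata} follow by integrating the flow equation; the one smoothing of $\Psi^{\tau_0,\tau}_{X}-{\io d}$ is visible from $X(z(\sigma),\sigma)\in h^{s+1}_r$ with $\|X(z(\sigma),\sigma)\|_{s+1}\lesssim_s\|z_\bot(\sigma)\|_s\|z_\bot(\sigma)\|_0$. For the differential, $W(\tau):=d\Psi^{\tau_0,\tau}_{X}(z)-{\rm Id}$ solves $W(\tau)=\int_{\tau_0}^\tau dX(z(\sigma),\sigma)\,[{\rm Id}+W(\sigma)]\,d\sigma$; inserting the bounds on $dX$ from Lemma \ref{stime campo vettoriale ausiliario} and running Gronwall -- using the already proven level-$0$ bound on $W$ to absorb the term $\|z_\bot\|_s\,\|W(\sigma)[\widehat z]\|_0$, and the smallness $\|z_\bot\|_0\lesssim\delta$ to absorb the circular term $\|z_\bot\|_0\,\|W(\sigma)[\widehat z]\|_{s+1}$ -- yields the second estimates in \eqref{stima flusso e sua derivata0} and \eqref{stima flusso e sua derivata}.

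For the higher derivatives $d^k\Psi^{\tau_0,\tau}_{X}$, $k\ge2$, I would differentiate the flow equation $k$ times and use the Fa\`a di Bruno formula to write $\partial_\tau\big(d^k\Psi^{\tau_0,\tau}_{X}(z)[\widehat z_1,\dots,\widehat z_k]\big)$ as $dX(z(\tau),\tau)\big[d^k\Psi^{\tau_0,\tau}_{X}(z)[\widehat z_1,\dots,\widehat z_k]\big]$ plus a finite sum of terms $d^jX(z(\tau),\tau)\big[d^{|I_1|}\Psi^{\tau_0,\tau}_{X}(z)[\widehat z_{I_1}],\dots,d^{|I_j|}\Psi^{\tau_0,\tau}_{X}(z)[\widehat z_{I_j}]\big]$ with $2\le j\le k$ and $\{I_1,\dots,I_j\}$ a partition of $\{1,\dots,k\}$ into blocks of size $<k$. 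One then argues by induction on $k$, first establishing all the $\|\cdot\|_0$ bounds and then the $\|\cdot\|_{s+1}$ ones: the leading term supplies the Gronwall structure (its relevant coefficient being $\lesssim\delta$), every factor $d^{|I_i|}\Psi^{\tau_0,\tau}_{X}$ with $|I_i|<k$ is controlled by the inductive hypothesis (with $d^1\Psi^{\tau_0,\tau}_{X}={\rm Id}+W$ as above), and the multilinear bounds on $d^jX$ from Lemma \ref{stime campo vettoriale ausiliario} -- which carry no $z_\bot$ factor at level $0$ for $j\ge2$ and produce exactly the structure $\sum_i\|\cdot\|_s\prod_{l\ne i}\|\cdot\|_0+\|z_\bot\|_s\prod_l\|\cdot\|_0$ at level $s+1$ -- deliver, after a final Gronwall in $\tau$, the claimed right-hand sides in \eqref{stima flusso e sua derivata0} and \eqref{stima flusso e sua derivata}. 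Real analyticity of $(\tau_0,\tau)\mapsto\Psi^{\tau_0,\tau}_{X}$ and of $z\mapsto\Psi^{\tau_0,\tau}_{X}(z)$ on each $h^s_r$ is the standard analytic dependence of solutions of analytic ODEs on time and on initial data. The main technical obstacle is precisely the bookkeeping in this last step: tracking, in the Fa\`a di Bruno expansion, which single input $\widehat z_j$ is allowed to carry the high Sobolev norm, and arranging the nested inductions (on $k$, and the use of the level-$0$ estimates inside the level-$(s+1)$ ones) so that the constants close with the exact right-hand sides stated; by contrast, the stability of $\|z_\bot(\tau)\|_s$ along the flow and the one smoothing properties are comparatively direct consequences of Lemmata \ref{stime cal Lt inverso} and \ref{stime campo vettoriale ausiliario}.
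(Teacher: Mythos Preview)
Your proposal is correct and follows essentially the same approach as the paper's proof: both start from the integral form of the flow equation, first control $\|\Pi_\bot\Psi^{\tau_0,\tau}_X(z)\|_s$ along the flow via a Gronwall argument (you phrase the $s=0$ step as the ODE comparison $m'\le Cm^2$, the paper uses Gronwall on the integral inequality after shrinking $\delta$ so that the normal component stays $\le 1$), then read off the estimates for $\Psi^{\tau_0,\tau}_X-\iota d$ and its differential by inserting the bounds on $X$ and $dX$ from Lemma~\ref{stime campo vettoriale ausiliario} and running Gronwall again, first at level $0$ and then at level $s+1$. The paper simply says the $k\ge2$ case follows ``by similar arguments, differentiating $k$-times the equation \eqref{equazione integrale flusso}'', which is exactly the Fa\`a di Bruno/induction scheme you outline in more detail.
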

  \begin{proof}
    For any $\tau_0, \tau \in [0, 1]$ and $z \in {\cal V}_\delta'  \cap h^0_r$, the flow $ \Psi^{\tau_0, \tau}_{X}(z)$ satisfies the integral equation 
    \begin{equation}\label{equazione integrale flusso}
     \Psi^{\tau_0, \tau}_{X}(z) = z + \int_{\tau_0}^\tau X(\Psi^{\tau_0, t}_{X}(z), t)\,d t\,.
    \end{equation}
    In view of the estimate \eqref{estimate vector field and its derivative} of the vector field $X(z, \tau)$, we first estimate
    $\| \Pi_\bot  \Psi^{\tau_0, \tau}_{X}(z)\|_{s}$  for $z \in {\cal V}_\delta' \cap h^s_r$ with $s \in \Z_{\geq 0}$.
Applying the operator $\Pi_\bot$ to both sides of the identity \eqref{equazione integrale flusso}, one gets 
    $$
    \Pi_\bot  \Psi^{\tau_0, \tau}_{X}(z) = \Pi_\bot z + \int_{\tau_0}^\tau \Pi_\bot X(\Psi_{X}^{\tau_0, t}(z), t)\,dt\,.
    $$
By Lemma \ref{stime campo vettoriale ausiliario}, for any $\tau, \tau_0 \in [0, 1]$, one has
    \begin{align}
 \| \Pi_\bot  \Psi^{\tau_0, \tau}_{X}(z)\|_{s} \leq \| z_\bot\|_s + 
 C(s) \Big| \int_{\tau_0}^\tau \|  \Pi_\bot\Psi^{\tau_0, t}_{X}(z) \|_s \| \Pi_\bot\Psi^{\tau_0, t}_{X}(z)\|_0\, d t\, \Big|
    \end{align}
   for some constant $C(s) > 0$, only depending on $s$. Then by shrinking $\delta > 0$, if necessary, so that for $z_\bot \in {\cal V}_\bot(\delta)$, we have $\sup_{\tau_0, \tau \in [0, 1]} \| \Pi_\bot  \Psi^{\tau_0, \tau}_{X}(z)\|_0\leq 1$, the above estimate becomes  
   \begin{align}
 \|    \Pi_\bot  \Psi^{\tau_0, \tau}_{X}(z)\|_{s} \leq \| z_\bot\|_s + C(s) \Big| \int_{\tau_0}^\tau \|  \Pi_\bot \Psi^{\tau_0, t}_{X}(z) \|_s d t \Big|\,.
    \end{align}
   By the Gronwall inequality one then gets 
    \begin{equation}\label{stima Pi bot flusso nella dim}
    \sup_{\tau_0, \tau \in [0, 1]} \|    \Pi_\bot  \Psi^{\tau_0, \tau}_{X}(z)\|_{s} \lesssim_s  \| z_\bot\|_s\,, \qquad \forall z \in {\cal V}_\delta' \cap h^s_r\,. 
    \end{equation}
    Now let us prove \eqref{stima flusso e sua derivata}. 
   By \eqref{equazione integrale flusso}, using again Lemma \ref{stime campo vettoriale ausiliario}, one gets for any $s \in \Z_{\geq 1}$, $\tau_0, \tau \in [0, 1]$, and $z \in {\cal V}_\delta' \cap h^s_r$
    \begin{align}
    \|  \Psi^{\tau_0, \tau}_{X}(z) - z \|_{s + 1} & \leq \Big| \int_{\tau_0}^\tau  \| X(\Psi^{\tau_0, t}_{X}(z), t) \|_{s + 1}\,d t \Big|  \lesssim_s \sup_{t \in [0, 1]} \| \Pi_\bot \Psi_{X}^{\tau_0,t}(z)\|_s \sup_{t \in [0, 1]} \| \Pi_\bot\Psi^{\tau_0, t}_{X}(z) \|_0 \nonumber\\
    & \stackrel{\eqref{stima Pi bot flusso nella dim}}{\lesssim_s} \| z_\bot\|_s \| z_\bot\|_0\,,
    \end{align}
    which is the first claimed inequality in \eqref{stima flusso e sua derivata}. To prove the one for the differential  $d  \Psi^{\tau_0, \tau}_{X} - {\rm Id}$, differentiate \eqref{equazione integrale flusso} with respect to $z$. Using the chain rule one gets
    \begin{equation}\label{derivata equazione integrale flusso}
    d  \Psi^{\tau_0, \tau}_{X}(z)[\widehat z] = \widehat z +  \int_{\tau_0}^\tau d X(\Psi^{\tau_0, t}_{X}(z),t)[d\Psi^{\tau_0, t}_{X}(z)[\widehat z]]\,d t\,.
    \end{equation}
    By applying the estimates of $d X(\cdot, \tau)$ of Lemma \ref{stime campo vettoriale ausiliario}, 
    it follows that for any $s \in \Z_{\geq 0}$ there is a constant $C(s) > 0$ such that  
    \begin{align}
    \|   d  \Psi^{\tau_0, \tau}_{X}(z) [\widehat z] \|_{s} & \leq \| \widehat z\|_s + C(s) \Big| \int_{\tau_0}^\tau \Big( \| \Pi_\bot  \Psi^{\tau_0, t}_{X}(z)\|_s \| d  \Psi^{\tau_0, t}_{X}(z)[\widehat z]\|_0 + \| \Pi_\bot  \Psi^{\tau_0,t}_{X}(z)\|_0   \| d  \Psi^{\tau_0, t}_{X}(z)[\widehat z]\|_s \Big)\, d t \Big|  \nonumber\\
    & \stackrel{\eqref{stima Pi bot flusso nella dim}}{\leq} \| \widehat z\|_s +  C_1(s) \Big| \int_{\tau_0}^\tau\Big(\| z_\bot\|_s  \| d  \Psi^{\tau_0, t}_{X}(z)[\widehat z]\|_0 + \| z_\bot\|_0   \| d  \Psi^{\tau_0, t}_{X}(z)[\widehat z]\|_s  \Big)\, d t \Big|\, \label{san martino campanaro 0}
    \end{align}
    for some constant $C_1(s) > C(s) > 0$. For $s= 0$, using that $\|z_\bot \|_0 \leq \delta < 1$, \eqref{san martino campanaro 0} becomes 
    $$
    \|   d  \Psi^{\tau_0, \tau}_{X}(z) [\widehat z] \|_{0} \leq \| \widehat z\|_0 + 2 C_1(0) \Big| \int_{\tau_0}^\tau   \| d  \Psi^{\tau_0, t}_{X}(z)[\widehat z]\|_0\, d t \Big|\,
    $$
   and hence by the Gronwall inequality 
    $$
   \|   d  \Psi^{\tau_0, \tau}_{X}(z) [\widehat z] \|_{0} \lesssim \| \widehat z\|_0\,.
    $$
    For $s \in \Z_{\geq 1}$, substitute the latter estimate into \eqref{san martino campanaro 0} to get, again using that $\|z_\bot\|_0 < \delta < 1$
    \begin{align}
 \|   d  \Psi^{\tau_0, \tau}_{X}(z) [\widehat z] \|_{s} & \leq \| \widehat z\|_s + C_2(s) \| z_\bot\|_s \| \widehat z\|_0 + C_2(s) \Big|\int_{\tau_0}^\tau  \| d  \Psi^{\tau_0, t}_{X}(z)[\widehat z]\|_s \, dt \Big| \label{san martino campanaro 1}
    \end{align}
    for some constant $C_2(s) > C_1(s)$. Then using again the Gronwall inequality one concludes that for any
    $0 \le \tau_0 \le 1,$
    \begin{equation}\label{san martino campanaro 2}
    \sup_{\tau \in [0, 1]} \|   d  \Psi^{\tau_0, \tau}_{X}(z) [\widehat z] \|_{s} \lesssim_s  \| \widehat z\|_s +  \| z_\bot\|_s \|\widehat z \|_0\,.
    \end{equation}
    We are now ready to prove the second estimate in \eqref{stima flusso e sua derivata}. By \eqref{derivata equazione integrale flusso} and the smoothing estimates on $d X(\cdot, \tau)$ of Lemma \ref{stime campo vettoriale ausiliario}, one gets that for any $s \in \Z_{\geq 1}$, $0 \le \tau_0 \le 1,$
    \begin{align}
    \|\big( d  \Psi^{\tau_0, \tau}_{X}(z) - {\rm Id} \big)[\widehat z] \|_{s + 1} & \lesssim_s \sup_{t \in [0, 1]} \| \Pi_\bot  \Psi^{\tau_0, t}_{X}(z)\|_s\sup_{t  \in [0, 1]} \| d  \Psi^{\tau_0, t }_{X}(z)[\widehat z]\|_0 \nonumber\\
    & \qquad + \sup_{ t \in [0, 1]} \| \Pi_\bot  \Psi^{\tau_0, t}_{X}(z)\|_0 \sup_{t \in [0, 1]}  \| d  \Psi^{\tau_0, t}_{X}(z)[\widehat z]\|_s \nonumber\\ 
    & \stackrel{\eqref{stima Pi bot flusso nella dim},\eqref{san martino campanaro 2}}{\lesssim_s} \| z_\bot\|_s \| \widehat z\|_0 + \| z_\bot\|_0 \| \widehat z\|_s\,, \nonumber
    \end{align}
    where we used again that $\| z_\bot\|_0 < \delta < 1$. Hence the claimed estimate for $d  \Psi^{\tau_0, \tau}_{X}(z) - {\rm Id}$ in \eqref{stima flusso e sua derivata} is established. The estimates for the higher order derivatives $d^k  \Psi^{\tau_0, \tau}_{X}$, $k \geq 2$, follow by similar arguments, 
    differentiating $k$-times the equation \eqref{equazione integrale flusso}  with respect to $z$. 
      \end{proof}
 \noindent 
 {\bf Definition of $\Psi_C$ and its properties:}    
      Our candidate for the symplectic corrector is the time one flow map of $X(z, \tau)$, 
\begin{equation}\label{definizione correttore simplettico}
\Psi_C : = \Psi_{X}^{0, 1} : {\cal V}_\delta' \cap h^0_r \to h^0_r\,.
\end{equation} 
Clearly, $\Psi_C$ is one to one and its inverse is given by the backward flow of the PDE \eqref{equazione differenziale mappa cal Ft0}, namely $\Psi_C^{- 1} = \Psi_{X}^{1, 0}$. Hence the maps $\Psi_C^{\pm 1}$ satisfy the estimates stated in Lemma \ref{lemma stima flusso correttore}. Furthermore, recall that for any $\tau \in [0, 1]$, the two form $\Lambda_\tau$ admits the representation \eqref{rappresentazione omega t}. Then the following Darboux lemma holds.
  \begin{proposition}\label{lemma principale correttore mappa simplettica}
The map $\Psi_C$ is a symplectic corrector, i.e., for any $z \in {\cal V}_\delta' \cap h^0_r$, $ \Psi_C^* \Lambda_1(z) = \Lambda_0$.
  \end{proposition}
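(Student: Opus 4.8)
The plan is to carry out the classical Moser deformation (Darboux) argument in the present infinite-dimensional setting, along the lines sketched just before the statement. Fix $z \in {\cal V}_\delta' \cap h^0_r$ and consider the family of two forms
$$
g(\tau) := (\Psi^{0, \tau}_X)^* \Lambda_\tau\,, \qquad \tau \in [0, 1]\,.
$$
By \eqref{domini flusso cal F} we have $\Psi^{\tau_0, \tau}_X({\cal V}_\delta' \cap h^0_r) \subseteq {\cal V} \cap h^0_r$, the domain of $\Lambda_\tau$, so that, together with the representation \eqref{rappresentazione omega t} and Lemma \ref{lemma stima flusso correttore}, each $g(\tau)$ is a well defined two form on ${\cal V}_\delta' \cap h^0_r$ and $\tau \mapsto g(\tau)$ is smooth. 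It suffices to show that $g$ is independent of $\tau$: since $\Psi^{0, 0}_X = {\io d}$ one has $g(0) = \Lambda_0$, while by \eqref{definizione correttore simplettico}, $g(1) = (\Psi^{0, 1}_X)^* \Lambda_1 = \Psi_C^* \Lambda_1$, and hence $\Psi_C^* \Lambda_1(z) = \Lambda_0$, as claimed.

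To establish $\partial_\tau g(\tau) = 0$, I would invoke the standard deformation formula for the $\tau$-derivative of the pullback of a $\tau$-dependent form along the flow of the non autonomous vector field $X(\cdot, \tau)$ of \eqref{equazione differenziale mappa cal Ft0}, namely
$$
\partial_\tau g(\tau) = (\Psi^{0, \tau}_X)^* \Big( \partial_\tau \Lambda_\tau + {\cal L}_{X(\cdot, \tau)} \Lambda_\tau \Big)\,,
$$
where ${\cal L}_{X(\cdot, \tau)}$ denotes the Lie derivative along $X(\cdot, \tau)$. Each $\Lambda_\tau$ is closed, being the affine combination $\tau \Lambda_1 + (1 - \tau) \Lambda_0$ of the exact forms $\Lambda_1 = d\lambda_1$ and $\Lambda_0 = d\lambda_0$; hence Cartan's formula gives ${\cal L}_{X(\cdot, \tau)} \Lambda_\tau = d\big( \Lambda_\tau[X(\cdot, \tau), \, \cdot\,] \big)$. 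Moreover, by \eqref{definizione Lambda tau} and \eqref{closeness Lambda 1 0}, $\partial_\tau \Lambda_\tau = \Lambda_1 - \Lambda_0 = \Lambda_L = d \lambda_L$, with $\lambda_L$ the primitive of $\Lambda_L$ constructed in \eqref{1 forma Kuksin}--\eqref{forma finale 1 forma Kuksin}.

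It then remains to compute the contraction $\Lambda_\tau[X(\cdot, \tau), \, \cdot\,]$, and this is precisely what the vector field $X$ was designed for. By the representation \eqref{rappresentazione omega t}, $\Lambda_\tau(z)[\widehat z, \widehat z'] = \big({\cal L}_\tau(z)[\widehat z], \widehat z'\big)_r$, and by \eqref{forma finale 1 forma Kuksin}, $\lambda_L(z)[\widehat z] = \big(E(z), \widehat z\big)_r$; hence the choice $X(z, \tau) = - {\cal L}_\tau(z)^{- 1} E(z)$ of \eqref{definizione campo vettoriale ausiliario} -- i.e. the fundamental identity \eqref{fundamental identity} -- gives
$$
\Lambda_\tau(z)[X(z, \tau), \widehat z'] = \big( {\cal L}_\tau(z)[X(z, \tau)], \widehat z' \big)_r = - \big( E(z), \widehat z' \big)_r = - \lambda_L(z)[\widehat z']\,,
$$
that is, $\Lambda_\tau[X(\cdot,\tau), \, \cdot\,] = - \lambda_L$. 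Consequently $\partial_\tau \Lambda_\tau + {\cal L}_{X(\cdot, \tau)} \Lambda_\tau = d\lambda_L - d\lambda_L = 0$, whence $\partial_\tau g(\tau) = 0$ for every $\tau \in [0, 1]$ and $g$ is constant, which finishes the proof.

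The only point I expect to require real care is not the algebraic identity above, which is immediate, but the justification of the deformation formula together with Cartan's formula in the present setting of scales of Hilbert spaces, with a merely one smoothing, densely defined vector field. For this one uses that, after the final shrinking of $\delta$, the flow maps $\Psi^{\tau_0, \tau}_X$ are real analytic diffeomorphisms of ${\cal V}_\delta' \cap h^0_r$ onto open subsets of $h^0_r$ (Lemma \ref{lemma stima flusso correttore}), that $(z, \tau) \mapsto X(z, \tau)$ is real analytic (Lemma \ref{stime campo vettoriale ausiliario}), and that $\Lambda_\tau$ and $\lambda_L$ are bounded, real analytic tensor fields on ${\cal V} \cap h^0_r$ (Lemma \ref{stime Omega (w)} and Lemma \ref{stima campo vettoriale 1-forma}). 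With these regularity facts in hand, the finite-dimensional computation of $\partial_\tau \big( (\Psi^{0, \tau}_X)^* \Lambda_\tau\big)$ goes through verbatim when evaluated on an arbitrary fixed pair of tangent vectors $\widehat z, \widehat z' \in h^0_c$, yielding the pointwise identity $\partial_\tau g(\tau)[\widehat z, \widehat z'] = 0$, and hence the claim.
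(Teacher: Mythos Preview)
Your proof is correct and follows essentially the same Moser deformation argument as the paper: both compute $\partial_\tau\big((\Psi^{0,\tau}_X)^*\Lambda_\tau\big)$ via Cartan's identity and the closedness of $\Lambda_\tau$, then verify $\lambda_L + \Lambda_\tau[X(\cdot,\tau),\,\cdot\,] = 0$ from the defining relation \eqref{definizione campo vettoriale ausiliario}. One small remark: the vector field $X$ is not ``densely defined'' --- by Lemma \ref{stime campo vettoriale ausiliario} it is a bounded real analytic map $({\cal V}\cap h^0_r)\times[0,1]\to h^0_r$, and the one-smoothing property is a \emph{gain} of regularity --- so the justification of Cartan's formula is in fact more straightforward than you suggest.
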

  \begin{proof}
  For any $\tau \in [0, 1]$, consider the two form $( \Psi^{0, \tau}_{X})^* \Lambda_\tau$. Since $\Psi_{X}^{0, 0} = {\rm Id}$, one has $(\Psi_{X}^{0, 0})^* \Lambda_0 = \Lambda_0$ and hence it suffices to prove that the map $\tau \mapsto ( \Psi^{0, \tau}_{X})^* \Lambda_\tau$ is constant or, equivalently,
  $$
  \partial_\tau \big( ( \Psi^{0, \tau}_{X})^* \Lambda_\tau \big) = 0\,, \quad \forall \tau \in [0, 1]\,.
  $$
  By Cartan's identity (see for instance Lemma 1.2 in \cite{	K}) and the fact that $\Lambda_\tau$ is closed, it follows that 
  $$
    \partial_\tau \big( ( \Psi^{0, \tau}_{X})^* \Lambda_\tau \big) =
    ( \Psi^{0, \tau}_{X})^* \big( \partial_\tau \Lambda_\tau + 
    d ( \Lambda_\tau [X( \cdot, \tau), \, \cdot \, ] ) \big)\,.
  $$
Since $\partial_\tau \Lambda_\tau \stackrel{\eqref{definizione Lambda tau}}{=} \Lambda_1 - \Lambda_0 = \Lambda_L$  and $\Lambda_L \stackrel{\eqref{1 forma Kuksin}}{=}  d \lambda_L$, it remains to prove that
    $$
  d \big( \lambda_L + \Lambda_\tau\big[X(\cdot, \tau), \, \cdot \, \big] \big) = 0\,.
  $$
  By \eqref{rappresentazione omega t}, \eqref{forma finale 1 forma Kuksin}, \eqref{definizione campo vettoriale ausiliario}, one has for any $\tau \in [0, 1]$, $z \in {\cal V}_\delta' \cap h^0_r$, and $\widehat z \in h^0_c$ 
  $$
  \lambda_L(z)[\widehat z] + \Lambda_\tau [X(z, \tau), \widehat z ] = \big( E(z), \widehat z \big)_r - \big( {\cal L}_\tau(z) {\cal L}_\tau(z)^{- 1} E(z), \widehat z\big)_r = 0\,.
  $$
 It means that 
  $$
  \lambda_L + \Lambda_\tau [ X(\cdot, \tau), \, \cdot \, ] = 0\,, \quad \forall \tau \in [0, 1]\,,
  $$
  proving the proposition. 
  \end{proof}
  As a consequence of Lemma \ref{lemma stima flusso correttore} we get the following 
  \begin{corollary}\label{corollario correttore simplettico}
$(i)$ For any $s \in \Z_{\geq 0}$, the map $\Psi_C : {\cal V}_\delta' \cap h^s_r \to h^s_r$ is a real analytic diffeomorphism onto its image and its nonlinear part is one smoothing, meaning that for any $s \in \Z_{\geq 1}$, the map $B_C := \Psi_C - { \io d} :  {\cal V}_\delta'  \cap h^s_r \to h^{s + 1}_r$ is real analytic. Furthermore,
$B_C$ satisfies the following tame estimates:  
for any $ z \in {\cal V}_\delta' \cap h^0_r$, $\widehat z \in h^0_c$,
 $$
 \|  B_C(z) \|_{0} \lesssim  \| z_\bot \|_0^2\,, \quad 
 \| d B_C(z) [\widehat z] \|_{0} \lesssim \| z_\bot\|_0 \| \widehat z\|_0 
 $$
 and for any $k \geq 2$, $\widehat z_1, \ldots , \widehat z_k \in h^0_c$,
 $$
 \| d^k  B_C(z)[\widehat z_1, \ldots, \widehat z_k]\|_{0} \lesssim_k  \prod_{j = 1}^k \|\widehat z_j \|_0\,,
 $$
whereas for any $s \in \Z_{\geq 1}$, $ z \in {\cal V}_\delta' \cap h^s_r$, $\widehat z \in h^s_c$, 
 $$
 \|  B_C(z) \|_{s + 1} \lesssim_s \| z_\bot \|_s \| z_\bot \|_0\,, \quad 
 \| d B_C(z) [\widehat z] \|_{s + 1} \lesssim_s \| z_\bot\|_0 \| \widehat z\|_s + \| z_\bot\|_s \| \widehat z\|_0\,
 $$
 and for any $k \geq 2$, $\widehat z, \widehat z_1, \ldots , \widehat z_k \in h^s_c$,
 $$
 \| d^k  B_C(z)[\widehat z_1, \ldots, \widehat z_k]\|_{s + 1} \lesssim_{s, k} \sum_{j = 1}^k \| \widehat z_j\|_s \prod_{i \neq j} \| \widehat z_i\|_0 + \| z_\bot\|_s \prod_{j = 1}^k \|\widehat z_j \|_0\,.
 $$

 \noindent
 $(ii)$ The map $A_C :=\Psi_C^{- 1} - {  \io d} : \Psi_C({\cal V}_\delta') \cap h^0_r \to h^{0}_r$ 
 is real analytic and satisfies the following tame estimates: 
 for any $ z \in \Psi_C({\cal V}_\delta') \cap h^0_r$, $\widehat z \in h^0_c$,
 $$
 \|  A_C(z) \|_{0} \lesssim \| z_\bot \|_0^2 \,, \quad 
 \| d A_C(z) [\widehat z] \|_{0} \lesssim \| z_\bot\|_0 \| \widehat z\|_0 
 $$
 and for any $k \geq 2$, $ \widehat z_1, \ldots , \widehat z_k \in h^0_c$,
 $$
 \| d^k  A_C(z)[\widehat z_1, \ldots, \widehat z_k]\|_0 \lesssim_k \prod_{j = 1}^k \|\widehat z_j \|_0\,. 
 $$
Furthermore, for any $s \in \Z_{\geq 1}$, 
$A_C : \Psi_C({\cal V}_\delta') \cap h^s_r \to h^{s + 1}_r$ is real analytic and satisfies the following tame estimates:  for any $ z \in \Psi_C({\cal V}_\delta') \cap h^s_r$, $\widehat z \in h^s_c$, 
 $$
 \|  A_C(z) \|_{s + 1} \lesssim_s \| z_\bot \|_s \| z_\bot \|_0 \,, \quad 
 \| d A_C(z) [\widehat z] \|_{s + 1} \lesssim_s \| z_\bot\|_0 \| \widehat z\|_s + \| z_\bot\|_s \| \widehat z\|_0\,
 $$
 and for any $k \geq 2$, $ \widehat z_1, \ldots , \widehat z_k \in h^s_c$,
 $$
 \| d^k  A_C(z)[\widehat z_1, \ldots, \widehat z_k]\|_{s + 1} \lesssim_{s, k} \sum_{j = 1}^k \| \widehat z_j\|_s \prod_{i \neq j} \| \widehat z_i\|_0 + \| z_\bot\|_s \prod_{j = 1}^k \|\widehat z_j \|_0\,. 
 $$
  \end{corollary}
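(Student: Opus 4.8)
The plan is to read off all the assertions directly from Lemma \ref{lemma stima flusso correttore}, applied to the forward flow for $B_C$ and to the backward flow for $A_C$, since by definition $\Psi_C=\Psi^{0,1}_X$ and $\Psi_C^{-1}=\Psi^{1,0}_X$. First I would observe that $B_C=\Psi_C-\io d=\Psi^{0,1}_X-\io d$, so that every tame estimate for $B_C$, $dB_C$ and $d^kB_C$ claimed in part $(i)$, as well as the real analyticity of $B_C$ and the one smoothing property $B_C:{\cal V}_\delta'\cap h^s_r\to h^{s+1}_r$ for $s\ge1$, is exactly Lemma \ref{lemma stima flusso correttore} specialised to $\tau_0=0$, $\tau=1$. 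Nothing needs to be recomputed here.

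For part $(ii)$ I would likewise use that $A_C=\Psi_C^{-1}-\io d=\Psi^{1,0}_X-\io d$ and invoke Lemma \ref{lemma stima flusso correttore} with $\tau_0=1$, $\tau=0$. Two minor adjustments are needed. First, Lemma \ref{lemma stima flusso correttore} is formulated on ${\cal V}_\delta'\cap h^s_r$, whereas $A_C$ lives on $\Psi_C({\cal V}_\delta')\cap h^s_r$; since by Lemma \ref{stime campo vettoriale ausiliario} the vector field $X$ and its derivatives are estimated on all of ${\cal V}$, the Gronwall argument behind Lemma \ref{lemma stima flusso correttore} runs unchanged on the larger ball ${\cal V}_{2\delta}$, which — after possibly shrinking $\delta$ — contains $\Psi_C({\cal V}_\delta')$, so the estimates are available at points of the image as well. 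Second, those estimates come out with their right hand sides expressed through $z_\bot$, where $z\in{\cal V}_\delta'$, rather than through $w_\bot$ with $w=\Psi_C(z)$; to pass to the form stated in $(ii)$ I would use that Lemma \ref{lemma stima flusso correttore} also gives $\|w_\bot-z_\bot\|_0\lesssim\|z_\bot\|_0^2$ and $\|w_\bot\|_s\lesssim_s\|z_\bot\|_s$, whence for $\delta$ small the norms $\|z_\bot\|_s$ and $\|w_\bot\|_s$ are comparable for all $s\ge0$, so substituting one for the other changes only the implicit constants.

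It then remains to prove, for each $s\ge0$, that $\Psi_C:{\cal V}_\delta'\cap h^s_r\to h^s_r$ is a real analytic diffeomorphism onto its image. For $s=0$ this is immediate: $\Psi_C$ is analytic and one to one, with analytic inverse $\Psi^{1,0}_X$. For $s\ge1$, $\Psi_C$ maps ${\cal V}_\delta'\cap h^s_r$ into $h^s_r$ because $B_C$ takes values in $h^{s+1}_r\hookrightarrow h^s_r$; it is injective and, by Lemma \ref{lemma stima flusso correttore}, real analytic; and its differential $d\Psi_C(z)={\rm Id}+dB_C(z)$ is, thanks to $dB_C(z):h^s_c\to h^{s+1}_c$ and the compact embedding $h^{s+1}_c\hookrightarrow h^s_c$, the identity plus a compact operator on $h^s_c$, hence Fredholm of index zero. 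Since $d\Psi_C(z)$ is the restriction to $h^s_c$ of the operator $d\Psi^{0,1}_X(z)$, which is invertible on $h^0_c$, it is injective on $h^s_c$; the Fredholm alternative then forces $d\Psi_C(z):h^s_c\to h^s_c$ to be a linear isomorphism. The analytic inverse function theorem now gives that $\Psi_C$ is a local diffeomorphism on $h^s_r$, which together with global injectivity yields the asserted diffeomorphism onto its image.

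I expect the only step that is not a routine transcription of Lemma \ref{lemma stima flusso correttore} to be this last Fredholm argument — checking that ${\rm Id}+dB_C(z)$ is invertible on $h^s_c$ for $s\ge1$, which is precisely where the one smoothing of $B_C$ enters — together with the small comparison of $\|z_\bot\|_s$ with $\|w_\bot\|_s$ needed in $(ii)$. Neither is difficult, the substantial analytic work having already been done in Lemma \ref{lemma stima flusso correttore}.
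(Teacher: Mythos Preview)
Your approach is essentially the paper's: the proof there is a single sentence stating that the results follow from Lemma \ref{lemma stima flusso correttore} via $\Psi_C=\Psi^{0,1}_X$ and $\Psi_C^{-1}=\Psi^{1,0}_X$. You have correctly identified and handled the small domain and norm-comparison issues in part $(ii)$ that the paper leaves implicit. One remark: your Fredholm argument for the diffeomorphism property in $h^s_r$ is correct but heavier than necessary, since $\Psi_C^{-1}=\Psi^{1,0}_X$ is itself a flow map enjoying the same one-smoothing estimates from Lemma \ref{lemma stima flusso correttore}, and therefore furnishes directly a real analytic inverse on $h^s_r$ without any appeal to the Fredholm alternative.
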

  \begin{proof}
  The claimed results are a special case of Lemma \ref{lemma stima flusso correttore}, since 
  $\Psi_C = \Psi^{0, 1}_{X}$ and $\Psi_C^{- 1} = \Psi_{X}^{1, 0}$. 
  \end{proof}
  An immediate consequence of Corollary \ref{corollario correttore simplettico} is the following result, needed in Subsection \ref{Hamiltoniana trasformata}.
  \begin{corollary}\label{espansione quadratica cubica cal R Psi}
The Taylor expansion of the map $B_C = \Psi_C - {\io d}$ around $\Pi_S z$ up to order three is of the form 
$$
B_C(z) = B^C_{2}(z) + B^C_{3}(z)\,, \quad z \in {\cal V}_\delta' \cap h^0_r\,,
$$
where 
\begin{equation}\label{definizione cal R Psi (2)}
B^C_{2}(z) := \frac12 d^2 B_C(\Pi_S z)[\Pi_\bot z, \Pi_\bot z] 
\end{equation}
and $B^C_{3}(z)$ is the Taylor remainder term
\begin{equation}\label{definizione cal R Psi (3)} 
B^C_{3}(z) := \frac12 \int_0^1 (1 - t)^2 d^3 B_C(\Pi_S z + t \Pi_\bot z)[\Pi_\bot z, \Pi_\bot z, \Pi_\bot z]\,d t\,.
\end{equation}
The maps $B^C_i : {\cal V}_\delta' \cap h^0_r \to h^{0}_r$, $i = 2, 3$,  are real analytic and $B^C_{3}$ satisfies the following estimates: for any $ z \in {\cal V}_\delta' \cap h^0_r$, $\widehat z , \widehat z_1, \widehat z_2 \in h^0_c$, 
$$
\| B^C_3(z)\|_{0} \lesssim  \| z_\bot\|_0^3 \,, \quad \| d B^C_3(z)[\widehat z] \|_{0} \lesssim \| z_\bot\|_0^2 \| \widehat z\|_0 \,, \quad
\| d^2 B^C_3(z)[\widehat z_1, \widehat z_2]\|_{0} \lesssim \| z_\bot\|_0 \| \widehat z_1\|_0 \| \widehat z_2\|_0 
$$
and for any $k \geq 3$, $ \widehat z_1, \ldots , \widehat z_k \in h^0_c$,
$$
\| d^k B^C_3(z)[\widehat z_1, \ldots, \widehat z_k]\|_0 \lesssim_k  \prod_{j = 1}^k \|\widehat z_j \|_0\,.
 $$
Furthermore, for any $s \in \Z_{\geq 1}$, $B^C_{i} : {\cal V}_\delta' \cap h^s_r \to h^{s + 1}_r$,
$i = 2, 3$, are real analytic and $B^C_3$ satisfies the following tame estimates: for any 
$ z \in {\cal V}_\delta' \cap h^s_r$, $\widehat z , \widehat z_1, \widehat z_2 \in h^s_c$,
$$
\| B^C_3(z)\|_{s + 1} \lesssim_s \| z_\bot\|_s \| z_\bot\|_0^2 \,, \quad \| d B^C_3(z)[\widehat z] \|_{s + 1} \lesssim_s \| z_\bot\|_0^2 \| \widehat z\|_s + \| z_\bot\|_s \| z_\bot\|_0 \| \widehat z\|_0\,,
$$
$$
\| d^2 B^C_3(z)[\widehat z_1, \widehat z_2]\|_{s + 1} \lesssim_s \| z_\bot\|_0 \big(\| \widehat z_1\|_0 \| \widehat z_2\|_s + \| \widehat z_1\|_s \| \widehat z_2\|_0  \big) + \| z_\bot\|_s \| \widehat z_1\|_0 \| \widehat z_2\|_0
$$
and for any $k \geq 3$, $\widehat z, \widehat z_1, \ldots , \widehat z_k \in h^s_c$,
$$
\| d^k B^C_3(z)[\widehat z_1, \ldots, \widehat z_k]\|_{s + 1} \lesssim_{s, k} \sum_{j = 1}^k \| \widehat z_j\|_s \prod_{i \neq j} \| \widehat z_i\|_0 + \| z_\bot\|_s \prod_{j = 1}^k \|\widehat z_j \|_0\,.
 $$
 \end{corollary}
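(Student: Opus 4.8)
The plan is to obtain the decomposition $B_C=B^C_2+B^C_3$ directly from Taylor's formula with integral remainder taken in the $z_\bot$-direction, and then to read off the bounds on $B^C_3$ from the tame estimates for $B_C$ established in Corollary \ref{corollario correttore simplettico}. The first observation is that $B_C$ and its differential vanish on $\{z_\bot=0\}$: evaluating the order-$0$ bounds $\|B_C(z)\|_0\lesssim\|z_\bot\|_0^2$ and $\|dB_C(z)[\widehat z]\|_0\lesssim\|z_\bot\|_0\|\widehat z\|_0$ of Corollary \ref{corollario correttore simplettico} at a point of the form $\Pi_S z=(z_S,0)\in{\cal V}_\delta'$ gives $B_C(\Pi_S z)=0$ and $dB_C(\Pi_S z)=0$; equivalently, the generating vector field $X(\cdot,\tau)$ of $\Psi_C$ and its differential vanish on $\{z_\bot=0\}$, so $\Pi_S z$ is a fixed point of the flow at which it linearizes to the identity. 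Hence, applying Taylor's formula with integral remainder to the curve $g(t):=B_C(\Pi_S z+t\Pi_\bot z)$, $t\in[0,1]$ — legitimate because $\|t\Pi_\bot z\|_0\le\|z_\bot\|_0<\delta$ keeps the segment inside ${\cal V}_\delta'$, where $B_C$ is real analytic — and using $g(0)=0$, $g'(0)=dB_C(\Pi_S z)[\Pi_\bot z]=0$, $g''(0)=d^2B_C(\Pi_S z)[\Pi_\bot z,\Pi_\bot z]$ and $g'''(t)=d^3B_C(\Pi_S z+t\Pi_\bot z)[\Pi_\bot z,\Pi_\bot z,\Pi_\bot z]$, one gets $B_C(z)=g(1)=B^C_2(z)+B^C_3(z)$ with $B^C_2,B^C_3$ exactly as in \eqref{definizione cal R Psi (2)}, \eqref{definizione cal R Psi (3)}. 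The real analyticity and one-smoothing of $B^C_2$ follow since it is the composition of the bounded linear maps $z\mapsto\Pi_S z$ and $z\mapsto\Pi_\bot z$ with the real analytic map $w\mapsto d^2B_C(w)$ into bounded bilinear forms (valued in $h^{s+1}_c$ for $s\ge1$, in $h^0_c$ for $s=0$) and the bounded bilinear evaluation; likewise $B^C_3$ is a convergent integral over the compact interval $[0,1]$ of maps depending real-analytically on $z$ and continuously on $t$, hence real analytic and one smoothing.

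It then remains to prove the estimates on $B^C_3$ and its derivatives. I would differentiate the integral formula \eqref{definizione cal R Psi (3)} under the integral sign $k$ times. Since $z\mapsto\Pi_S z+t\Pi_\bot z$ is affine and $z\mapsto\Pi_\bot z$ linear, the product/chain rule produces a finite sum, with coefficients depending only on $k$, of terms of the schematic form
$$
\int_0^1(1-t)^2\,d^{3+m}B_C(\Pi_S z+t\Pi_\bot z)\big[\,\underbrace{\Pi_\bot z,\dots,\Pi_\bot z}_{3-p},\ \text{args from }\{\Pi_\bot\widehat z_i,\ \Pi_S\widehat z_i+t\Pi_\bot\widehat z_i\}\,\big]\,dt\,,
$$
where $m$ of the $k$ derivatives hit the base point, each contributing an argument of $h^\sigma_c$-norm $\lesssim_\sigma\|\widehat z_i\|_\sigma$, and $p=k-m\le3$ of them hit the three $\Pi_\bot z$-slots, each replacing one $\Pi_\bot z$ by some $\Pi_\bot\widehat z_i$. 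Estimating each term with the $(3+m)$-th order estimate of Corollary \ref{corollario correttore simplettico} and using $\|\Pi_\bot z\|_\sigma=\|z_\bot\|_\sigma$, $\|\Pi_S\widehat z_i+t\Pi_\bot\widehat z_i\|_\sigma\lesssim_\sigma\|\widehat z_i\|_\sigma$ and $\|z_\bot\|_0<\delta<1$, the term with $p$ maximal produces exactly the asserted leading bound (e.g. $\|z_\bot\|_0^{3-k}\prod_j\|\widehat z_j\|_0$ for $k\le3$ and $\prod_j\|\widehat z_j\|_0$ for $k\ge3$ in the $h^0$ estimates), while every remaining term carries a strictly higher power of $\|z_\bot\|_0<1$ and is absorbed. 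In the tame ($h^{s+1}$) estimates the extra $\|z_\bot\|_s$-contributions arise from the $\|w_\bot\|_s\prod\|\cdot\|_0$-part of the tame estimate for $B_C$ (note $\|w_\bot\|_s=t\|z_\bot\|_s\le\|z_\bot\|_s$) and from a high Sobolev norm falling on a surviving $\Pi_\bot z$-argument; collecting the terms yields all the stated order-$0$ and order-$(s+1)$ estimates for $d^kB^C_3$, $k\ge0$.

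The conceptual content is thus just Taylor's formula together with Corollary \ref{corollario correttore simplettico}; the only step requiring genuine care is the bookkeeping of the previous paragraph — verifying that for the low-order derivatives $k=0,1,2$ the term with maximal $p$ contributes precisely the claimed leading power of $\|z_\bot\|_0$ while all other terms are of strictly lower order, and, in the tame estimates, tracking which argument carries the high Sobolev norm so as to reproduce exactly the right-hand sides in the statement.
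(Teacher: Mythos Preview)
Your proposal is correct and takes essentially the same approach as the paper: the paper's proof simply observes that Corollary~\ref{corollario correttore simplettico} gives $B_C(\Pi_S z)=0$ and $dB_C(\Pi_S z)=0$, invokes Taylor's formula, and states that the analyticity and tame estimates follow from Corollary~\ref{corollario correttore simplettico}. You have merely fleshed out the differentiation-under-the-integral bookkeeping that the paper leaves to the reader.
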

 \begin{proof}
Note that by Corollary \ref{corollario correttore simplettico}, $B_C(\Pi_S z) = 0$ and $d B_C(\Pi_S z) = 0$. Thus $B_C(z) = B^C_2(z) + B^C_3(z)$ is the Taylor expansion of $B_C$ around $\Pi_S z$ with Taylor remainder term given by \eqref{definizione cal R Psi (3)}. The claimed analyticity and tame estimates follow from Corollary \ref{corollario correttore simplettico}.
 \end{proof}
  

\section{Proof of Theorem \ref{modified Birkhoff map}.}\label{Proof of main theorem}
 In this section we prove Theorem \ref{modified Birkhoff map}. First we introduce and discuss our new canonical coordinates and then express the Hamiltonian of the defocusing NLS equation in the new coordinates.  
 
 
 \noindent
\subsection{New canonical coordinates}\label{sec new coordinates}
 
 \noindent
 Our candidate of the canonical transformation  is the map
 \begin{equation}\label{definizione mappa simplettica finale}
 \Psi := \Psi_L \circ \Psi_C\, : {\cal V}_\delta' \to H^0_c
 \end{equation} 
 where ${\cal V}_\delta'$ is the neighborhood introduced in \eqref{domini flusso cal F}. 
  \begin{proposition}\label{Lemma finale correttore simplettico}
By shrinking $0 < \delta < 1$, if necessary, it follows that for any $s \in \Z_{\geq 0}$, $\Psi : {\cal V}_\delta' \cap h^s_r \to H^s_r$ is a real analytic symplectic diffeomorphism onto its image with the property that
its nonlinear part $B  := \Psi - F_{nls}^{- 1}: {\cal V}_\delta'  \cap h^0_r \to H^{0}_r$ satisfies the following estimates: 
for any $k \in \Z_{\geq 1}$, $ z \in {\cal V}_\delta' \cap h^0_r$,  $ \widehat z_1, \ldots , \widehat z_k \in h^0_c$,
    $$ \| B(z) \|_{0} \lesssim \, 1 \, , \quad
    \| d^k B(z)[\widehat z_1, \ldots, \widehat z_k]\|_{0} \lesssim_k  \prod_{j = 1}^k \|\widehat z_j \|_0\,.
    $$
Furthermore, $B$ is one smoothing, meaning that for any $s \in \Z_{\geq 1}$, the map 
$B: {\cal V}_\delta'  \cap h^s_r \to H^{s + 1}_r$ is real analytic, and it satisfies the following tame estimates: 
for any $k \in \Z_{\geq 1}$, $ z \in {\cal V}_\delta' \cap h^s_r$, and $ \widehat z_1, \ldots , \widehat z_k \in h^s_c$,
    $$ \| B(z) \|_{s+1} \lesssim_{s} \, 1 +  \|z_\bot\|_s\, , \quad
    \| d^k B(z)[\widehat z_1, \ldots, \widehat z_k]\|_{s + 1} \lesssim_{s ,k} \, 
    \sum_{j = 1}^k \| \widehat z_j\|_s \prod_{i \neq j} \| \widehat z_i\|_0 + \| z_\bot\|_s \prod_{j = 1}^k \|\widehat z_j \|_0\,.
    $$
    \end{proposition}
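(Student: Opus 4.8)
The plan is to derive everything from the factorization $\Psi=\Psi_L\circ\Psi_C$ together with the properties already established for $\Psi_L$ in Proposition \ref{Psi L 2} and for $\Psi_C$ in Proposition \ref{lemma principale correttore mappa simplettica} and Corollary \ref{corollario correttore simplettico}. First I would observe that the composition is well defined: by \eqref{domini flusso cal F} the map $\Psi_C$ sends ${\cal V}_\delta'\cap h^0_r$ into ${\cal V}_{2\delta}={\cal V}_S\times{\cal V}_\bot(2\delta)\subseteq{\cal V}$, the domain on which $\Psi_L$ enjoys the properties of Proposition \ref{Psi L 2}. Since $\Psi_C:{\cal V}_\delta'\cap h^s_r\to h^s_r$ and $\Psi_L:{\cal V}\cap h^s_r\to H^s_r$ are both real analytic diffeomorphisms onto their images for every $s\in\Z_{\geq 0}$ (Corollary \ref{corollario correttore simplettico}$(i)$, and items (L1), (L3) of Proposition \ref{Psi L 2}), so is $\Psi$. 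Symplecticity is immediate from the pullback computation $\Psi^*\Lambda=\Psi_C^*\big(\Psi_L^*\Lambda\big)=\Psi_C^*\Lambda_1=\Lambda_M$, the last equality being the Darboux step Proposition \ref{lemma principale correttore mappa simplettica}.

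For the quantitative statements I would write the nonlinear part in a form adapted to the factorization. Using $\Psi_L=F_{nls}^{-1}+B_L$, $\Psi_C=\io d+B_C$ and the linearity of $F_{nls}^{-1}$,
$$
B(z)=\Psi_L\big(\Psi_C(z)\big)-F_{nls}^{-1}(z)=F_{nls}^{-1}\big(B_C(z)\big)+B_L\big(\Psi_C(z)\big)\,.
$$
Since $F_{nls}^{-1}:h^s_c\to H^s_c$ is a bounded linear isomorphism for every $s\in\Z_{\geq 0}$, the first summand and all its derivatives are controlled directly by the one smoothing tame estimates for $B_C$ from Corollary \ref{corollario correttore simplettico}$(i)$: at the lowest level these give $\|F_{nls}^{-1}B_C(z)\|_0\lesssim\|z_\bot\|_0^2\lesssim 1$ and $\|F_{nls}^{-1}d^kB_C(z)[\,\cdot\,]\|_0\lesssim_k\prod_j\|\widehat z_j\|_0$, and at level $s+1$ they give precisely the right hand sides appearing in the statement, after absorbing the harmless factors $\|z_\bot\|_0<\delta<1$.

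It then remains to estimate $z\mapsto B_L(\Psi_C(z))$ and its derivatives. I would first record the a priori bound $\|\Pi_\bot\Psi_C(z)\|_s\le\|z_\bot\|_s+\|B_C(z)\|_{s+1}\lesssim_s\|z_\bot\|_s$, which follows from Corollary \ref{corollario correttore simplettico}$(i)$ and $\|z_\bot\|_0<1$. Feeding this into the tame estimates \eqref{stime BL Lemma 0}, \eqref{stime BL Lemma} for $B_L$ yields $\|B_L(\Psi_C(z))\|_0\lesssim 1$ and $\|B_L(\Psi_C(z))\|_{s+1}\lesssim_s 1+\|\Pi_\bot\Psi_C(z)\|_s\lesssim_s 1+\|z_\bot\|_s$. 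For the derivatives I would expand $d^k\big(B_L\circ\Psi_C\big)(z)$ by the Fa\`a di Bruno formula into a sum of terms of the form $d^jB_L(\Psi_C(z))\big[d^{m_1}\Psi_C(z)[\dots],\dots,d^{m_j}\Psi_C(z)[\dots]\big]$ with $m_1+\dots+m_j=k$, and combine the tame estimates for the $d^m\Psi_C$ from Corollary \ref{corollario correttore simplettico}$(i)$ with those for the $d^jB_L$. Here I would use that $B_L$ is affine in $z_\bot$ (Proposition \ref{Psi L 2}$(L2)$, where $d^2_\bot B_L=0$), so that every derivative $d^jB_L$ reduces to the partial derivatives $\partial_S^{\alpha,\beta}B_L$ and $\partial_S^{\alpha,\beta}d_\bot B_L$ already controlled by \eqref{stime BL Lemma 0}, \eqref{stime BL Lemma}; together with $\|z_\bot\|_0<1$ and the standard interpolation inequalities in the scale $h^s$, this keeps at most one top Sobolev index $s$ (or one factor $\|z_\bot\|_s$) in each resulting product and gives the asserted bounds for $d^kB(z)$.

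The analytic statements and the identities are essentially bookkeeping; the delicate point is the last paragraph, namely organizing the chain rule for $d^k(B_L\circ\Psi_C)$ so that the \emph{tame} structure of the estimates — exactly one factor carrying the top Sobolev index, all the others measured in $h^0$ — is preserved, while simultaneously using that the one smoothing of $B_C$ is exactly what makes $B(z)$ land in $H^{s+1}$. Assembling the affine dependence of $B_L$ on $z_\bot$, the flow bound $\|\Pi_\bot\Psi_C(z)\|_s\lesssim_s\|z_\bot\|_s$, and the tame product inequalities with the right bookkeeping is the main obstacle.
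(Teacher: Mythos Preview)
Your proposal is correct and follows essentially the same approach as the paper: the same decomposition $B(z)=F_{nls}^{-1}(B_C(z))+B_L(\Psi_C(z))$, the same pullback computation for symplecticity, and the same appeal to Proposition \ref{Psi L 2} and Corollary \ref{corollario correttore simplettico} for the estimates. The paper's proof is in fact terser than yours --- it simply records the decomposition and states that the estimates follow --- so your more explicit organization of the chain rule (exploiting the affine dependence of $B_L$ on $z_\bot$ and the bound $\|\Pi_\bot\Psi_C(z)\|_s\lesssim_s\|z_\bot\|_s$) is a faithful unpacking of what the authors leave implicit; the reference to interpolation inequalities is not actually needed, as the tame structure is preserved directly by composing the tame bounds for $d^jB_L$ and $d^m\Psi_C$.
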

   \begin{proof}
  By Proposition \ref{Psi L 2} and Corollary \ref{corollario correttore simplettico} one has that for any $s \in \Z_{\geq 0}$, the map $\Psi = \Psi_L \circ \Psi_C : {\cal V}_\delta' \cap h^s_r \to H^s_r$ is real analytic
  and 
  \begin{equation}\label{simpletticita mappa finale}
  \Psi^* \Lambda = (\Psi_L \circ \Psi_C)^* \Lambda = \Psi_C^* \Psi_L^* \Lambda \stackrel{\eqref{definizione omega 0 1}}{=} \Psi_C^* \Lambda_1 \stackrel{Proposition\,\, \ref{lemma principale correttore mappa simplettica}}{=} \Lambda_0 \stackrel{\eqref{definizione omega 0 1}}{=} \Lambda_M\,,
  \end{equation}
implying that $\Psi$ is symplectic. Recalling that $\Psi_L = F_{nls}^{- 1} + B_L$ (see \eqref{espressione BL(z)}) and using that, by Corollary \ref{corollario correttore simplettico}, $\Psi_C = { \io d} + B_C$, a direct calculation shows that for any $z \in {\cal V}_\delta' \cap h^0_r$
\begin{align}
B(z) = \Psi(z) -  F_{nls}^{- 1} (z) & = F_{nls}^{- 1} (B_C(z)) + B_L(\Psi_C(z)) \label{definition cal R Psi}
\end{align}
The claimed estimates for $B$ then follow from the estimates of Proposition \ref{Psi L 2} and the ones of Corollary \ref{corollario correttore simplettico}.  
\end{proof}
Substituting formula \eqref{espressione BL(z)} for $B_L$ one gets
\begin{align}
 \Psi(z) =  & F_{nls}^{- 1} (z) + F_{nls}^{- 1}( B_C(z)) + 
 B^{nls}\big( \Pi_S z + \Pi_SB_C(z) \big) + d_\bot B^{nls}\big( \Pi_S z + \Pi_S B_C(z) \big)[z_\bot + \pi_\bot B_C(z) ]
\end{align}
where according to Corollary \ref{espansione quadratica cubica cal R Psi},
$$
B_C(z) =  \frac12 d^2 B_C(\Pi_S z)[\Pi_\bot z, \Pi_\bot z] 
+ \frac12 \int_0^1 (1 - t)^2 d^3 B_C(\Pi_S z + t \Pi_\bot z)[\Pi_\bot z, \Pi_\bot z, \Pi_\bot z]\,d t\,.
$$
Next, we state and prove the one smoothing property and tame estimates for the map
 \begin{equation}\label{definizione cal A Psi (w)}
 {\cal A}(z) := d\Psi(z)^{- 1} - F_{nls}\,, \qquad z \in {\cal V}_\delta' \cap h^0_r\,.
 \end{equation}
By the chain rule, 
 \begin{equation}\label{formula d Psi inverse}
 d \Psi(z)^{- 1} = d \Psi_C(z)^{- 1} \, \big( d \Psi_L(\Psi_C(z)) \big)^{- 1}\,.
 \end{equation}
Note that by Corollary \ref{corollario correttore simplettico}, 
$$
d \Psi_C(z)^{- 1} = d \Psi_C^{- 1}(\Psi_C(z))= {\rm Id} + d A_C(\Psi_C(z))\,,
$$
and that by \eqref{cuba 0}, $d \Psi_L(z)^{- 1}  = F_{nls} + {\cal A}_L(z)$. Hence \eqref{formula d Psi inverse} can be written as 
 \begin{equation}\label{definizione cal A Psi}
 d \Psi(z)^{- 1}  = F_{nls} + {\cal A}(z)\,, \quad {\cal A}(z) := {\cal A}_L(\Psi_C(z)) + d A_C(\Psi_C(z)) d \Psi_L(\Psi_C(z))^{- 1}\,.
 \end{equation}
 \begin{proposition}[Tame estimates for ${\cal A}$]\label{stima tame cal A Psi}
 For any $s \in \Z_{\geq 1}$, the map ${\cal A} : {\cal V}_\delta' \cap h^s_r \to {\cal L}(H^s_c , h^{s + 1}_c)$ is real analytic and satisfies the following tame estimates: for any $z \in {\cal V}_\delta' \cap h^0_r$,  
 $\widehat w \in H^0_c$, 
$$
 \| {\cal A}(z)[\widehat w] \|_{0} \lesssim   \| \widehat w \|_0 
$$
and for any $k \ge 1$, $\widehat z_1, \ldots, \widehat z_k \in h^0_c$,  
\begin{align*}
\| d^k \big( {\cal A}(z)[\widehat w] \big)[\widehat z_1, \ldots, \widehat z_k] \|_{0}
 \lesssim_{ k}  & \,\,
\| \widehat w\|_0 \prod_{j = 1}^k \| \widehat z_j\|_0\,.
 \end{align*}  
Moreover, for any $s \in \Z_{\geq 1}$, $z \in {\cal V}_\delta' \cap h^s_r$,  $w \in H^s_c$, 
$$
 \| {\cal A}(z)[\widehat w] \|_{s+1} \lesssim_{s}  \| z_\bot\|_s \| \widehat w \|_0  +  \| \widehat w\|_s
$$
and for any $k \ge 1$, $\widehat z_1, \ldots, \widehat z_k \in h^s_c$,   
\begin{align*}
\| d^k \big( {\cal A}(z)[\widehat w] \big)[\widehat z_1, \ldots, \widehat z_k] \|_{s + 1}
 \lesssim_{s, k}  &
\,\, \Big(   \| z_\bot\|_s \|\widehat w \|_0  +  \| \widehat w\|_s \Big) \, \prod_{j = 1}^k \| \widehat z_j\|_0  +   \| \widehat w\|_0 \, \sum_{j = 1}^k  \| \widehat z_j\|_s \prod_{i \neq j} \| \widehat z_i\|_0  \,.
 \end{align*}  
\end{proposition}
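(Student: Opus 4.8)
The plan is to exploit the representation \eqref{definizione cal A Psi},
$$
{\cal A}(z) = {\cal A}_L(\Psi_C(z)) + d A_C(\Psi_C(z))\, d \Psi_L(\Psi_C(z))^{- 1}\,,
$$
and to estimate the two summands separately, feeding in the tame estimates for ${\cal A}_L$ from Proposition \ref{estimates of d Psi L inv}, those for $B_C = \Psi_C - {\io d}$ and $A_C = \Psi_C^{- 1} - {\io d}$ from Corollary \ref{corollario correttore simplettico}, and the a priori bound $\| \Pi_\bot \Psi_C(z)\|_s \lesssim_s \| z_\bot\|_s$, which is \eqref{stima Pi bot flusso nella dim} with $\tau_0 = 0$, $\tau = 1$. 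No new analytic input is needed. I would first recall that, after shrinking $\delta$ as in \eqref{domini flusso cal F}, one has $\Psi_C({\cal V}_\delta') \subseteq {\cal V}$, so that ${\cal A}_L$, $d A_C$ and $d\Psi_L^{- 1}$ may all legitimately be evaluated at $\Psi_C(z)$; throughout I would use that the $S$-component of $z$ stays in a fixed bounded set and that $\| z_\bot\|_0 < \delta < 1$, so that lower order norms get absorbed into constants.

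For the first summand, evaluating \eqref{estimate for A L} at $\Psi_C(z) \in {\cal V} \cap h^s_r$ and using $\| \Pi_\bot\Psi_C(z)\|_s \lesssim_s \| z_\bot\|_s$ yields, for $s \in \Z_{\geq 1}$,
$$
\| {\cal A}_L(\Psi_C(z))[\widehat w] \|_{s + 1} \lesssim_s \| z_\bot\|_s \| \widehat w\|_0 + \| \widehat w\|_s\,,
$$
and likewise $\| {\cal A}_L(\Psi_C(z))[\widehat w]\|_0 \lesssim \| \widehat w\|_0$. For the $z$-derivatives I would differentiate the composition $z \mapsto {\cal A}_L(\Psi_C(z))[\widehat w]$ by the higher-order chain rule, expressing $d^k$ as a finite sum over set partitions of terms $d^j\big({\cal A}_L(\cdot)[\widehat w]\big)(\Psi_C(z))\big[ d^{m_1}\Psi_C(z)[\ldots], \ldots, d^{m_j}\Psi_C(z)[\ldots]\big]$, then bounding each outer factor by \eqref{estimate for d A L} at $\Psi_C(z)$ and each inner factor by Corollary \ref{corollario correttore simplettico}. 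The point to check is that in every such summand either at most one argument is placed in an $s$-norm — in the $\| \widehat w\|_s$ or $\| \Pi_\bot\Psi_C(z)\|_s$ slot of the outer factor, or in the single high-norm slot of one $d^{m}\Psi_C$ factor — or else the summand carries a $\| z_\bot\|_s$-factor with all the $\widehat z_i$ in the $0$-norm, so that the mixed tame structure of \eqref{estimate for d A L} is reproduced; using once more $\| \Pi_\bot\Psi_C(z)\|_s \lesssim_s \| z_\bot\|_s$ and $\| z_\bot\|_0 < 1$ gives exactly the claimed bound.

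For the second summand I would use \eqref{cuba 0} to write $d\Psi_L(\Psi_C(z))^{- 1} = F_{nls} + {\cal A}_L(\Psi_C(z))$, so that by the first part it maps $H^s_c$ into $h^s_c$ with $\| d\Psi_L(\Psi_C(z))^{- 1}[\widehat w]\|_s \lesssim_s \| z_\bot\|_s \| \widehat w\|_0 + \| \widehat w\|_s$ and $\| d\Psi_L(\Psi_C(z))^{- 1}[\widehat w]\|_0 \lesssim \| \widehat w\|_0$. On the other hand, by Corollary \ref{corollario correttore simplettico} together with $\| \Pi_\bot\Psi_C(z)\|_s \lesssim_s \| z_\bot\|_s$, the map $d A_C(\Psi_C(z))$ is one smoothing with $\| d A_C(\Psi_C(z))[\widehat u]\|_{s + 1} \lesssim_s \| z_\bot\|_0 \| \widehat u\|_s + \| z_\bot\|_s \| \widehat u\|_0$ and $\| d A_C(\Psi_C(z))[\widehat u]\|_0 \lesssim \| \widehat u\|_0$. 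Composing these two bounds with $\widehat u = d\Psi_L(\Psi_C(z))^{- 1}[\widehat w]$ and using $\| z_\bot\|_0 < 1$ gives $\| d A_C(\Psi_C(z))\, d\Psi_L(\Psi_C(z))^{- 1}[\widehat w]\|_{s + 1} \lesssim_s \| z_\bot\|_s \| \widehat w\|_0 + \| \widehat w\|_s$, and the analogous $\|\cdot\|_0$ estimate. Its $z$-derivatives I would treat by the product and higher-order chain rules exactly as in the first step, again checking that only a single factor is ever in the high norm. Adding the two contributions produces all the asserted estimates, and the real analyticity of ${\cal A} : {\cal V}_\delta' \cap h^s_r \to {\cal L}(H^s_c, h^{s + 1}_c)$ follows from that of ${\cal A}_L$, $d A_C$, $d\Psi_L^{- 1}$ and of the composition with the real analytic map $\Psi_C$.

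The only genuine difficulty is organizational: carrying out the Leibniz and higher-order chain rule bookkeeping so that, in each of the many summands produced, exactly one argument carries the $s$-norm — or the summand has the form $\| z_\bot\|_s \| \widehat w\|_0 \prod_j \| \widehat z_j\|_0$ — so that the mixed tame structure is manifestly preserved through the compositions. I expect that, as is usual in this circle of arguments, one extra lower order factor $\| z_\bot\|_0 < \delta$ will have to be spent at each composition to keep the constants clean; no estimate beyond Proposition \ref{estimates of d Psi L inv} and Corollary \ref{corollario correttore simplettico} is required.
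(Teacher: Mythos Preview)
Your proposal is correct and follows exactly the same approach as the paper: the paper's proof is a single sentence stating that the estimates follow from Proposition \ref{estimates of d Psi L inv} and Corollary \ref{corollario correttore simplettico} by the chain and product rules, which is precisely what you carry out in detail via the decomposition \eqref{definizione cal A Psi}. Your inclusion of the auxiliary bound $\|\Pi_\bot \Psi_C(z)\|_s \lesssim_s \|z_\bot\|_s$ from \eqref{stima Pi bot flusso nella dim} is the right ingredient to transfer the estimates from $\Psi_C(z)$ back to $z$, and the bookkeeping you describe is exactly the routine work the paper leaves implicit.
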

\begin{proof}
The claimed estimates for ${\cal A}$ follow by Lemma \ref{estimates of d Psi L inv} and Corollary \ref{corollario correttore simplettico} by the chain and product rules. 
\end{proof}


  \subsection{The defocusing NLS Hamiltonian in new coordinates}\label{Hamiltoniana trasformata}
In this subsection we prove the expansion of ${\cal H}^{nls} \circ \Psi$, stated in $(C3)$ of Theorem \ref{modified Birkhoff map}. 
Recall from \eqref{Poisson brackets} that the Hamiltonian of the defocusing NLS equation is given by 
  $$
  {\mathcal H}^{nls}(w) = \int_{0}^1 (\partial _x  u \partial _x v + u^2 v^2)dx\,, \quad w = (u, v) \in H^1_r\,. 
  $$
    By Theorem \ref{Theorem Birkhoff coordinates}, $H^{nls} := {\cal H}^{nls} \circ \Psi^{nls}$
  only depends on the actions. By a slight abuse of notation we write 
  \begin{equation}\label{Hamiltoniana integrabile Birkhoff}
  H^{nls} = H^{nls}(I)\,, \qquad I  = (I_k)_{k \in \Z} \in \ell^{1, 2}_+\,, 
  \quad I_k \equiv I_k(z) = |z_k|^2/2 = (x_k^2 + y_k^2)/2 \quad \forall k \in \Z\,
  \end{equation}
 and denote by $ \omega_k^{nls} (I)$ the dNLS frequencies, 
 \begin{equation}\label{definizione frequenze NLS introduzione}
 \omega_k^{nls} (I):= \partial_{I_k} H^{nls}(I)\,, \qquad k \in \Z\,.
 \end{equation}
The  properties of the frequency map $ I \mapsto \omega (I) := (\omega_k(I))_{k \in \Z}$ , needed in the sequel, are summarized in the following
   \begin{proposition} {\bf (dNLS frequencies)}
\label{Asintotica frequenze integrabile} 
The map
   \begin{equation}\label{asintotics frequencies A}
   \ell^{1, 2}_+ \rightarrow \ell ^\infty , \ (I_k)
      _{k \in {\mathbb Z}} \mapsto (\omega ^{ nls}_n(I) - 4 \pi ^2 n^2)
      _{n \in {\mathbb Z}}
   \end{equation}
is real analytic and bounded. 


\end{proposition}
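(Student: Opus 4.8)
The plan is to combine the structure of ${\cal H}^{nls}$ with the properties of the Birkhoff map recorded in Theorem \ref{Theorem Birkhoff coordinates}, and to reduce the proposition to a quantitative $\ell^\infty$-bound for the frequencies. By (B3), $H^{nls}$ is real analytic on $\ell^{1,2}_+$; hence each $\omega^{nls}_n=\partial_{I_n}H^{nls}$ is real analytic there and extends holomorphically to a complex neighbourhood of any given compact $K\subset\ell^{1,2}_+$. By the standard criterion for analyticity of $\ell^\infty$-valued maps --- a locally bounded, coordinatewise holomorphic map into $\ell^\infty$ is holomorphic --- it therefore suffices to establish
$$
\sup_n\ \sup_{I}\ \big|\,\omega^{nls}_n(I)-4\pi^2 n^2\,\big| < \infty,
$$
the supremum over $I$ being taken over a complex neighbourhood of $K$; this yields at once both the boundedness and the real analyticity asserted.

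For the quantitative bound I would first isolate the source of the $n^2$-growth. Split ${\cal H}^{nls}={\cal H}_2+{\cal H}_4$, with ${\cal H}_2(u,v):=\int_0^1\partial_x u\,\partial_x v\,dx$ and ${\cal H}_4(u,v):=\int_0^1 u^2v^2\,dx$, and use $\Psi^{nls}=F_{nls}^{-1}+B^{nls}$ from (B4) and (B5). A direct computation in Fourier coefficients gives $({\cal H}_2\circ F_{nls}^{-1})(z)=\sum_n 4\pi^2 n^2 I_n(z)$, whose associated frequencies are exactly $4\pi^2 n^2$; hence
$$
H^{nls}(z)=\sum_n 4\pi^2 n^2 I_n(z)+R(z),\qquad R(z):=\big({\cal H}_2(\Psi^{nls}z)-{\cal H}_2(F_{nls}^{-1}z)\big)+{\cal H}_4(\Psi^{nls}z).
$$
Since both $H^{nls}$ and $\sum_n 4\pi^2 n^2 I_n$ depend on the actions only, so does $R=R(I)$, and $\omega^{nls}_n(I)=4\pi^2 n^2+\partial_{I_n}R(I)$; it remains to show $\sup_n|\partial_{I_n}R|<\infty$ locally uniformly on $\ell^{1,2}_+$. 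The term $R$ is built from the cross terms $\int\partial_x u_0\,\partial_x v_1+\int\partial_x u_1\,\partial_x v_0+\int\partial_x u_1\,\partial_x v_1$, where $(u_0,v_0)=F_{nls}^{-1}z$ and $(u_1,v_1)=B^{nls}z$, together with ${\cal H}_4(\Psi^{nls}z)$; since $B^{nls}$ is one smoothing (so $(u_1,v_1)$ is one derivative smoother than $(u_0,v_0)$), ${\cal H}_4$ carries no derivatives, and $H^1(\T)$ is a Banach algebra, all of these are finite and morally one order lower than the leading part --- which accounts for the form $\omega^{nls}_n=4\pi^2 n^2+O(1)$.

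The main obstacle is to make this last step sharp. One may fix $I$, evaluate at the point $z=z(I)\in h^1_r$ with $x_k=\sqrt{2I_k}$ and $y=0$, and compute $\nabla_z R(z)=d\Psi^{nls}(z)^t\,\nabla_w{\cal H}^{nls}(\Psi^{nls}z)-\nabla_z\big(\sum_n 4\pi^2n^2 I_n\big)(z)$ using $\nabla_w{\cal H}^{nls}(u,v)=(-\partial_x^2 v+2uv^2,\,-\partial_x^2 u+2u^2v)$ and the estimates of (B5). Because $R$ depends on the actions only, $\partial_{x_n}R(z)$ is divisible by $x_n$, and $\partial_{I_n}R(I)=\partial_{x_n}R(z)/x_n=\int_0^1\partial_{x_n}^2R(z_t)\,dt$ with $z_t$ obtained from $z$ by replacing $x_n$ with $tx_n$; so one is reduced to bounding the radial Hessian $\tfrac12(\partial_{x_n}^2+\partial_{y_n}^2)R$ uniformly in $n$. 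The difficulty is that the $n^2$-weight produced by ${\cal H}_2$ is compensated by only a single gain of derivative, so the soft one smoothing property of $B^{nls}$, used by itself, does not obviously deliver the sharp $O(1)$ bound; closing this gap requires a finer input --- either the sharper asymptotics of the dNLS Birkhoff map in \cite{KST} and \cite{Molnar}, or the direct computation of the dNLS frequencies from the periodic/Dirichlet spectrum of the associated Zakharov--Shabat operator in \cite{GK}, where precisely the displayed bound and the analytic dependence on $I$ are established. In practice I would therefore quote the quantitative bound from \cite{GK} (cf.\ also \cite{KST}, \cite{Molnar}) and add only the short complexification argument of the first paragraph to conclude analyticity into $\ell^\infty$.
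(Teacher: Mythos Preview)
Your proposal is correct and, in the end, takes the same route as the paper: the paper's entire proof is the single line ``See e.g.\ Theorem 3.2 in \cite{BKM}'', so both you and the authors defer the quantitative $\ell^\infty$-bound (and with it the analyticity) to the existing literature. Your additional sketch---isolating the leading $4\pi^2 n^2$ part via ${\cal H}_2\circ F_{nls}^{-1}$ and tracing the remainder to $B^{nls}$ and ${\cal H}_4$---is a helpful and accurate explanation of \emph{why} the cited result has this form, but it is not needed for the paper's purposes and, as you yourself note, does not by itself close the sharp $O(1)$ bound without the finer spectral input from \cite{GK} (or \cite{BKM}).
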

\begin{proof}
See e.g. Theorem 3.2 in \cite{BKM}. 
\end{proof}

With the notation introduced above, the $L^2$-gradient $\nabla H^{nls}(z)$ is then given  by  
  $$
  \nabla H^{nls}(z) = \Omega^{nls}(I)[z]\,, \quad z \in h^1_r\,,  \,\,\, I \equiv I(z) = (I_n(z))_{n \in \Z}
  $$
where for any $I \in \ell^{1,2}_+$, \, ${ \Omega}^{nls}(I ): h^1_r \to h^{-1}_r$ is the diagonal operator
  \begin{equation}\label{definizione bf Omega nls}
  { \Omega}^{nls}(I ) := \begin{pmatrix}
 {\rm diag}_{k \in \Z} \omega_k^{nls}(I) & 0 \\
 0 & {\rm diag}_{k \in \Z} \omega_k^{nls}(I) 
  \end{pmatrix}\,.
  \end{equation}
 Further note that since $H^{nls}(z) = {\cal H}^{nls}(\Psi^{nls}(z))$ one has by the chain rule 
  \begin{equation}\label{identita importante 1}
  { \Omega}^{nls}(I) [z] =   \nabla H^{nls}(z) =  (d \Psi^{nls}(z))^t \nabla {\cal H}^{nls}(\Psi^{nls}(z))\,, \quad \forall z \in {\cal V} \cap h^1_r\,
  \end{equation}
  where ${\cal V}$ is the neighborhood of $h^0_r$ in $h^0_c$ of Theorem \ref{Theorem Birkhoff coordinates}, ${\cal V} = \Phi^{nls}({\cal W})$. 
 For later use we record that \eqref{identita importante 1}, evaluated at $z$ with $z = \Pi_S z$, reads
  $$
  { \Omega}^{nls}(I_S, 0)[ \Pi_S z] = (d \Psi^{nls}(\Pi_S z))^t \nabla {\cal H}^{nls}(\Psi^{nls}(\Pi_S z))
  $$ 
  implying that 
  \begin{equation}\label{identita importante 2}
  \Pi_\bot (d \Psi^{nls}(\Pi_S z))^t \nabla {\cal H}^{nls}(\Psi^{nls}(\Pi_S z)) = 0\,.
  \end{equation}
  \noindent 
The equations of motion, associated to the Hamiltonian $H^{nls}$ are given by  
  \begin{equation}\label{equazione integrabile NLS}
  \partial_t z = J { \Omega}^{nls}(I) [z]\,, \quad J = \begin{pmatrix}
  0 & - {\rm Id} \\
  {\rm Id} & 0 
  \end{pmatrix}\,.
  \end{equation}
    According to the splitting $z = (z_S, z_\bot) \in \C^S \times \C^S \times h^0_{ \bot c}$, we can decompose the equation \eqref{equazione integrabile NLS} as
  \begin{equation}\label{equazione integrabile NLS decomposta}
  \begin{cases}
  \partial_t z_S = J \Omega_S^{nls}(I) [z_S] \\
  \partial_t z_\bot = J \Omega_\bot^{nls}(I) [z_\bot]
  \end{cases}
  \end{equation}
  where 
  \begin{equation}\label{splitting Omega}
  \Omega_S^{nls}(I ) := \begin{pmatrix}
 {\rm diag}_{k \in S} \omega_k^{nls}(I) & 0 \\
 0 & {\rm diag}_{k \in S} \omega_k^{nls}(I) 
  \end{pmatrix}\,, \qquad \Omega_\bot^{nls}(I ) := \begin{pmatrix}
 {\rm diag}_{k \in S^\bot} \omega_k^{nls}(I) & 0 \\
 0 & {\rm diag}_{k \in S^\bot} \omega_k^{nls}(I) 
  \end{pmatrix}\,.
  \end{equation}
 \noindent
Similarly,  by a slight abuse of terminology, we identify $I = (I_k)_{k \in \Z}$ with  $(I_S, I_\bot)$,
  \begin{equation}\label{splitting azioni}
  I = (I_S, I_\bot)\,, \qquad I_S := (I_k)_{k \in S}\,, \qquad I_\bot := (I_k)_{k \in S^\bot}\,.
  \end{equation}
    \noindent
    Note that although the frequencies $\omega_k(I)$ are functions of all the action variables 
    $I_n$, $n \in Z$, the system
    \eqref{equazione integrabile NLS decomposta} decouples since the action variables are invariant in time
    and depend only on the initial data.
  Now let us assume that $z(t) = (z_S(t), 0)$ is a solution of \eqref{equazione integrabile NLS decomposta} with initial data $z (0) = (z_S^{(0)}, 0)$ and 
  consider the equation obtained from \eqref{equazione integrabile NLS decomposta} by linearizing it along
  $(z_S(t), 0)$ with initial data given by $\widehat z^{(0)} = ( 0, \widehat z_\bot^{(0)})$ and 
  $\widehat z_\bot^{(0)} \in h^1_{\bot r}$ and denote by $\widehat z(t)$ the corresponding solution
  which evolves in $h^1_r$. By a straightforward computation one verifies that the differential of $\Omega^{nls}(I )$ at 
  $(z_S^{(0)}, 0)$ in direction $( 0, \widehat z_\bot^{(0)})$ vanishes, implying that
  $\widehat z(t) = (0, \widehat z_\bot(t) )$ where 
  $\widehat z_\bot(t)$ is the solution of 
  \begin{equation}\label{linearization in z variable}
  \partial_t \widehat z_\bot(t) = J \Omega_\bot(I_S, 0) [\widehat z_\bot(t)]\,, \qquad
   \widehat z_\bot (0) = \widehat z_\bot^{(0)} \, .
 \end{equation}
Since  by Theorem \ref{Theorem Birkhoff coordinates}, $\Psi^{nls}: h^1_r \to H^1_r$ is symplectic 
it follows that
  \begin{equation}\label{maradona 0}
  \widehat w(t) := d \Psi^{nls}(z_S(t), 0)[(0, \widehat z_\bot(t))] = d_\bot \Psi^{nls}(z_S(t), 0)[\widehat z_\bot(t)]
  \end{equation}
  is a solution of the equation obtained by linearizing  the dNLS equation along $\Psi^{nls}(z_S(t), 0)$.
  More precisely, 
  \begin{equation}\label{maradona 2}
  \partial_t \widehat w(t) = \ii {\mathbb J} d \nabla {\cal H}^{nls}(\Psi^{nls}(z_S(t), 0))[\widehat w(t)]\,,
  \qquad \widehat w(0) = d \Psi^{nls}(z_S^{(0)}, 0)[(0, \widehat z_\bot^{(0)})] \, .
  \end{equation}
  On the other hand, by differentiating formula \eqref{maradona 0} with respect to $t$, one gets  
  \begin{align}
  \partial_t \widehat w (t)& = d_\bot \Psi^{nls}(z_S(t), 0)[\partial_t \widehat z_\bot(t)] + d_S \big( d_\bot \Psi^{nls}(z_S(t), 0)[\widehat z_\bot(t)] \big) [\partial_t z_S(t) ]\nonumber\\
  & = d_\bot \Psi^{nls}(z_S(t), 0) \big[ J \Omega_\bot^{nls}(I_S, 0) \widehat z_\bot(t) \big] +
  d_S \big( d_\bot \Psi^{nls}(z_S(t), 0)[\widehat z_\bot(t)] \big)[ J \Omega_S^{nls}(I_S, 0) z_S(t)]\,. 
  \label{maradona 1} 
  \end{align}
  Comparing \eqref{maradona 2} and \eqref{maradona 1} 
  one gets 
  \begin{align}
\ii {\mathbb J} d \nabla {\cal H}^{nls}(\Psi^{nls}(z_S(t) , 0)) & \big[ d_\bot \Psi^{nls}(z_S(t), 0) \widehat z_\bot  \big] 
=  d_\bot \Psi^{nls}(z_S(t), 0) \big[ J \Omega^{nls}_\bot(I_S, 0) \widehat z_\bot(t) \big] \,  \nonumber\\
  & \quad + \, d_S \big(d_\bot \Psi^{nls}(z_S(t), 0)[\widehat z_\bot(t) ] \big)[ J \Omega_S^{nls}(I_S, 0) z_S(t)]\,.  \label{maradona 3}
  \end{align}
  The latter identity implies that for any $z_S \in \R^S \times \R^S$, 
  $\widehat z_\bot \in h^1_{\bot r}$,
  \begin{align}
  \ii {\mathbb J} d \nabla {\cal H}^{nls}(\Psi^{nls}(z_S, 0)) & \big[ d \Psi^{nls}(z_S, 0)[(0, \widehat z_\bot)] \big]
   =  d \Psi^{nls}(z_S, 0) J \Omega^{nls}(I_S, 0) [(0, \widehat z_\bot )]  \,  \nonumber\\
  & \quad  + \, d_S \big(d_\bot \Psi^{nls}(z_S, 0)[\widehat z_\bot] \big) [ J \Omega_S^{nls}(I_S, 0) z_S]\,.  \label{maradona 4}
  \end{align}
Solving for $J \Omega^{nls}(I_S, 0)[(0, \widehat z_\bot)] $, one gets
  \begin{align}
  J \Omega^{nls}(I_S, 0)[(0, \widehat z_\bot )] & = (d \Psi^{nls}(z_S, 0))^{- 1}  \ii {\mathbb J} d \nabla {\cal H}^{nls}(\Psi^{nls}(z_S, 0)) \big[d \Psi^{nls}(z_S, 0) \, (0, \widehat z_\bot) \big] \nonumber\\
  & \qquad - \, (d \Psi^{nls}(z_S, 0))^{- 1} d_S \big(d_\bot \Psi^{nls}(z_S, 0)[\widehat z_\bot] \big) [ J \Omega_S^{nls}(I_S, 0) z_S]\,. \label{maradona 5}
  \end{align}
  Since $ \Psi^{nls}$ is symplectic, one has 
  $$
  (d \Psi^{nls}(z_S, 0))^{- 1}  \ii {\mathbb J}  = J (d \Psi^{nls}(z_S, 0))^t\,, \quad  (d \Psi^{nls}(z_S, 0))^{- 1} 
  = J (d \Psi^{nls}(z_S, 0))^{ t} \ii {\mathbb J}
  $$
  and hence \eqref{maradona 5} reads 
  \begin{align}
  \Omega^{nls}(I_S, 0)[(0, \widehat z_\bot)]  &  = (d \Psi^{nls}(z_S, 0))^t 
   d \nabla {\cal H}^{nls}(\Psi^{nls}(z_S, 0))d \Psi^{nls}(z_S, 0)[(0, \widehat z_\bot)] - 
   {\cal R}^{(1)}(z_S)[\widehat z_\bot]   \label{identita importante 3}
  \end{align}
  where $ {\cal R}^{(1)}(z_S) : h^{0}_{\bot c} \to h^{0}_{ c}$ is the bounded linear operator, defined by
  \begin{equation}\label{definizione cal M (wS)}
  {\cal R}^{(1)}(z_S)[\widehat z_\bot] :=  \, d \Psi^{nls}(z_S, 0)^{ t} \ii {\mathbb J} 
  d_S \big(d_\bot \Psi^{nls}(z_S, 0)[\widehat z_\bot] \big) [ J \Omega_S^{nls}(I_S, 0) z_S]\,.
  \end{equation}
  For later use we record the following estimates for ${\cal R}^{(1)}(z_S)$. 
  \begin{lemma}\label{stime cal M(wS)}
  The map ${\cal V}_S \cap (\R^S \times \R^S) \to {\cal L}(h^{0}_{\bot c}, h^{0}_c)$, $z_S \mapsto {\cal R}^{(1)} (z_S)$ is real analytic and bounded. Moreover it is one smoothing, meaning that for any $s \in \Z_{\geq 1}$, ${\cal V}_S \cap (\R^S \times \R^S) \to {\cal L}(h^{s}_{\bot c}, h^{s + 1}_c)$, $z_S \mapsto {\cal R}^{(1)}(z_S)$ is real analytic. Furthermore, for any $s \in \Z_{\geq 1}$, $\alpha, \beta \in \Z_{\geq 0}^S$, $z_S \in {\cal V}_S \cap (\R^S \times \R^S)$,  
  $$
   \| \partial_S^{\alpha, \beta}{\cal R}^{(1)}(z_S)\|_{{\cal L}(h^0_{\bot c}, h^{0}_c)} \lesssim_{ \alpha, \beta} \,\,1\,, \quad  \| \partial_S^{\alpha, \beta}{\cal R}^{(1)}(z_S)\|_{{\cal L}(h^s_{ \bot c}, h^{s + 1}_c)} \lesssim_{s, \alpha, \beta} \,\,1\,.
  $$
  \end{lemma}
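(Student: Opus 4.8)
The plan is to derive the estimates for $\mathcal{R}^{(1)}(z_S)$ directly from its explicit formula \eqref{definizione cal M (wS)} together with the properties of $\Psi^{nls}$ stated in Theorem \ref{Theorem Birkhoff coordinates} and the asymptotics of the dNLS frequencies from Proposition \ref{Asintotica frequenze integrabile}. First, I would observe that $\mathcal{R}^{(1)}(z_S)$ is built from the composition of four ingredients: the transpose $d\Psi^{nls}(z_S,0)^t$, the constant bounded operator $\ii\mathbb{J}$, the second-order partial derivative $d_S\big(d_\bot\Psi^{nls}(z_S,0)[\widehat z_\bot]\big)$, and the finite-rank operator $J\Omega_S^{nls}(I_S,0)z_S$, which depends smoothly on $z_S$ and lies in $\C^S\times\C^S$. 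Since $\overline{\mathcal{V}}_S$ is compact and contained in the domain where $\Psi^{nls}$ is analytic, and since all $z_S$-derivatives $\partial_S^{\alpha,\beta}$ of $\Psi^{nls}$, of $d_\bot\Psi^{nls}$, and of the frequency map are uniformly bounded on $\overline{\mathcal V}_S$, the analyticity and boundedness of $z_S\mapsto\mathcal{R}^{(1)}(z_S)$ as a map into $\mathcal{L}(h^0_{\bot c},h^0_c)$ follows by the chain and product rules, giving the first bound $\|\partial_S^{\alpha,\beta}\mathcal{R}^{(1)}(z_S)\|_{\mathcal{L}(h^0_{\bot c},h^0_c)}\lesssim_{\alpha,\beta}1$.

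Next I would establish the one-smoothing property. The key point is that $d_\bot\Psi^{nls} = d_\bot B^{nls}$ (since the linear part $F_{nls}$ contributes nothing to the mixed derivative), and by (B5) of Theorem \ref{Theorem Birkhoff coordinates} the map $B^{nls}$ is one smoothing, so $d_S\big(d_\bot B^{nls}(z_S,0)[\widehat z_\bot]\big)$ gains one derivative: it maps $h^s_{\bot c}$ into $H^{s+1}_c$ with norm controlled by $\|\widehat z_\bot\|_s$, uniformly for $z_S$ in the compact set $\overline{\mathcal V}_S$. Composing on the right with the finite-rank operator $J\Omega_S^{nls}(I_S,0)z_S$ (which is harmless, being finite-dimensional and smoothly dependent on $z_S$, hence bounded with all derivatives) and on the left with $\ii\mathbb{J}$ and $d\Psi^{nls}(z_S,0)^t$, I need only check that the transpose of $d\Psi^{nls}(z_S,0)$ preserves the $H^{s+1}$-to-$h^{s+1}$ mapping property with uniform bounds. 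This follows because $d\Psi^{nls}(z_S,0) = F_{nls}^{-1} + d B^{nls}(\cdots)$ with $dB^{nls}$ one smoothing, so its transpose (with respect to the pairings \eqref{trasposto successioni}, \eqref{trasposto funzioni successioni}) is $F_{nls} + (\text{one smoothing})$, which is bounded $H^{s+1}_c\to h^{s+1}_c$ uniformly on $\overline{\mathcal V}_S$. Differentiating in $z_S$ and again using compactness of $\overline{\mathcal V}_S$ and the uniform bounds on all $z_S$-derivatives of $\Psi^{nls}$, $B^{nls}$ and the frequencies yields $\|\partial_S^{\alpha,\beta}\mathcal{R}^{(1)}(z_S)\|_{\mathcal{L}(h^s_{\bot c},h^{s+1}_c)}\lesssim_{s,\alpha,\beta}1$.

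I expect the only mildly delicate point to be keeping track of which factors genuinely produce the regularity gain and verifying that the transpose operation interacts correctly with the Sobolev scale; once it is clear that the smoothing comes entirely from the $d_S d_\bot B^{nls}$ factor and that everything else is bounded (the finite-rank frequency factor, $\ii\mathbb J$, and the $F_{nls}^{-1}$-part of $d\Psi^{nls}(z_S,0)^t$ being bounded operators, and $dB^{nls}$ being smoothing), the estimates reduce to routine applications of the chain and product rules over the compact parameter set $\overline{\mathcal V}_S$. No separate argument for the uniformity in $z_S$ is needed beyond compactness, since all the relevant quantities depend continuously (indeed analytically) on $z_S$.
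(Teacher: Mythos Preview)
Your proposal is correct and follows essentially the same approach as the paper: the paper's proof consists of the single observation that $d_S\big(d_\bot\Psi^{nls}(z_S,0)[\widehat z_\bot]\big)=d_S\big(d_\bot B^{nls}(z_S,0)[\widehat z_\bot]\big)$ and then invokes Theorem~\ref{Theorem Birkhoff coordinates}, which is exactly the core of your argument. Your write-up simply spells out in more detail how the remaining factors ($d\Psi^{nls}(z_S,0)^t$, $\ii\mathbb J$, and the finite-dimensional vector $J\Omega_S^{nls}(I_S,0)z_S$) are harmless; the point you flag as ``mildly delicate'' about the transpose is handled either by direct inspection of $F_{nls}$ or, more cleanly, by the symplecticity relation $(d\Psi^{nls})^t=-J\,d\Phi^{nls}\,\ii\mathbb J$ used elsewhere in the paper.
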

  \begin{proof}
 By Theorem \ref{Theorem Birkhoff coordinates}, $\Psi^{nls} = F_{nls}^{- 1} + B^{nls}$ and hence 
  $d_S \big( d_\bot \Psi^{nls}(z_S, 0)[\widehat z_\bot] \big)= d_S \big( d_\bot B^{nls}(z_S, 0)[\widehat z_\bot] \big)$.
The claimed statements then follow from Theorem \ref{Theorem Birkhoff coordinates}.
    \end{proof}
  
  \bigskip
  
  \noindent
  We also need to record some properties of the operator  $\Omega_\bot^{nls}(I)$
  for $I = (I_S, 0)$. Write 
  \begin{equation}\label{splitting Omega bot (IS)}
  \Omega_\bot^{nls}(I_S, 0) = D^2_\bot + \Omega_\bot^{(0)}(I_S, 0)\,, 
  \end{equation}
  where  
  \begin{equation}\label{definizione D bot}
  D_\bot := \begin{pmatrix}
   {\rm diag}_{n \in S^\bot} (2 \pi n) & 0 \\
   0 &  {\rm diag}_{n \in S^\bot} (2 \pi n)
   \end{pmatrix}\,,
  \end{equation}
  and
  \begin{equation}\label{definizione Omega bot (0) (IS)}
  \Omega_\bot^{(0)}(I_S, 0) := \begin{pmatrix}
   {\rm diag}_{n \in S^\bot} (\omega_n^{nls}(I_S, 0) - 4 \pi^2 n^2) & 0 \\
   0 &  {\rm diag}_{n \in S^\bot} (\omega_n^{nls}(I_S, 0) - 4 \pi^2 n^2)
   \end{pmatrix}\,.
  \end{equation}
   
  \begin{lemma}\label{stime Omega bot (0) (IS)}
 For any $s \in \Z_{\geq 0}$, the map ${\cal V}_S \cap (\R^S \times \R^S) \to {\cal L}(h^s_{\bot c}, h^s_{\bot c})$, 
 $z_S \mapsto \Omega_\bot^{(0)}(I_S(z_S), 0)$
 is real analytic and bounded.
  \end{lemma}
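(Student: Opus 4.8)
The plan is to deduce the lemma from Proposition \ref{Asintotica frequenze integrabile} together with the elementary observation that a diagonal operator on $h^s_{\bot c}$ is controlled, in every Sobolev norm, by the $\ell^\infty$-norm of its diagonal. Concretely, for each $s \in \Z_{\geq 0}$ I would consider the linear map
$$
\iota_s : \ell^\infty(S^\bot) \to {\cal L}(h^s_{\bot c}, h^s_{\bot c})\,, \quad (d_n)_{n \in S^\bot} \mapsto \begin{pmatrix} {\rm diag}_{n \in S^\bot}\, d_n & 0 \\ 0 & {\rm diag}_{n \in S^\bot}\, d_n \end{pmatrix}\,.
$$
Since multiplying a sequence by a bounded one leaves the weights $\langle n \rangle^{2s}$ untouched, $\iota_s$ is bounded with $\| \iota_s(d) \|_{{\cal L}(h^s_{\bot c}, h^s_{\bot c})} = \| d \|_{\ell^\infty}$; being linear and bounded, $\iota_s$ is real analytic. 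Hence it suffices to show that the $\ell^\infty(S^\bot)$-valued map $z_S \mapsto \big( \omega_n^{nls}(I_S(z_S), 0) - 4\pi^2 n^2 \big)_{n \in S^\bot}$ is real analytic and bounded on ${\cal V}_S \cap (\R^S \times \R^S)$, for then $z_S \mapsto \Omega_\bot^{(0)}(I_S(z_S), 0)$ is the composition of this map with $\iota_s$.

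I would factor the $\ell^\infty$-valued map as a concatenation of three maps, each manifestly real analytic and bounded. First, $z_S = (x_S, y_S) \mapsto I(z_S, 0) = (I_S(z_S), 0)$, with $I_k(z_S) = (x_k^2 + y_k^2)/2$ for $k \in S$ and $I_k = 0$ for $k \in S^\bot$: this is a quadratic polynomial map from $\R^S \times \R^S$ into the finite-dimensional subspace of $\ell^{1,2}$ spanned by $\{ e_k : k \in S \}$, and since its components are nonnegative it takes values in the cone $\ell^{1,2}_+$; in particular it is real analytic and sends bounded sets to bounded sets. Second, the real analytic, bounded map $\ell^{1,2}_+ \to \ell^\infty(\Z)$, $I \mapsto (\omega_n^{nls}(I) - 4\pi^2 n^2)_{n \in \Z}$ of Proposition \ref{Asintotica frequenze integrabile}. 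Third, the restriction $\ell^\infty(\Z) \to \ell^\infty(S^\bot)$, $(a_n)_{n \in \Z} \mapsto (a_n)_{n \in S^\bot}$, which is bounded and linear, hence real analytic. A composition of real analytic maps between open subsets of (or cones in) Banach spaces is real analytic, and a composition of maps sending bounded sets to bounded sets is bounded, so the $\ell^\infty(S^\bot)$-valued map has both properties. Boundedness is also visible directly: $\overline{\cal V}_S$ is compact, so $\{ I(z_S, 0) : z_S \in {\cal V}_S \cap (\R^S \times \R^S) \}$ is a bounded subset of $\ell^{1,2}_+$, on which the asymptotics $\omega_n^{nls}(I) - 4\pi^2 n^2$ are uniformly bounded by Proposition \ref{Asintotica frequenze integrabile}.

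Since the argument consists entirely of soft functional-analytic bookkeeping, there is no serious obstacle; the only point requiring a little care is the compatibility of the notions of \emph{real analytic} and \emph{bounded} when the domain is the cone $\ell^{1,2}_+$ rather than an open set, and the fact that post-composition with a bounded linear operator (here $\iota_s$ and the coordinate restriction) preserves real analyticity and the property of mapping bounded sets to bounded sets. Granting these standard facts, the lemma follows by concatenating the explicit polynomial map $z_S \mapsto I(z_S, 0)$, Proposition \ref{Asintotica frequenze integrabile}, the restriction to indices in $S^\bot$, and the diagonal embedding $\iota_s$.
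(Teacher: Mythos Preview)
Your proposal is correct and follows essentially the same approach as the paper: both proofs reduce the statement to Proposition~\ref{Asintotica frequenze integrabile} together with the fact that the operator norm of a diagonal operator on $h^s_{\bot c}$ equals the $\ell^\infty$-norm of its diagonal. The paper phrases this more tersely by directly bounding the partial derivatives $\partial_S^{\alpha,\beta}\big(\omega_n^{nls}(I_S,0)-4\pi^2 n^2\big)$ uniformly in $n$ and passing to the operator norm, while you present the same content as a composition of the polynomial action map, the frequency map of Proposition~\ref{Asintotica frequenze integrabile}, a coordinate restriction, and the bounded linear embedding $\iota_s$; the substance is the same.
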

  \begin{proof}
  The lemma is a straightforward application of Proposition \ref{Asintotica frequenze integrabile}, since for any $\alpha, \beta \in \Z_{\geq 0}^S$ 
  $$
  \sup_{n \in S^\bot} |\partial_S^{\alpha, \beta} \big( \omega_n^{nls}(I_S, 0) - 4 \pi^2 n^2 \big)| 
  \lesssim_{\alpha, \beta} \,\,1
  $$
  and 
  $$
  \| \partial_S^{\alpha, \beta} \Omega_\bot^{(0)}(I_S, 0)  \|_{{\cal L}(h^s_{\bot c}, \, h^s_{\bot c})} 
  \lesssim \sup_{n \in S^\bot} |\partial_S^{\alpha, \beta} \big( \omega_n^{nls}(I_S, 0) - 4 \pi^2 n^2 \big)| \lesssim_{\alpha, \beta} \,\,1
  $$
  uniformly on ${\cal V}_S \cap (\R^S \times \R^S)$. 
  \end{proof}

  \bigskip

  \noindent
    After this preliminary discussion, we can now study the transformed Hamiltonian ${\cal H}^{nls} \circ \Psi$ where $\Psi = \Psi_L \circ \Psi_C$ is the symplectic transformation introduced in Subsection \ref{sec new coordinates}. We split the analysis into two parts. First we expand ${\cal H}^{(1)} := {\cal H}^{nls} \circ \Psi_L$ and then we analyze ${\cal H}^{(2)} = {\cal H}^{(1)} \circ \Psi_C$. 
   
   \bigskip
   
  \noindent 
 {\bf Expansion of ${\cal H}^{nls} \circ \Psi_L$ }
 
 \smallskip
 
 \noindent 
To expand ${\cal H}^{nls} \circ \Psi_L$, it is useful to write ${\cal H}^{nls}$ in the form   
  \begin{equation}\label{forma compatta hamiltoniana d-NLS}
  {\cal H}^{nls}(w) ={\cal H}_2^{nls} (w)+ {\cal H}_4^{nls}(w)  
  \end{equation}
  where 
  \begin{equation}\label{definizione cal H 2 4 nls}
  {\cal H}_2^{nls}(w) := \frac12 \big\langle {\cal D}_2 w\,,\,w\big\rangle_r\,, 
  \qquad {\cal H}_4^{nls}(w) := \int_{\T} u^2 v^2 \,dx\,,
  \end{equation}
  and the operator ${\cal D}_2$ is defined as 
  $$
  {\cal D}_2 := \begin{pmatrix}
  0 & - \partial_{xx} \\
  - \partial_{xx} & 0
  \end{pmatrix}\,.
  $$
  Note that ${\cal D}_2 = {\cal D}^t_2$. The Hamiltonian equations associated to \eqref{forma compatta hamiltoniana d-NLS} can be written as 
  \begin{equation}
  \partial_t w = \ii {\mathbb J} \nabla {\cal H}^{nls}(w)\,, \qquad {\mathbb J} = \begin{pmatrix}
  0 & - {\rm Id} \\
  {\rm Id} & 0
  \end{pmatrix}\,, \qquad  \nabla {\cal H}^{nls} = (\nabla_u {\cal H}^{nls}, \nabla_v {\cal H}^{nls})
  \end{equation}
where 
  \begin{equation}\label{espressione gradiente Hamiltoniana originaria nls}
  \nabla {\cal H}^{nls}(w) = {\cal D}_2 w + \nabla {\cal H}_4^{nls}(w)\,, \quad   
  d \nabla {\cal H}^{nls}(w) = {\cal D}_2 + d \nabla {\cal H}_4^{nls}(w)\, .
  \end{equation}

  \bigskip
  
  \noindent
  The Taylor expansion of ${\cal H}^{nls}$ around $\Psi^{nls}(\Pi_S z)$ up to order three reads
\begin{align}
{\cal H}^{nls}(\Psi^{nls}(\Pi_S z) + w) & = {\cal H}^{nls}(\Psi^{nls}(\Pi_S z)) + 
\langle \nabla {\cal H}^{nls}(\Psi^{nls}(\Pi_S z)), w \rangle_r + 
\frac12 \langle d \nabla {\cal H}^{nls}(\Psi^{nls}(\Pi_S z))[w]\,,\, w \rangle_r \nonumber\\
& \quad +   {\cal T}_{3}^{(1)}(z_S, w) \label{espansione taylor H4 nls}
\end{align}
where ${\cal T}_{3}^{(1)}(z_S, w) $ is the Taylor remainder term of order three, given by
\begin{align}
{\cal T}_{3}^{(1)}(z_S, w) & := \frac12 \int_0^1 (1 - t)^2 d^3 {\cal H}^{nls}(\Psi^{nls}(\Pi_S z) + t w )[w, w, w]\, d t \nonumber\\
 & \stackrel{\eqref{forma compatta hamiltoniana d-NLS}, \eqref{definizione cal H 2 4 nls}}{=} \frac12 \int_0^1 (1 - t)^2 d^3 {\cal H}^{nls}_4(\Psi^{nls}(\Pi_S z) + t w )[w, w, w]\, d t \,. \label{resto ordine 3 taylor H4 nls}
\end{align}
For later use we record that the third derivative of $ {\cal H}^{nls}_4$ at
 $w_0 = (u_0, v_0) \in H^1_r$ in direction $w = (u, v)$ in $H^1_r$ can be computed as 
\begin{equation}\label{formula for third derivative of cal H4nls}
d^3 {\cal H}^{nls}_4(w_0)[w, w, w] = 12 \int_0^1 \big( u_0 u v^2 + u^2 v_0 v  \big) dx\,.
\end{equation}
Substituting for $w$ the function $d_\bot \Psi^{nls}(\Pi_S z)[z_\bot]$ $(= d \Psi^{nls}(\Pi_S z)[\Pi_\bot z])$ and taking into account that by \eqref{seconda def Psi L}, $\Psi_L(z) = \Psi^{nls}(\Pi_S z) + d_\bot \Psi^{nls}(\Pi_S z)[z_\bot]$ yields  
  \begin{align}
 {\cal H}^{(1)}(z)  = {\cal H}^{nls}(\Psi_L(z)) & = {\cal H}^{nls}(\Psi^{nls}(\Pi_S z)) + \langle \nabla {\cal H}^{nls}(\Psi^{nls}(\Pi_S z)), d \Psi^{nls}(\Pi_S z)[\Pi_\bot z] \rangle_r \nonumber\\
  & \quad + \frac12 \Big\langle d \nabla {\cal H}^{nls}(\Psi^{nls}(\Pi_S z))\big[ d \Psi^{nls}(\Pi_S z)[\Pi_\bot z] \big]\,,\, d \Psi^{nls}(\Pi_S z)[\Pi_\bot z] \Big\rangle_r \nonumber\\
  & \qquad +{\cal T}_3^{(1)}\big(z_S, d \Psi^{nls}(\Pi_S z)[\Pi_\bot z] \big) \,.  \nonumber
  \end{align}
  Writing the right hand side of the latter identity in a more convenient form one gets 
  \begin{align}
   {\cal H}^{(1)}(z)  & = {\cal H}^{nls}(\Psi^{nls}(\Pi_S z)) + 
  \big( \Pi_\bot (d \Psi^{nls}(\Pi_S z))^t \nabla {\cal H}^{nls}(\Psi^{nls}(\Pi_S z)), \, \Pi_\bot z \big)_r \nonumber\\
  & \quad + \frac12 \big( \Pi_\bot (d \Psi^{nls}(\Pi_S z))^t  d \nabla {\cal H}^{nls}(\Psi^{nls}(\Pi_S z))
  d \Psi^{nls}(\Pi_S z)[\Pi_\bot z] \,,\,  \Pi_\bot z \big)_r \nonumber\\
  & \qquad + {\cal T}_3^{(1)}\big(z_S, d \Psi^{nls}(\Pi_S z)[\Pi_\bot z] \big)\,.\label{prima espansione H4 circ Psi}
  \end{align}
   Recall that $H^{nls} ={\cal H}^{nls} \circ \Psi^{nls}$. Hence by 
   Theorem \ref{Theorem Birkhoff coordinates} one gets
  \begin{align}
   {\cal H}^{nls}(\Psi^{nls}(\Pi_S z)) = H^{nls}(I_S, 0)\,. \label{forma finale parte ordine 0 in w bot}
  \end{align}
Furthermore by \eqref{identita importante 2}, 
\begin{equation}\label{parte lineare in w bot = 0}
\Pi_\bot (d \Psi^{nls}(\Pi_S z))^t \nabla {\cal H}^{nls}(\Psi^{nls}(\Pi_S z)) = 0\,.
\end{equation} 
Next, the term in \eqref{prima espansione H4 circ Psi}, which is quadratic in $z_\bot$, can be written as  
  \begin{align}
  &  \frac12 \big( \Pi_\bot (d \Psi^{nls}(\Pi_S z))^t d \nabla {\cal H}^{nls}(\Psi^{nls}(\Pi_S z))
  d \Psi^{nls}(\Pi_S z)[\Pi_\bot z] , \,  \Pi_\bot z \big)_r \nonumber\\
  & \stackrel{\eqref{identita importante 3}}{=} \frac12 \big( { \Omega}^{nls}(I_S, 0)[\Pi_\bot z], \, \Pi_\bot z \big)_r + 
  \frac12 \big( {\cal R}^{(1)}(z_S)[z_\bot] , \, z_\bot \big)_r\,.\label{parte quadratica buona forma normale}
  \end{align}
   Substituting \eqref{forma finale parte ordine 0 in w bot}-\eqref{parte quadratica buona forma normale} into \eqref{prima espansione H4 circ Psi} then yields   
  \begin{align}
& {\cal H}^{(1)} (z) = H^{nls}(I_S, 0) + 
\frac12 \big( { \Omega}^{nls}_\bot(I_S, 0)[ z_\bot], \,  z_\bot \big)_r + {\cal P}_2^{(1)}(z) + {\cal P}_3^{(1)}(z)    \label{forma semifinale H nls circ Psi}
  \end{align}
  where 
  \begin{align}
  & {\cal P}_2^{(1)}(z) :=  \frac12 \big( {\cal R}^{(1)}(z_S)[z_\bot], \, z_\bot \big)_r\,, \quad 
  {\cal P}_3^{(1)}(z) := {\cal T}_3^{(1)}\big(z_S, \, d \Psi^{nls}(\Pi_S z)[\Pi_\bot z] \big)\,. 
  \label{perturbazione composizione con Psi L}
  \end{align}
  \begin{lemma}\label{lemma stima cal P2 P3}
  $(i)$ For any $s \in \Z_{\geq 0}$, ${\cal P}_2^{(1)} : {\cal V}\cap h^s_r \to \R$ is real analytic and the following estimates hold: for any $s \in \Z_{\geq 0}$, $z \in {\cal V} \cap h^s_r$,
  $$
  \| \nabla {\cal P}_2^{(1)} (z)\|_{s } \lesssim_s \| z_\bot\|_{s}
  $$
 and for any $k \in \Z_{\geq 1}$, $\widehat z_1, \ldots, \widehat z_k \in h^s_c$,
  $$
  \| d^k \nabla {\cal P}_2^{(1)}(z)[\widehat z_1, \ldots, \widehat z_k]\|_{s } \lesssim_{s, k} \,\,
  \sum_{j = 1}^k \| \widehat z_j\|_s\prod_{i \neq j} \|\widehat z_i \|_0 + \| z_\bot\|_s \prod_{j = 1}^k \|\widehat z_j \|_0\,.
  $$

  \noindent
  $(ii)$ For any $s \in \Z_{\geq 0}$, ${\cal P}_3^{(1)} : {\cal V}\cap h^s_r \to \R$ is real analytic and the following estimates hold: for any $s \in \Z_{\geq 0}$, $z \in {\cal V} \cap h^s_r$, $\widehat z \in h^s_c$,
 $$
  \| \nabla {\cal P}_3^{(1)}(z)\|_s \lesssim_s \| z_\bot\|_s \| z_\bot\|_0\,, \quad \| d  \nabla {\cal P}_3^{(1)}(z)[\widehat z]\|_s \lesssim_s \| z_\bot\|_s \| \widehat z\|_0 + \| z_\bot\|_0 \| \widehat z\|_s 
  $$
  and for any $k \in \Z_{\geq 2}$, $\widehat z_1, \ldots, \widehat z_k \in h^s_c$,
  $$
  \| d^k \nabla {\cal P}_3^{(1)}(z)[\widehat z_1, \ldots, \widehat z_k]\|_{s} \lesssim_{s, k} \sum_{j = 1}^k \| \widehat z_j\|_s\prod_{i \neq j} \|\widehat z_i \|_0 + \| z_\bot\|_s \prod_{j = 1}^k \|\widehat z_j \|_0\,.
  $$
  \end{lemma}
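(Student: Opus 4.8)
The plan is to estimate ${\cal P}_2^{(1)}$ and ${\cal P}_3^{(1)}$ separately, in each case by differentiating the defining formulas \eqref{perturbazione composizione con Psi L} and invoking the auxiliary estimates already established — Lemma \ref{stime cal M(wS)} for ${\cal R}^{(1)}$, and Theorem \ref{Theorem Birkhoff coordinates} (together with the compactness of $\overline{\cal V}_S$) for $\Psi^{nls}$ and its derivatives.

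For part $(i)$, recall ${\cal P}_2^{(1)}(z) = \frac12\big({\cal R}^{(1)}(z_S)[z_\bot],z_\bot\big)_r$. Since this is quadratic in $z_\bot$ with coefficients depending analytically and boundedly on $z_S$ (Lemma \ref{stime cal M(wS)}), its gradient $\nabla{\cal P}_2^{(1)}(z)$ has a $z_\bot$-component equal to $\tfrac12\big({\cal R}^{(1)}(z_S)+{\cal R}^{(1)}(z_S)^t\big)[z_\bot]$ and a $z_S$-component given by $\tfrac12\big(\partial_S^{\cdot,\cdot}{\cal R}^{(1)}(z_S)[z_\bot],z_\bot\big)_r$, which is quadratic in $z_\bot$ and hence of size $\|z_\bot\|_s\|z_\bot\|_0$. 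The one-smoothing estimate $\|{\cal R}^{(1)}(z_S)\|_{{\cal L}(h^s_{\bot c},h^{s+1}_c)}\lesssim_s 1$ then gives $\|\nabla{\cal P}_2^{(1)}(z)\|_s\lesssim_s\|z_\bot\|_s$ (the $z_S$-component, being $O(\|z_\bot\|_0\|z_\bot\|_s)$, is absorbed since $\|z_\bot\|_0\le\delta<1$). For the derivatives one differentiates repeatedly: each derivative in a $z_\bot$-direction removes one factor of $z_\bot$ and produces a $\widehat z$-factor, each derivative in a $z_S$-direction hits the smooth coefficient ${\cal R}^{(1)}$, and after $k$ derivatives one is left with at most two surviving "slots" (from the original quadratic form) filled by some of the $\widehat z_j$'s and possibly by $z_\bot$; combined with the uniform operator bounds on $\partial_S^{\alpha,\beta}{\cal R}^{(1)}$ this yields the asserted bound with the two characteristic terms $\sum_j\|\widehat z_j\|_s\prod_{i\ne j}\|\widehat z_i\|_0$ and $\|z_\bot\|_s\prod_j\|\widehat z_j\|_0$.

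For part $(ii)$, ${\cal P}_3^{(1)}(z) = {\cal T}_3^{(1)}\big(z_S,d\Psi^{nls}(\Pi_S z)[\Pi_\bot z]\big)$ where ${\cal T}_3^{(1)}$ is the order-three Taylor remainder of ${\cal H}^{nls}_4$, see \eqref{resto ordine 3 taylor H4 nls}, \eqref{formula for third derivative of cal H4nls}. The key structural point is that ${\cal T}_3^{(1)}(z_S,w)$ is a sum of trilinear-in-$w$ expressions of the form $\int_0^1 u_* u v^2\,dx$ (plus the symmetric term), with $u_*$ built from $\Psi^{nls}(\Pi_S z)+tw$, so after substituting $w=d_\bot\Psi^{nls}(\Pi_S z)[z_\bot]$ the whole object is cubic in $z_\bot$ up to the smooth $z_S$-dependence and the extra $w$ inside $u_*$. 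To estimate $\nabla{\cal P}_3^{(1)}$ in $H^s$ one writes out the gradient using the product/chain rules, noting that $d_\bot\Psi^{nls}(\Pi_S z)=d_\bot B^{nls}(\Pi_S z)$ is one-smoothing and bounded on $\overline{\cal V}_S$, so $\|d_\bot\Psi^{nls}(\Pi_S z)[z_\bot]\|_s\lesssim_s\|z_\bot\|_s$; then one applies the standard algebra/tame product estimates in $H^s(\T)$ for products of three Sobolev functions (one factor taken in $H^s$, the other two in $H^0$, or permutations), exploiting that ${\cal V}_\bot$ has radius $<1$ to absorb low-order factors. The resulting bounds are cubic (resp. quadratic, resp. linear after two derivatives) in $z_\bot$ at the $H^0$ level, with exactly one factor allowed to carry the $s$-norm — this is precisely the content of the claimed inequalities $\|\nabla{\cal P}_3^{(1)}(z)\|_s\lesssim_s\|z_\bot\|_s\|z_\bot\|_0$ and its differentiated versions. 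For higher derivatives $k\ge 2$ one differentiates the trilinear-plus-remainder structure $k$ times; the worst terms either distribute all derivatives onto the three $z_\bot$-slots and the smooth coefficients (giving $\sum_j\|\widehat z_j\|_s\prod_{i\ne j}\|\widehat z_i\|_0$) or leave a residual $z_\bot$ inside the integral-remainder factor (giving $\|z_\bot\|_s\prod_j\|\widehat z_j\|_0$).

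The main obstacle is bookkeeping rather than any genuine analytic difficulty: one must carefully track, through the chain and product rules applied to the composition ${\cal T}_3^{(1)}\big(z_S,d\Psi^{nls}(\Pi_S z)[\Pi_\bot z]\big)$, how many factors of $z_\bot$ remain, how the derivatives in the $z_S$-directions are harmlessly absorbed by the uniform bounds on $\overline{\cal V}_S$, and how to apply the $H^s(\T)$ tame product estimates so that at most one factor carries the high Sobolev norm. Since ${\cal V}_\bot$ has radius less than one, all "too high'' powers of $\|z_\bot\|_0$ can be discarded, which is what makes the final form of the estimates match those claimed. Once the substitution structure is made explicit and the trilinear nature of $d^3{\cal H}^{nls}_4$ is used, the estimates reduce to routine applications of Theorem \ref{Theorem Birkhoff coordinates}, Lemma \ref{stime cal M(wS)}, and standard Sobolev interpolation/product inequalities.
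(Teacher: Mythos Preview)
Your approach is essentially the same as the paper's: item $(i)$ follows from Lemma~\ref{stime cal M(wS)} and the quadratic-in-$z_\bot$ structure of ${\cal P}_2^{(1)}$, while item $(ii)$ follows by differentiating the Taylor remainder \eqref{resto ordine 3 taylor H4 nls} via the explicit trilinear formula \eqref{formula for third derivative of cal H4nls} together with Theorem~\ref{Theorem Birkhoff coordinates} and standard tame product estimates. One small correction: $d_\bot\Psi^{nls}(\Pi_S z)$ is \emph{not} equal to $d_\bot B^{nls}(\Pi_S z)$ (the linear part $F_{nls}^{-1}$ contributes), so it is not one-smoothing --- what is true is $d_S d_\bot\Psi^{nls}=d_S d_\bot B^{nls}$ --- but the bound $\|d_\bot\Psi^{nls}(\Pi_S z)[z_\bot]\|_s\lesssim_s\|z_\bot\|_s$ that you actually use follows directly from Theorem~\ref{Theorem Birkhoff coordinates}, so your argument goes through unchanged.
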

  \begin{proof}
  Item $(i)$ follows from Lemma \ref{stime cal M(wS)} and item $(ii)$ from 
  \eqref{perturbazione composizione con Psi L}, \eqref{resto ordine 3 taylor H4 nls}, 
  \eqref{formula for third derivative of cal H4nls}, and Theorem \ref{Theorem Birkhoff coordinates}.  
  \end{proof}
  
  \bigskip

\noindent  
{\bf Expansion of ${\cal H}^{(2)} := {\cal H}^{(1)} \circ \Psi_C$}

\smallskip

\noindent
To study the expansion of the composition ${\cal H}^{(2)}= {\cal H}^{(1)} \circ \Psi_C$ of the Hamiltonian ${\cal H}^{(1)}$ with the symplectic corrector $\Psi_C$, constructed in Section \ref{costuzione mappa Psi simplettica}, 
we separately expand the compositions of the terms on the right hand side of the identity \eqref{forma semifinale H nls circ Psi} with $\Psi_C$. 
In addition to the projectors $\Pi_S, \Pi_\bot$, defined in \eqref{Pi S}, \eqref{Pi bot}, we also introduce
the following versions of them, 
\begin{equation}\label{pi S}
  \pi_S : \C^S \times \C^S \times h^0_{ \bot c} \to \C^S \times \C^S\,, \quad z = (z_S, z_\bot) \to z_S\,,
  \end{equation}
  \begin{equation}\label{pi bot}
  \pi_\bot : \C^S \times \C^S \times h^0_{ \bot c} \to h^0_{ \bot c}\,, \quad z= (z_S, z_\bot) \to z_\bot\,.
  \end{equation}
\noindent

\noindent
{\bf Term $H^{nls}(I_S, 0)$:} It is convenient to define
\begin{equation}\label{h nls H nls}
h^{nls}(z_S) := H^{nls}(I_S, 0)
\end{equation}
where we recall that by \eqref{Hamiltoniana integrabile Birkhoff}, \eqref{splitting azioni}
$$
I_S = I_S(z_S) = \big( \frac{1}{2} (x_j^2 + y_j^2) \big)_{j \in S}\,, \quad 
z_S = \big( (x_j)_{j \in S}, (y_j)_{j \in S} \big)\in \R^S \times \R^S\,.
$$
By Corollaries \ref{corollario correttore simplettico}, \ref{espansione quadratica cubica cal R Psi} $\Psi_C(z)$,
defined for $z \in {\cal V}_\delta' \cap h^0_r ,$ is of the form
$\Psi_C(z) = z + B_C(z) = z + B_2^C(z) + B_3^C(z)$. Hence the Taylor expansion of $h^{nls}(\pi_S\Psi_C(z))$ around $z_S$  reads 
\begin{align}\label{h nls circ PsiC}
h^{nls}(\pi_S \Psi_C(z)) & = h^{nls}(z_S) + \nabla_S h^{nls}(z_S) \cdot \pi_S B^C_2 (z) + {\cal P}^{(2a)}_3(z)\,,
\end{align}
where ${\cal P}^{(2a)}_3(z)$ is the Taylor remainder term of order three, given by
\begin{align}
{\cal P}^{(2a)}_3(z) & := \nabla_S h^{nls}(z_S) \cdot \pi_S B^C_3(z) + 
\int_0^1 (1 - t)d_S \nabla_S h^{nls}(z_S + t \, \pi_S B_C(z))[\pi_S B_C(z)] 
\cdot \pi_S B_C(z)\, d t \label{definizione h3 nls}\,.
\end{align}
In the next lemma we provide estimates for the Hamiltonian ${\cal P}^{(2a)}_3(z)$.
\begin{lemma}\label{stime tame h3 nls}
For any $s \in \Z_{\geq 0}$, ${\cal P}^{(2a)}_3 \circ \Psi_C : {\cal V}'_\delta \cap h^s_r \to \R$ is real analytic. Furthermore, $\nabla {\cal P}^{(2a)}_3$ satisfies the following tame estimates: for any $s \in \Z_{\geq 0}$, 
$z \in {\cal V}'_\delta \cap h^s_r$, $\widehat z \in h^s_c$,
  $$
  \| \nabla {\cal P}^{(2a)}_3(z)\|_{s} \lesssim_s  \, \| z_\bot\|_s \| z_\bot\|_0\,, \quad 
  \| d  \nabla {\cal P}^{(2a)}_3(z) [\widehat z]\|_{s } \lesssim_s \, \| z_\bot\|_s \| \widehat z\|_0 + 
  \| z_\bot\|_0 \| \widehat z\|_s  
  $$
  and for any $k \in \Z_{\geq 2}$, $\widehat z_1, \ldots, \widehat z_k \in h^s_c$,
  $$
  \| d^k \nabla {\cal P}^{(2a)}_3(z) [\widehat z_1, \ldots, \widehat z_k]\|_{s } \lesssim_{s, k} \,
  \sum_{j = 1}^k \| \widehat z_j\|_s\prod_{i \neq j} \|\widehat z_i \|_0 + 
  \| z_\bot\|_s \prod_{j = 1}^k \|\widehat z_j \|_0\,.
  $$
\end{lemma}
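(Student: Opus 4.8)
The plan is to differentiate the explicit formula \eqref{definizione h3 nls} for ${\cal P}^{(2a)}_3$ and to estimate the resulting terms by the product and chain rules, using the properties of the ingredients already at our disposal; the argument runs parallel to the proof of Lemma \ref{lemma stima cal P2 P3}. Two kinds of building blocks appear. First, the functions $\partial_S^{\alpha,\beta}h^{nls}$ and $\partial_S^{\alpha,\beta}\nabla_S h^{nls}$, evaluated either at $z_S$ or at $z_S+t\,\pi_S B_C(z)$: since $h^{nls}(z_S)=H^{nls}(I_S(z_S),0)$ with $z_S\mapsto I_S(z_S)$ a polynomial map, item (B3) of Theorem \ref{Theorem Birkhoff coordinates} together with Proposition \ref{Asintotica frequenze integrabile} shows that $h^{nls}$ is real analytic in a neighbourhood of the compact set $\overline{\cal V}_S$; and since $\|\pi_S B_C(z)\|\lesssim\|z_\bot\|_0^2$ by Corollary \ref{corollario correttore simplettico}, shrinking $\delta$ keeps $z_S+t\,\pi_S B_C(z)$ in a fixed compact subset of that neighbourhood, so that all these derivatives are uniformly bounded. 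Second, the maps $\pi_S B_C(z)$ and $\pi_S B_3^C(z)$ together with their $z$-derivatives, for which we invoke the tame and one-smoothing estimates of Corollary \ref{corollario correttore simplettico} (for $B_C$) and Corollary \ref{espansione quadratica cubica cal R Psi} (for $B_3^C$).

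Carrying out the differentiation, the $z_S$-component $\nabla_S{\cal P}^{(2a)}_3(z)$ is finite-dimensional, hence its $h^s_c$-norm coincides with its $h^0$-norm, and it is estimated directly from the $h^0$-bounds of the two corollaries: it is of size $O(\|z_\bot\|_0^2)$, which is $\lesssim_s\|z_\bot\|_s\|z_\bot\|_0$ since $\|z_\bot\|_0<1$, and the analogous remarks apply to its $z$-derivatives. The only delicate point is the control of the $z_\bot$-component $\nabla_\bot{\cal P}^{(2a)}_3(z)$ in the high norm $\|\cdot\|_s$. Here the finite rank of $\pi_S$ is decisive: it reduces the matter to estimating the $\|\cdot\|_s$-norm of $\Pi_\bot$ applied to transposes of the one-smoothing operators $d_\bot B_C(z)$, $d_\bot B_3^C(z)$ (and of their $z$-derivatives, and of the higher derivatives $d^k_\bot B_C(z)$, $d^k_\bot B_3^C(z)$) evaluated on the fixed, arbitrarily regular vectors $\partial_S^{\alpha,\beta}\nabla_S h^{nls}(z_S)$. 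Since these operators are one smoothing, i.e. regularizing of order $-1$, their transposes are again one smoothing, and the tame estimates of Corollaries \ref{corollario correttore simplettico}, \ref{espansione quadratica cubica cal R Psi} place the resulting contributions in $h^s_{\bot c}$ with norm $\lesssim_s\|z_\bot\|_s\|z_\bot\|_0$; the low–high splitting in the bounds for $d^k\nabla{\cal P}^{(2a)}_3(z)$ then arises according to whether a $z$-derivative falls on a one-smoothing operator, producing a $\|z_\bot\|_s$ factor, or on one of the inserted directions $\widehat z_j$, producing a $\|\widehat z_j\|_s$ factor. The real analyticity is routine, being obtained by composition and $t$-integration of real analytic maps (and, if needed, composition with $\Psi_C$, which is real analytic by Corollary \ref{corollario correttore simplettico}).

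The main obstacle is exactly this last point, the $\|\cdot\|_s$-control of $\nabla_\bot{\cal P}^{(2a)}_3(z)$: a priori the gradient of a functional defined only on $h^s_r$ lands in the negative-index space, and it is the combination of the one-smoothing of $B_C$, $B_3^C$ with the finite rank of $\pi_S$ — which lets the one-smoothing maps be contracted against the smooth coefficients $\partial_S^{\alpha,\beta}\nabla_S h^{nls}(z_S)$ — that pushes the gradient back onto the positive Sobolev scale with the stated quantitative bounds. The remaining verifications are routine bookkeeping with the product and chain rules.
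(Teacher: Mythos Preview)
Your approach is the same as the paper's --- differentiate \eqref{definizione h3 nls} and invoke Corollaries \ref{corollario correttore simplettico} and \ref{espansione quadratica cubica cal R Psi} --- and the paper's own proof is precisely this one line.

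There is, however, a step in your more detailed argument that is not justified as stated. You write that since $d_\bot B_C(z)$ and $d_\bot B^C_3(z)$ are one smoothing, ``their transposes are again one smoothing.'' This implication is false in general: the rank-one map $T[\widehat z] := ( a, \widehat z)_r \, e$, with $e$ a standard basis vector and $a \in h^0 \setminus h^1$, sends $h^0$ boundedly into every $h^\sigma$, yet $T^t[\eta] = (e,\eta)_r \, a \notin h^1$. The tame estimates of the two corollaries bound $d^k B_C$ and $d^k B^C_3$ as maps $h^s \to h^{s+1}$; dualizing with respect to $(\cdot,\cdot)_r$ only yields $h^{-s-1} \to h^{-s}$ for the transposes, which does not by itself place $(\pi_S \, d_\bot B_C(z))^t[c]$ into $h^s_{\bot c}$ for $s>0$, however regular $c\in\R^S\times\R^S$ may be. What does the job is the symplectic relation $\Psi_C^* \Lambda_1=\Lambda_0$ of Proposition \ref{lemma principale correttore mappa simplettica}: in operator form $(d\Psi_C(z))^t \, {\cal L}_1(\Psi_C(z)) \, d\Psi_C(z) = J^{-1}$, hence $(d\Psi_C(z))^t = J^{-1}\,(d\Psi_C(z))^{-1}\,{\cal L}_1(\Psi_C(z))^{-1}$. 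Since $(d\Psi_C(z))^{-1} - {\rm Id}=dA_C(\Psi_C(z))$ and ${\cal L}_1(\Psi_C(z))^{-1} - J$ satisfy tame one-smoothing estimates by Corollary \ref{corollario correttore simplettico}(ii) and Lemma \ref{stime cal Lt inverso}, so does $(dB_C(z))^t=(d\Psi_C(z))^t-{\rm Id}$; differentiating this identity in $z$ handles the higher derivatives. With this input your scheme goes through.
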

\begin{proof}
The Lemma follows by differentiating ${\cal P}^{(2a)}_3$ and applying the estimates of 
Corollaries \ref{corollario correttore simplettico}, \ref{espansione quadratica cubica cal R Psi}. 
\end{proof}
  \bigskip
  
  \noindent
  {\bf Term ${\cal H}_\Omega(z) := \frac12 \big( \Omega_\bot^{nls}(I_S, 0) z_\bot, z_\bot \big)_r $:} 
To begin with let us point out that the expansion of the composition of the term ${\cal H}_\Omega(z)$ 
with the transformation $\Psi_C$ needs special care. To expain this in more detail, write $2{\cal H}_\Omega(z)$
in the form
  $$
    \big( \Omega_\bot^{nls}(I_S, 0) z_\bot, \, z_\bot \big)_r  =  
   \ \big( D_\bot^2 z_\bot, \, z_\bot  \big)_r 
   +  \ \big(  \Omega_\bot^{(0)}(I_S, 0) z_\bot, \, z_\bot  \big)_r 
  $$
  where $D_\bot$ is the diagonal operator defined in \eqref{definizione D bot}.
When composed with $\Psi_C = { \io d} + B_C$, the term $  \big( D_\bot^2z_\bot, z_\bot \big)_r$ becomes 
\begin{align}
&  \big( D_\bot^2 [z_\bot + \pi_\bot B_C(z)], \, z_\bot + \pi_\bot B_C(z) \big)_r =  
 \big( D_\bot^2 [z_\bot] , \, z_\bot  \big)_r +  
 \big( D_\bot^2 [z_\bot] , \,  \pi_\bot B_C(z) \big)_r  \nonumber\\
& \qquad +   \big( D_\bot^2 [ \pi_\bot B_C(z)], \, z_\bot  \big)_r +  
 \big( D_\bot^2 [ \pi_\bot B_C(z)],  \, \pi_\bot B_C(z) \big)_r
\end{align}
where $\pi_\bot$ is defined in \eqref{pi bot}. By
\eqref{definizione cal R Psi (2)} - \eqref{definizione cal R Psi (3)}, it then follows that the difference
$$
 \frac12  \big( D_\bot^2 [z_\bot + \pi_\bot B_C(z)], \, z_\bot + \pi_\bot B_C(z) \big)_r  - \,
\frac12   \big( D_\bot^2 [z_\bot] , \, z_\bot  \big)_r
$$
 belongs
to the error term ${\cal P}_3(z)$ in Theorem \ref{modified Birkhoff map}. 
Since $B_C$ is only one smoothing, the two terms
$$
\big( D_\bot^2 [z_\bot] ,  \, \pi_\bot B_C(z) \big)_r \,,  \qquad 
 \big( D_\bot^2 [ \pi_\bot B_C(z)], \, z_\bot  \big)_r
$$ 
could prevent that  ${\cal P}_3$ satisfies the estimates \eqref{estimate error term},
stated in Theorem \ref{modified Birkhoff map}. 

To proceed, recall that $\Psi_C = \Psi^{0, 1}_{X}$ where $\Psi_{X}^{\tau_0, \tau}$ is the flow map, defined in 
\eqref  {domini flusso cal F}. We have 
  \begin{equation}\label{parte pericolosa Psi C}
  {\cal H}_\Omega(\Psi_C(z)) = {\cal H}_\Omega(z) + {\cal P}_3^{(2b)}(z)\,, 
  \qquad {\cal P}_3^{(2b)}(z) := {\cal H}_\Omega(\Psi_C(z)) - {\cal H}_\Omega(z)\,. 
  \end{equation}
  Using the mean value theorem and recalling \eqref{domini flusso cal F}, one has
  \begin{align}
  & {\cal P}_3^{(2b)}(z) = \int_0^1 {\cal P}_{\Omega}(\Psi_{X}^{0, \tau}(z), \tau) \, d \tau   \label{caniggia 0}
  \end{align}
where for any $\tau \in [0, 1]$, the Hamiltonian ${\cal P}_{\Omega}(z,  \tau)$ is defined by
  \begin{equation}\label{definition cal H Omega tau}
{\cal P}_{\Omega} (z, \tau) := \big( \nabla {\cal H}_\Omega(z), \, X(z, \tau) \big)_r \,.
  \end{equation}
  One has that 
  \begin{align}
   \big( \nabla {\cal H}_\Omega(z), \, X(z, \tau) \big)_r &  = 
   \frac12  \nabla_S {\cal H}_\Omega(z) \cdot \pi_S X(z, \tau) + 
   \big( \Omega^{nls}_\bot(I_S, 0) z_\bot, \,  \pi_\bot {X}(z, \tau)  \big)_r\,. \label{termine delicato sergey}
  \end{align}
   By \eqref{definizione campo vettoriale ausiliario}, the vector field $X(z, \tau)$ was chosen to be 
  $$
  X(z, \tau) = - {\cal L}_\tau(z)^{- 1} E(z)
  $$
  where $E(z)$ is given by \eqref{forma finale 1 forma Kuksin 2} and ${\cal L}_\tau(z)^{- 1}$ by the Neumann series \eqref{definizione Lt (w)} in Lemma \ref{stime cal Lt inverso}. Hence 
  \begin{align}
  X(z, \tau) & = - {\cal L}_\tau(z)^{- 1} E(z) = - J E(z) -  \sum_{n \geq 1}(- 1)^n  \tau^n (J L(z))^n JE(z) \nonumber\\
  & = - J E(z) + \tau J L(z) \sum_{n \geq 0} (- 1)^n \tau^n (J L(z))^n JE(z) \nonumber\\
  & = - J E(z) + \tau J L(z) X(z, \tau)\,. \label{proprieta Neumann F tau w}
  \end{align}
 Since $E = \Pi_S E$ and $J^t = - J$, the last term in \eqref{termine delicato sergey} becomes 
  \begin{align}
  \big( \Omega^{nls}_\bot(I_S, 0) z_\bot,  \, \pi_\bot X(z, \tau) \big)_r & = 
  \big( \Omega^{nls}_\bot(I_S, 0) z_\bot, \, \pi_\bot \tau J L(z) X(z, \tau) \big)_r 
   \nonumber\\
  & = - \tau  \big( J \Omega^{nls}_\bot(I_S, 0) z_\bot, \, \pi_\bot  L(z) X(z, \tau) \big)_r \,. \label{torres 0}
  \end{align}
  By \eqref{definizione Omega(w)}, the component $L_\bot^\bot(z)$ of $L(z)$ vanishes. 
  Hence using the projections introduced in \eqref{pi S}, \eqref{pi bot}, one has 
  $$
  \pi_\bot L(z) X(z, \tau) = L_\bot^S(z) \pi_S X(z, \tau)\,.
  $$
  Substituting the latter expression into \eqref{torres 0} then leads to
  \begin{align}
    \big( \Omega^{nls}_\bot(I_S, 0) z_\bot, \, \pi_\bot X(z, \tau) \big)_r   & 
    = - \tau  \big( J \Omega^{nls}_\bot(I_S, 0) z_\bot, \, L_\bot^S(z) \pi_S X(z, \tau) \big)_r \nonumber\\
   & = - \tau\,\,  L_\bot^S(z)^t J \Omega^{nls}_\bot(I_S, 0) z_\bot  \cdot \pi_S X(z, \tau) \nonumber\\
   & \stackrel{\eqref{proprieta blocchi L(w)}}{=} 
   \tau\,\, L_S^\bot(z) J \Omega^{nls}_\bot(I_S, 0) z_\bot  \cdot \pi_S X(z, \tau)\,. \label{cappello 10}
  \end{align}
 Note that by the definition \eqref{definizione L S bot},
  \begin{align}
  &  L_S^\bot(z) J \Omega^{nls}_\bot(I_S, 0) z_\bot = \begin{pmatrix}
  \big( \ii \Big\langle {\mathbb J}  d_\bot \Psi^{nls}(\Pi_S z)[ J \Omega^{nls}_\bot(I_S, 0) z_\bot ] \,,\, \partial_{x_j}  d_\bot \Psi^{nls}(\Pi_S z)[ z_\bot]   \Big\rangle_r \big)_{j \in S} \\
   \big( \ii \Big\langle {\mathbb J}  d_\bot \Psi^{nls}(\Pi_S z)[ J \Omega^{nls}_\bot(I_S, 0) z_\bot ] \,,\, \partial_{y_j}  d_\bot \Psi^{nls}(\Pi_S z)[ z_\bot]     \Big\rangle_r \big)_{j \in S}
  \end{pmatrix}\,. \label{cappello 0}
  \end{align}
 Let us take a closer look at the expression
 $$
 d_\bot \Psi^{nls}(\Pi_S z)[ J \Omega^{nls}_\bot(I_S, 0) z_\bot ] = 
 d \Psi^{nls}(\Pi_S z)[ J \Omega^{nls}(I_S, 0) (0, z_\bot) ]\,.
 $$
Substituting for $J \Omega^{nls}(I_S, 0)(0, z_\bot)$ the right hand side of 
the identity \eqref{identita importante 3}, one gets
\begin{align}
 d \Psi^{nls}(\Pi_S z)[ J \Omega^{nls}(I_S, 0) (0, z_\bot) ] = 
& \,  d \Psi^{nls}(\Pi_S z) J (d \Psi^{nls}(\Pi_S z))^t 
   d \nabla {\cal H}^{nls}(\Psi^{nls}(\Pi_S z)d \Psi^{nls}(\Pi_S z)[(0, \widehat z_\bot)]  \nonumber\\ 
& - \,  d \Psi^{nls}(\Pi_S z) J  {\cal R}^{(1)}(z_S)[\widehat z_\bot]\,. \nonumber
\end{align}
Note that the first term on the right hand side of the latter identity can be simplified. 
Since $\Psi^{nls}$ is symplectic, 
  $$
d \Psi^{nls}(\Pi_S z) J (d \Psi^{nls}(\Pi_S z))^t 
= \ii {\mathbb J} \,, 
  $$
one has
\begin{align}
& d \Psi^{nls}(\Pi_S z) J (d \Psi^{nls}(\Pi_S z))^t 
   d \nabla {\cal H}^{nls}(\Psi^{nls}(\Pi_S z))d \Psi^{nls}(\Pi_S z)[(0, \widehat z_\bot)] = \nonumber \\
 & \,    \ii {\mathbb J} d \nabla {\cal H}^{nls}(\Psi^{nls}(\Pi_S z))d \Psi^{nls}(\Pi_S z)[ (0,  z_\bot)]   
 =     \ii {\mathbb J} d \nabla {\cal H}^{nls}(\Psi^{nls}(\Pi_S z))d_\bot \Psi^{nls}(\Pi_S z)[z_\bot] \,.
\end{align}
Combining the above identities, the component
$\ii \big\langle {\mathbb J}  d_\bot \Psi^{nls}(\Pi_S z)[ J \Omega^{nls}_\bot(I_S, 0) z_\bot ] \,,\, \partial_{x_j}  d_\bot \Psi^{nls}(\Pi_S z)[ z_\bot]     \big\rangle_r$ on the right hand side of \eqref{cappello 0}
becomes, for $j \in S$ arbitrary,
  \begin{align}
  & \ii \Big\langle {\mathbb J}  d_\bot \Psi^{nls}(\Pi_S z)[ J \Omega^{nls}_\bot(I_S, 0) z_\bot ] \,,\, \partial_{x_j}  d_\bot \Psi^{nls}(\Pi_S z)[ z_\bot]     \Big\rangle_r  \nonumber\\
  & =   \Big\langle   d \nabla {\cal H}^{nls}(\Psi^{nls}(\Pi_S z))d_\bot \Psi^{nls}(\Pi_S z)[  z_\bot]  \,,\, \partial_{x_j}  d_\bot \Psi^{nls}(\Pi_S z)[ z_\bot]    \Big\rangle_r \nonumber\\
  &  \quad - \, \ii \Big\langle {\mathbb J}  d \Psi^{nls}(\Pi_S z) J {\cal R}^{(1)}(z_S) [z_\bot ] \,,\, \partial_{x_j}  d_\bot \Psi^{nls}(\Pi_S z)[ z_\bot]     \Big\rangle_r  
  \end{align}
  which in view of
  $d \nabla {\cal H}^{nls}(w) = {\cal D}_2 + d \nabla {\cal H}_4^{nls}(w)$
  (cf  \eqref{espressione gradiente Hamiltoniana originaria nls})
  leads to
  \begin{align}
  &  \ii \Big\langle {\mathbb J}  d_\bot \Psi^{nls}(\Pi_S z)[ J \Omega^{nls}_\bot(I_S, 0) z_\bot ] \,,\, \partial_{x_j}  d_\bot \Psi^{nls}(\Pi_S z)[ z_\bot]     \Big\rangle_r  \nonumber\\
  &=    \Big\langle   {\cal D}_2d_\bot \Psi^{nls}(z_S, 0)[ z_\bot]  \,,\, \partial_{x_j}  d_\bot \Psi^{nls}(\Pi_S z)[ z_\bot]     \Big\rangle_r \nonumber\\
  & \, + \,  \Big\langle   d \nabla {\cal H}_4^{nls}(\Psi^{nls}(\Pi_S z))d_\bot \Psi^{nls}(\Pi_S z)[ z_\bot]  \,,\, \partial_{x_j}  d_\bot \Psi^{nls}(\Pi_S z)[ z_\bot]     \Big\rangle_r \nonumber\\
  &  \quad - \,  \ii \Big\langle {\mathbb J}  d \Psi^{nls}(\Pi_S z) J {\cal R}^{(1)}(z_S) [z_\bot ] \,,\, \partial_{x_j} d_\bot \Psi^{nls}(\Pi_S z)[ z_\bot]     \Big\rangle_r\,.  \label{cappello -1}
    \end{align}
  Since ${\cal D}_2 = {\cal D}_2^t$, the first term on the right hand side on the latter identity can be written as
  \begin{align}
   \big\langle   {\cal D}_2 d_\bot \Psi^{nls}(\Pi_S z)[  z_\bot]  \,,\, 
 &  \partial_{x_j} d_\bot \Psi^{nls}(\Pi_S z)[ z_\bot] \big\rangle_r  = 
  \frac12 \partial_{x_j} \big\langle   {\cal D}_2 d_\bot \Psi^{nls}(\Pi_S z)[  z_\bot]  \,,\,  
  d_\bot \Psi^{nls}(\Pi_S z)[ z_\bot]   \big\rangle_r \nonumber\\
  & = \, \frac12 \partial_{x_j} \big\langle   {\cal D}_2 d \Psi^{nls}(\Pi_S z)[(0,   z_\bot)]  \,,\,  
  d \Psi^{nls}(\Pi_S z)[(0, z_\bot)]   \big\rangle_r \nonumber\\
  & = \, \frac12 \partial_{x_j} \big(  (d \Psi^{nls}(\Pi_S z))^t {\cal D}_2 d \Psi^{nls}(\Pi_S z)[(0,   z_\bot)]  \,,\,  
  (0, z_\bot)   \big)_r  \,, \label{cappello 2}
  \end{align}
  which can be further transformed as follows: 
  using  $ {\cal D}_2 \, = \,  d \nabla {\cal H}^{nls} -  d \nabla {\cal H}_4^{nls} $ 
  (cf \eqref{espressione gradiente Hamiltoniana originaria nls}) and taking into account
  that by \eqref{identita importante 3},
  $$
     (d \Psi^{nls}(z_S, 0))^t d \nabla {\cal H}^{nls}(\Psi^{nls}(z_S, 0))d \Psi^{nls}(z_S, 0)[(0, z_\bot)]
     =   \Omega^{nls}(I_S, 0)[(0, z_\bot)] + 
   {\cal R}^{(1)}(z_S)[ z_\bot] 
  $$
  one is lead to
  \begin{align}
  &
  \frac12 \partial_{x_j} \big(  (d \Psi^{nls}(\Pi_S z))^t {\cal D}_2 d \Psi^{nls}(\Pi_S z)[(0,   z_\bot)]  \,,\,  
  (0, z_\bot)   \big)_r 
   = \frac12  \partial_{x_j} \big( \Omega^{nls}(I_S, 0) [( 0, z_\bot)] , \, ( 0, z_\bot) \big)_r  \, + \nonumber\\
 & \frac12 \partial_{x_j}  \big(  {\cal R}^{(1)}(z_S) [z_\bot ], (0, z_\bot) \big)_r 
   - \frac12 \partial_{x_j} \Big\langle   d \nabla {\cal H}_4^{nls}(\Psi^{nls}(\Pi_S z))d_\bot \Psi^{nls}(z_S, 0)[  z_\bot]  \,,\,  d_\bot \Psi^{nls}(\Pi_S z)[ z_\bot]   \Big\rangle_r\,. \nonumber
  \end{align}
  Let us analyze 
  $ \partial_{x_j} \big( \Omega^{nls}(I_S, 0) [( 0, z_\bot)] , \, ( 0, z_\bot) \big)_r  = 
  \big( \partial_{x_j}\Omega_\bot^{nls}(I_S, 0) z_\bot, \, z_\bot \big)_r $ 
  in more detail.
 Substituting for  $\Omega_\bot^{nls}(I_S, 0)$ the expression $D^2_\bot + \Omega_\bot^{(0)}(I_S, 0)$
 (cf \eqref{splitting Omega bot (IS)})
 and using that $\big( \partial_{x_j} D^2_\bot z_\bot, \, z_\bot \big)_r =0 $ for any $j \in S,$ one concludes that
 $$
 \big( \partial_{x_j}\Omega_\bot^{nls}(I_S, 0) z_\bot, \, z_\bot \big)_r = 
 \big( \partial_{x_j}\Omega_\bot^{(0)}(I_S, 0) z_\bot, z_\bot \big)_r\,, \qquad \forall \, j \in S\,.
 $$
 The above identities then imply that \eqref{cappello 2}  becomes 
  \begin{align}
  & \Big\langle   {\cal D}_2 d_\bot \Psi^{nls}(\Pi_S z)[  z_\bot]  \,,\, 
  \partial_{x_j} d_\bot \Psi^{nls}(\Pi_S z)[ z_\bot] \Big\rangle_r = 
  \frac12 \big( \partial_{x_j}\Omega_\bot^{(0)}(I_S, 0) z_\bot, z_\bot \big)_r + 
  \frac12  \big( \partial_{x_j} {\cal R}^{(1)}(w_S) z_\bot, \, (0, z_\bot) \big)_r \nonumber\\
  & - \frac12 \partial_{x_j} \Big\langle   d \nabla {\cal H}_4^{nls}(\Psi^{nls}(\Pi_S z))d_\bot \Psi^{nls}(\Pi_S z)[  z_\bot]  \,,\,  d_\bot \Psi^{nls}(\Pi_S z)[ z_\bot]   \Big\rangle_r\,. \label{cappello 3}
  \end{align}
With \eqref{cappello 2} - \eqref{cappello 3}, the identity \eqref{cappello -1}
becomes
  \begin{align}
 &  \ii \Big\langle {\mathbb J}  d_\bot \Psi^{nls}(\Pi_S z)[ J \Omega^{nls}_\bot(I_S, 0) z_\bot ] \,,\, \partial_{x_j}  d_\bot \Psi^{nls}(\Pi_S z)[ z_\bot]     \Big\rangle_r 
  = \big( {\cal R}_{x_j}(z_S)[z_\bot], z_\bot \big)_r\,,\label{cappello 4}
  \end{align}
  where for any $j \in S$,  ${\cal R}_{x_j}(z_S):  h^{0}_{\bot c} \to h^{0}_{\bot c} $ is 
  the linear operator defined by 
  \begin{align}
 &  \frac12 \partial_{x_j} \Omega_\bot^{(0)}(I_S, 0) + \, 
  \frac12 \pi_\bot \partial_{x_j} {\cal R}^{(1)}(z_S)  
  - \, \frac12 \partial_{x_j} \big( (d_\bot \Psi^{nls}(z_S, 0))^t \, 
  d \nabla {\cal H}_4^{nls}(\Psi^{nls}(\Pi_S z))d_\bot \Psi^{nls}(\Pi_S z) \big) \, + \nonumber\\
  & \, \big( \partial_{x_j}  d_\bot \Psi^{nls}(\Pi_S z)\big)^t \, d \nabla {\cal H}_4^{nls}(\Psi^{nls}(\Pi_S z))d_\bot \Psi^{nls}(\Pi_S z) 
   - \, \ii \big( \partial_{x_j} d_\bot \Psi^{nls}(\Pi_S z) \big)^t {\mathbb J}  \, 
  d \Psi^{nls}(\Pi_S z) J {\cal R}^{(1)}(z_S) \,. \label{definizione cal R xj (zS)}
  \end{align}
  Arguing similarly as above one obtains 
  \begin{align}
 &  \ii \Big\langle {\mathbb J}  d_\bot \Psi^{nls}(\Pi_S z)[ J \Omega^{nls}_\bot(I_S, 0) z_\bot ] \,,\, \partial_{y_j}  d_\bot \Psi^{nls}(\Pi_S z)[ z_\bot]     \Big\rangle_r = \big( {\cal R}_{y_j}(z_S)[z_\bot]\,,\, z_\bot \big)_r 
 \label{cappello 5}
  \end{align}
  where ${\cal R}_{y_j}(z_S): h^{0}_{\bot c} \to h^{0}_{\bot c}$ is given by
  \begin{align}
  & \frac12 \partial_{y_j} \Omega_\bot^{(0)}(I_S, 0) + \,  
  \frac12 \pi_\bot \partial_{y_j} {\cal R}^{(1)}(z_S)  - \, \frac12 \partial_{y_j} \big((d_\bot \Psi^{nls}(z_S, 0))^t \, 
  d \nabla {\cal H}_4^{nls}(\Psi^{nls}(\Pi_S z))d_\bot \Psi^{nls}(\Pi_S z) \big) \, + \nonumber\\
  &  \big( \partial_{y_j}  d_\bot \Psi^{nls}(\Pi_S z)\big)^t \, 
  d \nabla {\cal H}_4^{nls}(\Psi^{nls}(\Pi_S z))d_\bot \Psi^{nls}(\Pi_S z)
  - \,  \ii \big( \partial_{y_j} d_\bot \Psi^{nls}(\Pi_S z) \big)^t \, 
  {\mathbb J}  d \Psi^{nls}(\Pi_S z) J {\cal R}^{(1)}(z_S) \,. \label{definizione cal R yj (zS)}
  \end{align}
  In the next lemma we state estimates for the operators ${\cal R}_{x_j}(z_S)$ and  ${\cal R}_{y_j}(z_S)$. 
  \begin{lemma}\label{stime R xj yj (zS)}
For any $j \in S$ and $s \in \Z_{\geq 0}$, the maps 
$$
{\cal R}_{x_j} :
{\cal V}_S \cap (\R^S \times \R^S) \to {\cal L}(h^{s}_{\bot c}, h^{s }_{\bot c}), \, z_S \mapsto {\cal R}_{x_j}(z_S) \, ,
 \quad
{\cal R}_{y_j} : 
{\cal V}_S \cap (\R^S \times \R^S) \to {\cal L}(h^{s}_{\bot c}, h^{s }_{\bot c}), \, 
z_S \mapsto {\cal R}_{y_j}(z_S)
$$
are real analytic and bounded. Furthermore, for any $\alpha, \beta \in \Z_{\ge 0}^S$,
  $$
  \| \partial_S^{\alpha, \beta}{\cal R}_{x_j}(z_S)\|_{{\cal L}(h^s_{\bot c}, h^{s}_{\bot c})}, \,\,\,
  \| \partial_S^{\alpha, \beta}{\cal R}_{y_j}(z_S)\|_{{\cal L}(h^s_{\bot c}, h^{s}_{\bot c})} \, 
  \lesssim_{s, \alpha, \beta}  \,  \, 1\,.
$$
  \end{lemma}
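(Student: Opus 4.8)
\noindent
{\em Plan of proof.}\enspace The plan is to bound each of the five summands in the formulas \eqref{definizione cal R xj (zS)} and \eqref{definizione cal R yj (zS)} for ${\cal R}_{x_j}(z_S)$ and ${\cal R}_{y_j}(z_S)$ separately, using only the real analyticity and boundedness of $\Psi^{nls}$ and of all its $z_S$-derivatives as maps $h^s_r \to H^s_r$ (Theorem \ref{Theorem Birkhoff coordinates}), the estimates for ${\cal R}^{(1)}$ of Lemma \ref{stime cal M(wS)}, the estimates for $\Omega_\bot^{(0)}(I_S,0)$ of Lemma \ref{stime Omega bot (0) (IS)}, and the algebra (multiplier) property of $H^s(\T)$. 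Since $\overline{\cal V}_S$ is compact, the families $\{\partial_S^{\alpha,\beta}\Psi^{nls}(z_S,0) : z_S \in {\cal V}_S\}$ lie in bounded subsets of $H^t_\C$ for every $t$ and every multi-index $(\alpha,\beta)$, so all bounds obtained below are uniform on ${\cal V}_S \cap (\R^S\times\R^S)$.

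First I would dispose of the two diagonal summands $\frac12\partial_{x_j}\Omega_\bot^{(0)}(I_S,0)$ and $\frac12\pi_\bot\partial_{x_j}{\cal R}^{(1)}(z_S)$, which are handled directly by Lemma \ref{stime Omega bot (0) (IS)} and Lemma \ref{stime cal M(wS)}: both are real analytic and, together with all their $\partial_S$-derivatives, bounded into ${\cal L}(h^s_{\bot c}, h^s_{\bot c})$. The crucial cancellation is already built into \eqref{definizione cal R xj (zS)}: because $D_\bot$ of \eqref{definizione D bot} is independent of $z_S$, one has $\partial_{x_j}D_\bot^2 = 0$, so that differentiating $\Omega_\bot^{nls}(I_S,0) = D_\bot^2 + \Omega_\bot^{(0)}(I_S,0)$ leaves only the bounded operator $\Omega_\bot^{(0)}$ and the second-order part never enters ${\cal R}_{x_j},{\cal R}_{y_j}$. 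Similarly, ${\cal H}_4^{nls}$ carries no derivatives, which is what keeps the remaining summands bounded.

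Next I would treat the remaining three summands, each of which is a finite composition — with one, or an overall, $\partial_{x_j}$ dispatched by the Leibniz rule — of the operator-valued maps $z_S \mapsto d_\bot\Psi^{nls}(z_S,0)$, $z_S \mapsto \partial_{x_j}d_\bot\Psi^{nls}(z_S,0)$, $z_S \mapsto d\Psi^{nls}(z_S,0)$, their transposes in the sense of \eqref{trasposto successioni}, \eqref{trasposto funzioni successioni}, the fixed operators $J,\ {\mathbb J},\ {\cal R}^{(1)}(z_S)$, and $z_S \mapsto d\nabla {\cal H}_4^{nls}(\Psi^{nls}(z_S,0))$. By Theorem \ref{Theorem Birkhoff coordinates} (using $\Psi^{nls} = F_{nls}^{-1}+B^{nls}$, so that $\partial_{x_j}d_\bot\Psi^{nls}=\partial_{x_j}d_\bot B^{nls}$) the first three and all their $\partial_S$-derivatives are real analytic and bounded between the relevant Sobolev scales for every $s$; since $(\cdot,\cdot)_r$ and $\langle\cdot,\cdot\rangle_r$ are non-degenerate and induce the Sobolev norms, transposition preserves operator norms, and Lemma \ref{stime cal M(wS)} does the same for ${\cal R}^{(1)}$. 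For the Hessian $d\nabla{\cal H}_4^{nls}$ one computes from ${\cal H}_4^{nls}(u,v) = \int_\T u^2v^2\,dx$ (equivalently from \eqref{formula for third derivative of cal H4nls}) that $d\nabla{\cal H}_4^{nls}(w_0)$ is multiplication by a matrix with entries $2v_0^2$, $4u_0v_0$, $2u_0^2$; with $w_0 = \Psi^{nls}(z_S,0)$ these stay in a bounded subset of $H^t_\C$ for all $t$, so by the multiplier property $z_S \mapsto d\nabla{\cal H}_4^{nls}(\Psi^{nls}(z_S,0)) \in {\cal L}(H^s_c, H^s_c)$ is real analytic and bounded together with all its $\partial_S$-derivatives, for every $s$. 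Composing these facts and checking that the domains and codomains match yields boundedness $h^s_{\bot c} \to h^s_{\bot c}$ of each summand; expanding $\partial_S^{\alpha,\beta}$ by the generalized Leibniz rule gives the claimed derivative bounds, and ${\cal R}_{y_j}(z_S)$ is treated identically from \eqref{definizione cal R yj (zS)}. The only substantive point, which I would flag as the (mild) main obstacle, is this multiplier step: it is what makes the smoothing deficiencies of $B^{nls}$ and of the transposes harmless, so that plain $h^s_{\bot c} \to h^s_{\bot c}$ boundedness — with no gain of derivatives needed or claimed — suffices throughout.
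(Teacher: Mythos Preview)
Your approach is essentially the paper's (which just cites Theorem~\ref{Theorem Birkhoff coordinates} and Lemmata~\ref{stime cal M(wS)},~\ref{stime Omega bot (0) (IS)}), fleshed out term by term; the multiplier argument for $d\nabla{\cal H}_4^{nls}$ and the observation $\partial_{x_j}D_\bot^2=0$ are exactly right.

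There is one step that does not go through as written. Your claim that ``transposition preserves operator norms'' because $(\cdot,\cdot)_r$ and $\langle\cdot,\cdot\rangle_r$ ``induce the Sobolev norms'' is false for $s>0$: these are the $\ell^2/L^2$ pairings, so a bound $Q:h^s_{\bot c}\to H^s_c$ only yields $Q^t:H^{-s}_c\to h^{-s}_{\bot c}$, not $Q^t:H^s_c\to h^s_{\bot c}$. In particular the boundedness of $(\partial_{x_j}d_\bot\Psi^{nls}(\Pi_Sz))^t$ on $H^s_c$, needed for the fourth and fifth summands of \eqref{definizione cal R xj (zS)}, is not automatic. The correct argument uses the symplectic property (B2) of Theorem~\ref{Theorem Birkhoff coordinates}: from $d\Psi^{nls}(z)\,J\,(d\Psi^{nls}(z))^t=\ii{\mathbb J}$ one gets $(d\Psi^{nls}(z))^t=-J\,(d\Psi^{nls}(z))^{-1}\,\ii{\mathbb J}$, and since $(d\Psi^{nls}(z))^{-1}=d\Phi^{nls}(\Psi^{nls}(z)):H^s_c\to h^s_c$ is bounded for every $s\ge0$ by (B1), so is $(d\Psi^{nls}(z))^t$. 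Differentiating this identity in $x_j$ (transposition commutes with parameter differentiation) and invoking (B5) for $A^{nls}$ then gives the needed $H^s_c\to h^s_{\bot c}$ bounds on all the transposes occurring in \eqref{definizione cal R xj (zS)}--\eqref{definizione cal R yj (zS)}. With this correction your proof is complete.
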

  \begin{proof}
  The lemma follows from Theorem \ref{Theorem Birkhoff coordinates} and Lemmata \ref{stime cal M(wS)}\,,
  \ref{stime Omega bot (0) (IS)}.
  \end{proof}
 Finally, by \eqref{definition cal H Omega tau}, \eqref{termine delicato sergey}, \eqref{cappello 10}, \eqref{cappello 0}, \eqref{cappello 4}, \eqref{cappello 5} and writing 
 $$
 \pi_S X(z, \tau) =  \big( \big( X_{j, +}(z, \tau) \big)_{j \in S} \,,
  \big( X_{j, -}(z, \tau) \big)_{j \in S} \big) \in \R^S \times \R^S 
 $$
 one sees 
 that the Hamiltonian ${\cal P}_{\Omega} (z, \tau)$, defined by \eqref{definition cal H Omega tau}, can be written in the form 
 \begin{align}
 & \frac12  \nabla_S {\cal H}_\Omega(z) \cdot \pi_S  X(z, \tau)  + 
 \sum_{j \in S} X_{j, +}(z, \tau) \, \big({\cal R}_{x_j}(z_S)[z_\bot], \, z_\bot \big)_r + 
 \sum_{j \in S} X_{j, -}(z, \tau) \,  \big( {\cal R}_{y_j} (z_S)[z_\bot], \, z_\bot \big)_r  \,.
 \label{espressione finale cal H Omega tau 1}
 \end{align}
 In the next lemma we state estimates for the Hamiltonian ${\cal P}_3^{(2b)}$, defined in \eqref{caniggia 0}. 
 \begin{lemma}\label{lemma cal P Omega C}
  For any $s \in \Z_{\geq 0}$, the Hamiltonian ${\cal P}_3^{(2b)} : {\cal V}'_\delta \cap h^s_r \to \R$ is real analytic. Moreover, it satisfies the following tame estimates: for any $s \in \Z_{\geq 0}$, 
   $z \in {\cal V}'_\delta \cap h^s_r$, $\widehat z, \widehat z_1,  \widehat z_2 \in h^s_c$,
  $$
  \| \nabla {\cal P}_3^{(2b)}(z)\|_{s } \lesssim_s \| z_\bot\|_s \| z_\bot\|_0^2\,, \quad 
  \| d  \nabla{\cal P}_3^{(2b)}(z)[\widehat z]\|_{s } \lesssim_s  \, 
  \| z_\bot\|_s \| z_\bot\|_0 \| \widehat z\|_0 + \| z_\bot\|_0^2 \| \widehat z\|_s \,,
  $$
  $$
\| d^2  \nabla{\cal P}_3^{(2b)}(z)[\widehat z_1, \widehat z_2]\|_{s } \lesssim_s \, 
\| z_\bot\|_s  \| \widehat z_1\|_0 \| \widehat z_2\|_0 + 
\| z_\bot\|_0\big(  \| \widehat z_1\|_s \| \widehat z_2\|_0 + \| \widehat z_1\|_0 \| \widehat z_2 \|_s \big)\,,
  $$
  and   for any $k \in \Z_{\geq 3}$,  $\widehat z_1, \ldots, \widehat z_k \in h^s_c$,
  $$
  \| d^k \nabla {\cal P}_3^{(2b)}(z)[\widehat z_1, \ldots, \widehat z_k]\|_{s + 1} \lesssim_{s, k} \,\,
  \sum_{j = 1}^k \| \widehat z_j\|_s\prod_{i \neq j} \|\widehat z_i \|_0 + 
  \| z_\bot\|_s \prod_{j = 1}^k \|\widehat z_j \|_0\,.
  $$ 
  \end{lemma}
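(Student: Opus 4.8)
The plan is to deduce the lemma from the representation \eqref{espressione finale cal H Omega tau 1} of ${\cal P}_\Omega(\cdot,\tau)$, since that formula is precisely the outcome of the preceding computation, in which the unbounded part $D_\bot^2$ of $\Omega_\bot^{nls}(I_S,0)$ has been eliminated. By \eqref{caniggia 0}, ${\cal P}_3^{(2b)}(z)=\int_0^1{\cal P}_\Omega(\Psi_X^{0,\tau}(z),\tau)\,d\tau$, so all the claimed estimates will follow by differentiating under the integral sign and applying the chain rule for the composition with the flow $\Psi_X^{0,\tau}$ together with the Leibniz rule for the product structure of ${\cal P}_\Omega$.

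First I would record the building blocks. By \eqref{espressione finale cal H Omega tau 1}, ${\cal P}_\Omega(\cdot,\tau)$ is a finite sum of products of the scalar $X_{j,\pm}(\cdot,\tau)=\pi_S X(\cdot,\tau)$ with a quadratic form in $z_\bot$ whose operator coefficient is a bounded, real analytic function of $z_S$ --- either $\partial_{x_j}\Omega_\bot^{(0)}(I_S,0)$ or $\partial_{y_j}\Omega_\bot^{(0)}(I_S,0)$, which by Proposition \ref{Asintotica frequenze integrabile} and Lemma \ref{stime Omega bot (0) (IS)} are bounded on $h^s_{\bot c}$ with all $z_S$-derivatives bounded, or one of the operators ${\cal R}_{x_j}(z_S),{\cal R}_{y_j}(z_S)$ controlled by Lemma \ref{stime R xj yj (zS)}. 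For the scalars $X_{j,\pm}$, Lemma \ref{stime campo vettoriale ausiliario} gives $|X_{j,\pm}(z,\tau)|\lesssim\|z_\bot\|_0^2$, $|dX_{j,\pm}(z,\tau)[\widehat z]|\lesssim\|z_\bot\|_0\|\widehat z\|_0$ and $|d^kX_{j,\pm}(z,\tau)[\widehat z_1,\ldots,\widehat z_k]|\lesssim_k\prod_i\|\widehat z_i\|_0$ for $k\ge 2$, uniformly in $\tau$. Hence ${\cal P}_\Omega(\cdot,\tau)$ vanishes to fourth order in $z_\bot$ and its gradient to third order. Composing with $\Psi_X^{0,\tau}$, which is one smoothing by Lemma \ref{lemma stima flusso correttore} and satisfies $\|\Pi_\bot\Psi_X^{0,\tau}(z)\|_s\lesssim_s\|z_\bot\|_s$ (cf.\ \eqref{stima Pi bot flusso nella dim}) and $\|(d\Psi_X^{0,\tau}(z)-{\rm Id})[\widehat z]\|_{s+1}\lesssim_s\|z_\bot\|_0\|\widehat z\|_s+\|z_\bot\|_s\|\widehat z\|_0$, preserves this order of vanishing.

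With these ingredients, I would expand $d^k\nabla{\cal P}_3^{(2b)}(z)$ by Leibniz, distributing the $k$ derivatives over the quadratic-form factor, the scalar $X_{j,\pm}$ and the inner flow map, and estimate each resulting product by putting exactly one factor in the $\|\cdot\|_s$-norm and the remaining ones in the $\|\cdot\|_0$-norm, using the equivalence of all Sobolev norms on the finite-dimensional $S$-components. For $k\le 2$ this yields the stated bounds with $\|\cdot\|_s$ on the left; the index cannot be improved to $s+1$ in this range because the coefficient operators $\partial_{x_j}\Omega_\bot^{(0)},\partial_{y_j}\Omega_\bot^{(0)}$ and ${\cal R}_{x_j},{\cal R}_{y_j}$ are merely bounded, not smoothing. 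For $k\ge 3$, however, after three or more differentiations every surviving term of the Leibniz expansion carries at least one derivative on one of the genuinely one smoothing objects entering ${\cal P}_\Omega\circ\Psi_X^{0,\tau}$ --- the one smoothing flow correction of Lemma \ref{lemma stima flusso correttore}, the maps $\partial_S^{\alpha,\beta}d_\bot B^{nls}$ and ${\cal R}^{(1)}$ hidden inside ${\cal R}_{x_j},{\cal R}_{y_j}$ (Theorem \ref{Theorem Birkhoff coordinates}, Lemma \ref{stime cal M(wS)}), or the vector field $X(z,\tau)=-{\cal L}_\tau(z)^{-1}E(z)$ itself, which by Lemma \ref{stime campo vettoriale ausiliario} maps $h^s_r$ into $h^{s+1}_r$. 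Tracking which factor absorbs the extra index then upgrades the left-hand norm from $s$ to $s+1$ at no cost on the right.

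The real analyticity of ${\cal P}_3^{(2b)}$ on ${\cal V}'_\delta\cap h^s_r$ is then immediate from the real analyticity of all the ingredients (Theorem \ref{Theorem Birkhoff coordinates}, Lemmata \ref{stime cal M(wS)}, \ref{stime Omega bot (0) (IS)}, \ref{stime R xj yj (zS)}, \ref{stime campo vettoriale ausiliario}, \ref{lemma stima flusso correttore}) together with the analyticity of composition and of integration over $\tau\in[0,1]$. The only genuine difficulty --- the cancellation of the dangerous terms $\big(D_\bot^2 z_\bot,\pi_\bot B_C(z)\big)_r$ --- has already been dealt with in the derivation of \eqref{espressione finale cal H Omega tau 1}; what remains is the lengthy but routine Leibniz bookkeeping just described, for which the one point requiring care is to ensure that in each product at most one factor is measured in a high Sobolev norm.
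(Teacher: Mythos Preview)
Your approach coincides with the paper's: combine the integral representation \eqref{caniggia 0} with the product formula \eqref{espressione finale cal H Omega tau 1} for ${\cal P}_\Omega$ and apply Lemmata \ref{stime campo vettoriale ausiliario}, \ref{lemma stima flusso correttore}, \ref{stime Omega bot (0) (IS)}, \ref{stime R xj yj (zS)} via Leibniz and the chain rule. For the estimates in $\|\cdot\|_s$ your bookkeeping is correct.

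The one genuine gap is your justification of the upgrade to $\|\cdot\|_{s+1}$ for $k\ge 3$. Your claim that ``every surviving term carries at least one derivative on a genuinely one smoothing object'' is not true. Consider the contribution of $\tfrac12\nabla_S{\cal H}_\Omega\cdot\pi_S X$ to $\nabla_\bot{\cal P}_\Omega$: it contains the term $X_{j,+}(z,\tau)\,\partial_{x_j}\Omega_\bot^{(0)}(I_S,0)[z_\bot]$. If all $k$ derivatives fall on the scalar factor $X_{j,+}\!\circ\!\Psi_X^{0,\tau}$, the remaining factor is still $\partial_{x_j}\Omega_\bot^{(0)}(\tilde I_S,0)[\pi_\bot\Psi_X^{0,\tau}(z)]$, and $\partial_{x_j}\Omega_\bot^{(0)}$ is merely bounded (Lemma \ref{stime Omega bot (0) (IS)}), not smoothing; its $h^{s+1}$-norm is controlled only by $\|\pi_\bot\Psi_X^{0,\tau}(z)\|_{s+1}$, whose leading part is $\|z_\bot\|_{s+1}$, not $\|z_\bot\|_s$. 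The fact that $X_{j,+}$ is a component of a one smoothing \emph{vector field} is irrelevant here, since $X_{j,+}$ is scalar-valued. The same obstruction arises for the non-smoothing piece $\tfrac12\partial_{x_j}\Omega_\bot^{(0)}(I_S,0)$ inside ${\cal R}_{x_j}$.

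In fact the index $s+1$ in the $k\ge 3$ estimate appears to be a typo in the paper: Lemma \ref{stime finali grado 3 perturbazione}, which collects the contribution of ${\cal P}_3^{(2b)}$ into ${\cal P}_3$, states and needs only the $\|\cdot\|_s$ bound, consistent with the $k\le 2$ cases. Your argument proves the $\|\cdot\|_s$ estimate for all $k$, which is what is actually required; you should simply not claim the extra smoothing.
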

  \begin{proof}
  The lemma follows by \eqref{caniggia 0}, \eqref{espressione finale cal H Omega tau 1},  
  and Lemmata \ref{stime campo vettoriale ausiliario}, \ref{lemma stima flusso correttore}, 
  \ref{stime Omega bot (0) (IS)}, \ref{stime R xj yj (zS)}\,. 
\end{proof}
  
  \bigskip
  
  \noindent
  {\bf Term ${\cal P}_2^{(1)}$\, :} Recall that the Hamiltonian ${\cal P}_2^{(1)}$ was introduced in \eqref{perturbazione composizione con Psi L}. For $z \in  {\cal V}'_\delta \cap h^0_r$ one has $\Psi_C(z) = z + B_C(z)$
  and hence the Taylor expansion 
  of $ {\cal P}_2^{(1)}(\Psi_C(z))$ around $z$  reads
  \begin{equation}\label{cal P 12 Psi C}
  {\cal P}_2^{(1)}(\Psi_C(z)) = {\cal P}_2^{(1)}(z) + {\cal P}_3^{(2c)}(z)\,, \quad  
  {\cal P}_3^{(2c)}(z):= \int_0^1 \big( \nabla {\cal P}_2^{(1)}(z + t B_C(z))\,,\, B_C(z) \big)_r\, d t\,.
  \end{equation}
  The following lemma holds:
  \begin{lemma}\label{composizione cal P 12 Psi C}
  For any $s \in \Z_{\geq 0}$, the Hamiltonian 
  ${\cal P}_2^{(1)} \circ \Psi_C : {\cal V}'_\delta \cap h^s_r \to \R$ is real analytic. 
  Moreover,  the Hamiltonian ${\cal P}_3^{(2c)}$, defined in \eqref{cal P 12 Psi C},
   satisfies the following tame estimates: for any $s \in \Z_{\geq 0}$, 
   $z \in {\cal V}'_\delta \cap h^s_r$, $\widehat z \in h^s_c$,
  $$
  \| \nabla {\cal P}_3^{(2c)}(z)\|_{s} \lesssim_s \,  \| z_\bot\|_s \| z_\bot\|_0\,, \quad 
  \| d  \nabla {\cal P}_3^{(2c)}(z)[\widehat z]\|_{s } \lesssim_s \, 
  \| z_\bot\|_s \| \widehat z\|_0 + \| z_\bot\|_0 \| \widehat z\|_s \,,
  $$
  and  for any $k \in \Z_{\geq 2}$, $\widehat z_1, \ldots, \widehat z_k \in h^s_c$,
  $$
  \| d^k \nabla {\cal P}_3^{(2c)}(z)[\widehat z_1, \ldots, \widehat z_k]\|_{s} \lesssim_{s, k} \,
  \sum_{j = 1}^k \| \widehat z_j\|_s\prod_{i \neq j} \|\widehat z_i \|_0 +
   \| z_\bot\|_s \prod_{j = 1}^k \|\widehat z_j \|_0\,.
  $$ 
   \end{lemma}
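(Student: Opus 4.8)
The plan is to establish Lemma~\ref{composizione cal P 12 Psi C} by differentiating the defining formula \eqref{cal P 12 Psi C} for ${\cal P}_3^{(2c)}$ and systematically applying the already-established estimates for ${\cal P}_2^{(1)}$ (Lemma~\ref{lemma stima cal P2 P3}$(i)$) and for the corrector remainder $B_C$ (Corollary~\ref{corollario correttore simplettico}). The analyticity of ${\cal P}_2^{(1)} \circ \Psi_C$ is immediate: ${\cal P}_2^{(1)}$ is real analytic on ${\cal V} \cap h^s_r$ by Lemma~\ref{lemma stima cal P2 P3}, $\Psi_C$ maps ${\cal V}'_\delta \cap h^s_r$ analytically into ${\cal V} \cap h^s_r$ by Corollary~\ref{corollario correttore simplettico}$(i)$, and compositions of real analytic maps are real analytic; the remainder ${\cal P}_3^{(2c)} = {\cal P}_2^{(1)} \circ \Psi_C - {\cal P}_2^{(1)}$ is then a difference of real analytic functionals.

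First I would record the gradient identity. Since ${\cal P}_3^{(2c)}(z) = \int_0^1 \big( \nabla {\cal P}_2^{(1)}(z + t B_C(z)), B_C(z) \big)_r\, dt$, taking the gradient in $z$ produces, by the chain and product rules, a sum of terms each of which pairs a (higher) derivative of $\nabla {\cal P}_2^{(1)}$ evaluated at $z + t B_C(z)$ with tensor arguments built from ${\rm Id} + d B_C(z)$, $B_C(z)$, and their higher derivatives. The crucial structural observation is that every resulting term carries at least one factor that is \emph{quadratic} or better in $z_\bot$-type quantities: $B_C(z)$ contributes $\|z_\bot\|_0^2$-type smallness (Corollary~\ref{corollario correttore simplettico}), and $\nabla {\cal P}_2^{(1)}(w)$ contributes $\|w_\bot\|_s$-type smallness (Lemma~\ref{lemma stima cal P2 P3}$(i)$). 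One also needs the elementary remark that if $w = z + t B_C(z)$ then $\|w_\bot\|_s \lesssim_s \|z_\bot\|_s$ on ${\cal V}'_\delta$ (this is essentially \eqref{stima Pi bot flusso nella dim}, or follows directly from the $B_C$ estimates after shrinking $\delta$), so bounds phrased in terms of $\|w_\bot\|_s$ can be rewritten in terms of $\|z_\bot\|_s$.

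Then I would organize the bookkeeping for the three successive estimates. For $\|\nabla {\cal P}_3^{(2c)}(z)\|_s$: the dominant term is $({\rm Id} + d B_C(z))^t \int_0^1 \nabla {\cal P}_2^{(1)}(z + t B_C(z))\, dt$ minus $\nabla {\cal P}_2^{(1)}(z)$ plus the term where $d^2 {\cal P}_2^{(1)}$ is hit; using $\|\nabla {\cal P}_2^{(1)}(w)\|_s \lesssim_s \|w_\bot\|_s$ and $\|d\nabla{\cal P}_2^{(1)}(w)[\widehat z]\|_s \lesssim_s \|\widehat z\|_s \|w_\bot\|_0 + \dots$ together with $\|B_C(z)\|_0 \lesssim \|z_\bot\|_0^2$, $\|B_C(z)\|_{s+1} \lesssim_s \|z_\bot\|_s \|z_\bot\|_0$, one collects the bound $\|z_\bot\|_s \|z_\bot\|_0$. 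For $\|d\nabla {\cal P}_3^{(2c)}(z)[\widehat z]\|_s$ and the $k$-th derivatives one repeats the Leibniz expansion, tracking in each summand which factor is differentiated $s$ times and which carries the $\|z_\bot\|_s$ weight; the resulting "tame splitting" $\sum_j \|\widehat z_j\|_s \prod_{i\ne j}\|\widehat z_i\|_0 + \|z_\bot\|_s \prod_j \|\widehat z_j\|_0$ is exactly the shape produced by the $B_C$ and ${\cal P}_2^{(1)}$ estimates, and the interpolation-type inequalities used throughout the paper (product estimates in Sobolev spaces) close the argument. The main obstacle — really the only subtlety — is the combinatorial care needed in the Leibniz/chain-rule expansion of the $k$-fold derivative of a composition of maps with tame estimates: one must verify that \emph{no} summand loses the factor that keeps the estimate quadratic in $z_\bot$ at the base level and linear in $z_\bot$ for the first derivative, i.e.\ that the $\|z_\bot\|_0^2$ smallness of $B_C$ and the $\|z_\bot\|_s$ smallness of $\nabla{\cal P}_2^{(1)}$ propagate consistently through all terms. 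Since both input lemmas already encode precisely this structure, the verification is routine but must be done term by term.
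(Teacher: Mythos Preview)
Your approach is correct and is essentially the same as the paper's, which simply says the lemma follows by differentiating ${\cal P}_3^{(2c)}$ and applying Corollary~\ref{corollario correttore simplettico} together with Lemma~\ref{lemma stima cal P2 P3}\,$(i)$. You supply more of the bookkeeping (the Leibniz/chain-rule expansion, the reduction $\|(z+tB_C(z))_\bot\|_s \lesssim_s \|z_\bot\|_s$, and the tracking of which factor carries the $\|\cdot\|_s$ weight), but the underlying argument is identical.
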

  \begin{proof}
  The lemma follows by differentiating ${\cal P}_3^{(2c)}$ and applying Corollary \ref{corollario correttore simplettico} and Lemma \ref{lemma stima cal P2 P3} $(i)$.  
  \end{proof}
  
  \bigskip
  
  \noindent
  {\bf Term ${\cal P}_3^{(1)} $ :} By  \eqref{perturbazione composizione con Psi L},
   ${\cal P}_3^{(1)}$ is given by $ {\cal T}_3^{(1)}\big(z_S, \, d \Psi^{nls}(\Pi_S z)[\Pi_\bot z] \big)$
  where  ${\cal T}_3^{(1)}$ is the Taylor remainder term of order three, introduced in \eqref{resto ordine 3 taylor H4 nls}.
  Using the estimates of ${\cal P}_3^{(1)}$ of  Lemma \ref{lemma stima cal P2 P3} $(ii)$,
  the Hamiltonian ${\cal P}_3^{(1)} \circ \Psi_C$ can be estimated as follows: 
  \begin{lemma}\label{composizione cal P 13 Psi C}
  For any $s \in \Z_{\geq 0}$, ${\cal P}_3^{(1)} \circ \Psi_C : {\cal V}'_\delta \cap h^s_r \to \R$ is real analytic. Moreover, the following tame estimates hold: for any $s \in \Z_{\geq 1}$, 
  $z \in {\cal V}'_\delta \cap h^s_r$, $\widehat z \in h^s_c$,
  $$
  \| \nabla ({\cal P}_{3}^{(1)} \circ \Psi_C)(z)\|_{s} \lesssim_s \, \| z_\bot\|_s \| z_\bot\|_0\,, \quad 
  \| d  \nabla ({\cal P}_{3}^{(1)} \circ \Psi_C)(z)[\widehat z]\|_{s } \lesssim_s \,
  \| z_\bot\|_s \| \widehat z\|_0 + \| z_\bot\|_0 \| \widehat z\|_s \,,
  $$
  and  for any $k \in \Z_{\geq 2}$, $\widehat z_1, \ldots, \widehat z_k \in h^s_c$,
  $$
  \| d^k \nabla({\cal P}_{3}^{(1)} \circ \Psi_C)(z)[\widehat z_1, \ldots, \widehat z_k]\|_{s} \lesssim_{s, k} \,
  \sum_{j = 1}^k \| \widehat z_j\|_s\prod_{i \neq j} \|\widehat z_i \|_0 + \| z_\bot\|_s \prod_{j = 1}^k \|\widehat z_j \|_0\,.
  $$
  \end{lemma}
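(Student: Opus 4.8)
The plan is to obtain the estimates for $\nabla({\cal P}_3^{(1)}\circ\Psi_C)$ directly from the estimates for $\nabla {\cal P}_3^{(1)}$ established in Lemma~\ref{lemma stima cal P2 P3}$(ii)$, combined with the tame estimates for $\Psi_C$ and its derivatives from Corollary~\ref{corollario correttore simplettico}, by applying the chain and product rules. First I would record that for any $C^1$ functional ${\cal F}$ on ${\cal V}\cap h^0_r$ and any map $\Phi$ leaving $h^s_r$ invariant one has $\nabla({\cal F}\circ\Phi)(z)=d\Phi(z)^t\,\nabla{\cal F}(\Phi(z))$, where the transpose is taken with respect to $(\cdot,\cdot)_r$. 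Writing $\Psi_C={\io d}+B_C$, we have $d\Psi_C(z)^t={\rm Id}+dB_C(z)^t$, so
$$
\nabla({\cal P}_3^{(1)}\circ\Psi_C)(z)=\nabla{\cal P}_3^{(1)}(\Psi_C(z))+dB_C(z)^t\,\nabla{\cal P}_3^{(1)}(\Psi_C(z))\,.
$$
Since transposition (with respect to $(\cdot,\cdot)_r$) does not change operator norms on the scale $h^s_c$, the estimates of Corollary~\ref{corollario correttore simplettico} for $dB_C(z)$ and its derivatives apply verbatim to $dB_C(z)^t$.

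The key point is that $\Psi_C$ is \emph{close to the identity} in the quantitative sense of Lemma~\ref{lemma stima flusso correttore}, so $\Psi_C$ does not spoil the product structure of the bounds. Concretely, $\|\Pi_\bot\Psi_C(z)\|_s=\|z_\bot+\pi_\bot B_C(z)\|_s\lesssim_s\|z_\bot\|_s$ by Corollary~\ref{corollario correttore simplettico} and the fact that ${\cal V}_\bot$ has radius $<1$; moreover $\|\Pi_\bot\Psi_C(z)\|_0\lesssim\|z_\bot\|_0$, so composing with $\Psi_C$ replaces $\|z_\bot\|_s$ by a quantity of the same order and $\|z_\bot\|_0$ by a quantity of the same order. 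Substituting $\Psi_C(z)$ into the estimates of Lemma~\ref{lemma stima cal P2 P3}$(ii)$ therefore gives
$$
\|\nabla{\cal P}_3^{(1)}(\Psi_C(z))\|_s\lesssim_s\|\Pi_\bot\Psi_C(z)\|_s\,\|\Pi_\bot\Psi_C(z)\|_0\lesssim_s\|z_\bot\|_s\|z_\bot\|_0\,,
$$
and the bound on the first term of the displayed formula for $\nabla({\cal P}_3^{(1)}\circ\Psi_C)$ follows. For the second term, the zeroth-order estimate $\|dB_C(z)[\widehat z]\|_0\lesssim\|z_\bot\|_0\|\widehat z\|_0$ together with the smoothing estimate $\|dB_C(z)[\widehat z]\|_{s+1}\lesssim_s\|z_\bot\|_0\|\widehat z\|_s+\|z_\bot\|_s\|\widehat z\|_0$ and the just-obtained bound on $\nabla{\cal P}_3^{(1)}(\Psi_C(z))$ give, after an interpolation-type splitting of the index $s$,
$$
\|dB_C(z)^t\nabla{\cal P}_3^{(1)}(\Psi_C(z))\|_s\lesssim_s\|z_\bot\|_0\,\|\nabla{\cal P}_3^{(1)}(\Psi_C(z))\|_s+\|z_\bot\|_s\,\|\nabla{\cal P}_3^{(1)}(\Psi_C(z))\|_0\lesssim_s\|z_\bot\|_s\|z_\bot\|_0\,.
$$
This proves the first claimed inequality; the real analyticity of ${\cal P}_3^{(1)}\circ\Psi_C$ on ${\cal V}'_\delta\cap h^s_r$ is immediate from that of ${\cal P}_3^{(1)}$ (Lemma~\ref{lemma stima cal P2 P3}$(ii)$) and of $\Psi_C$ (Corollary~\ref{corollario correttore simplettico}).

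The estimates for the higher derivatives $d^k\nabla({\cal P}_3^{(1)}\circ\Psi_C)(z)[\widehat z_1,\dots,\widehat z_k]$ are obtained by differentiating the identity $\nabla({\cal P}_3^{(1)}\circ\Psi_C)(z)=d\Psi_C(z)^t\nabla{\cal P}_3^{(1)}(\Psi_C(z))$ $k$ times in $z$ via the Faà~di~Bruno / general product rule: each term of the resulting sum is a product of a factor $d^j\Psi_C(z)^t[\dots]$ (estimated by Corollary~\ref{corollario correttore simplettico}, which for $j\geq1$ contributes $\prod\|\widehat z_{i}\|_0$ up to one possible $\|\cdot\|_s$ or one $\|z_\bot\|_s$) and a factor $d^m\nabla{\cal P}_3^{(1)}(\Psi_C(z))[d\Psi_C(z)[\widehat z_{i_1}],\dots]$ (estimated by Lemma~\ref{lemma stima cal P2 P3}$(ii)$ after using $\|d\Psi_C(z)[\widehat z]\|_s\lesssim_s\|\widehat z\|_s+\|z_\bot\|_s\|\widehat z\|_0$ and $\|d\Psi_C(z)[\widehat z]\|_0\lesssim\|\widehat z\|_0$). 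Collecting terms and tracking where the single high-Sobolev index can sit, one checks that every summand is bounded by $\sum_{j}\|\widehat z_j\|_s\prod_{i\neq j}\|\widehat z_i\|_0+\|z_\bot\|_s\prod_j\|\widehat z_j\|_0$, which is the asserted bound. The only mildly delicate bookkeeping — and thus the main obstacle — is to verify that in no term does a high-Sobolev norm appear on two factors simultaneously (which would break the stated tame form): this is guaranteed precisely because the estimates of Lemma~\ref{lemma stima cal P2 P3}$(ii)$ and of Corollary~\ref{corollario correttore simplettico} are each of tame type with a single $s$-index, so that products of such factors remain tame; one also uses that $\|z_\bot\|_s$ appearing from $\Psi_C$ can be absorbed into the $\|z_\bot\|_s\prod\|\widehat z_j\|_0$ term since ${\cal V}_\bot$ has radius $<1$.
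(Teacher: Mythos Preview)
Your approach is the same as the paper's: its proof is the single sentence that the lemma follows by differentiating ${\cal P}_3^{(1)}\circ\Psi_C$ and applying Corollary~\ref{corollario correttore simplettico} and Lemma~\ref{lemma stima cal P2 P3}$(ii)$, and you have spelled this out via the chain rule and the gradient identity $\nabla({\cal P}_3^{(1)}\circ\Psi_C)(z)=d\Psi_C(z)^t\,\nabla{\cal P}_3^{(1)}(\Psi_C(z))$.

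One technical claim needs to be tightened. The assertion that ``transposition with respect to $(\cdot,\cdot)_r$ does not change operator norms on the scale $h^s_c$'' is not correct for $s\ne 0$: the $h^0$-transpose of a bounded operator $h^s_c\to h^s_c$ is in general only bounded $h^{-s}_c\to h^{-s}_c$, so the tame estimates of Corollary~\ref{corollario correttore simplettico} for $dB_C(z)$ do not transfer to $dB_C(z)^t$ by this reasoning alone. A clean fix is to use Proposition~\ref{lemma principale correttore mappa simplettica}: from $\Psi_C^*\Lambda_1=\Lambda_0$ one obtains $d\Psi_C(z)^t\,{\cal L}_1(\Psi_C(z))\,d\Psi_C(z)=J^{-1}$, hence
\[
d\Psi_C(z)^t \;=\; J^{-1}\,d\Psi_C(z)^{-1}\,{\cal L}_1(\Psi_C(z))^{-1}\,.
\]
Since $J$ is an isometry on each $h^s_c$, the required $h^s$-bounds for $d\Psi_C(z)^t$ and its $z$-derivatives follow from the estimates for $d\Psi_C(z)^{-1}={\rm Id}+dA_C(\Psi_C(z))$ in Corollary~\ref{corollario correttore simplettico}$(ii)$ together with those for ${\cal L}_1^{-1}$ in Lemma~\ref{stime cal Lt inverso}. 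With this correction the rest of your argument goes through as written.
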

  \begin{proof}
  The lemma follows by differentiating the Hamiltonian ${\cal P}_3^{(1)} \circ \Psi_C$ and applying Corollary \ref{corollario correttore simplettico} and Lemma \ref{lemma stima cal P2 P3} $(ii)$.
  \end{proof}

\bigskip

\noindent
By \eqref{forma semifinale H nls circ Psi}, \eqref{h nls circ PsiC}, \eqref{parte pericolosa Psi C}, \eqref{cal P 12 Psi C} one gets that the Hamiltonian ${\cal H}^{(2)} := {\cal H}^{(1)} \circ \Psi_C = {\cal H}^{nls} \circ \Psi_L \circ \Psi_C$ has the form 
\begin{equation}\label{cal H2 nls}
{\cal H}^{(2)} (z) = H^{nls}(I_S, 0) + \frac12 \big( \Omega_\bot^{nls}(I_S, 0)[z_\bot], z_\bot \big)_r 
+ {\cal P}_2(z) + {\cal P}_3(z)
\end{equation}
where for any $z \in  {\cal V}'_\delta \cap h^0_r$,
\begin{align}
{\cal P}_2 (z)  := \nabla_S \, h^{nls}(z_S) \cdot \pi_S B_2^{C}(z) + {\cal P}_2^{(1)}(z)\,,   
\label{definizione cap P2 (2)} \\
{\cal P}_3 (z)  := {\cal P}^{(2a)}_3(z) +{\cal P}_3^{(2b)}(z) + {\cal P}_3^{(2c)}(z) + {\cal P}_3^{(1)}(\Psi_C(z))\,.  \label{definizione cap P2 (3)}
\end{align}
Note that ${\cal P}_2 $ is quadratic with respect to $z_\bot$, whereas ${\cal P}_3$
is a remainder term of order three in  $z_\bot$. 
Being quadratic with respect to $z_\bot$, ${\cal P}_2$ can be written as 
$$
{\cal P}_2 (z) = \frac12 \big( d_\bot (\nabla_\bot {\cal P}_2(\Pi_S z) )  [z_\bot], \, z_\bot \big)_r\,.
$$
  We prove the following
      \begin{lemma}\label{cancellazione finale termini quadratici}
  The Hamiltonian ${\cal P}_2$ vanishes on $  {\cal V}'_\delta \cap h^0_r$. 
    \end{lemma}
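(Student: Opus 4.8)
The plan is to compare $\Psi$ with the Birkhoff map $\Psi^{nls}$ through the \emph{symplectic} map $\Upsilon:=\Phi^{nls}\circ\Psi$, and to exploit that $\Upsilon$ agrees with the identity on the torus family $\{z_\bot=0\}$ to first order. After shrinking $\delta$ once more so that $\Psi({\cal V}_\delta')\subseteq{\cal W}$, put $\Upsilon:=\Phi^{nls}\circ\Psi:{\cal V}_\delta'\to h^0_c$ and write $\Upsilon={\io d}+W$. Being the composition of the symplectic maps $\Psi$ (Proposition \ref{Lemma finale correttore simplettico}) and $\Phi^{nls}=(\Psi^{nls})^{-1}$ (Theorem \ref{Theorem Birkhoff coordinates}), $\Upsilon$ is symplectic, $\Upsilon^*\Lambda_M=\Lambda_M$, which by \eqref{definizione due forma h0 c} reads
\begin{equation}\label{pf cancellazione sympl}
d\Upsilon(z)^t\,J^{-1}\,d\Upsilon(z)=J^{-1}\,,\qquad z\in{\cal V}_\delta'\cap h^0_r\,.
\end{equation}
Moreover, by Proposition \ref{Psi L 2}$(L1)$, Corollary \ref{corollario correttore simplettico} and \eqref{differenziale mappa Psi L} one has $\Psi(z_S,0)=\Psi^{nls}(z_S,0)$ and $d\Psi(z_S,0)=d\Psi^{nls}(z_S,0)$ for $z_S\in{\cal V}_S\cap(\R^S\times\R^S)$; together with $d\Phi^{nls}(\Psi^{nls}(z_S,0))=d\Psi^{nls}(z_S,0)^{-1}$ this gives
\begin{equation}\label{pf cancellazione torus}
\Upsilon(z_S,0)=(z_S,0)\,,\qquad d\Upsilon(z_S,0)={\rm Id}\,,\qquad\forall\,z_S\in{\cal V}_S\cap(\R^S\times\R^S)\,.
\end{equation}
Since $W$ and $dW$ vanish identically on ${\cal V}_S\times\{0\}$, the Taylor expansion of $W$ around $\Pi_S z$ starts at order two in $z_\bot$:
\begin{equation}\label{pf cancellazione W}
W(z)=W_2(z)+O(\|z_\bot\|_0^3)\,,\qquad W_2(z):=\tfrac12\,M(z_S)\,[(0,z_\bot)]\,,
\end{equation}
where $M(z_S)\equiv M(z_S;z_\bot):h^0_c\to h^0_c$ denotes the bounded operator $\widehat v\mapsto d^2\Upsilon(z_S,0)[(0,z_\bot),\widehat v]$, which depends linearly on $z_\bot$.

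Next I would read off a formula for ${\cal P}_2$. As $\Psi=\Psi^{nls}\circ\Upsilon$ and ${\cal H}^{nls}\circ\Psi^{nls}=H^{nls}(I)$ (Theorem \ref{Theorem Birkhoff coordinates}), one has ${\cal H}^{(2)}(z)={\cal H}^{nls}(\Psi(z))=H^{nls}\big(I(\Upsilon(z))\big)$ on ${\cal V}_\delta'\cap h^1_r$. Using $I_k(z+w)=I_k(z)+z_k\cdot w_k+\tfrac12|w_k|^2$ and \eqref{pf cancellazione W}: for $k\in S$ one gets $I_k(\Upsilon(z))=I_k(z_S)+(z_S)_k\cdot W_2(z)_k+O(\|z_\bot\|_0^4)$, and for $k\in S^\bot$ one gets $I_k(\Upsilon(z))=I_k(z)+O(\|z_\bot\|_0^3)$. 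Since $H^{nls}$ is real analytic near $(I_S,0)$ and $I(\Upsilon(z))-(I_S(z_S),0)=O(\|z_\bot\|_0^2)$, collecting all contributions of order $\le2$ in $z_\bot$ and using $\omega^{nls}_k=\partial_{I_k}H^{nls}$ shows that this part equals $H^{nls}(I_S,0)+\tfrac12\big(\Omega_\bot^{nls}(I_S,0)[z_\bot],z_\bot\big)_r+\big(\xi(z_S),W_2(z)\big)_r$, with $\xi(z_S):=\big(\Omega_S^{nls}(I_S,0)[z_S],0\big)\in\C^S\times\C^S\times h^0_{\bot c}$. Comparing with \eqref{cal H2 nls}, where ${\cal P}_2$ is of order exactly two and ${\cal P}_3$ of order $\ge3$ in $z_\bot$, gives
\begin{equation}\label{pf cancellazione P2 formula}
{\cal P}_2(z)=\big(\xi(z_S),W_2(z)\big)_r=\tfrac12\,\big(\xi(z_S),\,M(z_S)[(0,z_\bot)]\big)_r\,,
\end{equation}
first on ${\cal V}_\delta'\cap h^1_r$ and hence, both sides being real analytic on ${\cal V}_\delta'\cap h^0_r$, on all of ${\cal V}_\delta'\cap h^0_r$.

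It remains to use two structural facts about $M(z_S)$. First, differentiating the identity $d\Upsilon(z_S',0)[(0,z_\bot)]=(0,z_\bot)$ (valid for all $z_S'$ by \eqref{pf cancellazione torus}) in a $z_S$-direction yields $d^2\Upsilon(z_S,0)[(0,z_\bot),(\widehat z_S,0)]=0$, i.e.
\begin{equation}\label{pf cancellazione M kills S}
M(z_S)[(\widehat z_S,0)]=0\,,\qquad\forall\,\widehat z_S\in\C^S\times\C^S\,.
\end{equation}
Second, differentiating \eqref{pf cancellazione sympl} at $(z_S,0)$ in the $z_\bot$-direction and using $d\Upsilon(z_S,0)={\rm Id}$ gives $M(z_S)^tJ^{-1}+J^{-1}M(z_S)=0$. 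Hence, writing $\xi(z_S)=J^{-1}\eta(z_S)$ with $\eta(z_S):=J\,\xi(z_S)=\big(J\,\Omega_S^{nls}(I_S,0)[z_S],0\big)$ again a $\Pi_S$-vector, and using $(J^{-1})^t=-J^{-1}$,
\begin{align*}
2\,{\cal P}_2(z)&=\big(J^{-1}\eta(z_S),\,M(z_S)[(0,z_\bot)]\big)_r=-\big(\eta(z_S),\,J^{-1}M(z_S)[(0,z_\bot)]\big)_r\\
&=\big(\eta(z_S),\,M(z_S)^tJ^{-1}[(0,z_\bot)]\big)_r=\big(M(z_S)[\eta(z_S)],\,J^{-1}[(0,z_\bot)]\big)_r=0\,,
\end{align*}
the last equality by \eqref{pf cancellazione M kills S}. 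Therefore ${\cal P}_2\equiv0$ on ${\cal V}_\delta'\cap h^0_r$.

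The step I expect to be most delicate is the bookkeeping behind \eqref{pf cancellazione P2 formula}: among the order-two-in-$z_\bot$ terms of $H^{nls}(I\circ\Upsilon)$ one must keep apart the part $\tfrac12(\Omega_\bot^{nls}(I_S,0)z_\bot,z_\bot)_r$ already present in \eqref{cal H2 nls} and the genuine remainder $(\xi(z_S),W_2(z))_r$, and must check that the $O(\|z_\bot\|_0^4)$ errors stemming from the $S$-components of $W$ and from the Hessian of $H^{nls}$ do not intrude. The sign bookkeeping in the differentiation of \eqref{pf cancellazione sympl} and in the final chain of identities is routine but must be carried out with care.
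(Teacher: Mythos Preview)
Your argument is correct and takes a genuinely different route from the paper. The paper proves the lemma dynamically: it observes that $d\Psi(\Pi_S z)=d\Psi^{nls}(\Pi_S z)$ and that both $\Psi$ and $\Psi^{nls}$ are symplectic, hence the solution of the equation obtained by linearizing the $\mathcal H^{(2)}$--flow along $\Pi_S z(t)$ coincides with the one obtained by linearizing the $H^{nls}$--flow; comparing the two linearized equations at $t=0$ forces $d_\bot\nabla_\bot\mathcal P_2(\Pi_S z)=0$. You instead work entirely algebraically via the auxiliary symplectic map $\Upsilon=\Phi^{nls}\circ\Psi$: the identity $\mathcal H^{(2)}=H^{nls}\circ I\circ\Upsilon$ lets you read off $\mathcal P_2$ as the pairing of $\nabla H^{nls}(\Pi_S z)$ (a $\Pi_S$--vector) with the quadratic part $W_2$ of $\Upsilon-{\io d}$, and then the differentiated symplecticity relation $M^t J^{-1}+J^{-1}M=0$ together with $M|_{\Pi_S}=0$ kills this pairing. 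Both proofs rest on exactly the same three ingredients (symplecticity of $\Psi$, first-order agreement of $\Psi$ with $\Psi^{nls}$ on $\{z_\bot=0\}$, and action-dependence of $H^{nls}$); the paper's version explains conceptually \emph{why} the quadratic piece must vanish (the linearized dynamics are already determined), while yours avoids any flow argument and reduces the claim to a clean linear-algebra identity. Your bookkeeping concern about \eqref{pf cancellazione P2 formula} is justified but the step is sound: by construction $\mathcal P_2$ is homogeneous of degree two in $z_\bot$ and the estimates of Lemmata \ref{stime tame h3 nls}, \ref{lemma cal P Omega C}, \ref{composizione cal P 12 Psi C}, \ref{composizione cal P 13 Psi C} show $\mathcal P_3$ vanishes to order two at $z_\bot=0$, so the identification of the degree-two parts is unambiguous.
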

  \begin{proof}
 By Corollary \ref{corollario correttore simplettico}, $\Psi_C(\Pi_S z) = \Pi_S z$ and $d \Psi_C(\Pi_S z) = {\rm Id}$. 
 Hence by the chain rule and formula \eqref{differenziale mappa Psi L}, the map $\Psi = \Psi_L \circ \Psi_C$ satisfies 
  \begin{equation}\label{identity Psi and Psi nls}
  d \Psi(\Pi_S z) = d \Psi_L (\Pi_S z) = d \Psi^{nls}(\Pi_S z)\,.
  \end{equation}
Recall that we denoted by $\widehat w(t)$ the solution of equation \eqref{maradona 2},
obtained by linearizing the dNLS equation along  $w(t) = \Psi^{nls}(\Pi_S z(t))$ 
with initial data $\widehat w(0) = d\Psi^{(nls)}(\Pi_S z (t)) (0, \widehat z_\bot^{(0)})$ and
by $\widehat z(t) = (0, \widehat z_\bot(t))$ the one of the equation
obtained by linearizing  the dNLS equation, expressed in Birkhoff coordinates (cf \eqref{equazione integrabile NLS}),
along $(z_S(t), 0) = \Pi_S z(t)$ with initial data $(0, \widehat z_\bot^{(0)})$.
 Since $\Psi^{nls}$ is symplectic, $\widehat w(t) = d\Psi^{nls}(\Pi_S z(t))[\widehat z(t)]$.
 We remark that $(z_S(t), 0) = \Pi_S z(t)$ is also a solution of the Hamiltonian equation 
 $\partial_t z^{(2)} = J \nabla {\cal H}^{(2)}(z^{(2)})$ with ${\cal H}^{(2)}$ given by \eqref{cal H2 nls}.
 Denote by $\widehat z^{(2)}(t) = (0, \widehat z^{(2)}_\bot(t))$ the solution
 of the equation obtained by linearizing $\partial_t z^{(2)} = J \nabla {\cal H}^{(2)}(z^{(2)})$
 along $\Pi_S z(t)$ with the same initial data $(0, \widehat z_\bot^{(0)})$ as above. 
 Since $\Psi$ is symplectic, $\widehat w(t) = d\Psi(\Pi_S z(t))[\widehat z^{(2)}(t)]$,
implying together with  
 $d \Psi(\Pi_S z) =  d \Psi^{nls}(\Pi_S z)$ (cf \eqref{identity Psi and Psi nls} above) 
 that $\widehat z^{(2)}(t) = \widehat z(t)$ for any $t$.
By \eqref{linearization in z variable},  $\widehat z_\bot (t)$ satisfies
  \begin{equation}\label{pane e vino 1}
  \partial_t  \widehat z_\bot(t) = J  {\Omega}_\bot^{nls}(I_S, 0) [\widehat z_\bot(t)] 
  \end{equation}
 whereas by \eqref{cal H2 nls}, one has
  \begin{equation}\label{pane e vino 2}
  \partial_t \widehat z^{(2)}_\bot (t) =  
  J d_\bot \nabla {\cal H}^{(2)}(\Pi_Sz(t))  [z^{(2)}_\bot(t)] = 
  J \Omega_\bot^{nls}(I_S, 0) [\widehat z^{(2)}_\bot (t)] \, + \,
  J d_\bot \nabla_\bot {\cal P}_2(\Pi_S z(t))[\widehat z^{(2)}_\bot(t)]  \,.
  \end{equation}
  In particular, it follows that  $d_\bot \nabla_\bot {\cal P}_2(\Pi_S z(0))[\widehat z^{(0)}_\bot]  = 0$.
   Since  ${\cal P}_2(z)$ is quadratic in $z_\bot$ and the initial data 
   $z_S(0) \in \pi_S ({\cal V}'_\delta \cap h^0_r)$, \, $\widehat z^{(0)}_\bot \in h^0_{\bot c} $ are
   arbitrary, it follows that 
  $ {\cal P}_2(z) = 0$ for any $z \in  {\cal V}'_\delta \cap h^0_r$,  which proves the claimed statement.
  \end{proof}
As a consequence of  Lemma \ref{cancellazione finale termini quadratici},
formula \eqref{cal H2 nls} becomes
\begin{equation}\label{forma finalissima hamiltoniana trasformata}
{\cal H}^{(2)}(z) = H^{nls}(I_S, 0) + \frac12 \big( {\Omega}^{nls}_\bot(I_S, 0)[z_\bot], \, z_\bot \big)_r + 
{\cal P}_3(z)\,.
\end{equation}

\noindent
The Hamiltonian ${\cal P}_3$, introduced in \eqref{definizione cap P2 (3)}, satisfies the following tame estimates. 
\begin{lemma}[Tame estimates of ${\cal P}_3$] \label{stime finali grado 3 perturbazione}For any $s \in \Z_{\geq 0}$, the Hamiltonian ${\cal P}_3 : {\cal V}'_\delta \cap h^s_r \to \R$ is real analytic and  
satsfies the following tame estimates: 
for any $z \in {\cal V}_\delta' \cap h^s_r$, $\widehat z \in h^s_c$, 
\begin{align}
&  \| \nabla {\cal P}_3(z)\|_s \lesssim_s \| z_\bot\|_s \| z_\bot\|_0\,, \quad \| d \nabla {\cal P}_3(z) [\widehat z] \|_s \lesssim_s \| z_\bot\|_s \| \widehat z\|_0 + \| z_\bot\|_0 \| \widehat z\|_s  \nonumber
\end{align}
and for any $k \in \Z_{\geq 2}$, $\widehat z_1, \ldots, \widehat z_k \in h^s_c$,
$$
\| d^k \nabla {\cal P}_3(z)[\widehat z_1, \ldots, \widehat z_k]\|_s \lesssim_s \sum_{j = 1}^k \| \widehat z_j\|_s \prod_{i \neq j} \| \widehat z_i\|_0 + \| z_\bot\|_s \prod_{j = 1}^k \| \widehat z_j\|_0\,.
$$
\end{lemma}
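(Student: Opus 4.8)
The plan is to read off the estimates for ${\cal P}_3$ directly from its defining decomposition \eqref{definizione cap P2 (3)},
$$
{\cal P}_3 = {\cal P}^{(2a)}_3 + {\cal P}_3^{(2b)} + {\cal P}_3^{(2c)} + {\cal P}_3^{(1)} \circ \Psi_C\,,
$$
together with the tame estimates already proved for each of the four summands: real analyticity and the bounds for $\nabla {\cal P}^{(2a)}_3$ and its derivatives come from Lemma \ref{stime tame h3 nls}; those for $\nabla {\cal P}_3^{(2b)}$ from Lemma \ref{lemma cal P Omega C}; those for $\nabla {\cal P}_3^{(2c)}$ from Lemma \ref{composizione cal P 12 Psi C}; and those for $\nabla({\cal P}_3^{(1)} \circ \Psi_C)$ from Lemma \ref{composizione cal P 13 Psi C}. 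Since each of these four lemmas delivers bounds of exactly the shape asserted here — namely $\lesssim_s \| z_\bot\|_s \| z_\bot\|_0$ for the gradient, $\lesssim_s \| z_\bot\|_s \|\widehat z\|_0 + \| z_\bot\|_0 \|\widehat z\|_s$ for its first differential, and the higher order analogue for $k \geq 2$ — I would simply apply the triangle inequality and absorb the finitely many $s$- and $k$-dependent constants into one $\lesssim_{s,k}$. The real analyticity of ${\cal P}_3$ on ${\cal V}'_\delta \cap h^s_r$ is likewise inherited from that of the four pieces.

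A couple of bookkeeping points have to be checked while doing this. For $k \geq 3$ Lemma \ref{lemma cal P Omega C} states the bound on $\| d^k \nabla {\cal P}_3^{(2b)}(z)\|$ with the norm $\|\cdot\|_{s+1}$ on the left, which is stronger than and hence implies the $\|\cdot\|_s$-bound needed here; for $k \le 2$ that lemma already produces the required $\|\cdot\|_s$-estimates, so there is no loss. The only genuinely missing corner is the case $s = 0$ for the term ${\cal P}_3^{(1)} \circ \Psi_C$, since Lemma \ref{composizione cal P 13 Psi C} records its estimates only for $s \in \Z_{\geq 1}$; I would close this by the chain rule $\nabla({\cal P}_3^{(1)}\circ\Psi_C)(z) = d\Psi_C(z)^t \nabla {\cal P}_3^{(1)}(\Psi_C(z))$, using the $s=0$ bound $\|\nabla {\cal P}_3^{(1)}(w)\|_0 \lesssim \|w_\bot\|_0^2$ from Lemma \ref{lemma stima cal P2 P3}$(ii)$, the boundedness of $d\Psi_C(z)$ on $h^0_c$, and the estimate $\|\Pi_\bot\Psi_C(z)\|_0 \lesssim \|z_\bot\|_0$ from Corollary \ref{corollario correttore simplettico}; its derivatives are treated the same way. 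The higher derivatives of all four terms are handled by the product rule exactly as in the proofs of the respective lemmas.

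I do not expect any real obstacle at this stage: the substance of the argument lies upstream, in the four auxiliary lemmas. The only delicate point in the whole construction — the term ${\cal P}_3^{(2b)} = {\cal H}_\Omega \circ \Psi_C - {\cal H}_\Omega$, where the unbounded diagonal part $D_\bot^2$ composed with the merely one-smoothing corrector $B_C$ threatens to spoil the tame estimates — has already been dealt with in \eqref{parte pericolosa Psi C}–\eqref{espressione finale cal H Omega tau 1}, by reorganizing the dangerous contributions $\big(D_\bot^2 z_\bot, \pi_\bot B_C(z)\big)_r$ and $\big(D_\bot^2 \pi_\bot B_C(z), z_\bot\big)_r$ via the identity \eqref{identita importante 3} and the symplecticity of $\Psi^{nls}$ into expressions controlled by the bounded operators ${\cal R}_{x_j}, {\cal R}_{y_j}$ of Lemma \ref{stime R xj yj (zS)}. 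Granting all of that, the present lemma is a one-paragraph triangle-inequality argument.
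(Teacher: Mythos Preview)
Your proposal is correct and takes essentially the same approach as the paper: the paper's proof is a one-line appeal to Lemmata \ref{stime tame h3 nls}, \ref{lemma cal P Omega C}, \ref{composizione cal P 12 Psi C}, and \ref{composizione cal P 13 Psi C}, which is precisely your triangle-inequality argument on the decomposition \eqref{definizione cap P2 (3)}. In fact you go further than the paper by flagging and resolving the minor bookkeeping mismatches (the $\|\cdot\|_{s+1}$ versus $\|\cdot\|_s$ norm in Lemma \ref{lemma cal P Omega C} for $k\ge 3$, and the missing $s=0$ case in Lemma \ref{composizione cal P 13 Psi C}).
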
 
\begin{proof}
The claimed statements follow from Lemmata \ref{stime tame h3 nls}, 
\ref{lemma cal P Omega C}, \ref{composizione cal P 12 Psi C}, and \ref{composizione cal P 13 Psi C}.
\end{proof} 

  
 \subsection{Summary of the proof of Theorem \ref{modified Birkhoff map} } \label{summary proof}
  
 Theorem \ref{modified Birkhoff map} is a direct consequence of Propositions \ref{Lemma finale correttore simplettico}, \ref{stima tame cal A Psi}, formula \eqref{forma finalissima hamiltoniana trasformata}, and 
  Lemma \ref{stime finali grado 3 perturbazione}. 
  
  
  \section{Proof of Theorem \ref{comparison Kuksin}}\label{comparison with Kuksin's map}  
Within this proof, it is convenient to use complex Birkhoff 
coordinates, given by $\zeta_n := (x_n - \ii y_n) /  \sqrt 2,$ $n \in \mathbb Z.$
A solution $z(t) = (x(t), y(t))$ of the dNLS equation in Birkhoff coordinates then satisfies the equations
\begin{equation}\label{dnls in complex Birkhoff}
\partial_t \zeta_n= - \ii \omega_n^{nls} \zeta_n, \quad n \in \mathbb Z\,,
\end{equation}
where
\[
\omega_n^{nls} \equiv \omega_n^{nls}(I_S, I_\bot) = \partial _{I_n} H^{nls}(I_S, I_\bot)\,.
\]
Linearize  \eqref{dnls in complex Birkhoff} at 
a solution $\zeta(t)$ of the form $( \zeta_S(t), 0)$. For initial data of the form 
$\widehat \zeta(0) = (0, \widehat \zeta_\bot(0))$, the corresponding  
solution
$\widehat \zeta(t) = (\widehat \zeta_S (t), \widehat \zeta_\bot(t))$
of the linearized equation satisfies 
\[ \widehat \zeta_S (t) \equiv 0\,, \qquad
\partial_t \widehat \zeta_n (t)= - \ii \omega_n^{nls}(I_S, 0) \, \widehat \zeta_n(t), \,\, \,  n \in S^\bot.
\]
Note that the latter equation are reduced to constant coefficients and hence
\[
\widehat \zeta_\bot(t) \,  = \, 
(e^{-\ii \omega_n(I_S, 0) t} \widehat \zeta_n(0))_{n \in S^\bot}\,.
\]
Since $\Psi^{nls}$ is symplectic, the solution of the equation,
obtained by linearizing the dNLS equation along $\Psi^{nls}(z_S(t), 0)$,
with initial data $d\Psi^{nls}(0, \widehat \zeta_\bot(0))$,
is given by
\[
\widehat w(t) = d\Psi^{nls}(z_S(t), 0)[0, \widehat \zeta_\bot (t)] 
\]
We now consider the special solutions 
$\widehat \zeta^{\pm , j}(t) = e^{\pm \ii\omega_j(I_S, 0) t} \,\widehat \zeta^{\pm , j}(0)$, $j \in S^\bot$, 
corresponding to the  initial data
 \[
 \widehat \zeta^{\pm , j}(0)= ( e^{(1,j)} \pm \ii e^{(2, j)} ) / \sqrt 2\,, \qquad 
e^{(1, j)}= ((\delta_{n j})_{n \in \mathbb Z}, 0), \,\,\, e^{(2, j)}= (0, (\delta_{n j})_{n \in \mathbb Z})\,.
\]
Note that these solutions are periodic in time and that $d\Psi^{nls}(z(t))[\widehat \zeta^{\pm , j}(t)] $
 can be written as
\[ 
\widehat w^{\pm, j}(t) = e^{\pm \ii\omega_j(I_S, 0) t} \, d\Psi^{nls}(z_S(t), 0)[\widehat \zeta^{\pm , j}(0)]\,.
\]
In the terminology  of \cite{K},
$\widehat w^{+, j}(t), \,\, \widehat w^{-, j}(t)$, $j \in S^\bot$, are Floquet solutions
with Floquet exponents $\pm \omega_j(I_S, 0)$.  Furthermore, by Theorem \ref{Theorem Birkhoff coordinates},
\[
\{ \, d\Psi^{nls}(z_S(t), 0)[\widehat \zeta^{\sigma , j}(0)]\, : \,  j \in S^\bot, \, \sigma \in \{\pm\} \}
\]
is a complete set of Floquet solutions in the sense of \cite{K}. One then concludes that 
up to normalisations (cf Appendix B) and natural identifications
(such as the identifications of action angle with Birkhoff coordinates), 
the map $\Phi_1$,
obtained by applying the scheme of construction of \cite{K} to the dNLS equation, coincides with the map
\[
\mathbb R^S \times \mathbb R^S \to {\cal L}(h^s_{\bot r}, H^s_r)\,, \quad
z_S \,  \mapsto \, d\Psi^{nls}(z_S, 0)\big|_{ h^s_{\bot r}}\,.
\]
Since according to \cite{K},  the map $\Phi(z)$ can be chosen of the form $\Psi^{nls}(z_S, 0) + \Phi_1(z)$ and since
the symplectic corrector $\Psi_C$ is constructed following the scheme in \cite{K}, one concludes that 
again up to normalisations and natural identifications, 
$\Psi = \Psi_L \circ \Psi_C$ coincides with the map $\Phi \circ \phi$ obtained by applying the scheme 
of \cite{K} to the dNLS equation.  \hfill $\square$

\begin{remark} In the terminology of \cite{K}, the system of the Floquet exponents $\pm \omega_j(I_S, 0)$, $j \in S^\bot$, is nonresonant -- see e.g. \cite{BKM} where the relevant properties of the dNLS frequencies are discussed.

\end{remark}

   
\section{Appendix A: a version of the Poincar\'e lemma}\label{appendice A}
  We follow the general approach of \cite{Lang}, Chapter V,  and restrict to the finite dimensional setup as the extension to infinite dimension is straightforward by restriction, see \cite{K}, Lemma 1.1. Let $E = \R^n$ and denote by $L_a^r(E)$ the space of multilinear continuous alternating forms of degree $0 \le r \le n$. Let $U \subseteq E$ be an open nonempty set and consider 
  $$
  \omega : U \to L_a^r(E)\,.
  $$
  For any $z \in U$, denote by 
  $$
  \omega(z)[\xi_1, \ldots, \xi_r] \in \R
  $$
  the value of $\omega(z)$ when evaluated at $\xi_1, \ldots, \xi_r \in E$. Similarly, if $\xi_j = \xi_j(z) \in E$, $j = 1, \ldots, r$, are vector fields on $U$, then we denote by $\omega[\xi_1, \ldots, \xi_r]$ the function 
  $$
  U \to \R \,, \quad z \mapsto \omega(z)[\xi_1(z), \ldots, \xi_r(z)] \,.
  $$
Furthermore, we denote by $\omega'(z) \cdot \xi$, $\xi \in E$, the alternating $r$-form 
  \begin{equation}\label{directional derivative}
  \partial_\e \mid_{ \e = 0} \, \omega(z + \e \xi) \in L^r_a(E)\,.
  \end{equation}
  The exterior differential $d \omega$ of $\omega$, evaluated at $z \in U$, $\xi_1, \ldots, \xi_{r + 1} \in E$, 
  is then given by the formula
 \begin{equation}\label{formula differenziale esterno}
  \sum_{j = 1}^{r + 1} (- 1)^{j + 1} \omega'(z) \cdot \xi_j [\xi_1, \ldots, \xi_{j - 1}, \xi_{j + 1}, \ldots, \xi_{r + 1}]\, ,
  \end{equation}
  also referred to as Cartan's formula. Let us now consider the case where 
  $$
  E = \R^{n_1} \times \R^{n_2}\,, \quad n = n_1 + n_2\,, \quad n_2 \geq 1\,,
  $$
  $$
  U = U_1 \times U_2 \subseteq \R^{n_1} \times \R^{n_2}\,, \quad 
  $$
  and $U_2$ is a ball in $\R^{n_2}$ centered at $0$. We denote the elements of $U$ by $z = (x, y)$ and the ones of $E$ by $\xi = (v, w) \in \R^{n_1} \times \R^{n_2}$. 
  For any $r$-form $ \omega$ on $U$, denote by $  \omega_{\cal C}$
   the $(r - 1)$-form on $U$, obtained by the cone construction: for any $x \in U_1,$ $y \in U_2$,
   $v_1, \dots , v_{r-1} \in \R^{n_1}$, and $w_1, \dots , w_{r-1} \in \R^{n_2}$, 
   \begin{equation}\label{r - 1 forma lemma di poincare}
   \omega_{\cal C}(x, y)[(v_1, w_1), \ldots, (v_{r - 1}, w_{r - 1})] = 
  \int_0^1  \omega(x, t y)[(0, y), (v_1, t w_1), \ldots, (v_{r - 1}, t w_{r - 1})]\,d t\,.
   \end{equation}
   Note that since $U_2$ is a ball in $\R^{n_2}$, centered at $0$, for any $0 \le t \le 1$,
    $(x, t y)$ is in $U_1 \times U_2$ and hence $ \omega(x, t y)$ in \eqref{r - 1 forma lemma di poincare}  
   is well defined.
   \begin{lemma}[Poincar\'e  lemma]\label{lemma di poincare}
  Assume that $\omega$ is a $r$-form on $U = U_1 \times U_2$, with $1 \le r \leq n$ and $n_2 \geq 1$, 
 satisfying
 \begin{equation}\label{condition H}
 \omega (x, 0) [(v_1, 0), \ldots, (v_r, 0)] = 0\,, \quad \forall \,\, x \in U_{1}\,, \quad \forall \,\, v_1, \ldots, v_r \in \R^{n_1}\,.
  \end{equation}
 Then
  \begin{equation}\label{identita cruciale lemma poincare}
   d ( \omega_{\cal C} )+ (d \omega)_{\cal C} =  \omega\,.
   \end{equation}
 In particular, if in addition $\omega$ is closed, $d \omega = 0$, then 
 $ d ( \omega_{\cal C} ) =  \omega$\,.
  \end{lemma}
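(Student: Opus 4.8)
The plan is to recognize the cone form $\omega_{\cal C}$ as the fiber–integration (chain homotopy) operator attached to the radial contraction of the $y$–variable, and then to invoke the standard homotopy identity for pullbacks.

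\textbf{Step 1: the homotopy and a reformulation.} First I would introduce the smooth map
\[
h : [0,1]\times U \to U\,, \qquad h(t,x,y) := (x, t y)\,,
\]
which indeed takes values in $U = U_1\times U_2$ because $U_2$ is a ball centered at $0$; note that $h(1,\cdot) = \mathrm{id}_U$ and $h(0,\cdot) = \pi$, where $\pi(x,y) := (x,0)$. For an arbitrary $k$–form $\mu$ on $U$ put $K\mu := \int_0^1 \big(\iota_{\partial_t} h^*\mu\big)\,dt$, the operator that contracts the pullback $h^*\mu$ with $\partial/\partial t$ and integrates over $[0,1]$. Using that $dh$ at $(t,x,y)$ sends $\partial_t\mapsto(0,y)$ and $(0,v,w)\mapsto(v,tw)$, one unravels directly that $(K\mu)(x,y)$ evaluated on $(v_1,w_1),\dots,(v_{k-1},w_{k-1})$ equals $\int_0^1 \mu(x,ty)[(0,y),(v_1,tw_1),\dots,(v_{k-1},tw_{k-1})]\,dt$, i.e. $K\mu = \mu_{\cal C}$ in the notation of \eqref{r - 1 forma lemma di poincare}; in particular $K\omega = \omega_{\cal C}$ and $K(d\omega) = (d\omega)_{\cal C}$. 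Thus \eqref{identita cruciale lemma poincare} is the coordinate form of the homotopy formula $d(K\omega) + K(d\omega) = h(1,\cdot)^*\omega - h(0,\cdot)^*\omega = \omega - \pi^*\omega$.

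\textbf{Step 2: the homotopy formula.} Next I would establish $d(\omega_{\cal C}) + (d\omega)_{\cal C} = \omega - \pi^*\omega$ directly, in the spirit of \cite{Lang}, Chapter V, using only the tools provided in this appendix: Cartan's formula \eqref{formula differenziale esterno} for the exterior differential and the directional–derivative notation \eqref{directional derivative}. One forms $d(\omega_{\cal C})$ by applying \eqref{formula differenziale esterno}, differentiating the integrand of \eqref{r - 1 forma lemma di poincare} under the integral sign, and $(d\omega)_{\cal C}$ by applying \eqref{r - 1 forma lemma di poincare} to the $(r+1)$–form $d\omega$; the key point is that, after using the Leibniz rule for the directional derivative, the two alternating sums combine into $\int_0^1 \tfrac{\partial}{\partial t}\big(h_t^*\omega\big)(x,y)[\xi_1,\dots,\xi_r]\,dt$, where $h_t := h(t,\cdot)$ and $\xi_j=(v_j,w_j)$. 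The fundamental theorem of calculus then yields the boundary values $h_1^*\omega(x,y)[\xi_1,\dots,\xi_r] = \omega(x,y)[\xi_1,\dots,\xi_r]$ and $h_0^*\omega(x,y)[\xi_1,\dots,\xi_r]$. Keeping the signs and the ordering of the $r+1$ arguments consistent across \eqref{formula differenziale esterno}, the product rule, and the cone construction \eqref{r - 1 forma lemma di poincare} is the only genuinely technical point, and I expect it to be the main obstacle. (Alternatively, one may quote the chain–homotopy formula for $h$ as a standard fact and restrict the work to verifying the identification $K\omega = \omega_{\cal C}$ from Step 1.)

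\textbf{Step 3: discarding the $t=0$ boundary term and conclusion.} Finally, the surviving boundary contribution at $t=0$ is exactly $h_0^*\omega = \pi^*\omega$, whose value at $(x,y)$ on $\xi_j=(v_j,w_j)$ is $\omega(x,0)[(v_1,0),\dots,(v_r,0)]$ since $d\pi(v,w) = (v,0)$; by the hypothesis \eqref{condition H} this vanishes. Hence $d(\omega_{\cal C}) + (d\omega)_{\cal C} = \omega$, which is \eqref{identita cruciale lemma poincare}. The addendum is then immediate: if $d\omega = 0$ then $(d\omega)_{\cal C} = 0$, so $d(\omega_{\cal C}) = \omega$.
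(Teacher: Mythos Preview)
Your argument is correct and is precisely the standard chain--homotopy proof: identify $\omega_{\cal C}$ with the fiber--integration operator $K$ attached to the radial contraction $h(t,x,y)=(x,ty)$, apply the homotopy formula $dK+Kd = h_1^*-h_0^*$, and kill the $t=0$ boundary term $\pi^*\omega$ using hypothesis \eqref{condition H}. The paper does not actually supply its own proof of this lemma---it merely records the statement and refers to \cite{K}, Lemma~1.1, and \cite{Lang}, Chapter~V---so your write-up is exactly in the spirit of those cited sources and there is nothing to compare against.
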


\section{Appendix B: formulas for $d\Psi^{nls}(z_S, 0) [ (0, z_\bot) ]$ }\label{appendice B}
Note that for $z = (z_S, z_\bot)$ with $z_S \in \R^S  \times \R^S$ and $z_\bot = ( (x_j)_{j \in S^\bot}, (y_j)_{j \in S^\bot}) \in h^0_{\bot r}$,
\[
d\Psi^{nls}(z_S, 0) [ (0, z_\bot) ] = \sum_{j \in S^\bot} x_j d\Psi^{nls}(z_S, 0) [e^{(1, j)}] \,
+ \, \sum_{j \in S^\bot} y_j d\Psi^{nls}(z_S, 0) [e^{(2, j)}]
\]
where for any $j \in S^\bot,$
\[
e^{(1, j)} = ((\delta_{nj})_{n \in \Z}, 0), \qquad  e^{(2, j)} = (0, (\delta_{nj})_{n \in \Z})\,.
\]
It turns out that for $j \in S^\bot$, $d\Psi^{nls}(z_S, 0) [e^{(1, j)}]$ and $d\Psi^{nls}(z_S, 0) [e^{(2, j)}]$
can be computed quite explicitly. Consider the Hamiltonian equation with Hamiltonian
given by the coordinate function $x_j$, $\partial_t w = \ii {\mathbb J} \partial x_j$, and
denote by $w(t)$ its solution with initial data $w(0) = (z_S, 0)$. Then $z(t):= \Phi^{nls}(w(t))$
solves 
\begin{equation}\label{equation z}
\partial_t z = d \Phi^{nls} (w(t)) \, \partial_t w(t) = d \Phi^{nls} (w(t)) \, \ii {\mathbb J} \partial x_j\,.
\end{equation}
Since by Theorem \ref{Theorem Birkhoff coordinates}, $\Phi^{nls}$ is symplectic, 
one has $\partial_t z = J e^{(1, j)} = e^{(2,j)}$. When combined with \eqref{equation z} it implies that
$d \Psi^{nls} (z(t))[e^{(2, j)}] = \ii {\mathbb J} \partial x_j$. Similarly, one derives the corresponding 
identity for the coordinate function $y_j$. When evaluated at $t=0$ we then obtain
\[
d \Psi^{nls} ((z_S, 0))[e^{(2, j)}] = \ii {\mathbb J} \partial x_j  = (- \ii \partial_{v} x_j, \ii \partial_u x_j)\,, \qquad 
d \Psi^{nls} ((z_S, 0))[e^{(1, j)}] = \ii {\mathbb J} \partial y_j = (- \ii \partial_{v} y_j, \ii \partial_u y_j)\,.
\]
By the definition of $x_j,$ $y_j$ in \cite{GK}, p 113, one has for a potential $w \in H^0_r$ 
with Birkhoff coordinates $(z_S, 0)$ (referred to as $S-$gap potential)
\[
x_j = \frac{\xi_j}{ \sqrt 8} (e^{\ii \beta_j} {\frak z}_j + e^{- \ii \beta_j} {\frak z}_j)\,, \qquad
y_j = \frac{\xi_j}{ \sqrt 8 \,  \ii} (e^{\ii \beta_j} {\frak z}_j - e^{- \ii \beta_j} {\frak z}_j)\,,
\]
where ${\frak z}_j^\pm = \gamma_j e^{\pm \ii \eta_j}$ if $\gamma_j \ne 0$
and ${\frak z}_j^\pm = 0$ otherwise. We refer to \cite{GK} for the definitions of $\xi_j$,
$ \eta_j$, and  $\beta_j$. Since $w$ is assumed to be a $S-$gap potential, it follows that for any $j \in S^\bot,$
\[
\partial x_j = \frac{\xi_j}{ \sqrt 8} (e^{\ii \beta_j} \partial {\frak z}^+_j + e^{- \ii \beta_j} \partial {\frak z}^-_j)\,, \qquad
\partial y_j = \frac{\xi_j}{ \sqrt 8 \,  \ii} (e^{\ii \beta_j} \partial {\frak z}^+_j - e^{- \ii \beta_j} \partial {\frak z}^-_j)\,,
\]
where by formula (17.3) in \cite{GK},
\[
\partial {\frak z}^\pm_j = 2 ( \partial \tau_j - \partial \mu_j )
\pm \big(\, 
\ii 2 \delta(\mu_j) \partial \phi_j  \, +  \, 2 \phi_j \,  ( \ii \partial \delta \mid_{\lambda = \mu_j} + \ii \dot \delta (\mu_j) \partial \mu_j ) 
\, \big)\,.
\]
We refer to \cite{GK} for the definitions of the various quantities as well as for formulas
of the gradients in the latter expression. Each of the two components of these gradients are shown to be
a linear combination of quadratic expressions in the entries of the fundamental solution $M = M(x, \lambda)$ 
of the Zakharov Shabat operator 
\[
L := \begin{pmatrix}
\ii  & 0\\
0 & - \ii 
\end{pmatrix} \partial_x+ 
 \begin{pmatrix}
0 & u\\
\bar u & 0
\end{pmatrix}\,, \qquad w = (u, v) = (u, \bar u)\,.
\]
In fact, in \cite{GKP}, it has been proved that
\[
\partial {\frak z}^\pm_j = \big( \, (K_{j 2} \pm \ii H_{j 2})^2, \,  (K_{j 1} \pm \ii H_{j 1})^2  \big)
\]
where 
\[
H_j = (H_{j1}, H_{j2}) = \frac{1}{\| M_1 + M_2 \|_{L^2}}(M_1 + M_2)\mid_{\lambda = \mu_j}
\]
denotes the $L^2-$normalized eigenfunction
of $L$ for the Dirichlet eigenvalue $\mu_j$, $M_1,$ $M_2$ are the two columns of $M$,
and $K_j = (K_{j1}, K_{j2}) $ is the $L^2-$normalized solution of $LF = \mu_j F,$ which is $L^2-$orthogonal to $H_j$
and satisfies the additional normalization condition $-\ii (K_{j1}(0) - K_{j2}(0)) > 0. $


\vspace{1.0cm}

\noindent
T. Kappeler, 
Institut f\"ur Mathematik, 
Universit\"at Z\"urich, Winterthurerstrasse 190, CH-8057 Z\"urich;\\
${}\qquad$  email: thomas.kappeler@math.uzh.ch \\

\noindent
R. Montalto, 
Institut f\"ur Mathematik, 
Universit\"at Z\"urich, Winterthurerstrasse 190, CH-8057 Z\"urich;\\
${}\qquad$ email: riccardo.montalto@math.uzh.ch

\end{document}